\def\E{\ifmmode{\mathbb E}\else{$\mathbb E$}\fi} %natural numbers
\def\N{\ifmmode{\mathbb N}\else{$\mathbb N$}\fi} %natural numbers%
\def\R{\ifmmode{\mathbb R}\else{$\mathbb R$}\fi} %real numbers
\def\Q{\ifmmode{\mathbb Q}\else{$\mathbb Q$}\fi} %rational numbers
\def\C{\ifmmode{\mathbb C}\else{$\mathbb C$}\fi} %complex numbers
\def\H{\ifmmode{\mathbb H}\else{$\mathbb H$}\fi} %complex numbers
\def\Z{\ifmmode{\mathbb Z}\else{$\mathbb Z$}\fi} %integers
\def\P{\ifmmode{\mathbb P}\else{$\mathbb P$}\fi} %real numbers
\def\T{\ifmmode{\mathbb T}\else{$\mathbb T$}\fi} %real numbers
\def\SS{\ifmmode{\mathbb S}\else{$\mathbb S$}\fi} %real numbers
\def\DD{\ifmmode{\mathbb D}\else{$\mathbb D$}\fi} %real numbers
\newcommand{\del}{\partial}
\newcommand{\Cont}{{\operatorname{Cont}}}
\newcommand{\ben}{\begin{enumerate}}
\newcommand{\een}{\end{enumerate}}
\newcommand{\be}{\begin{equation}}
\newcommand{\ee}{\end{equation}}
\newcommand{\bea}{\begin{eqnarray}}
\newcommand{\eea}{\end{eqnarray}}
\newcommand{\beastar}{\begin{eqnarray*}}
\newcommand{\eeastar}{\end{eqnarray*}}
\newcommand{\bc}{\begin{center}}
\newcommand{\ec}{\end{center}}
\theoremstyle{theorem}
\newtheorem{thm}{Theorem}[section]
\newtheorem{cor}[thm]{Corollary}
\newtheorem{lem}[thm]{Lemma}
\newtheorem{prop}[thm]{Proposition}
\theoremstyle{definition}
\newtheorem{defn}[thm]{Definition}
\newtheorem{rem}[thm]{Remark}
\newtheorem{hypo}[thm]{Hypothesis}
\newtheorem{notation}[thm]{\rm\bfseries{Notation}}
\newtheorem*{thm*}{Theorem}
\numberwithin{equation}{section}
\def\R{{\mathbb R}}
\def\Crit{{\hbox{Crit}}}
\def\E{{\mathbb E}}
\def\Z{{\mathbb Z}}
\def\C{{\mathbb C}}
\def\R{{\mathbb R}}
\def\P{{\mathbb P}}
\def\N{{\mathbb N}}
\def\11{{\mathbb I}}
\def\delbar{{\overline \partial}}
\def\C{\mathbb{C}}
\def\Z{\mathbb{Z}}
\def\T{\mathbb{T}}
\def\Q{\mathbb{Q}}
\def\E{\ifmmode{\mathbb E}\else{$\mathbb E$}\fi} %natural numbers
\def\N{\ifmmode{\mathbb N}\else{$\mathbb N$}\fi} %natural numbers
\def\R{\ifmmode{\mathbb R}\else{$\mathbb R$}\fi} %real numbers
\def\Q{\ifmmode{\mathbb Q}\else{$\mathbb Q$}\fi} %rational numbers
\def\C{\ifmmode{\mathbb C}\else{$\mathbb C$}\fi} %complex numbers
\def\H{\ifmmode{\mathbb H}\else{$\mathbb H$}\fi} %complex numbers
\def\Z{\ifmmode{\mathbb Z}\else{$\mathbb Z$}\fi} %integers
\def\P{\ifmmode{\mathbb P}\else{$\mathbb P$}\fi} %real numbers
\def\SS{\ifmmode{\mathbb S}\else{$\mathbb S$}\fi} %real numbers
\def\DD{\ifmmode{\mathbb D}\else{$\mathbb D$}\fi} %real numbers
\def\R{{\mathbb R}}
\def\Crit{{\hbox{Crit}}}
\def\E{{\mathbb E}}
\def\Z{{\mathbb Z}}
\def\C{{\mathbb C}}
\def\R{{\mathbb R}}
\def\N{{\mathbb N}}
\def\LL{{\mathcal L}}
\def\CC{{\mathcal C}}
\def\delbar{{\overline \partial}}
\def\CA{{\mathcal A}}
\def\CC{{\mathcal C}}
\def\CE{{\mathcal E}}
\def\CF{{\mathcal F}}
\def\CH{{\mathcal H}}
\def\CJ{{\mathcal J}}
\def\CL{{\mathcal L}}
\def\CM{{\mathcal M}}
\def\CP{{\mathcal P}}
\def\CP{{\mathcal P}}
\def\CS{{\mathcal S}}
\def\CV{{\mathcal V}}
\def\CW{{\mathcal W}}
\def\darr#1{\raise1.5ex\hbox{$\leftrightarrow$}
\mkern-16.5mu #1}
\def\roughly#1{\raise.3ex\hbox{$#1$\kern-.75em
\lower1ex\hbox{$\sim$}}}
\def\opname#1{\mathop{\kern0pt{\rm #1}}\nolimits}
\def\Im{\opname{Im}}
\def\dim{\opname{dim}}
\def\supp{\operatorname{supp}}
\def\span{\operatorname{span}}
\def\Cont{\operatorname{Cont}}
\def\Crit{\operatorname{Crit}}
\def\Spec{\operatorname{Spec}}
\def\Sing{\operatorname{Sing}}
\def\GFQI{\mathfrak{G}}
\def\Index{\operatorname{Index}}
\def\Image{\operatorname{Image}}
\def\ev{\operatorname{ev}}
\def\Int{\operatorname{Int}}
\begin{document}

\quad \vskip1.375truein

\def\mq{\mathfrak{q}}
\def\mp{\mathfrak{p}}
\def\mH{\mathfrak{H}}
\def\mh{\mathfrak{h}}
\def\ma{\mathfrak{a}}
\def\ms{\mathfrak{s}}
\def\mm{\mathfrak{m}}
\def\mn{\mathfrak{n}}
\def\mz{\mathfrak{z}}
\def\mw{\mathfrak{w}}
\def\Hoch{{\tt Hoch}}
\def\mt{\mathfrak{t}}
\def\ml{\mathfrak{l}}
\def\mT{\mathfrak{T}}
\def\mL{\mathfrak{L}}
\def\mg{\mathfrak{g}}
\def\md{\mathfrak{d}}
\def\mr{\mathfrak{r}}
\def\Cont{\operatorname{Cont}}
\def\Crit{\operatorname{Crit}}
\def\Spec{\operatorname{Spec}}
\def\Sing{\operatorname{Sing}}
\def\GFQI{\text{\rm GFQI}}
\def\Index{\operatorname{Index}}
\def\Cross{\operatorname{Cross}}
\def\Ham{\operatorname{Ham}}
\def\Fix{\operatorname{Fix}}
\def\Graph{\operatorname{Graph}}
\def\ric{\operatorname{Ric}}
\def\lcs{\text{\rm lcs}}

\title[Pseudoholomorphic curves on symplectization]
{Analysis of pseudoholomorphic curves on symplectization:
Revisit via contact instantons}
\author{Yong-Geun Oh, Taesu Kim}
\address{Center for Geometry and Physics, Institute for Basic Science (IBS),
77 Cheongam-ro, Nam-gu, Pohang-si, Gyeongsangbuk-do, Korea 790-784
\& POSTECH, Gyeongsangbuk-do, Korea}
\email{yongoh1@postech.ac.kr}
\address{Center for Geometry and Physics, Institute for Basic Science (IBS),
77 Cheongam-ro, Nam-gu, Pohang-si, Gyeongsangbuk-do, Korea 790-784}
\email{tkim@ibs.re.kr}
\thanks{This work is supported by the IBS project \# IBS-R003-D1.
Both authors would like to also acknowledge MATRIX and the Simons Foundation for their support and funding through the MATRIX-Simons Collaborative Fund of the IBS-CGP and MATRIX workshop on Symplectic Topology.
}

%\date{March, 2021; Revised in April, 2021}

\begin{abstract} In this survey article, we present the analysis of pseudoholomorphic curves
$u : (\dot \Sigma,j) \to (Q \times \R, \widetilde J)$
on the symplectization of contact manifold $(Q,\lambda)$ as a subcase of the analysis of contact
instantons $w:\dot \Sigma \to Q$, i.e., of the maps $w$ satisfying the equation
$$
\delbar^\pi w = 0, \, d(w^*\lambda \circ j) = 0
$$
on the contact manifold $(Q,\lambda)$, which has been carried out by a coordinate-free covariant tensorial
calculus. The latter was initiated by Wang and the first author of the present survey in
\cite{oh-wang:CR-map1,oh-wang:CR-map2} for the closed string case.
More recently the first author
has extended the machinery to the open string case and applied it to the problems
of quantitative contact topology and contact dynamics \cite{oh:contacton-Legendrian-bdy}, \cite{oh:shelukhin-conjecture},
\cite{oh-yso:spectral}.
When the analysis is applied to that of pseudoholomorphic curves
$u = (w,f)$  with $w = \pi_Q \circ u$, $f = s\circ u$ on symplectization,
the outcome is generally stronger  and more accurate
than the common results on the regularity presented in the literature in that all of our a priori estimates can be written purely in terms $w$ not involving $f$.
The a priori elliptic estimates for $w$, especially $W^{2,2}$-estimate,
 are largely consequences of
various Weitzenb\"ock-type formulae \emph{with respect
to the contact triad connection} introduced by Wang and the first
author in \cite{oh-wang:connection}, and the estimate for $f$ is a consequence thereof
by simple integration of the equation $df = w^*\lambda \circ j$.

We also derive a simple precise tensorial formulae for the linearized
operator and for the asymptotic operator that
admit a perturbation theory of the operators with respect to (adapted) almost complex structures:
The latter has been missing in the analysis of pseudoholomorphic curves on symplectization
in the existing literature.
\end{abstract}

\keywords{Contact manifolds, Legendrian submanifolds, contact triad metric,
contact triad connection,  contact instantons, symplectization, locally
conformal symplectic manifold, almost
Hermitian manifold,  canonical connection,
pseudoholomorphic curves on symplectization, Weitzenb\"ock formula, asymptotic operator}
\subjclass[2010]{Primary 53D42; Secondary 58J32}

\maketitle

\tableofcontents

\section{Introduction and overview}

Let $(Q, \xi)$ be a contact manifold. Assume $\xi$ is coorientable.
Then we can choose a contact form $\lambda$ with $\ker \lambda = \xi$.
With $\lambda$ given, we have the Reeb vector field $R_\lambda$
uniquely determined by the equation $R_\lambda \rfloor d\lambda = 0, \, R_\lambda \rfloor \lambda = 1$.
Then we have decomposition $TM = \xi \oplus \R \{R_\lambda\}$. We denote by $\pi: TM \to \xi$ the associated projection and
$\Pi = \Pi_\lambda: TM \to TM$ the associated idempotent whose image
is $\xi$.

A \emph{contact triad} is a triple $(Q,\lambda, J)$
where $\lambda$ is a contact form of $\xi$, i.e., $\ker \lambda = \xi$ and
$J$ is an endomorphism $J: TM \to TM$ that satisfies the following.

\begin{defn}[$\lambda$-adapted CR almost complex structure]
\label{defn:adapted-J}
A \emph{CR almost complex structure $J$} is an endomorphism
$J: TM \to TM$ satisfying $J^2 = - \Pi$, or more explicitly
$$
(J|_\xi)^2 = - id|_\xi, \quad J(R_\lambda) = 0.
$$
We say $J$ is \emph{adapted to} $\lambda$ if $d\lambda(Y, J Y) \geq 0$ for all $Y \in \xi$ with equality only when $Y = 0$.
In this case, we all the pair $(\lambda, J)$ an \emph{adapted pair} of the contact manifold $(Q,\xi)$.
\end{defn}
The associated contact triad metric is given by
\be\label{eq:triad-metric}
g_\lambda := d\lambda(\cdot, J\cdot) +\lambda\otimes\lambda.
\ee
The symplectization of $(Q,\lambda)$ is the symplectic manifold $(Q \times \R, d(e^s\lambda))$ with $\R$-coordinate
$s$ also called the radial coordinate.
We equip the symplectization with the $s$-translation invariant almost complex structure
$$
\widetilde J = J \oplus J_0
$$
where $J_0$ is the almost
complex structure on the plane $\R \{\frac{\del}{\del s}, R_\lambda\}$ satisfying $J_0(\frac{\del}{\del s}) = R_\lambda$.

The main purpose of the present survey is to advertise
the \emph{covariant} tensorial approach to the analysis of pseudoholomorphic curves
on symplectization via the study of contact instantons which was initiated by
Wang and the first author of the present survey in \cite{oh-wang:CR-map1,oh-wang:CR-map2},
further developed by
the first author in \cite{oh:contacton}--\cite{oh:shelukhin-conjecture} and
also by the present authors in \cite{kim-oh:asymp-analysis}.

\begin{rem} When we say `covariant tensorial', it means that we follow the way how Riemannian geometers
and physicists do their tensor calculations. More specifically, we fix a
`best'  connection $\nabla$ on $Q$ once and for all that optimizes tensor calculations.
In our case, the contact triad connection is such a connection as illustrated by
\cite{oh-wang:CR-map1,oh-wang:CR-map2}, \cite{oh:contacton} and \cite{oh-yso:index}.
Then for a given smooth map $w: \dot \Sigma \to Q$ or
$u: \dot \Sigma \to Q \times \R$, we take covariant derivatives of any induced tensorial quantities only in terms of
the induced connection of $\nabla$ and the Levi-Civita connection of the domain $\dot \Sigma$.
\end{rem}

Also the relevant Fredholm theory and
compactification results are developed by the first author
in \cite{oh:contacton} for the closed string case.
More recently the theory of contact instantons
has been extended in two different directions. On the one hand, in
the joint work by Savelyev and the first author
\cite{oh-savelyev}, they lifted the theory of contact instantons to
the theory of pseudoholomorphic curves on the $\mathfrak{lcs}$-fication
$(Q \times S^1_\rho, \omega_\lambda)$, Banyaga's locally conformal symplectification (which they call the $\mathfrak{lcs}$-fication) of
contact manifold $(Q,\lambda)$ \cite{banyaga:lcs} on
$$
(Q \times S^1_\rho, d\lambda + d\theta \wedge \lambda)
$$
with the canonical angular form $d\theta$ satisfying $\int_{S^1_\rho} d\theta = 1$. According to the terminology adopted in \cite{oh-savelyev},
the authors call them the $\mathfrak{lcs}$-fication `of nonzero temperature'
on which the theory of pseudoholomorphic curves is
developed. Here `lcs' stands for the standard abbreviation of `locally conformal symplectic'. The authors of ibid. call the relevant pseudoholomorphic
curves \emph{lcs instantons}.
This family can be augmented by  including
 the case of the product $Q \times \R$ as the `zero temperature limit'
 with $1/\rho \to 0$,
\be\label{eq:lcsfication}
(Q \times \R, \omega_\lambda), \quad  \omega_\lambda : = d\lambda + ds \otimes \lambda
\ee
(Here $\rho$ represents the radius of the circle $S^1$.)

On the other hand,  in \cite{oh:contacton-Legendrian-bdy}, the first
author of the present paper also extended the theory to the open-string case. It is further
developed in \cite{oh-yso:index}  and applied for the construction of
Legendrian contact instanton cohomology and the associated spectral invariants on the one-jet bundle
in \cite{oh-yso:spectral} jointly with Seungook Yu. Then
the present  authors have carried out precise asymptotic analyses near the punctures
of finite energy contact instantons and of finite energy pseudoholomorphic curves
in \cite{kim-oh:asymp-analysis} in preparation by developing a generic perturbation theory
of asymptotic operators over the change of adapted pairs $(\lambda,J)$.
The tensorial approach also clarifies the relationship between
the background geometries of
the contact triad $(Q,\lambda,J)$, the symplectization
$$
(SQ, d(e^s\lambda)) = (Q \times \R, e^s \omega_\lambda)
$$
and the lcs manifold \eqref{eq:almost-Hermitian-triple}.
Consider the decomposition
\be\label{eq:TM-splitting}
TM \cong \xi \oplus \R\left\{\frac{\del}{\del s},R_\lambda\right\}.
\ee
Let $\widetilde \nabla: = \nabla^{\text{\rm can}}$
be the canonical connection of
this almost Hermitian manifold
\be\label{eq:almost-Hermitian-triple}
(Q \times \R, \widetilde g_\lambda, \widetilde J), \quad  \widetilde g_\lambda : = g_\lambda + ds \otimes ds
\ee
i.e., the unique Riemannian connection whose torsion
$T$ satisfies $T(X,\widetilde J X) = 0$ for all $X \in T(Q \times \R)$
(See \cite{gauduchon,kobayashi,oh:book1} for its definition and basic properties.)
We note that this almost Hermitian structure on $Q\times \R$ is translational invariant in the $s$-direction.

The \emph{first upshot} of our tensorial approach utilizing the contact triad connection lies in our point of view of regarding the product
$$
(Q \times \R, \omega_\lambda), \quad \omega_\lambda: = d\lambda
+ ds \wedge \lambda
$$
as an $\mathfrak{lcs}$-fication of contact manifold $(Q,\lambda)$, and
Hofer's analysis of pseudoholomorphic curves as the analysis of
pseudoholomorphic curves on the \emph{$\mathfrak{lcs}$-fication
(in zero temperature)}
$$
(Q \times \R, \omega_\lambda, \widetilde J)
$$
 of contact triad  $(Q,\lambda,J)$. It is quite apparent that
in Hofer's global analysis of
pseudoholomorphic curves on symplectization
\emph{the symplectic form $d(e^s\lambda)$  plays little role}
but $d\lambda$ and Hofer's ingenious way of considering
translational invariant \emph{$\lambda$-energy} does.

The $\mathfrak{lcs}$-fication carries a canonical
translational invariant  metric
$$
\widetilde g: = \omega_\lambda(\cdot, J\cdot)
$$
so that the triple $(Q \times \R, \widetilde g,\widetilde J)$ becomes an
almost Hermitian (but not almost K\"ahler) manifold. The following geometric relationship between
the contact triad connection and its lcs lifting
has been implicitly exploited.

\begin{prop}[Canonical connection on $\mathfrak{lcs}$-fication]\label{prop:canonical-intro}
Let $\widetilde g = \widetilde g_\lambda $ be the almost Hermitan metric given above.
Let $\widetilde \nabla$ be the canonical connection of the almost Hermitian manifold
\eqref{eq:almost-Hermitian-triple}, and $\nabla$ be the contact triad connection for the triad $(Q,\lambda,J)$.
Then $\widetilde \nabla$ preserves the splitting \eqref{eq:TM-splitting}
and satisfies $\widetilde \nabla|_\xi = \nabla|_\xi$.
\end{prop}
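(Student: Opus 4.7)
The plan is to exhibit a candidate connection $D$ on $Q \times \R$ built directly from $\nabla$ and the product structure, verify that $D$ satisfies the three defining properties of the canonical connection of $(Q\times\R, \widetilde g_\lambda, \widetilde J)$, and then invoke the uniqueness clause of the canonical connection to conclude $D = \widetilde \nabla$. Both assertions of the proposition are then built into the construction of $D$.

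First, I note that the splitting \eqref{eq:TM-splitting} is orthogonal with respect to $\widetilde g_\lambda = g_\lambda + ds \otimes ds$ and $\widetilde J$-invariant, since $\widetilde J = J \oplus J_0$ preserves $\xi$ and interchanges $\partial/\partial s$ with $R_\lambda$ up to sign. Any vector field on $Q \times \R$ decomposes uniquely as $Z = Z^\pi + a\, \partial/\partial s + b\, R_\lambda$ with $Z^\pi$ a section of $\xi \subset T(Q\times \R)$, and I set
$$
D_X Z := \nabla_X Z^\pi + (X[a])\, \partial/\partial s + (X[b])\, R_\lambda,
$$
where $\nabla_X Z^\pi$ is computed via the contact triad connection pulled back from $Q$. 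This is well-defined because the ambient data are $s$-invariant, $\nabla$ preserves $\xi$, and $\nabla R_\lambda = 0$. By construction $D$ preserves \eqref{eq:TM-splitting} and restricts to $\nabla|_\xi$ on $\xi$.

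The bulk of the work is then to verify the three defining axioms. Metric compatibility $D\widetilde g_\lambda = 0$ reduces, by orthogonality of the splitting, to the metric compatibility of $\nabla|_\xi$ with $g_\lambda|_\xi$ plus the triviality of $D$ on the rank-two summand. The condition $D\widetilde J = 0$ similarly splits: on $\xi$ it is the identity $\nabla J = 0$ recorded in \cite{oh-wang:connection}, and on the rank-two summand it is automatic because $D$ annihilates the frame $\{\partial/\partial s, R_\lambda\}$ which is merely permuted by $\widetilde J$. The torsion condition $\widetilde T(Z, \widetilde J Z) = 0$ polarizes into three cases: both inputs in $\xi$, both in the rank-two summand, and a mixed case. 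The first reduces to the analogous property of the torsion of $\nabla$ on $\xi$ proved in \cite{oh-wang:connection}, and the second is immediate from $[\partial/\partial s, R_\lambda] = 0$.

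The main obstacle I expect is the mixed torsion identity. For $X \in \xi$, since $D\partial/\partial s = D R_\lambda = 0$, one has
$$
\widetilde T(X, \partial/\partial s) = -D_{\partial/\partial s} X - [X, \partial/\partial s], \qquad \widetilde T(X, R_\lambda) = -D_{R_\lambda} X - [X, R_\lambda],
$$
and these two expressions must be matched under $\widetilde J$ to achieve the $(1,1)$-vanishing of $\widetilde T$. By $s$-invariance the first expression collapses, while the second reduces to a pure statement on $Q$ about the relation between $\nabla_{R_\lambda} X$ and $[R_\lambda, X]^\pi$; this is precisely the torsion profile of the contact triad connection in the Reeb direction from \cite{oh-wang:connection}, which was engineered so that $T^\nabla(R_\lambda, \cdot)$ takes the form required for the extended torsion to be of pure type $(2,0)+(0,2)$. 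Once all three axioms are confirmed, the Gauduchon uniqueness theorem for the canonical connection of an almost Hermitian manifold \cite{gauduchon, kobayashi} gives $D = \widetilde \nabla$, and both conclusions of the proposition follow from the construction.
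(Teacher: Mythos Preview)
The paper states this proposition without proof, so there is no reference argument to compare against; your uniqueness-based strategy is the natural one. However, the torsion verification for inputs in $\xi$ has a genuine gap. For $X \in \Gamma(\xi)$ the Lie bracket $[X, JX]$ on $Q$ (and hence on $Q\times\R$) has Reeb component
\[
\lambda([X, JX]) \;=\; -\,d\lambda(X, JX) \;=\; -\,|X|_\xi^{2},
\]
while by construction your candidate $D$ sends $D_X(JX),\, D_{JX}X$ into $\xi$. Hence the $R_\lambda$-component of $T_D(X,\widetilde J X)=T_D(X,JX)$ equals $|X|_\xi^{2}\,R_\lambda \neq 0$, and the defining axiom $T(X,\widetilde J X)=0$ fails for every nonzero $X\in\xi$. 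Your claim that this case ``reduces to the analogous property of the torsion of $\nabla$ on $\xi$'' accounts only for the $\xi$-projection $T^\pi(X,JX)=0$ from Theorem~\ref{thm:connection}(5); the Reeb component is untouched. Two incidental inaccuracies compound this: by Corollary~\ref{cor:connection} one has $\nabla_Y R_\lambda=\tfrac12(\CL_{R_\lambda}J)JY$, which is generically nonzero, and from $\lambda(\nabla_Y X)=-g(X,\nabla_Y R_\lambda)$ one sees that the triad connection $\nabla$ does \emph{not} preserve $\xi$.

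This gap is not repairable by tweaking the mixed case: the computation shows that \emph{any} Hermitian connection on $(Q\times\R,\widetilde g_\lambda,\widetilde J)$ that preserves the splitting \eqref{eq:TM-splitting} must have $T(X,\widetilde J X)\neq 0$ for $X\in\xi$, so the literal assertion ``$\widetilde\nabla$ preserves the splitting'' is in tension with the torsion axiom \eqref{eq:canonical-nabla}. The statement the paper actually uses downstream (see Section~\ref{sec:asymptotic-analysis}) is the weaker identification of the induced connection on $\xi$, i.e.\ $\pi\,\widetilde\nabla|_\xi=\nabla^\pi$, and Remark~\ref{rem:compatibility} further suggests a member of Gauduchon's one-parameter family other than the Chern connection may be intended. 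You should flag this discrepancy rather than force the verification through.
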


The \emph{second upshot} is our utilization of an important property
of the Levi-Civita connection proved by Blair
\cite{blair:book} for the triad metric on $Q$. This important property has been completely unnoticed
(as far as the authors are aware) in the symplectic community around pseudoholomorphic curves, including the first author
until very recently at the time of preparing  the
paper \cite{kim-oh:asymp-analysis} and this survey. (Indeed this property had been already
mentioned in the paper \cite[Proposition 4]{oh-wang:connection}
by Wang and the first author himself!)

\begin{prop}[Lemma 6.1 \cite{blair:book}]\label{prop:blair} Let $\nabla^{\text{\rm LC}}$ be the Levi-Civita
connection of the triad metric associated to the triad $(Q,\lambda,J)$. Then
\be\label{eq:nablaLCRlambdaJ=0}
\nabla_{R_\lambda}^{\text{\rm LC}} J = 0.
\ee
\end{prop}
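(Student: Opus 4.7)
The plan is to follow Blair's approach in \cite{blair:book} and derive the identity from the Koszul formula for $\nabla^{\text{\rm LC}}$ together with the defining identities of the contact triad metric, namely $g_\lambda(R_\lambda, \cdot) = \lambda(\cdot)$, $|R_\lambda|_{g_\lambda} = 1$, $\lambda \circ J = 0$, and the $J$-compatibility $d\lambda(JY_1, JY_2) = d\lambda(Y_1, Y_2)$ on $\xi$. As a preparatory step, introduce the $(1,1)$-tensor $h := \tfrac{1}{2} \CL_{R_\lambda} J$ and observe that $h R_\lambda = 0$ (since $J R_\lambda = 0$) and that $h$ has image in $\xi$ and vanishes on $\lambda$ (using $\lambda \circ J = 0$ together with $\CL_{R_\lambda} \lambda = 0$). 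These are pure algebra from the structure equations.

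The first step is the standard contact-metric formula
\[
\nabla^{\text{\rm LC}}_X R_\lambda = -J X - J h X,
\]
proved by computing $g_\lambda(\nabla^{\text{\rm LC}}_X R_\lambda, Y)$ via Koszul and using $\iota_{R_\lambda} d\lambda = 0$ and $\CL_{R_\lambda} \lambda = 0$ to collapse the Reeb-flow terms. The second, and main, step is the tensorial identity of Blair,
\[
(\nabla^{\text{\rm LC}}_X J) Y = g_\lambda(X + hX, Y)\, R_\lambda - \lambda(Y)(X + hX),
\]
obtained by expanding $g_\lambda((\nabla^{\text{\rm LC}}_X J) Y, Z)$ through Koszul, substituting the first-step formula for $\nabla^{\text{\rm LC}}_\cdot R_\lambda$, and carrying out the case analysis in which $Y,Z$ range independently over the horizontal distribution $\xi$ and the Reeb line. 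The bulk of the technical work -- and the main obstacle -- is precisely this case-by-case bookkeeping, in which the $J$-compatibility on $\xi$ must be invoked repeatedly to recombine the Koszul terms into the asserted tensorial form.

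Granted the two identities above, the proposition is immediate: setting $X = R_\lambda$ and using $h R_\lambda = 0$ together with $g_\lambda(R_\lambda, Y) = \lambda(Y)$, the right-hand side of Blair's identity collapses to $\lambda(Y) R_\lambda - \lambda(Y) R_\lambda = 0$, whence $\nabla^{\text{\rm LC}}_{R_\lambda} J \equiv 0$. As a by-product one also reads off $\nabla^{\text{\rm LC}}_{R_\lambda} R_\lambda = 0$ from the first identity, which records that Reeb orbits are geodesics of $g_\lambda$ -- a fact that will be used implicitly elsewhere in the tensorial comparison with the contact triad connection.
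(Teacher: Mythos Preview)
The paper does not supply its own proof of this proposition; it is stated with attribution to Blair's book (and later restated as Proposition~\ref{prop:nablaLCJ} with the same citation), so there is no in-paper argument to compare against. Your proposal reproduces Blair's original proof---the Koszul computation leading to the tensorial identity for $(\nabla^{\text{\rm LC}}_X J)Y$ and then specialization to $X=R_\lambda$---which is exactly the source the paper defers to.

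One point to watch: your intermediate formula $\nabla^{\text{\rm LC}}_X R_\lambda = -JX - JhX$ is Blair's form, written in his conventions. The paper's conventions differ (compare the lemma quoted just after Proposition~\ref{prop:nablaLCJ}, which reads $\nabla_Y^{\text{\rm LC}} R_\lambda = \tfrac{1}{2}JY + \tfrac{1}{2}(\CL_{R_\lambda}J)JY$ for $Y\in\xi$), reflecting a different normalization of the fundamental two-form. This discrepancy is purely conventional and does not affect your final step: at $X=R_\lambda$ one still has $hR_\lambda=0$ and $g_\lambda(R_\lambda,Y)=\lambda(Y)$, so both terms on the right-hand side of Blair's identity cancel regardless of the overall constants. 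If you intend to insert this argument into the paper, you should rescale the intermediate identities to match the paper's triad metric \eqref{eq:triad-metric}.
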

This property of the Levi-Civita connection together with the usage of canonical connection
on the $\mathfrak{lcs}$-fication $(Q\times \R, \widetilde J, \omega_\lambda)$
(or equivalently via contact triad connection on the triad $(Q,\lambda,J)$)
plays an important role in the present authors'
analysis of the asymptotic operators of finite energy contact instantons
in \cite{kim-oh:asymp-analysis}
and hence of finite energy pseudoholomorphic curves too.
\begin{rem}
in fact if we
let $\nabla_{R_\lambda}^{\text{\rm LC}}$ or $\nabla_{R_\lambda}$
acted upon $\Gamma(\xi) \subset \Gamma(TQ)$, then two connections coincide, i.e.,
$$
\pi \nabla_{R_\lambda}^{\text{\rm LC}}|_{\xi} = \pi \nabla_{R_\lambda}|_{\xi}.
$$
See \cite[Section 6]{oh-wang:connection} or more clearly
see its arXiv version 1212.4817(v2) Theorem 1.4 therein where
$\nabla$ is expressed as $\nabla = \nabla^{\text{\rm LC}} + B$
for some $(2,1)$ tensor $B$. From the expression of $B$ there,
we have $B(R_\lambda,Y) = 0$ for any $Y \in \xi$. However, while
we have $\nabla_{R_\lambda}^{\text{\rm LC}} J = 0$,  $\nabla_{R_\lambda}J \neq 0$
on the full tangent bundle $TQ$.
\end{rem}

Roughly speaking
Proposition \ref{prop:blair} enables us to compute the asymptotic operator
$$
A^\pi_{(\lambda,J,\nabla)}: \Gamma(\gamma^*\xi) \to \Gamma(\gamma^*\xi)
$$
in the covariant tensorial way
\emph{uniformly in terms of the pull-back connection of the Levi-Civita connection of
the triad metric} of the given adapted pair $(\lambda,J)$.

\begin{rem}
\begin{enumerate}
\item In the literature, the notion of `asymptotic operator' of a
pseudoholomorphic curve on symplectization has been used. For example,
Hofer-Wysocki-Zehnder use special coordinates followed by some
adjustment of the given almost complex structures along
the Reeb orbit in the analysis of asymptotic operators of finite energy planes, while Siefring \cite{siefring:relative} used a symmetric connection,
Wendl \cite{wendl:lecture}  and Pardon \cite{pardon:contacthomology}
a connection obtained by declaring the Lie derivative $\CL_{R_\lambda}$
to be the covariant differential along the Reeb orbit $\gamma$.
(Compare these practices with those given in \cite{kim-oh:asymp-analysis}
a summary of which is given in Section \ref{sec:asymptotic-analysis}
of the present survey.)
\item  Such an important property \eqref{eq:nablaLCRlambdaJ=0} of the Levi-Civita connection
has not attracted any attention from the symplectic community around
pseudoholomorphic curves, because contact Hamiltonian dynamics
has not attracted much attention from the researchers around pseudoholomorphic curves,
and there has been no serious investigation thereof up to the level of symplectic
Hamiltonian dynamics. As a consequence,
there might not have  been enough motivation for them to re-do the analysis
of pseudoholomorphic curves on symplectization \emph{from scratch starting from its starting place, the contact triad $(Q,\lambda,J)$}, especially when there is already the well-established Gromov's theory of pseudoholomorphic curves around.
\item
However in relation to thermodynamics, contact completely integrable systems
and new constructions of Sasaki-Einstein manifolds which is motivated by AdS/CFT correspondence and black-hole dynamics,
there has been a systematic development of contact Hamiltonian calculus by a group of
geometers and physicists. (See  \cite{BCT}, \cite{dMV} and \cite{lerman} and \cite{martelli-sparks-05,martelli-sparks-06}, \cite{boyer} and references therein).
Through their study, it has been becoming increasingly clearer
that contact Hamiltonian dynamics deserves much more attention than now in many respects.
We believe that the analysis and geometry
of (perturbed) contact instantons will provide a flexible geometro-analytical package
for the study of contact Hamiltonian dynamics and quantitative contact topology as
the symplectic Floer theory does. These are illustrated by
the first author and his collaborators' recent applications of the package
 to the problems of quantitative
contact topology and contact dynamics. (See \cite{oh:entanglement1,oh:shelukhin-conjecture} and
\cite{oh-yso:spectral}.)
\item
The common folklore practice
of lifting contact dynamics to the homogeneous Hamiltonian dynamics
to the symplectization, attempting to exploit the machinery of Floer's theory
on the symplectization and then extracting information on the original contact dynamics
therefrom does not produce optimal results because the lifting process is
not reversible. Because of this, the optimal results proved in
\cite{oh:entanglement1,oh:shelukhin-conjecture} or the Floer theoretic construction of Legendrian
spectral invariants on the one-jet bundle
given in \cite{oh-yso:spectral} have not been obtainable
by the existing machinery of pseudoholomorphic curves via symplectization at least by now.
It is an interesting open problem, if possible, to recover those results
belonging to purely contact realm by the symplectic machinery of pseudoholomorphic curves.
\end{enumerate}
\end{rem}

\subsection{Pseudoholomorphic curves on symplectization}

In the seminal work \cite{hofer:invent}, Hofer initiated the study of
pseudoholomorphic curves on the symplectization $Q \times \R=: M$
for the study of contact topology and developed the analysis thereof
in a series of papers \cite{HWZ:asymptotics}-\cite{HWZ:smallarea} with
applications to contact dynamics in 3 dimensions. Bourgeois \cite{bourgeois}
and Wendl \cite{wendl:lecture} extend their analysis to higher dimensions,
and Cant  to the relative case \cite{cant:thesis} .

Throughout the paper, we adopt the following notations.

\begin{notation} We denote by $(\Sigma,j)$ a closed Riemann surface,
$\dot \Sigma$ the associated punctured Riemann surface and
$\overline \Sigma$ the real blow-up of $\dot \Sigma$ along the
punctures.
\end{notation}

Note that in the presence of contact form $\lambda$, any smooth map
$u:\dot \Sigma \to Q \times \R$ has the form $u = (w,f)$ with
\be\label{eq:a-s}
f = s \circ u, \, w = \pi \circ u.
\ee
We have the splitting
$$
TM \cong \xi \oplus \R \{ R_\lambda \} \oplus \R \left \{
\frac{\del}{\del s} \right \}.
$$
We have a canonical almost complex structure
$$
J_0: \R\left\{\frac{\del}{\del s},R_\lambda\right\} \to
\R \left\{\frac{\del}{\del s}, R_\lambda \right\}
$$
defined by $J_0 \frac{\del}{\del s} = R_\lambda$.
We equip $(Q,\xi)$ with a triad $(Q,\lambda, J)$ and the cylindrical
almost complex structure $\widetilde J = J \oplus J_0$ which
is $s$-translation invariant.
\begin{rem}
As mentioned above, Hofer's analysis of pseudoholomorphic
curves on symplectization should be regarded as a special case
for the analysis of pseudoholomorphic curves on lcs manifolds
$(Q \times \R, d\lambda + ds \wedge \lambda)$ equipped with
the above mentioned almost Hermitian structure arising from
the contact triad $(Q, \lambda, J)$ as illustrated by Savelyev
and the first author in \cite{oh-savelyev}.
See \cite{oh:contacton}, \cite{oh-savelyev}
for further explanation on this
point of view. We would like to reiterate that the symplectic form $d(e^s\lambda)$ itself plays very little role in the compactification
of punctured  pseudoholomorphic curves but Hofer's translational
invariant energy $E(w) = E^\pi(w) + E^\perp(w)$ does.
\end{rem}

We have the decomposition of the derivative
$$
du = dw \oplus \left(df \otimes \frac{\del}{\del s}\right)
$$
viewed as a $TM$-valued one-form which
can be further decomposed to
\be\label{eq:dtildew}
du(z) = d^\pi w \oplus (w^*\lambda \otimes R_\lambda) \oplus
\left(df \otimes \frac{\del}{\del s}\right)
\ee
with respect to the splitting
$$
\operatorname{Hom}(T_z\dot \Sigma, T_{\widetilde w(z)}M)
= \operatorname{Hom}(T_z\dot \Sigma, HT_{\widetilde w(z)}M)
\oplus \operatorname{Hom}(T_z\dot \Sigma, VT_{\widetilde w(z)}M).
$$
(For the notational simplicity, we often omit `$\otimes$' except
the situation that could cause confusion to the readers without it.)

By definition, we have
$
d\pi du = dw.
$
It was observed by Hofer \cite{hofer:invent} that $u$ is $\widetilde J$-holomorphic if and only if $(w,f)$ satisfies
\be\label{eq:tildeJ-holo}
\begin{cases}
\delbar^\pi w = 0 \\
w^*\lambda \circ j= df.
\end{cases}
\ee

\subsection{Definitions of contact instantons and bordered contact instantons}

Let $\dot \Sigma$ a boundary punctured Riemann surface associated a bordered compact Riemann surface
$(\Sigma, j)$. Then for a given map $w: \dot \Sigma \to Q$, we can decompose its derivative
$du$, regarded as a $w^*TQ$-valued one-form on $\dot \Sigma$, into
\be\label{eq:du}
dw = d^\pi w + w^*\lambda \otimes R_\lambda
\ee
 where $d^\pi w := \pi dw$. Furthermore $d^\pi w$ is decomposed into
\be\label{eq:dpiu}
d^\pi w = \delbar^\pi w + \del^\pi w
\ee
where $\delbar^\pi w: = (dw^\pi)_J^{(0,1)}$ (resp. $\del^\pi w: = (dw^\pi)_J^{(1,0)}$) is
the anti-complex linear part (resp. the complex linear part) of $d^\pi w: (T\dot \Sigma, j) \to (\xi,J|_\xi)$.
(For the simplicity of notation, we will abuse our notation by often denoting $J|_\xi$ by $J$.
We also simply write $((\cdot)^\pi)_J^{(0,1)} = (\cdot)^{\pi(0,1)}$ and
 $((\cdot)^\pi)_J^{(1,0)} = (\cdot)^{\pi(1,0)}$ in general.)

A contact instanton is a map $w: \dot \Sigma \to M$ that satisfies
the system of nonlinear partial differential equation
\be\label{eq:contacton-intro}
\delbar^\pi w = 0, \quad d(w^*\lambda \circ j) = 0
\ee
on a contact triad $(M,\lambda, J)$. The equation itself had been first
mentioned by Hofer \cite[p.698]{hofer:gafa}, and some attempt
to utilize the equation to attack the Weinstein's conjecture for dimension 3
was made by Abbas \cite{abbas}, Abbas-Cieliebak-Hofer \cite{abbas-cieliebak-hofer} as well as by Bergmann
\cite{bergmann1,bergmann2}.

In a series of papers,
\cite{oh-wang:CR-map1,oh-wang:CR-map2} jointed with Wang and in \cite{oh:contacton}, the first named author systematically
developed analysis of contact instantons
(for the closed string case)
\emph{without taking symplectization} by the global covariant tensorial
calculations using the
notion of \emph{contact triad connection} which was introduced in \cite{oh-wang:connection}

More recently he also studied its open string counterpart of
the boundary value problem of \eqref{eq:contacton-intro} under the Legendrian boundary condition
whose explanation is now in order.  For the simplicity and
for the main purpose of the present paper, we focus on the genus zero case so that $\dot \Sigma$ is conformally the unit disc with boundary
punctures $z_0, \ldots, z_k \in \del D^2$ ordered
counterclockwise, i.e.,
$$
\dot \Sigma \cong D^2 \setminus \{z_0, \ldots, z_k\}
$$
Then, for a  $(k+1)$-tuple $\vec R = (R_0,R_1, \cdots, R_k)$ of Legendrian submanifolds, which we call
an (ordered) Legendrian link, we consider
the boundary value problem
\be\label{eq:contacton-Legendrian-bdy-intro}
\begin{cases}
\delbar^\pi w = 0, \quad d(w^*\lambda \circ j) = 0,\\
w(\overline{z_iz_{i+1}}) \subset R_i
\end{cases}
\ee
as an elliptic boundary value problem for a map $w: \dot \Sigma \to M$
by deriving the a priori coercive elliptic estimates. Here $\overline{z_iz_{i+1}} \subset \del D^2$
is the open arc between $z_i$ and $z_{i+1}$.

\subsection{$W^{2,2}$-estimates, Weitzenb\"ock formulae and contact triad connection}

Let us start with stating the general Weitzenb\"ock formula
in differential geometry. Let $(P,h)$ be a Riemannian manifold
and $E \to P$ is a
Euclidean vector bundle with inner product $\langle \cdot, \cdot \rangle$
and assume $\nabla$ is a connection compatible with $\langle \cdot,
\cdot \rangle$. ( A good exposition of general Weitzenb\"ock
formula is given in \cite[Appendix C]{freed-uhlen}.)

It is well established in the analysis of geometric PDE of the types, harmonic maps,
minimal surface equation and Yang-Mills equations and so on that all a priori
elliptic regularity results are based on suitable applications of the following general Weitzenb\"ock Formula one way or the other.
(See  \cite{schoen-yau:harmonicmap}, \cite{sacks-uhlen}, \cite{uhlen:removal},
\cite{schoen-uhlen:harmonicmap},  \cite{schoen},
\cite{parker-wolfson}, \cite{ruan-tian}, to name a few.)

\begin{thm}[Weitzenb\"ock Formula]\label{thm:weitzenbock-intro} Let $E \to P$ be a vector bundle equipped with
inner product $\langle \cdot,\cdot \rangle$ and an Euclidean connection $\nabla$.
Assume $\{e_i\}$ is an orthonormal frame of $P$, and $\{\alpha^i\}$ is the dual frame. Then
when applied to $E$-valued differential forms, we have
\beastar
\Delta^{\nabla}& =& -Tr\nabla^2+\sum_{i,j}\alpha^j\wedge (e_i\rfloor R(e_i,e_j)(\cdot))\\
&=& - \nabla^*\nabla+\sum_{i,j}\alpha^j\wedge (e_i\rfloor R(e_i,e_j)(\cdot)).
\eeastar
Here we denote $Tr\nabla^2=\sum_i \nabla^2_{e_i,e_i}= \nabla^*\nabla $ the trace Laplacian
and $R$ is the curvature tensor of the bundle $E$ with respect to the connection $\nabla$.
\end{thm}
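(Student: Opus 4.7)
The plan is to establish the identity pointwise at an arbitrary $p \in P$ and then invoke tensoriality for the global statement. First I would fix $p \in P$ and choose the orthonormal frame $\{e_i\}$ to be \emph{geodesic normal at} $p$ with respect to the Levi-Civita connection $\nabla^{\text{\rm LC}}$ of $h$, so that $\nabla^{\text{\rm LC}}_{e_i}e_j(p) = 0$, $[e_i,e_j](p) = 0$, and dually $\nabla^{\text{\rm LC}}_{e_i}\alpha^j(p) = 0$. At such a point the Koszul-type formulas for $d^\nabla$ and its formal $L^2$-adjoint acting on $E$-valued forms collapse to
$$
d^\nabla = \sum_j \alpha^j \wedge \nabla_{e_j}, \qquad (d^\nabla)^* = -\sum_i e_i \rfloor \nabla_{e_i};
$$
the adjoint formula comes from the standard integration-by-parts argument, using $\nabla$-compatibility of $\langle\cdot,\cdot\rangle$ and the fact that the Riemannian volume form is $\nabla^{\text{\rm LC}}$-parallel.

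Next I would substitute these into $\Delta^\nabla := d^\nabla (d^\nabla)^* + (d^\nabla)^* d^\nabla$, move each $\nabla_{e_i}$ past the algebraic operations (which is justified at $p$ by the normal-frame conditions), and exploit the graded Leibniz identity
$$
e_i \rfloor (\alpha^j \wedge \omega) + \alpha^j \wedge (e_i \rfloor \omega) = \delta^j_i\, \omega.
$$
The diagonal $\delta^j_i$-contribution produces the term $-\sum_i \nabla_{e_i}\nabla_{e_i}$, while the remaining piece reassembles into
$$
\sum_{i,j} \alpha^j \wedge \bigl(e_i \rfloor [\nabla_{e_i},\nabla_{e_j}]\bigr).
$$
Since $[e_i,e_j](p) = 0$, the connection commutator collapses to the bundle curvature, $[\nabla_{e_i},\nabla_{e_j}] = R(e_i,e_j)$ at $p$, producing exactly the curvature term in the statement.

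For the first sum, I would recall that the Hessian is $\nabla^2_{X,Y} := \nabla_X\nabla_Y - \nabla_{\nabla^{\text{\rm LC}}_X Y}$, so at a normal-frame point $\sum_i \nabla_{e_i}\nabla_{e_i} = \sum_i \nabla^2_{e_i,e_i} = \operatorname{Tr}\nabla^2$, and a further integration by parts identifies this with $\nabla^*\nabla$. Both sides of the desired identity are tensorial in $p$, so pointwise equality at an arbitrary $p$ upgrades to the claimed global identity, and both forms of the conclusion follow simultaneously.

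The main obstacle, and really the only delicate point, is the careful bookkeeping of signs and the algebra of $\alpha^j \wedge (\cdot)$ versus $e_i \rfloor (\cdot)$. Without the normal-frame reduction one must carry along Christoffel contributions $\nabla^{\text{\rm LC}}_{e_i}e_j$ and $\nabla^{\text{\rm LC}}_{e_i}\alpha^j$ at every step, producing a proliferation of terms whose cancellation against each other, leaving only the rough Laplacian plus a clean curvature contribution, is not at all transparent; the normal-frame trick collapses these extra contributions to zero at the chosen point and isolates the single commutator computation that produces the curvature term.
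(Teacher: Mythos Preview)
Your proof is correct and follows essentially the same approach as the paper's: choose a normal frame at a point, use the identities $d^\nabla = \sum_j \alpha^j \wedge \nabla_{e_j}$ and $\delta^\nabla = -\sum_i e_i \rfloor \nabla_{e_i}$, and compute $\Delta^\nabla$ at that point so the Christoffel contributions vanish and the commutator $[\nabla_{e_i},\nabla_{e_j}]$ reduces to curvature. The paper computes $\delta^\nabla d^\nabla$ and $d^\nabla \delta^\nabla$ separately before summing, whereas you invoke the anticommutator identity $e_i \rfloor(\alpha^j\wedge\cdot) + \alpha^j\wedge(e_i\rfloor\cdot) = \delta^j_i$ directly, but this is only a cosmetic difference.
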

For each given equation, to get the optimal regularity estimates, it is important to use the `best'
connection compatible with the given geometry such as the Chern connection or the canonical connection in the harmonic theory
of holomorphic vector bundles in complex geometry \cite{chern:connection}.
(See \cite{wells} for a nice exposition on the harmonic theory of holomorphic vector bundles on complex manifolds.
One may view that our calculations are largely the almost complex counterpart thereof on the Riemann surface $\dot \Sigma$.)

We specialize this general Weitzenb\"ock formula to our purpose of tensorial study of
contact instantons.  For each given contact triad $(M,\lambda,J)$, we consider
the vector bundles $E$ such as
$$
w^*TM, \quad w^*\xi, \quad \Lambda^1(w^*TM), \quad \Lambda^{(0,1)}(w^*TM)
$$
on punctured Riemann surface $(\dot \Sigma, j, h)$ equipped with K\"ahler metric $h$
that is cylindrical near each puncture. As the aforementioned `best' connection in this setting of
contact triads, Wang and the first author introduced the notion of
\emph{contact triad connection} which is unique for each given contact triad $(Q,\lambda,J)$.
(See \cite{oh-wang:connection} for its construction and full
properties. See also Section \ref{sec:connection} for a summary.)

In the geometric PDEs of minimal surface-type map $w: S \to M$,
such as pseudoholomorphic curves or contact instantons,
the regularity estimates usually starts with computing the formula for the Laplacian of the harmonic energy density function
$$
\Delta |dw|^2.
$$
A priori, this quantity involves degree 3 derivatives for general smooth maps (\emph{in the off-shell}),
 but which is hoped to be expressed as a sum of the
terms of degree less than 3 for maps satisfying the equation (\emph{on shell}).
It will then enable one to develop a priori boot-strap arguments.
We always regard $dw$ as a $w^*TM$-valued one-form on the domain $\dot \Sigma$.

Following this general practice and
using the decomposition \eqref{eq:du} and the properties $\xi \perp R_\lambda$ and $|R_\lambda| =1$ for the contact triad metric,
we  can decompose the (full) harmonic energy density into the sum
\be\label{eq:|dw|2}
|dw|^2 = |d^\pi w|^2 + |w^*\lambda|^2
\ee
where $|d^\pi w|^2$ is the contribution from the $\xi$-direction and $|w^*\lambda|^2$ is the energy density
in the Reeb direction: Recall the decomposition
$dw = d^\pi w + w^*\lambda \, \R_{\lambda}$
is orthogonal with respect to the induced metric from the triad metric of the target and
the K\"ahler metric $h$ of the domain Riemann surface $(\dot \Sigma, j)$.

The following differential identity for contact Cauchy-Riemann map plays a  fundamental
role for all the estimates needed for the contact instantons.
\begin{thm}[Fundamental Equation; Theorem 4.2 \cite{oh-wang:CR-map1}]\label{thm:fundamental-intro}
Let $w$ be any contact Cauchy--Riemann map,
 i.e., a solution of $\delbar^\pi w=0$. Then
\be\label{eq:Laplacian-w-intro}
d^{\nabla^\pi} (d^\pi w) = -w^*\lambda\circ j \wedge\left(\frac{1}{2} (\CL_{R_\lambda}J)\, d^\pi w\right).
\ee
\end{thm}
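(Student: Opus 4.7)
The starting point is the general Cartan-type identity
\begin{equation*}
d^{\nabla}(dw)(X, Y) = T^{\nabla}(dw(X), dw(Y)),
\end{equation*}
valid for any smooth $w : \dot{\Sigma} \to Q$ and any linear connection $\nabla$ on $Q$, where $dw$ is regarded as the tautological $w^*TQ$-valued one-form on $\dot{\Sigma}$ and $T^\nabla$ denotes the torsion. I would take $\nabla$ to be the contact triad connection of $(Q,\lambda,J)$ and project both sides to $\xi$ via the splitting $TQ = \xi\oplus\R\{R_\lambda\}$.

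For the left-hand side, decompose $dw = d^\pi w + w^*\lambda \otimes R_\lambda$. Two defining properties of the contact triad connection -- that it preserves the splitting $TQ=\xi\oplus\R\{R_\lambda\}$ and satisfies $\nabla R_\lambda \equiv 0$ -- force $d^\nabla(w^*\lambda\otimes R_\lambda) = d(w^*\lambda)\otimes R_\lambda$ to lie entirely in the Reeb direction. Hence
\begin{equation*}
\pi\circ d^\nabla(dw) \;=\; d^{\nabla^\pi}(d^\pi w),
\end{equation*}
identifying the LHS of the target formula as $\pi\circ T^\nabla(dw\wedge dw)$.

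For the right-hand side, I would bilinearly expand $T^\nabla(dw,dw)$ into a $(\xi,\xi)$-term and two mixed $(\xi,R_\lambda)$-terms. The $(\xi,\xi)$-term vanishes after projection, because $\pi T^\nabla(Y_1,Y_2) = 0$ for $Y_1,Y_2\in\xi$ (an equivalent form of the CR-torsion-freeness axiom $\nabla_{Y_1}Y_2 - \nabla_{Y_2}Y_1 = \pi[Y_1,Y_2]$ of the triad connection; the full torsion in those directions lies in $\R R_\lambda$ and equals $d\lambda(Y_1,Y_2)R_\lambda$). The whole Lie-derivative content of the theorem is then concentrated in the key structural identity
\begin{equation*}
\pi T^\nabla(R_\lambda, Y) \;=\; \tfrac{1}{2}(\CL_{R_\lambda}J)(JY), \qquad Y \in \xi,
\end{equation*}
which I would derive from the other defining axioms of the contact triad connection together with $\nabla R_\lambda=0$; concretely, one rewrites $T(R_\lambda,Y)=-\nabla_{R_\lambda}Y - [Y,R_\lambda]$ and uses the characterization of the $J$-antilinear part of $\nabla_{R_\lambda}$ on $\xi$.

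Combining these ingredients and anti-symmetrizing yields
\begin{equation*}
d^{\nabla^\pi}(d^\pi w)(X,Y) \;=\; \tfrac{1}{2}\bigl[w^*\lambda(X)(\CL_{R_\lambda}J)(Jd^\pi w(Y)) - w^*\lambda(Y)(\CL_{R_\lambda}J)(Jd^\pi w(X))\bigr].
\end{equation*}
At this last step I would invoke the contact Cauchy--Riemann equation $\delbar^\pi w = 0$, equivalent to $J\circ d^\pi w = d^\pi w\circ j$, to substitute $Jd^\pi w(\cdot) = d^\pi w(j\cdot)$; a short bookkeeping check (easily verified in the conformal basis $\partial_\tau,\partial_t$, using $j\partial_\tau=\partial_t$) then repackages the expression exactly as $-(w^*\lambda\circ j)\wedge\tfrac{1}{2}(\CL_{R_\lambda}J)d^\pi w$. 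The main obstacle is not the global manipulation but the clean derivation of the torsion formula for $T^\nabla(R_\lambda,\cdot)$ directly from the defining axioms of the contact triad connection, together with careful sign bookkeeping using $J^2=-\mathrm{id}|_\xi$ and its consequence $(\CL_{R_\lambda}J)J + J(\CL_{R_\lambda}J) = 0$; everything else is bilinear algebra.
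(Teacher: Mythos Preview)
Your overall architecture—starting from the Cartan identity $d^\nabla(dw)=T(dw,dw)$, projecting to $\xi$, and then using $\delbar^\pi w=0$—is exactly the route the paper takes (via Lemma~\ref{lem:FE-autono}). But the specific properties you attribute to the contact triad connection are wrong, and this matters because it misidentifies where the $\CL_{R_\lambda}J$ term actually comes from.

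Concretely: (i) The contact triad connection does \emph{not} satisfy $\nabla R_\lambda\equiv 0$; rather $\nabla_Y R_\lambda=\tfrac12(\CL_{R_\lambda}J)JY$ (Corollary~\ref{cor:connection}). (ii) Conversely, the torsion does satisfy $T(R_\lambda,\cdot)=0$ (Theorem~\ref{thm:connection}(2)), so your ``key structural identity'' $\pi T(R_\lambda,Y)=\tfrac12(\CL_{R_\lambda}J)(JY)$ is false. (iii) The claim $\pi T(Y_1,Y_2)=0$ for all $Y_1,Y_2\in\xi$ is also not an axiom; only the weaker $T^\pi(JY,Y)=0$ holds (Theorem~\ref{thm:connection}(5)). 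In the correct derivation, the $\CL_{R_\lambda}J$ term on the right of Lemma~\ref{lem:FE-autono} arises when you expand $\pi\,d^\nabla(w^*\lambda\otimes R_\lambda)=-w^*\lambda\wedge\pi(\nabla R_\lambda)$ on the \emph{left} side, while the torsion side contributes only $T^\pi(d^\pi w,d^\pi w)$ since the mixed $T(R_\lambda,\cdot)$ pieces vanish. The Cauchy--Riemann hypothesis then kills $T^\pi(d^\pi w,d^\pi w)$ via $T^\pi(JY,Y)=0$ (not via a nonexistent $T^\pi|_{\xi\times\xi}=0$), and the final rewriting $J\,d^\pi w=d^\pi w\circ j$ is as you described. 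So you have exchanged the roles of $\nabla R_\lambda$ and $T(R_\lambda,\cdot)$; the computation survives only because the two errors are complementary, but as written the argument invokes false identities.
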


\begin{rem} When we apply similar calculation to a $J$-holomorphic map $u$ with respect to the canonical connection
on the almost Hermitian manifold $(M,\omega,J)$, then the corresponding equation is
the simple harmonic map equation $d^\nabla(du) = 0$ for $u$. (See \cite[Corollary 7.3.3]{oh:book1}.)
Together with the conformality of any $J$-holomorphic map, this provides a computational proof with
the well-known fact that the image of a $J$-holomorphic curve is a minimal surface with respect to the
compatible metric. This equation was the basis for the $W^{2,2}$-estimate for $J$-holomorphic map
equation $\delbar_Ju = 0$ on symplectic manifolds. (See \cite[Proposition 7.4.5]{oh:book1}.)
\end{rem}

Then we can derive the following differential identity for the
$\xi$-component $d^\pi u$ of the derivative $dw$,
utilizing the Weitzenb\"ock formula associated to the contact triad connection.

\begin{prop}[Equation (4.11) \cite{oh-wang:CR-map1}]
\label{prop:e-pi-weitzenbock-intro}
Let $w$ be a contact Cauchy-Riemann map. Then
\bea\label{eq:e-pi-weitzenbock-intro}
-\frac{1}{2}\Delta e^\pi(w)&=&|\nabla^\pi (d^\pi w)|^2+K|d^\pi w|^2+\langle \ric^{\nabla^\pi} (d^\pi w), d^\pi w\rangle\nonumber\\
&{}&+\langle \delta^{\nabla^\pi}[(w^*\lambda\circ j)\wedge (\CL_{R_\lambda}J)d^\pi w], d^\pi w\rangle.
\eea
\end{prop}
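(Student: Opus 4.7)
The plan is to combine three ingredients: the pointwise Bochner identity for the Laplacian of $|d^\pi w|^2$, the Weitzenb\"ock formula of Theorem \ref{thm:weitzenbock-intro} applied to the $w^*\xi$-valued one-form $d^\pi w$, and the fundamental equation of Theorem \ref{thm:fundamental-intro} controlling $d^{\nabla^\pi}(d^\pi w)$ under the hypothesis $\delbar^\pi w = 0$. I view $d^\pi w$ throughout as a section of $T^*\dot\Sigma \otimes w^*\xi$ equipped with the tensor product connection built from the Levi-Civita connection on $(\dot\Sigma,h)$ and the pulled-back contact triad connection.

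Step one is a direct Bochner-type product rule on $\langle d^\pi w, d^\pi w\rangle$ with respect to this tensor product connection, giving
$$-\tfrac{1}{2}\Delta e^\pi(w) \;=\; |\nabla^\pi(d^\pi w)|^2 \;+\; \bigl\langle \nabla^{\pi,*}\nabla^\pi(d^\pi w),\, d^\pi w\bigr\rangle$$
in the sign conventions of the paper. Step two applies Theorem \ref{thm:weitzenbock-intro} to rewrite $\nabla^{\pi,*}\nabla^\pi(d^\pi w)$ as the Hodge Laplacian $\Delta^{\nabla^\pi}(d^\pi w)$ plus two curvature contributions: the scalar Gauss-curvature term $K\, d^\pi w$ coming from $T^*\dot\Sigma$ on the Riemann surface, and the fibrewise term $\ric^{\nabla^\pi}(d^\pi w)$ coming from the contracted pull-back of the curvature of $\nabla^\pi$ on $w^*\xi$.

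Step three is the key substitution: expand $\Delta^{\nabla^\pi}(d^\pi w) = d^{\nabla^\pi}\delta^{\nabla^\pi}(d^\pi w) + \delta^{\nabla^\pi}d^{\nabla^\pi}(d^\pi w)$. On the $\delta d$-piece, apply Theorem \ref{thm:fundamental-intro} directly to obtain
$$\delta^{\nabla^\pi}d^{\nabla^\pi}(d^\pi w) \;=\; -\tfrac{1}{2}\,\delta^{\nabla^\pi}\!\bigl[(w^*\lambda \circ j)\wedge (\CL_{R_\lambda}J)\,d^\pi w\bigr].$$
For the $d\delta$-piece, the hypothesis $\delbar^\pi w = 0$ makes $d^\pi w\colon (T\dot\Sigma,j)\to (\xi, J|_\xi)$ complex linear, and combined with the defining property $\nabla^\pi J = 0$ of the contact triad connection, the codifferential $\delta^{\nabla^\pi}(d^\pi w)$ either vanishes or yields a term that, after pairing with $d^\pi w$, is absorbed into the bookkeeping of the curvature terms already accounted for.

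Assembly: pair the output of step two with $d^\pi w$, substitute into step one, and replace the Hodge pairing using step three; then collect the four scalar contributions to read off the stated identity. The main obstacle — and the step that requires care — is step three: verifying that the $d\delta$ contribution vanishes under $\delbar^\pi w = 0$ for the contact triad connection (rather than leaving a residual term), which uses the specific torsion identities of $\nabla^\pi$, together with the careful sign and coefficient tracking needed to convert the $-\tfrac{1}{2}\,\CL_{R_\lambda}J$ in Theorem \ref{thm:fundamental-intro} into the unit-coefficient $\CL_{R_\lambda}J$ that appears in the final statement.
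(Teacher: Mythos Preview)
Your approach is essentially the same as the paper's: the paper starts from the same Bochner--Weitzenb\"ock identity (displayed as \eqref{eq:bochner-weitzenbock-e}), then replaces the term $-\langle \Delta^{\nabla^\pi} d^\pi w, d^\pi w\rangle$ via Lemma \ref{lem:Hodge-Laplacian}, which asserts $-\Delta^{\nabla^\pi} d^\pi w = \delta^{\nabla^\pi}[(w^*\lambda\circ j)\wedge (\CL_{R_\lambda}J)\del^\pi w]$ as a direct consequence of the fundamental equation. Your steps one through three mirror this exactly.

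The point you flag as the main obstacle---the $d^{\nabla^\pi}\delta^{\nabla^\pi}$ contribution and the passage from the coefficient $\tfrac12$ to $1$---is also not spelled out in the paper's outline, which defers the details to \cite{oh-wang:CR-map1}. Your tentative resolution (``either vanishes or is absorbed into the curvature terms'') is not the actual mechanism, however. The key observation is that for a contact Cauchy--Riemann map the one-form $d^\pi w = \del^\pi w$ is $(j,J)$-linear, so $*(d^\pi w) = -J\, d^\pi w$; together with $\nabla^\pi J = 0$ this yields $\delta^{\nabla^\pi}(d^\pi w) = J * d^{\nabla^\pi}(d^\pi w)$. Hence the codifferential is itself expressed through the fundamental equation, and the $d\delta$-piece contributes a second copy of the same expression, which combines with the $\delta d$-piece to convert the $\tfrac12$ into the unit coefficient appearing in Lemma \ref{lem:Hodge-Laplacian}. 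Once you make this explicit, your argument is complete and coincides with the paper's.
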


If $w$ is a contact instanton, i.e., if it satisfies $d(w^*\lambda \circ j) = 0$ in addition,
we also have the following identity for the Laplacian of the energy density along the Reeb direction.
\begin{prop}[Equation (5.4) \cite{oh-wang:CR-map1}; Proposition \ref{prop:Delta|w*lambda|2}]
\label{prop:-1/2Delta-w*lambda-intro}
Let $w$ be a contact instanton. Then
\be\label{eq:Delta|w*lamba|2-intro}
-\frac{1}{2}\Delta|w^*\lambda|^2=|\nabla w^*\lambda|^2+K|w^*\lambda|^2
+ \langle *\langle \nabla^\pi d^\pi w, d^\pi w\rangle,  w^*\lambda\rangle.
\ee
\end{prop}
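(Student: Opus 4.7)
The plan is to apply the Bochner--Weitzenb\"ock formula on the Riemann surface $(\dot\Sigma, h)$ to the scalar-valued 1-form $\alpha := w^*\lambda$, and then simplify the right-hand side by feeding in the two contact instanton equations together with the characteristic properties $\nabla\lambda = 0$ and $\nabla R_\lambda = 0$ of the contact triad connection from \cite{oh-wang:connection}.

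First, I would combine the pointwise identity
$$
-\tfrac12\Delta|\alpha|^2 \;=\; |\nabla\alpha|^2 - \langle \nabla^*\nabla\alpha, \alpha\rangle
$$
with the standard Weitzenb\"ock identity for 1-forms on a Riemann surface of Gauss curvature $K$, namely $\nabla^*\nabla\alpha = \Delta_H\alpha - K\alpha$ where $\Delta_H = dd^* + d^*d$, to obtain
$$
-\tfrac12\Delta|\alpha|^2 \;=\; |\nabla\alpha|^2 + K|\alpha|^2 - \langle \Delta_H\alpha, \alpha\rangle.
$$
The second contact instanton equation $d(w^*\lambda\circ j) = 0$ reads $d^*\alpha = 0$ on the Riemann surface, so $\Delta_H\alpha$ collapses to $d^*d\alpha = d^*(w^*d\lambda)$. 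The task is then to identify this divergence with the advertised contact-geometric term.

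Next, I would compute $w^*d\lambda$ using $\delbar^\pi w = 0$. Since $R_\lambda\rfloor d\lambda = 0$, only $d^\pi w$ contributes, and $\delbar^\pi w = 0$ gives $d^\pi w\circ j = J\, d^\pi w$; in an orthonormal frame $\{e_1, je_1\}$ this yields $w^*d\lambda = \tfrac12|d^\pi w|^2\,\omega_h$. Applying the identity $d^* = -*d$ on 2-forms on a surface and using the metric compatibility of $\nabla^\pi$ with the triad metric on $\xi$,
$$
d^*(w^*d\lambda) \;=\; -\tfrac12\, {*}\, d(|d^\pi w|^2) \;=\; -{*}\bigl\langle \nabla^\pi d^\pi w, d^\pi w\bigr\rangle.
$$
The crucial point is that, because the triad connection satisfies $\nabla R_\lambda = 0$ and $\nabla \lambda = 0$, the splitting $\nabla dw = \nabla^\pi d^\pi w + d(w^*\lambda)\otimes R_\lambda$ holds without cross-error terms, and $\lambda$ may be pulled freely through covariant derivatives. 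Pairing against $\alpha = w^*\lambda$ produces $-\langle \Delta_H\alpha, \alpha\rangle = +\langle *\langle \nabla^\pi d^\pi w, d^\pi w\rangle, w^*\lambda\rangle$, and substituting into the Bochner identity above gives the claim.

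The main obstacle is the bookkeeping of orientation and sign conventions---matching the Hodge star on $\dot\Sigma$ with the domain complex structure $j$ and with the fiberwise triad metric on $\xi$ consistently---so that the identity comes out clean and, in particular, no $\CL_{R_\lambda}J$-term survives on the right, in contrast to the companion $\pi$-identity of Theorem \ref{thm:fundamental-intro}. This clean form is available precisely because the contact triad connection parallel-transports $\lambda$, $R_\lambda$, $g_\xi$, and $J|_\xi$ in a compatible way; the same calculation carried out with the Levi--Civita connection of the triad metric would incur extra error terms that would then have to be absorbed by hand using Proposition \ref{prop:blair}.
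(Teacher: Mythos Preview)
Your approach is essentially the paper's own: apply the Weitzenb\"ock formula for scalar $1$-forms on $\dot\Sigma$ to $\alpha = w^*\lambda$, use $d(w^*\lambda\circ j)=0$ to kill the $dd^*$-term, and reduce $d^*d\alpha$ via the on-shell identity $w^*d\lambda = \tfrac12|d^\pi w|^2\,dA$ together with $d|d^\pi w|^2 = 2\langle \nabla^\pi d^\pi w, d^\pi w\rangle$.

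One factual correction to your commentary, however: the contact triad connection does \emph{not} satisfy $\nabla R_\lambda = 0$, nor $\nabla\lambda = 0$. By Corollary~\ref{cor:connection}(1) one has $\nabla_Y R_\lambda = \tfrac12(\CL_{R_\lambda}J)JY$, which is generically nonzero, and consequently $(\nabla_X\lambda)(Y) = g(\nabla_X R_\lambda, Y) \neq 0$. Fortunately your argument never actually uses these claims: the only structural input beyond the scalar Weitzenb\"ock formula and the two instanton equations is that $\nabla^\pi$ is a Hermitian connection on $(\xi, g_\xi)$ (property (4) in Theorem~\ref{thm:connection}), which already gives $d|d^\pi w|^2 = 2\langle \nabla^\pi d^\pi w, d^\pi w\rangle$. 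The sentence about the splitting $\nabla dw = \nabla^\pi d^\pi w + d(w^*\lambda)\otimes R_\lambda$ ``without cross-error terms'' is therefore both inaccurate and unnecessary; the computation goes through once you drop it.
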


By adding the two identities, we obtain a formula for the Laplacian $\Delta |dw|^2$
of the full harmonic energy density function $|dw|^2$ on shell.
Once these are established, the following local $W^{2,2}$-estimate
is obtained by a standard trick of multiplying cut-off function and
integrating by parts.
\begin{thm}[Theorem 1.6 \cite{oh-wang:CR-map1}]\label{thm:local-W12-intro}
Let $w: \dot \Sigma \to M$ satisfy \eqref{eq:contacton-Legendrian-bdy-intro}.
Then for any relatively compact domains $D_1$ and $D_2$ in
$\dot\Sigma$ such that $\overline{D_1}\subset D_2$, we have
$$
\|dw\|^2_{W^{1,2}(D_1)}\leq  C_4 \|dw\|^4_{L^4(D_2)}
$$
where $C_4$ is a constant depending only on $D_1$, $D_2$ and $(M,\lambda, J)$ and $R_i$'s.
\end{thm}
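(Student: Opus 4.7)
The plan is to run a standard Bochner/Weitzenböck cutoff argument, but using the two on-shell identities from Proposition \ref{prop:e-pi-weitzenbock-intro} and Proposition \ref{prop:-1/2Delta-w*lambda-intro} rather than attempting anything directly on $dw$ as a $TM$-valued form. First I would add the two identities, using the orthogonal decomposition $|dw|^2 = |d^\pi w|^2 + |w^*\lambda|^2$ from \eqref{eq:|dw|2}, to obtain a single pointwise identity of the schematic form
\begin{equation*}
-\tfrac{1}{2}\Delta|dw|^2 = |\nabla^\pi d^\pi w|^2 + |\nabla w^*\lambda|^2 + \mathcal{Q}(w),
\end{equation*}
where $\mathcal{Q}(w)$ collects the Gauss curvature term $K|dw|^2$, the Ricci-type term $\langle \ric^{\nabla^\pi}(d^\pi w), d^\pi w\rangle$, the $\CL_{R_\lambda}J$-term, and the cross term $\langle *\langle \nabla^\pi d^\pi w, d^\pi w\rangle, w^*\lambda\rangle$. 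The crucial point is that the latter two terms are at worst of the form $|\nabla dw|\cdot|dw|^2$ or a divergence of such, so that after integration by parts they can be absorbed into $|\nabla dw|^2$ plus a quartic remainder.

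Next I would fix a smooth cutoff $\chi\in C_c^\infty(D_2)$ with $\chi\equiv 1$ on $D_1$ and $0\le\chi\le 1$, multiply the integrated identity by $\chi^2$, and integrate by parts once on $\dot\Sigma$. The left-hand side becomes $\int \nabla(\chi^2)\cdot\nabla|dw|^2$, which after a second integration by parts produces the controllable term $\int |\Delta\chi^2|\,|dw|^2 \le C\|dw\|_{L^2(D_2)}^2$. The principal Bochner terms on the right yield $\int \chi^2(|\nabla^\pi d^\pi w|^2 + |\nabla w^*\lambda|^2)$, which together bound $\int\chi^2|\nabla dw|^2$ up to $\|dw\|_{L^2}^2$ (since the connection difference between $\nabla$ and $(\nabla^\pi, \nabla\text{ on }\lambda\text{-component})$ is algebraic in $dw$). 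The divergence term $\delta^{\nabla^\pi}[\cdots]$ is handled by moving the divergence back onto $\chi^2 d^\pi w$ before estimating, producing terms bounded pointwise by $C(|dw||\nabla dw| + |dw|^2)$ that I would split via Young's inequality $ab \le \varepsilon a^2 + \frac{1}{4\varepsilon}b^2$ into a small multiple of $|\nabla dw|^2$ (absorbed into the left) and a multiple of $|dw|^4$. The quadratic curvature terms in $\mathcal{Q}(w)$ are similarly bounded by $C\|dw\|_{L^2(D_2)}^2 \le C'|D_2|^{1/2}\|dw\|_{L^4(D_2)}^2$, which by Cauchy--Schwarz is dominated by $\|dw\|_{L^4(D_2)}^4 + 1$ on bounded domains and can be absorbed (or handled by choosing $D_2$ small enough that $|D_2|\le 1$).

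The Legendrian boundary condition $w(\overline{z_iz_{i+1}})\subset R_i$ enters when integration by parts produces boundary integrals over $\partial\dot\Sigma\cap\mathrm{supp}\,\chi$. Along each boundary arc, $\partial w$ is tangent to $R_i\subset\xi$, hence $w^*\lambda|_{\partial\dot\Sigma}=0$ and $d^\pi w$ takes values in the totally real subbundle $TR_i \subset \xi|_{R_i}$; combined with the compatibility of the contact triad connection with the splitting $TM = \xi\oplus\R\{R_\lambda\}$, this forces all the boundary contributions of the form $\int_{\partial} \chi^2 \langle \nabla dw, dw\rangle\,d\sigma$ to either vanish or reduce to a second-fundamental-form term of $R_i$ that is pointwise bounded by $C|dw|^2$ and controllable by the same Young trick. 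I would verify this vanishing carefully using \cite{oh:contacton-Legendrian-bdy}, where the relevant boundary analysis for the Legendrian problem is already carried out.

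The main obstacle I expect is bookkeeping rather than any genuine analytic difficulty: keeping track of which pieces of $\nabla dw$ are controlled by $\nabla^\pi d^\pi w$ versus $\nabla w^*\lambda$, and making sure that every cross term involving $\CL_{R_\lambda}J$ or the torsion of the contact triad connection is either algebraic in $dw$ (hence contributes to $\|dw\|_{L^4}^4$ by Hölder) or carries enough derivatives to be absorbed. The role of Proposition \ref{prop:blair} and of the defining properties of the contact triad connection \cite{oh-wang:connection} is precisely to make the cross terms have the correct structure so that no $|\nabla dw|^2$-term with coefficient $\ge 1$ appears on the right, which is exactly what permits the final absorption yielding the stated $W^{1,2}$-bound on $dw$.
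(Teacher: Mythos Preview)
Your interior strategy—summing the two Weitzenböck identities, applying Young's inequality to the $\CL_{R_\lambda}J$ cross terms so that a small multiple of $|\nabla^\pi d^\pi w|^2 + |\nabla w^*\lambda|^2$ can be absorbed, and then multiplying by a cutoff and integrating by parts—is exactly the paper's approach (see Lemma \ref{lem:3rd-term}, the differential inequality \eqref{eq:higher-derivative}, and Proposition \ref{prop:coercive-L2}).

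The boundary treatment, however, has a real gap. The boundary contribution from integrating $\Delta|dw|^2$ against $\chi^2$ is governed by $*d|dw|^2\big|_{\partial D}$, which the paper computes (Lemma \ref{prop:*de|delD}, equation \eqref{eq;CdelD}) to be $-4\langle B(\partial_x w, \partial_x w), \partial_y w\rangle$ for $B$ the second fundamental form of $R_i$. This is \emph{cubic} in $dw$, not quadratic as you assert, so a pointwise bound by $C|dw|^2$ is simply false; and a boundary integral of $|dw|^3$ is not controlled by $\|dw\|_{L^4(D_2)}^4$ without a trace argument you have not supplied. The paper's approach (outlined after Theorem \ref{thm:local-W12}) is different and cleaner: one first observes that the Legendrian boundary condition is a \emph{free} boundary condition ($\partial w/\partial\nu \perp TR_i$), then runs the cutoff estimate using the Levi-Civita connection of an auxiliary metric for which each $R_i$ is \emph{totally geodesic}, so that $B\equiv 0$ and the boundary term vanishes outright. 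The estimate is then converted back to the triad connection via $\nabla = \nabla^{\text{LC}} + P$ with $P$ a zeroth-order $(2,1)$-tensor, whose contribution is an interior term absorbed into $C\|dw\|_{L^4}^4$.

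A minor point: Proposition \ref{prop:blair} ($\nabla^{\text{LC}}_{R_\lambda}J = 0$) plays no role in this estimate—it enters only in the later analysis of asymptotic operators. The correct structure of the cross terms here comes from the torsion property $T^\pi(JY,Y)=0$ of the triad connection (Theorem \ref{thm:connection}(5)) via the Fundamental Equation \eqref{eq:Laplacian-w}.
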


The same estimates is also proved in the similar spirit
by incorporating the Legendrian boundary condition which is a (nonlinear) elliptic
boundary value problem. (See \cite[Theorem 1.4]{oh:contacton-Legendrian-bdy} for the statement
and \cite{oh-yso:index} for the same statement with corrected proof.)
This boundary estimate is rather nontrivial unlike the closed string case.

As the first step towards the analytic study of the
above boundary value problem \eqref{eq:contacton-Legendrian-bdy-intro},
we first show that the Legendrian boundary condition for the contact
instanton is a free boundary value problem, i.e., it satisfies
$$
\frac{\del w}{\del \nu} \perp TR
$$
for any Legendrian submanifold. (See \cite{jost:free-bdy} for
the importance of the free boundary value condition for a general study of
elliptic estimates of the minimal surface type equations.) Then we
prove the elliptic $W^{2,2}$-estimate as an application of
Stokes' formula combined with the Legendrian boundary condition. The global tensorial calculation deriving
the a priori estimate in \cite{oh-yso:index} illustrates how well the Legendrian boundary condition
interacts with triad connection and the contact instanton equation.

\subsubsection{Higher $C^{k,\delta}$ H\"older estimates}

Once this $W^{2,2}$ estimates is established,
we proceed with the higher boundary regularity estimates.
Obviously the same estimates hold for the closed string case \eqref{eq:contacton-intro}
the corresponding statement of which had been established in \cite{oh-wang:CR-map1}.
Since this case is easier, we focus on the statements for the boundary estimates below.

Starting from Theorem \ref{thm:local-W12-intro} and using
the embedding $W^{2,2} \hookrightarrow C^{0,\delta}$ with $0 < \delta < 1/2$,
we also establish the following higher local $C^{k,\delta}$-estimates on punctured surfaces
$\dot \Sigma$ in terms of the $W^{2,2}$-norms with $\ell \leq k+1$.

\begin{thm}[Theorem 1.4 \cite{oh-yso:index}]\label{thm:local-regularity-intro} Let $w$ satisfy \eqref{eq:contacton-Legendrian-bdy-intro}.
Then for any pair of disk $D_1 \subset D_2 \subset \dot \Sigma$
of semi-disk domains $(D_1,\del D_1)\subset (D_2,\del D_2) \subset (\Sigma,\del \Sigma)$
such that $\overline{D_1}\subset D_2$,
$$
\|dw\|_{C^{k,\delta}} \leq C_\delta(\|dw\|_{W^{1,2}(D_2)})
$$
where $C_\delta = C_\delta(r) > 0$ is a function continuous at $r = 0$
and depends only on $J$, $\lambda$ and $D_1, \, D_2$ but independent of $w$.
\end{thm}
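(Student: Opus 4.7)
The plan is to run a standard elliptic bootstrap starting from the $W^{2,2}$-control on $w$ furnished by Theorem~\ref{thm:local-W12-intro}. Because $\dot\Sigma$ is two-dimensional, the Sobolev embedding gives $W^{2,2}\hookrightarrow C^{0,\delta}$ and $W^{1,2}\hookrightarrow L^p$ for every finite $p$, so from a bound on $\|dw\|_{W^{1,2}(D_2)}$ I would immediately obtain, on a slightly smaller domain, an a priori $C^{0,\delta}$-bound on $w$ and an $L^p$-bound on $dw$. I would then view the contact instanton system
\[
\delbar^\pi w=0,\qquad d(w^*\lambda\circ j)=0
\]
as a determined first-order elliptic system: the first equation controls the $(0,1)$-part of the $\xi$-component $d^\pi w$ of $dw$, while the second pins down the divergence of the Reeb component $w^*\lambda$. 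The linearization, computed covariantly with respect to the contact triad connection $\nabla$, has principal symbol a $\delbar$-operator on $w^*TM$ whose target splitting $\xi\oplus\R\{R_\lambda\}$ is preserved by $\nabla$, so the standard interior (and, as discussed below, boundary) Schauder and $L^p$-estimates for $\delbar$ apply with coefficients depending only on $w$.

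I would then iterate the following single step. Given $w\in C^{\ell,\delta}$ on some intermediate subdomain, differentiate the equation $\ell$ times using $\nabla$, together with the Fundamental Equation of Theorem~\ref{thm:fundamental-intro} and the Weitzenb\"ock identities \eqref{eq:e-pi-weitzenbock-intro}, \eqref{eq:Delta|w*lamba|2-intro}, to obtain a system of the schematic form
\[
\delbar\bigl(\nabla^\ell dw\bigr)=F_\ell\bigl(w,dw,\nabla dw,\ldots,\nabla^{\ell-1}dw\bigr),
\]
whose right-hand side is polynomial in lower-order covariant derivatives with coefficients smooth in $w$, built from the curvature and torsion of $\nabla$ and from $\CL_{R_\lambda}J$. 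The Schauder estimate for $\delbar$ on $D_1\subset D_2$ then promotes $\nabla^\ell dw$ from $C^{0,\delta}$ to $C^{1,\delta}$, and $k+1$ iterations produce the claimed $C^{k,\delta}$-bound. The assertion that $C_\delta(r)$ is continuous at $r=0$ is built into the bootstrap because each $F_\ell$ vanishes to second order in $dw$ — already the right-hand side of the Fundamental Equation \eqref{eq:Laplacian-w-intro} carries the factor $w^*\lambda\circ j\wedge d^\pi w$ — so every nonlinear term produced along the way inherits at least one factor controlled by $\|dw\|_{W^{1,2}}$.

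The main obstacle is executing this scheme up to a boundary arc $\overline{z_iz_{i+1}}\subset\del D^2$ on which the Legendrian condition $w(\overline{z_iz_{i+1}})\subset R_i$ is imposed. The essential input, already established in \cite{oh:contacton-Legendrian-bdy,oh-yso:index}, is that the Legendrian boundary condition is a \emph{free} boundary condition $\del w/\del\nu\perp TR_i$, and that the splitting $TM|_{R_i}=TR_i\oplus JTR_i\oplus\R\{R_\lambda\}$ is compatible with the contact triad connection in a sufficiently strong sense that $\delbar$ together with the linearized Legendrian condition forms a totally real elliptic boundary value problem. Given this compatibility, the boundary Schauder theory for $\delbar$ with totally real boundary conditions applies at every step of the iteration above, and the argument proceeds exactly as in the interior case. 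This is in practice the delicate part and the reason the tensorial framework with the contact triad connection — rather than, say, raw coordinate calculations on the symplectization — is indispensable.
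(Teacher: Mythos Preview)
Your outline is correct in spirit and would, with care, yield the result, but it takes a different route from the paper's. The paper does not treat $dw$ as a single section of $w^*TM$ satisfying a $\delbar$-type system (note $w^*TM$ has odd rank, so there is no honest $\delbar$ on it); instead it makes the splitting $dw = d^\pi w + w^*\lambda\,R_\lambda$ explicit by introducing, in isothermal coordinates $(x,y)$ adapted to the boundary, the pair $\zeta = \pi\frac{\del w}{\del x}\in\Gamma(w^*\xi)$ and the complex scalar $\alpha = \lambda(\frac{\del w}{\del y}) + i\,\lambda(\frac{\del w}{\del x})$. These satisfy the \emph{coupled} system \eqref{eq:equation-for-zeta0-intro}--\eqref{eq:equation-for-alpha-intro}: $\zeta$ solves a linear Cauchy--Riemann equation $\overline\nabla^\pi\zeta + P_{w^*\lambda}(\zeta) = 0$ with totally real boundary condition $\zeta\in TR_i$, whose zero-order coefficient depends on $\alpha$; and $\alpha$ solves the scalar Riemann--Hilbert problem $\delbar\alpha = \frac12|\zeta|^2$ with $\alpha\in\R$ on $\del D_2$. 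The bootstrap is then \emph{alternating}: gain a H\"older derivative on $\alpha$ from the current regularity of $|\zeta|^2$, feed this improved coefficient back into the equation for $\zeta$ to gain a derivative there, and repeat. What this buys over your schematic single-step iteration is that the Legendrian boundary condition decouples into two standard problems (totally real in $\xi$ for $\zeta$, and the classical half-plane Riemann--Hilbert condition for $\alpha$), so no appeal to an abstract elliptic BVP for the full mixed system is needed; making your approach rigorous would in practice amount to rediscovering this $(\zeta,\alpha)$ decomposition.
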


With some adjustment of the function $C_\delta$, combining the two theorems, we obtain
\begin{cor} Let $w$ satisfy \eqref{eq:contacton-Legendrian-bdy-intro}.
Then for any pair of domains $D_1 \subset D_2 \subset \dot \Sigma$ such that $\overline{D_1}\subset D_2$,
$$
\|dw\|_{C^{k,\delta}} \leq C_\delta(\|dw\|_{L^4(D_2)})
$$
where $C_\delta = C_\delta(r) > 0$ is a function continuous at $r = 0$
and depends only on $J$, $\lambda$ and $D_1, \, D_2$ but independent of $w$.
\end{cor}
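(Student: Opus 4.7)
The plan is to chain the two a priori estimates, Theorem \ref{thm:local-W12-intro} and Theorem \ref{thm:local-regularity-intro}, by inserting an intermediate domain between $D_1$ and $D_2$. Concretely, since $\overline{D_1} \subset D_2$ and $D_2$ is open in $\dot \Sigma$, I would choose a relatively compact $D' \subset \dot\Sigma$ with
$$
\overline{D_1} \subset D' \subset \overline{D'} \subset D_2,
$$
where in the boundary case $D'$ is chosen as a semi-disk domain compatible with the Legendrian boundary condition (so that both earlier theorems apply with $D'$ playing the role of either the inner or outer domain). The existence of such $D'$ is a standard exhaustion/shrinking argument that depends only on $D_1, D_2$, not on $w$.

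Next, I would apply Theorem \ref{thm:local-regularity-intro} to the pair $(D_1, D')$, obtaining
$$
\|dw\|_{C^{k,\delta}(D_1)} \leq \widetilde C_\delta\bigl(\|dw\|_{W^{1,2}(D')}\bigr),
$$
where $\widetilde C_\delta = \widetilde C_\delta(r)$ depends only on $J,\lambda$ and $D_1,D'$ and is continuous at $r=0$. Then I would apply Theorem \ref{thm:local-W12-intro} to the pair $(D',D_2)$, which yields
$$
\|dw\|_{W^{1,2}(D')}^2 \leq C_4 \|dw\|_{L^4(D_2)}^4
$$
with $C_4$ depending only on $D', D_2$, the triad $(M,\lambda,J)$, and the $R_i$'s.

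Combining the two displays gives
$$
\|dw\|_{C^{k,\delta}(D_1)} \leq \widetilde C_\delta\bigl(\sqrt{C_4}\,\|dw\|_{L^4(D_2)}^2\bigr),
$$
so that defining $C_\delta(r) := \widetilde C_\delta\bigl(\sqrt{C_4}\, r^2\bigr)$ yields the desired estimate; this is precisely the ``adjustment of $C_\delta$'' alluded to in the statement. Continuity of $C_\delta$ at $r=0$ follows from the continuity of $\widetilde C_\delta$ at $0$ together with $\sqrt{C_4}\,r^2 \to 0$ as $r \to 0$, and independence of $w$ is inherited from both input estimates. Since both Theorems \ref{thm:local-W12-intro} and \ref{thm:local-regularity-intro} are already stated, no new analytic difficulty arises; the only point requiring a small amount of care is the selection of $D'$ in the boundary case, so that it is a semi-disk domain meeting $\partial\Sigma$ in the appropriate arc, ensuring that the boundary versions of both estimates (with Legendrian boundary condition $w(\overline{z_iz_{i+1}}) \subset R_i$) can be invoked verbatim.
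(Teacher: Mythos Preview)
Your proposal is correct and matches the paper's approach exactly: the paper simply says ``With some adjustment of the function $C_\delta$, combining the two theorems, we obtain'' the corollary, and your insertion of an intermediate domain $D'$ to chain Theorem~\ref{thm:local-regularity-intro} (applied to $(D_1,D')$) with Theorem~\ref{thm:local-W12-intro} (applied to $(D',D_2)$) is precisely the intended combination.
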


In particular, we prove that
any weak solution of \eqref{eq:contacton-Legendrian-bdy-intro} in
$W^{1,4}_{\text{\rm loc}}$ automatically becomes a classical solution.
(Compare \cite[Theorem 8.5.5]{oh:book1} for a similar theorem for the
Lagrangian boundary condition in symplectic geometry.)

The proof of Theorem \ref{thm:local-regularity-intro} is carried out
by an \emph{alternating boot strap argument} by decomposing
$$
dw = d^\pi w + w^*\lambda\, R_\lambda
$$
as follows. Let $z = x+i y$ be any isothermal coordinates on $(D_2,\del D_2) \subset (\dot \Sigma,\del \dot \Sigma)$
adapted to the boundary, i.e., satisfying that $\frac{\del}{\del x}$ is tangent to $\del \dot \Sigma$.
We set
\beastar
\zeta & : = & d^\pi w(\del_x), \\
 \alpha &: = & \lambda\left(\frac{\del w}{\del y}\right)
+ \sqrt{-1} \lambda\left(\frac{\del w}{\del x}\right)
\eeastar
Then we show that the fundamental equation \eqref{eq:Laplacian-w-intro} is transformed into
the following system of equations for the pair $(\zeta,\alpha)$
\be\label{eq:equation-for-zeta0-intro}
\begin{cases}\nabla_x^\pi \zeta + J \nabla_y^\pi \zeta
+ \frac{1}{2} \lambda(\frac{\del w}{\del y})(\CL_{R_\lambda}J)\zeta - \frac{1}{2} \lambda(\frac{\del w}{\del x})(\CL_{R_\lambda}J)J\zeta =0\\
\zeta(z) \in TR_i \quad \text{for } \, z \in \del D_2
\end{cases}
\ee
and
\be\label{eq:equation-for-alpha-intro}
\begin{cases}
\delbar \alpha = \frac{1}{2}|\zeta|^2 \\
\alpha(z) \in \R \quad \text{for } \, z \in \del D_2
\end{cases}
\ee
for some $i = 0, \ldots, k$. With this coupled system of equations for $(\zeta,\alpha)$
 at our disposal, the proof of higher regularity results is carried out
by the alternating boot strap argument between $\zeta$ and $\alpha$ in \cite{oh:contacton-Legendrian-bdy,oh-yso:index}.

\subsection{Asymptotic convergence and vanishing of asymptotic charge}

Next we study the asymptotic convergence result of contact instantons
of finite energy $E(w) = E^\pi(w) + E^\perp(w) < \infty$
for the closed string case (resp. with Legendrian boundary
condition of pair $(R_0,R_1)$ for the open string case)
near the punctures of a Riemann surface $\dot \Sigma$.
(We refer to \cite{oh:contacton,oh:entanglement1,oh-yso:spectral}
for the precise definition of total energy.)

Let $\dot\Sigma$ be a punctured Riemann surface with punctures
$$
\{p^+_i\}_{i=1, \cdots, l^+}\cup \{p^-_j\}_{j=1, \cdots, l^-}
$$
equipped with a metric $h$ with cylinder-like ends
(resp. \emph{strip-like ends} for the open string case)
outside a compact subset $K_\Sigma$.
Let $w: \dot \Sigma \to M$ be any such smooth map.

As in \cite{oh-wang:CR-map1}, we define the total $\pi$-harmonic energy $E^\pi(w)$ is easy to define as
\be\label{eq:pienergy-w}
E^\pi(w) = E^\pi_{(\lambda,J;\dot\Sigma,h)}(w) = \frac{1}{2} \int_{\dot \Sigma} |d^\pi w|^2\, dA
\ee
where $dA$ is the associated area form and the norm is taken in terms of the given metric $h$ on $\dot \Sigma$ and the triad metric on $M$.

\subsubsection{The case of closed strings}

 Under the hypotheses of nondegeneracy $\lambda$ (resp. of the pair
 $(\lambda,(R_0,R_1)$ for the open string case)
 and of asymptotic convergence at the punctures, we can associate two
natural asymptotic invariants at each puncture defined as
\bea
T & := & \lim_{r \to \infty} \int_{\{r\}\times S^1}(w|_{\{r\}\times S^1})^*\lambda
\label{eq:TQ-T-intro}\\
Q & : = & \lim_{r \to \infty} \int_{\{r\}\times S^1}((w|_{\{r\}\times S^1 })^*\lambda\circ j)\label{eq:TQ-Q-intro}
\eea
at each puncture.
(Here we only look at positive punctures. The case of negative punctures is similar.)
As in \cite{oh-wang:CR-map1}, we call $T$ the \emph{asymptotic contact action}
and $Q$ the \emph{asymptotic contact charge} of the contact instanton $w$ at the given puncture.

The proof of the following subsequence convergence result
is given in \cite[Theorem 6.4]{oh-wang:CR-map1}.
A similar asymptotic convergence result
 in 3 dimension in the setting of pseudoholomorphic
curves on symplectization, i.e., the case of $Q = 0$ is proved in
\cite{HWZ:smallarea}.
(See also \cite{HWZ:asymptotics,HWZ:asymptotics-correction} and
compare their proofs with the proof given in \cite{oh-wang:CR-map1}.)

\begin{thm}[Subsequence Convergence,
Theorem 6.4 \cite{oh-wang:CR-map1}]
\label{thm:subsequence-intro}
Let $w:[0, \infty)\times S^1 \to M$ satisfy the contact instanton equations \eqref{eq:contacton-Legendrian-bdy-intro}
and Hypothesis \eqref{eq:hypo-basic-pt}.
Then for any sequence $s_k\to \infty$, there exists a subsequence, still denoted by $s_k$, and a
massless instanton $w_\infty(\tau,t)$ (i.e., $E^\pi(w_\infty) = 0$)
on the cylinder $\R \times [0,1]$  that satisfies the following:
\begin{enumerate}
\item $\delbar^\pi w_\infty = 0$ and
$$
\lim_{k\to \infty}w(s_k + \tau, t) = w_\infty(\tau,t)
$$
in the $C^l(K \times [0,1], M)$ sense for any $l$, where $K\subset [0,\infty)$ is an arbitrary compact set.
\item $w_\infty^*\lambda = -Q\, d\tau + T\, dt$
\end{enumerate}
\end{thm}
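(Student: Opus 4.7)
The plan is to produce the limit $w_\infty$ by a compactness argument using the a priori estimates of Theorem \ref{thm:local-regularity-intro}, pass to the limit in the equation to conclude $w_\infty$ is a massless contact instanton, and then determine $w_\infty^*\lambda$ explicitly using harmonicity together with the definitions of the asymptotic action $T$ and charge $Q$.

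\smallskip
\noindent\textbf{Compactness and passage to the limit.} The central input is a uniform $C^0$-bound $\sup_{[s_0,\infty)\times S^1}|dw|\le C$ for some $s_0$, which follows from finite $\pi$-energy combined with the local $W^{1,2}$-estimate of Theorem \ref{thm:local-W12-intro} and a standard $\varepsilon$-regularity / no-bubbling argument on cylindrical ends. Granting this bound, Theorem \ref{thm:local-regularity-intro} yields uniform $C^{k,\delta}$ bounds on the translates $w_k(\tau,t) := w(s_k+\tau,t)$ on every compact $K\subset\R\times S^1$, and a diagonal extraction produces a $C^l_{\mathrm{loc}}$-subsequence limit $w_\infty:\R\times S^1 \to M$ satisfying both $\delbar^\pi w_\infty = 0$ and $d(w_\infty^*\lambda\circ j)=0$ by continuity. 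Moreover, for any $R>0$,
\[
E^\pi(w_\infty)\big|_{[-R,R]\times S^1} \;=\; \lim_{k\to\infty}\int_{[s_k-R,\,s_k+R]\times S^1}|d^\pi w|^2\,dA \;=\; 0
\]
by the finiteness of the total $\pi$-energy of $w$, so $d^\pi w_\infty \equiv 0$ and hence $dw_\infty = (w_\infty^*\lambda)\otimes R_\lambda$ everywhere on $\R\times S^1$.

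\smallskip
\noindent\textbf{Identifying $w_\infty^*\lambda$.} Set $\eta := w_\infty^*\lambda$. Since $d^\pi w_\infty=0$ and Reeb vectors annihilate $d\lambda$, one has $w_\infty^*d\lambda = 0$, hence $d\eta=0$; together with the contact instanton equation $d(\eta\circ j)=0$, this makes $\eta$ a \emph{harmonic} $1$-form on $\R\times S^1$. The uniform $C^0$-bound from the compactness step passes through $C^l_{\mathrm{loc}}$-convergence to a uniform $C^0$-bound on $\eta$ across all of $\R\times S^1$. Writing $\eta = f\,d\tau + g\,dt$, harmonicity makes $f-ig$ a holomorphic function on the cylinder $\R\times S^1\cong\C/\Z$; bounded holomorphic functions on this cylinder are constant by Fourier expansion, since every nonzero mode $e^{2\pi n z}$ grows exponentially at one of the ends. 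Thus $\eta = a\,d\tau + b\,dt$ for constants $a,b\in\R$. Evaluating on a slice $\{r\}\times S^1$ and using $C^l_{\mathrm{loc}}$-convergence together with the definitions \eqref{eq:TQ-T-intro}--\eqref{eq:TQ-Q-intro} yields
\[
b \;=\; \int_{\{r\}\times S^1}\eta \;=\; T, \qquad -a \;=\; \int_{\{r\}\times S^1}\eta\circ j \;=\; Q,
\]
so $\eta = -Q\,d\tau + T\,dt$, as asserted.

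\smallskip
\noindent\textbf{Main obstacle.} The delicate step is the uniform $C^0$-gradient bound on the half-cylinder used in the compactness argument. Unlike in symplectic Floer theory, the two orthogonal components $d^\pi w$ and $(w^*\lambda) R_\lambda$ of $dw$ are controlled by two distinct Weitzenb\"ock identities (Propositions \ref{prop:e-pi-weitzenbock-intro} and \ref{prop:-1/2Delta-w*lambda-intro}), and these must be used in tandem with the nondegeneracy hypothesis at the puncture to rule out every admissible bubble profile before the local $W^{1,2}$-estimate of Theorem \ref{thm:local-W12-intro} can be promoted to the global $C^0$-bound on $dw$ that drives the rest of the argument.
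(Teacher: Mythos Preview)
Your overall strategy is correct and matches the paper's approach (which is referred to \cite{oh-wang:CR-map1} for details): use the local $C^{k,\delta}$ estimates of Theorem~\ref{thm:local-regularity-intro} to extract a $C^l_{\mathrm{loc}}$-convergent subsequence of translates, pass to the limit in the equation, use finiteness of $E^\pi(w)$ to see that the limit is massless, and then identify $w_\infty^*\lambda$ as a bounded harmonic $1$-form on the cylinder, hence constant, with the constants pinned down by $T$ and~$Q$. The harmonic-form argument and the sign computation for $Q$ are both fine.

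However, your ``Main obstacle'' paragraph reflects a misreading of the hypotheses. Hypothesis~\eqref{eq:hypo-basic-pt} (equivalently Hypothesis~\ref{hypo:basic-intro}) \emph{assumes} the uniform $C^0$-bound $\|dw\|_{C^0([0,\infty)\times S^1)}<\infty$ outright, together with finite $\pi$-energy and precompact image. You do not need to derive it from an $\varepsilon$-regularity or bubbling analysis, and in particular you do not need any nondegeneracy hypothesis on asymptotic Reeb orbits at this stage---nondegeneracy enters only later, in upgrading subsequence convergence to full exponential convergence. So the ``delicate step'' you flag is not present in this theorem; the compactness argument is immediate from the assumed $C^0$-bound plus the interior regularity estimates. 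Once you drop that paragraph and simply invoke the hypothesis, your proof is complete and coincides with the paper's.
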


In general $Q  = 0$ does not necessarily hold for the closed string case.
When $Q \neq 0$ combined with $T = 0$ happens, we say $w$ has
the bad limit of \emph{appearance of spiraling instantons along the Reeb core}. It is also proven in \cite{oh:contacton} that If $Q = 0 = T$,
then the puncture is removable.

When $Q = 0$, which is always the case when contact instanton
is exact such as those arising from the symplectization case,
we have the following asymptotic convergence result.
\begin{cor} Assume that $\lambda$ is nondegenerate. Let
$w$ be as above and assume $Q = 0, \, T \neq 0$ and that
$w_\tau: S^1 \to Q$ converges as $|\tau| \to \infty$.
Then $w_\tau$ converges to a Reeb orbit of period $|T|$
exponentially fast.
\end{cor}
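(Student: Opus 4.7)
The plan is to first pin down a unique limit Reeb orbit, then linearize the contact instanton equation along this orbit, and finally exploit the spectral gap of the asymptotic operator coming from nondegeneracy of $\lambda$ to produce exponential decay.

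First, by Theorem \ref{thm:subsequence-intro} applied with $Q = 0$ and $T \neq 0$, every subsequential limit $w_\infty$ satisfies $\delbar^\pi w_\infty = 0$ and $w_\infty^*\lambda = T\, dt$. Combined with $E^\pi(w_\infty) = 0$, this forces $w_\infty(\tau,t)$ to be independent of $\tau$ and to trace out a closed Reeb orbit of period $|T|$, i.e., $w_\infty(\tau,t) = \gamma(Tt + t_0)$ for some fixed Reeb orbit $\gamma$ and phase $t_0$. Because $w_\tau$ is assumed to converge in $C^0(S^1,Q)$ as $|\tau| \to \infty$, the subsequential limit is actually the full limit, so there is a unique Reeb orbit $\gamma$ (with period $|T|$) such that $w(\tau,\cdot) \to \gamma(T\cdot + t_0)$ in $C^\infty(S^1,Q)$ by the higher regularity (Theorem \ref{thm:local-regularity-intro}).

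Next, I would fix a tubular neighborhood of $\gamma$ using the splitting $TQ|_\gamma = \xi|_\gamma \oplus \R\{R_\lambda\}$ and the exponential map of the contact triad metric. For $\tau$ sufficiently large, write
\be\label{eq:expchart-intro}
w(\tau,t) = \exp_{\gamma(Tt+t_0)}\bigl(\zeta(\tau,t) + \eta(\tau,t) R_\lambda\bigr)
\ee
with $\zeta(\tau,\cdot) \in \Gamma(\gamma^*\xi)$ and $\eta(\tau,\cdot)$ a small real function, both tending to $0$ in all $C^k$ norms as $\tau \to \infty$. The key point, for which Proposition \ref{prop:blair} of Blair is tailor-made, is that after passing through the covariant derivative $\nabla^\pi$ of the contact triad connection, the linearization of the contact instanton equation \eqref{eq:contacton-intro} along $\gamma$ takes the clean tensorial form
\be\label{eq:linearization-intro}
\nabla^\pi_\tau \zeta + J\nabla^\pi_t \zeta - T \cdot \tfrac{1}{2}(\CL_{R_\lambda}J)\zeta = N(\zeta,\eta),
\ee
where $N$ collects quadratic and higher order terms in $(\zeta,\nabla\zeta,\eta,\nabla\eta)$. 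Equivalently this reads $\partial_\tau \zeta = -A^\pi_\gamma \zeta + N$, where
$$
A^\pi_\gamma := J\nabla^\pi_t - T\cdot\tfrac{1}{2}J(\CL_{R_\lambda}J)
$$
is the asymptotic operator on $L^2(\gamma^*\xi)$, which is symmetric with respect to the triad metric on $\xi|_\gamma$. For the Reeb component, the equation $d(w^*\lambda\circ j) = 0$ together with $df = w^*\lambda \circ j$ in the symplectization case reduces $\eta$ to a quantity controlled pointwise by $|\zeta|^2$, as in \eqref{eq:equation-for-alpha-intro}, so $\eta$ need not be tracked independently.

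Nondegeneracy of $\lambda$ is exactly the statement that the linearized Poincar\'e return map along $\gamma$ has no eigenvalue $1$, which translates into $\ker A^\pi_\gamma = 0$ and hence a spectral gap $\operatorname{dist}(0,\operatorname{Spec}A^\pi_\gamma) \geq \delta > 0$. Now apply the standard exponential decay lemma for evolution equations on a Hilbert space with a self-adjoint generator having a spectral gap: any $L^2$-solution of $\partial_\tau \zeta + A^\pi_\gamma \zeta = N(\zeta,\eta)$ with $\zeta(\tau) \to 0$ and with $N$ at least quadratic in $\zeta$ (hence absorbed once $\|\zeta(\tau)\|$ is small) satisfies
$$
\|\zeta(\tau)\|_{L^2(S^1)} \leq C e^{-\delta \tau}
$$
for $\tau$ large, and a boot-strap through \eqref{eq:linearization-intro} combined with the higher $C^{k,\alpha}$-estimates of Theorem \ref{thm:local-regularity-intro} upgrades this to $\|\zeta(\tau)\|_{C^k(S^1)} \leq C_k e^{-\delta \tau}$ for every $k$. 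Combined with the control of $\eta$ by $|\zeta|^2$, this yields exponential convergence of $w_\tau$ to $\gamma(T\cdot + t_0)$ in every $C^k$-norm.

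The main obstacle, and the place where the tensorial set-up pays off, is step two: proving that the linearization really has the form \eqref{eq:linearization-intro} with no additional Reeb-direction cross terms and with coefficients intrinsic to $(\lambda,J)$. A naive linearization using the Levi-Civita connection would produce terms involving $\nabla^{LC}_{R_\lambda} J$ that obstruct closing \eqref{eq:linearization-intro} on $\Gamma(\gamma^*\xi)$; it is precisely Blair's identity \eqref{eq:nablaLCRlambdaJ=0} (equivalently the identity $\pi\nabla_{R_\lambda}|_\xi = \pi\nabla^{LC}_{R_\lambda}|_\xi$ from the Remark after Proposition \ref{prop:blair}) that makes $A^\pi_\gamma$ well-defined and symmetric on $\Gamma(\gamma^*\xi)$, turning the exponential decay argument into a routine spectral estimate.
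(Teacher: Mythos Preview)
Your overall strategy is sound and would work, but it differs substantially from the paper's route, and one of your cross-references is misplaced.

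\textbf{Different route.} You linearize the \emph{map} $w$ in an exponential chart around the limiting orbit $\gamma$, writing $w=\exp_\gamma(\zeta+\eta R_\lambda)$ with $\zeta\in\Gamma(\gamma^*\xi)$, and then appeal directly to the spectral gap of $A^\pi_\gamma$ via an evolution-equation argument on $L^2(\gamma^*\xi)$. The paper instead never introduces an exponential chart: its basic variable is the \emph{derivative} $\zeta:=\pi\,\partial w/\partial\tau$, which satisfies the fundamental equation \eqref{eq:fundamental-isothermal} on shell. Exponential decay of this derivative-level $\zeta$ (and then of $\alpha-T$ via $\delbar\alpha=\tfrac12|\zeta|^2$) is obtained by the three-interval method of Appendix~\ref{sec:three-interval}, after which $C^0$- and $C^\infty$-convergence of $w$ follow by integration and the alternating boot-strap of Section~\ref{sec:Ckdelta-estimates}. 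Your approach is closer in spirit to the (covariantized) Hofer--Wysocki--Zehnder argument; the paper's approach buys you that no target chart is ever chosen and all estimates are written purely in terms of intrinsic derivative quantities of $w$.

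\textbf{A misplaced reference.} When you say ``$\eta$ is controlled pointwise by $|\zeta|^2$ as in \eqref{eq:equation-for-alpha-intro}'', you are conflating two different $\zeta$'s. In \eqref{eq:equation-for-alpha-intro} the symbol $\zeta$ denotes $d^\pi w(\partial_x)$ and $\alpha$ encodes $w^*\lambda$, i.e.\ both are \emph{derivative-level} quantities; they are not your exponential-chart coordinates $(\zeta,\eta)$. In your framework the Reeb coordinate $\eta$ is governed instead by the linearization of $d(w^*\lambda\circ j)=0$, which at leading order gives a Laplace-type equation $\Delta\eta=$(quadratic in $\zeta,\nabla\zeta$), not a pointwise bound. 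This is fixable, but the argument you actually need here is different from the one you cite. Similarly, the precise form of your \eqref{eq:linearization-intro} from the exponential chart requires the torsion identities of the triad connection (e.g.\ $T(R_\lambda,\cdot)=0$ and \eqref{eq:nablalambdaY}); Blair's identity is used in the paper to simplify the asymptotic operator \emph{after} it has been derived from the derivative-level linearization, not to produce \eqref{eq:linearization-intro} from a chart.
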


\subsubsection{The case of open strings}

Now we make the corresponding statement for
the open string case proved in \cite{oh:contacton-Legendrian-bdy}.

\begin{thm}[Subsequence Convergence; the case of open strings]\label{thm:subsequence-open-intro}
Let $w:[0, \infty)\times [0,1]\to M$ satisfy the contact instanton equations \eqref{eq:contacton-Legendrian-bdy-intro}.
Then for any sequence $s_k\to \infty$, there exists a subsequence, still denoted by $s_k$, and a
massless instanton $w_\infty(\tau,t)$ (i.e., $E^\pi(w_\infty) = 0$)
on the cylinder $\R \times [0,1]$  such that
$$
\lim_{k\to \infty}w(s_k + \tau, t) = w_\infty(\tau,t)
$$
in the $C^l(K \times [0,1], M)$ sense for any $l$, where $K\subset [0,\infty)$ is an arbitrary compact set.
Furthermore, $w_\infty$ has $Q = 0$ and the formula $w_\infty(\tau,t)= \gamma(T\, t)$  with
asymptotic action $T$, where $\gamma$ is some Reeb chord joining $R_0$ and $R_1$ of period $|T|$.
\end{thm}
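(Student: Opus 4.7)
\smallskip

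The plan is to follow the blueprint of the closed string subsequence convergence theorem (Theorem \ref{thm:subsequence-intro}) while exploiting the Legendrian boundary condition to eliminate the appearance of the asymptotic charge $Q$ at the puncture. Set $w_k(\tau,t) := w(s_k + \tau, t)$ for $(\tau, t) \in \R \times [0,1]$. First I would establish a uniform $C^{k}_{\text{\rm loc}}$ bound on the sequence $\{w_k\}$. Finiteness of the total energy $E(w) = E^\pi(w) + E^\perp(w)$ forces
\[
\int_{[-R, R]\times [0,1]} |d^\pi w_k|^2\, dA \longrightarrow 0, \qquad \int_{[-R, R]\times [0,1]} |w_k^*\lambda|^2\, dA \longrightarrow 0
\]
for each fixed $R$ as $k \to \infty$, which in particular controls $\|dw_k\|_{L^4}$ on compact subsets of $\R \times [0,1]$ (using the coupled system \eqref{eq:equation-for-zeta0-intro}--\eqref{eq:equation-for-alpha-intro} together with $\epsilon$-regularity to rule out energy concentration). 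Feeding this $L^4$ smallness into the local boundary $C^{k,\delta}$-estimate of Theorem \ref{thm:local-regularity-intro} yields uniform $C^k$ bounds on $\{dw_k\}$ over any compact subset of the strip, so Arzel\`a--Ascoli produces a subsequential limit $w_\infty \in C^\infty(\R \times [0,1], M)$ solving \eqref{eq:contacton-Legendrian-bdy-intro} with Legendrian boundary on the same ordered pair $(R_0, R_1)$, and with $E^\pi(w_\infty) = 0$ by lower semicontinuity combined with the tail energy decay.

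The second step is to extract the structure of a massless instanton on the strip. Since $d^\pi w_\infty \equiv 0$, the derivative $dw_\infty$ is valued in $\R\{R_\lambda\}$, so $w_\infty$ maps $\R \times [0,1]$ into a single Reeb trajectory and we may write $dw_\infty = (w_\infty^*\lambda)\otimes R_\lambda$. Because $d\lambda$ kills the Reeb direction, $d(w_\infty^*\lambda) = w_\infty^*(d\lambda) = 0$; combined with the instanton condition $d(w_\infty^*\lambda \circ j) = 0$, the one-form $\alpha := w_\infty^*\lambda = a\, d\tau + b\, dt$ is harmonic on the strip. The Legendrian boundary condition $w_\infty(\tau, i) \in R_i$ with $\lambda|_{R_i} = 0$ forces
\[
a(\tau, 0) = a(\tau, 1) = 0.
\]

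The third step extracts $w_\infty^*\lambda = T\, dt$ from the harmonic 1-form $\alpha$. Expanding $a(\tau, t) = \sum_{n \geq 1} a_n(\tau) \sin(n\pi t)$ to match the Dirichlet boundary condition and using the conjugate Cauchy--Riemann relations $\partial_\tau a = -\partial_t b$, $\partial_t a = \partial_\tau b$ yields $a_n''(\tau) = n^2 \pi^2 a_n(\tau)$, whose only $L^2$ solution is $a_n \equiv 0$. Hence $a \equiv 0$ and the harmonic conjugate relations force $b$ to be a real constant $T$. This constant equals the asymptotic action, and the asymptotic charge
\[
Q = \lim_{r\to\infty} \int_{\{r\}\times [0,1]} w^*\lambda \circ j
\]
vanishes automatically because it equals $-\int_0^1 a_\infty(\tau,t)\, dt = 0$. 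Integrating $dw_\infty = T\, dt \otimes R_\lambda$ produces $w_\infty(\tau, t) = \gamma(Tt)$ where $\gamma$ is a Reeb flow line, and the boundary conditions $\gamma(0) \in R_0$, $\gamma(T) \in R_1$ identify $\gamma$ as a Reeb chord of length $|T|$ joining $R_0$ to $R_1$.

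The main obstacle I anticipate is the first step, namely the passage from finite total energy to uniform $C^k$-bounds up to the Legendrian boundary: one must simultaneously prevent bubbling at the puncture and ensure the $\pi$-part and the Reeb part of $dw$ are controlled together despite being governed by the coupled (not decoupled) system \eqref{eq:equation-for-zeta0-intro}--\eqref{eq:equation-for-alpha-intro}. This is where the $W^{2,2}$-estimate (Theorem \ref{thm:local-W12-intro}) adapted to the Legendrian boundary condition, together with the alternating bootstrap of Theorem \ref{thm:local-regularity-intro}, is indispensable. Once the limit exists and the uniform $C^k$ convergence of $w_k$ to $w_\infty$ is in place, the remaining harmonic analysis on the strip is routine and, crucially, the Legendrian boundary condition makes the charge $Q$ vanish automatically so that the pathological ``spiraling'' limits of the closed string case cannot occur.
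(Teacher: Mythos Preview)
Your overall strategy matches the approach the paper points to (the details are deferred to \cite{oh:contacton-Legendrian-bdy} and \cite{oh-yso:index}): pass to a subsequential $C^\infty_{\text{loc}}$ limit via the local a priori estimates, observe that the massless limit has $w_\infty^*\lambda$ a bounded harmonic one-form on the strip, and then use the Legendrian boundary condition $\lambda(\partial_\tau w_\infty)|_{t=0,1}=0$ to kill the $d\tau$-coefficient, whence $Q=0$ and $w_\infty(\tau,t)=\gamma(Tt)$.

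There is, however, one genuine error in your first step. The claim that finite total energy forces $\int_{[-R,R]\times[0,1]}|w_k^*\lambda|^2\,dA\to 0$ is false: already for the trivial strip over a Reeb chord one has $w^*\lambda=T\,dt$, so this integral equals $2RT^2$ and does not tend to zero. Finite $E^\perp$ is a supremum of $\int\varphi(f)\,df\circ j\wedge df$ over test functions $\varphi$, not an $L^2$ bound on $w^*\lambda$, and controls nothing of the sort. The paper sidesteps this entirely by working under Hypothesis~\ref{hypo:basic-intro}, which \emph{assumes} $\|dw\|_{C^0(\dot\Sigma)}<\infty$ as a standing hypothesis; with that in hand no $\epsilon$-regularity or bubbling argument is needed here, and the uniform $C^k$ bounds on $\{w_k\}$ follow directly from Theorem~\ref{thm:local-regularity-intro}. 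A related minor slip in step three: the condition singling out $a_n\equiv 0$ among solutions of $a_n''=n^2\pi^2 a_n$ is \emph{boundedness} on $\R$ (inherited from $\|dw\|_{C^0}<\infty$), not $L^2$ integrability.
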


\begin{cor}[Vanishing Charge] Assume the pair $(\lambda, \vec R)$ is nondegenerate.
Let $w$ be as above with finite energy. Suppose that $w(\tau,\cdot)$ converges as $\tau \to \infty$ in the
strip-like coordinate at a puncture $p \in \del \Sigma$  with associated Legendrian pair $(R,R')$.
 Then its  asymptotic charge $Q$ vanishes at $p$.
 \end{cor}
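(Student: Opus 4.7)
The plan is to deduce the Vanishing Charge corollary as a direct consequence of the open-string Subsequence Convergence Theorem~\ref{thm:subsequence-open-intro}, which already asserts that every subsequential limit $w_\infty$ of $w$ at the puncture $p$ is the Reeb chord $w_\infty(\tau,t) = \gamma(T\,t)$ and in particular is independent of $\tau$. Once $\tau$-independence of the limit is in hand, the computation of the asymptotic charge reduces to essentially a single line.

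Concretely, I would start by choosing an arbitrary sequence $\tau_k \to \infty$ in the strip-like coordinate at $p$ and applying Theorem~\ref{thm:subsequence-open-intro} to the translates $w_k(\tau,t) := w(\tau_k + \tau, t)$. After passing to a subsequence, $w_k \to w_\infty$ in $C^\infty_{\text{loc}}(\R \times [0,1], M)$ with $w_\infty(\tau,t) = \gamma(T\,t)$ for some Reeb chord $\gamma$ from $R$ to $R'$; in particular $\partial_\tau w_\infty \equiv 0$. Expressing the defining integral for the asymptotic charge in strip coordinates, where $j\partial_t = -\partial_\tau$, gives
\[
\int_{\{\tau_k\}\times[0,1]} w^*\lambda \circ j \;=\; \int_0^1 (w^*\lambda\circ j)(\partial_t)\,dt \;=\; -\int_0^1 \lambda\bigl(\partial_\tau w(\tau_k,t)\bigr)\,dt,
\]
and the $C^1_{\text{loc}}$ convergence of $w_k$ to the $\tau$-independent $w_\infty$ forces the right-hand side to tend to $0$ as $k \to \infty$.

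Finally, the hypothesis that $w(\tau,\cdot)$ admits a full (not merely subsequential) limit as $\tau \to \infty$ guarantees by uniqueness of subsequential limits that $Q = \lim_{\tau\to\infty}\int_{\{\tau\}\times[0,1]} w^*\lambda\circ j$ exists and equals $0$. The reason this open-string statement is cleaner than its closed-string counterpart is precisely the Legendrian boundary condition: in the closed-string setting one must separately rule out the pathological ``spiraling instanton along the Reeb core,'' where the limit satisfies $w_\infty^*\lambda = -Q\,d\tau + T\,dt$ with $T = 0$ and $Q \neq 0$, whereas in the open-string case Theorem~\ref{thm:subsequence-open-intro} automatically produces a genuine Reeb chord from $R$ to $R'$ as the limit, and such a chord carries no asymptotic charge. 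Consequently there is essentially no new analytic obstacle here; all the difficulty has already been absorbed into the subsequence convergence theorem, whose proof crucially exploits the Legendrian boundary condition via the free-boundary identity $\partial_\nu w \perp TR$ and the elliptic estimates of Theorem~\ref{thm:local-regularity-intro}.
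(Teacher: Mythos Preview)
Your proposal is correct and is exactly the approach the paper takes: the corollary is stated immediately after Theorem~\ref{thm:subsequence-open-intro} with no separate proof, because that theorem already asserts that every subsequential limit has the $\tau$-independent form $w_\infty(\tau,t)=\gamma(Tt)$, from which $Q=0$ follows by the one-line computation you give. One small remark: your final appeal to the full-convergence hypothesis is not strictly necessary for the conclusion $Q=0$, since the standard argument ``every sequence $\tau_k\to\infty$ has a further subsequence along which the charge integral tends to $0$'' already forces the full limit of the integral to be $0$; the convergence hypothesis on $w(\tau,\cdot)$ is there only so that the quantity $Q$ is well-defined to begin with.
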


\subsection{Asymptotic operators and their analysis}
\label{subsec:admissiblepair}

We first mention a few differences between the way how we study the asymptotic operators and
those of \cite{HWZ:asymptotics} and of other literature such as \cite[Appendix C]{robbin-salamon:asymptotic}, \cite{siefring:relative,siefring:intersection},
\cite{wendl:lecture}, \cite{cant:thesis}.

\begin{rem}
In \cite[Appendix E]{robbin-salamon:asymptotic},
\cite{siefring:relative,siefring:intersection}, \cite[Section 3.3]{wendl:lecture}, there have been attempts to give a coordinate-free definition
of the asymptotic operator along the
associated asymptotic Reeb orbit for a pseudoholomorphic
curve $u = (w,f)$ on symplectization.
However both fall short of a seamless definition of the `asymptotic operator' of the Reeb orbits
because the Reeb orbit lives on $Q$ while the pseudoholomorphic curves live on the product $Q \times \R$
and the asymptotic limit of pseudoholomorphic curve live at infinity $Q \times \{\pm \infty\}$ \emph{where only the contact structure
makes sense, i.e., is canonically defined, but not the contact form itself}.
Cant studies the asymptotic operator for the relative context in \cite[Section 6.3]{cant:thesis}
by adapting Wendl's. What these literature (e.g.\cite[Section 3.3]{wendl:lecture}) are describing
is actually the asymptotic operator of the contact instanton $w$ but trying to describe it in terms of
the pseudoholomorphic curves which prevents them from being able to give
a seamless definition. (See our definition of the asymptotic operator of contact instantons given
in Definition \ref{defn:asymptotic-operator} and compare it therewith. See also \cite[Section 11.2 \& 11.5]{oh-wang:CR-map2}
for the precursor of our definition.)
\end{rem}

In their series of works \cite{HWZ:embedding-control} --
\cite{HWZ:smallarea} in 3 dimension,
Hofer-Wysocki-Zehnder carried out fundamental
analytic study of  pseudoholomorphic curves  on symplectization
utilizing \emph{special coordinates followed by some local adjustment of given
almost complex structure along the Reeb orbit of interest.}
This practice has been propagated to other literature, such as \cite{bourgeois}, \cite[Section 1.3]{hutchings:index-inequality}
giving rise to some unnecessary restrictions on the choice of
almost complex structure beyond the natural $\lambda$-adaptedness.
\footnote{Hutchings has informed the first author that this part of his paper
is now obsolete in that Siefring proved the asymptotics he needed without the extra assumptions in \cite{siefring:relative} quoted above.
Siefring's paper is  now quoted for this in Hutching's  later paper
\cite{hutchings:index-revisit} which is a kind of update to
\cite{hutchings:index-inequality}.}

Largely, thanks to  the property
$\nabla_{R_\lambda}^{\text{\rm LC}}J = 0$ from Proposition \ref{prop:blair} combined with
our usage of contact triad connection in the derivation of the formula for the asymptotic operator,
 our asymptotic analysis provided in \cite{kim-oh:asymp-analysis} does not
need any of those special coordinates and so Hutchings' assumption (or similar ansatz in other literature)
is really not needed.

\begin{thm}[Corollary \ref{cor:A-in-LC}]\label{thm:A-in-LC} Let $(\lambda, J)$ be any adapted pair
and let $\nabla^{\text{\rm LC}}$ be the Levi-Civita connection of the
triad metric of $(Q,\lambda,J)$. For given contact instanton $w$ with its action $\int \gamma^*\lambda =T$
at a puncture, let $A_{(\lambda, J, \nabla)}$ be the asymptotic operator of $w$ written in cylindrical coordinate $(\tau,t)$. Then
\begin{enumerate}
\item $[\nabla_t^{\text{\rm LC}},J] (= \nabla^{\text{\rm LC}}_t J) = 0$,
\item We have
\beastar
A^\pi_{(\lambda,J,\nabla)} & = & - J\nabla_t + \frac{T}{2}\CL_{R_\lambda}JJ \\
& = & - J\nabla_t^{\text{\rm LC}} - \frac{T}{2} Id +  \frac{T}{2}\CL_{R_\lambda}JJ.
\eeastar
\end{enumerate}
\end{thm}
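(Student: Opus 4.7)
Both parts rest on Proposition~\ref{prop:blair} together with the relation between the contact triad connection $\nabla$ and the Levi-Civita connection $\nabla^{\text{LC}}$ recalled in the remark following it.

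For part~(1), recall from Theorem~\ref{thm:subsequence-intro} that the asymptotic Reeb orbit $\gamma$ is parametrized so that $\dot\gamma(t) = T\,R_\lambda(\gamma(t))$. The induced Levi-Civita covariant derivative along $\gamma$ therefore reads $\nabla_t^{\text{LC}} = T\,\nabla^{\text{LC}}_{R_\lambda}$ when applied to any section of $\gamma^*TQ$ via a local extension, and Proposition~\ref{prop:blair} immediately gives $\nabla_t^{\text{LC}} J = T\,\nabla^{\text{LC}}_{R_\lambda} J = 0$. Since $(\nabla_t^{\text{LC}} J)Y = \nabla_t^{\text{LC}}(JY) - J\nabla_t^{\text{LC}} Y = [\nabla_t^{\text{LC}},J]\,Y$, this is exactly the commutator identity in (1).

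For part~(2), the first equality will be the formula for $A^\pi_{(\lambda,J,\nabla)}$ relative to the contact triad connection, which I will take from Section~\ref{sec:asymptotic-analysis}: it is obtained by linearizing the $\pi$-component $\delbar^\pi w = 0$ of the contact instanton equation along $\gamma$ and invoking the Fundamental Equation (Theorem~\ref{thm:fundamental-intro}) to identify the zeroth-order piece as $\tfrac{T}{2}(\CL_{R_\lambda}J)J$. To obtain the second equality, I will convert $\nabla_t$ to $\nabla_t^{\text{LC}}$ via the decomposition $\nabla = \nabla^{\text{LC}} + B$ from \cite{oh-wang:connection}. Applied to $Y \in \Gamma(\gamma^*\xi)$ in the direction $\dot\gamma = T R_\lambda$, this yields
\[
\nabla_t Y - \nabla_t^{\text{LC}} Y \;=\; T\,B(R_\lambda,Y),
\]
so that as endomorphisms of $\gamma^*\xi$,
\[
-J\nabla_t \;=\; -J\nabla_t^{\text{LC}} \;-\; T\,J\circ B(R_\lambda,\cdot).
\]
It then remains to identify $J\circ B(R_\lambda,\cdot)\big|_\xi = \tfrac{1}{2}\,\mathrm{Id}\big|_\xi$, which produces the claimed $-\tfrac{T}{2}\,\mathrm{Id}$ correction; the tensorial piece $\tfrac{T}{2}(\CL_{R_\lambda}J)J$ is unaffected by the change of connection.

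The hard step will be this last algebraic identification, for which I will need to unpack the explicit expression of the difference tensor $B$ from \cite{oh-wang:connection} (equivalently, arXiv v2 Theorem~1.4 therein) and perform the short Reeb-versus-$\xi$ bookkeeping of its components. Once this identity is established, the second equality follows by direct substitution, and internal consistency is ensured by part~(1): the $-\tfrac{T}{2}\,\mathrm{Id}$ correction commutes with $J$, so passing from $\nabla$ to $\nabla^{\text{LC}}$ preserves the fact that $A^\pi_{(\lambda,J,\nabla)}$ is a well-defined endomorphism of $\Gamma(\gamma^*\xi)$.
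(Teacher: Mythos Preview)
Your plan is correct and follows essentially the same route as the paper: Part~(1) via Proposition~\ref{prop:blair}, the first equality in Part~(2) via Proposition~\ref{prop:A}, and the second equality by comparing $\nabla_t$ with $\nabla_t^{\text{LC}}$ along the Reeb direction.

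One practical point on executing the last step. You do not need to go back to the explicit formula for the difference tensor $B$ in \cite{oh-wang:connection}; everything you need is already in the present paper. Since the triad connection satisfies $T(R_\lambda,\cdot)=0$ and the Levi--Civita connection is torsion-free, subtracting the two torsion identities gives $B(R_\lambda,Y)=B(Y,R_\lambda)$ for $Y\in\xi$. Now compare Corollary~\ref{cor:connection}(1), $\nabla_Y R_\lambda=\tfrac12(\CL_{R_\lambda}J)JY$, with the Blair lemma quoted just before Corollary~\ref{cor:A-in-LC}, $\nabla_Y^{\text{LC}}R_\lambda=\tfrac12 JY+\tfrac12(\CL_{R_\lambda}J)JY$, to read off $B(R_\lambda,Y)=-\tfrac12 JY$. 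Then $-J\nabla_t=-J\nabla_t^{\text{LC}}-T\,J\,B(R_\lambda,\cdot)=-J\nabla_t^{\text{LC}}-\tfrac{T}{2}\,\mathrm{Id}$ on $\gamma^*\xi$, as required.

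A warning: the Remark following Proposition~\ref{prop:blair} asserts $B(R_\lambda,Y)=0$ for $Y\in\xi$, which is inconsistent with the computation above and with the very identity you are proving. Do not rely on that remark; the direct torsion comparison sketched here is the safe route and is exactly how the paper's own argument (``combining Proposition~\ref{prop:A} and this lemma'') is meant to proceed.
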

We also have the following formula of $A^\pi_{(\lambda,J,\nabla)}$ that is 
independent of the choice of connections.

\begin{prop}[Proposition 1.7 \cite{kim-oh:asymp-analysis}]
\label{prop:A=T-intro} We have
\be\label{eq:A-in-CL2-intro}
A^\pi_{(\lambda,J,\nabla)}
= T\, \left(-\frac12 \CL_{R_\lambda}J - \CL_{R_\lambda}
+ \frac{1}{2}(\CL_{R_\lambda}J) J\right).
\ee
\end{prop}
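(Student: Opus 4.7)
The plan is to start from Theorem~\ref{thm:A-in-LC}(2), which gives
\begin{equation*}
A^\pi_{(\lambda,J,\nabla)} \,=\, -J\nabla_t + \tfrac{T}{2}(\CL_{R_\lambda}J)J,
\end{equation*}
and to convert the connection-dependent term $-J\nabla_t$ into an expression purely involving $\CL_{R_\lambda}$ and $\CL_{R_\lambda}J$. Since the right-hand side of \eqref{eq:A-in-CL2-intro} already contains no trace of $\nabla$, the result will in particular confirm the expected intrinsicness of $A^\pi$.

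First, since the asymptotic orbit is parameterized as $w_\infty(\tau,t) = \gamma(Tt)$ with $\partial_t w_\infty = T R_\lambda$, the pulled-back covariant derivative satisfies $\nabla_t = T\nabla_{R_\lambda}$ on sections of $\gamma^*\xi$. Second, two essential properties of the contact triad connection from \cite{oh-wang:connection} come into play: (i) $\nabla R_\lambda = 0$, and (ii) the explicit formula for the torsion $T^\nabla$ restricted to $\xi$ when one argument is $R_\lambda$, which expresses $T^\nabla(R_\lambda,\cdot)|_\xi$ as a $J$-twisted multiple of $\CL_{R_\lambda}J$. Combining (i) with the defining torsion identity applied to $(R_\lambda,\eta)$ with $\eta\in\xi$ gives
\begin{equation*}
\nabla_{R_\lambda}\eta \,=\, \CL_{R_\lambda}\eta + T^\nabla(R_\lambda,\eta).
\end{equation*}

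Third, substituting the resulting expression for $-J\nabla_t$ into the formula for $A^\pi_{(\lambda,J,\nabla)}$ and simplifying with the help of the pointwise algebraic identity
\begin{equation*}
J\,(\CL_{R_\lambda}J) + (\CL_{R_\lambda}J)\,J \,=\, 0 \quad\text{on }\xi,
\end{equation*}
obtained by Lie-differentiating the constraint $J^2 = -\mathrm{Id}|_\xi$ along $R_\lambda$, the connection-dependent pieces conspire to cancel and \eqref{eq:A-in-CL2-intro} drops out.

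The hardest part will be the sign- and type-tracking in the torsion formula, together with the successive applications of the anti-commutation identity: there are several terms of the form $J(\CL_{R_\lambda}J)\eta$, $(\CL_{R_\lambda}J)J\eta$, and $\CL_{R_\lambda}(J\eta)$ that must first be normalized into a common form before the cancellations become visible. A cleaner conceptual route, which is presumably the one actually implemented in \cite{kim-oh:asymp-analysis}, is to first establish the connection-independence of $A^\pi$ directly from the linearized contact instanton equation, and then verify the proposed formula using the Levi-Civita connection of the triad metric, where Proposition~\ref{prop:blair} yields $[\nabla^{\text{LC}}_{R_\lambda},J]=0$ along $\gamma$ and makes the computation considerably more transparent.
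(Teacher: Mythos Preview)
Your overall strategy---start from $A^\pi=-J\nabla_t+\tfrac{T}{2}(\CL_{R_\lambda}J)J$, rewrite $\nabla_t=T\nabla_{R_\lambda}$, and then express $\nabla_{R_\lambda}$ on $\Gamma(\gamma^*\xi)$ via the torsion identity and the anti-commutation $J(\CL_{R_\lambda}J)+(\CL_{R_\lambda}J)J=0$---is the right one, and it is essentially what the paper has in mind (the present survey only quotes the result from \cite{kim-oh:asymp-analysis} without reproducing a proof). However, your two ``essential properties'' of the contact triad connection are both stated incorrectly, and this matters because the computation hinges on exactly these facts.

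Concretely: (i) it is \emph{not} true that $\nabla R_\lambda=0$; axiom~(3) of Theorem~\ref{thm:connection} only gives $\nabla_{R_\lambda}R_\lambda=0$, while Corollary~\ref{cor:connection}(1) says $\nabla_\eta R_\lambda=\tfrac12(\CL_{R_\lambda}J)J\eta$ for $\eta\in\xi$, which is generically nonzero. (ii) Conversely, the torsion in the Reeb direction \emph{vanishes}: axiom~(2) of Theorem~\ref{thm:connection} is precisely $T(R_\lambda,\cdot)=0$, so there is no ``$J$-twisted multiple of $\CL_{R_\lambda}J$'' coming from the torsion. You have swapped the roles of $\nabla_{\,\cdot\,}R_\lambda$ and $T(R_\lambda,\cdot)$. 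With the correct inputs the torsion identity reads
\[
\nabla_{R_\lambda}\eta \;=\; [R_\lambda,\eta]+\nabla_\eta R_\lambda \;=\; \CL_{R_\lambda}\eta+\tfrac12(\CL_{R_\lambda}J)J\eta ,
\]
and substituting this (together with $J(\CL_{R_\lambda}J)J=(\CL_{R_\lambda}J)$) yields
\[
A^\pi \;=\; T\Big(-\tfrac12(\CL_{R_\lambda}J)\;-\;J\CL_{R_\lambda}\;+\;\tfrac12(\CL_{R_\lambda}J)J\Big).
\]
Note the middle term is $-J\CL_{R_\lambda}$, not $-\CL_{R_\lambda}$; one can double-check this is the correct first-order piece by observing that $J\CL_{R_\lambda}$ is $L^2$-self-adjoint on $\Gamma(\gamma_T^*\xi)$ (use $(\CL_{R_\lambda})^*=-\CL_{R_\lambda}-(\CL_{R_\lambda}J)J$ and the commutation relations), whereas $\CL_{R_\lambda}$ alone is not, so the displayed formula \eqref{eq:A-in-CL2-intro} as literally written would fail to be self-adjoint. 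In short: fix the two connection facts, and your argument goes through to the corrected target.
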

Because the existing literature on the pseudoholomorphic curves on
symplectization lack  this kind of explicit formula of the asymptotic operator
(together with the commuting property $[\nabla_t^{\text{\rm LC}},J] = 0$),
it has been the case that the general abstract
perturbation theory of linear operators Kato \cite{kato} is just
quoted in their study of asymptotic operators  which prevents one
from making any statement on specific dependence on
the adapted almost complex structures.
(See \cite[Lemma 3.17 \& Theorem 3.35]{wendl:lecture},
for example, which  in turn follows the statements and arguments
given in \cite{HWZ:embedding-control}.)

\begin{rem} \begin{enumerate}
\item The formula of an asymptotic operator canonically applied to
every contact instanton $w$  of the adapted pair $(\lambda, J)$
(as well as that of a pseudoholomorphic curve
$u = (w,f)$ on symplectization) depends not only on the adapted pair
$(\lambda, J)$ but also on the connection that is used to compute
the linearization operator of $w$ (or $(u,f)$).
(See Definition \ref{defn:asymptotic-operator} and Remark
\ref{rem:asymptotic-operator} for the explanation for why.) In this regard, we denote
the asymptotic operator by
$$
A^\pi_{(\lambda,J, \nabla)}(u) := A^\pi_{(\lambda,J, \nabla)}(w)
$$
So it is conceivable to expect that a good choice of connection will
give rise to a formula of the asymptotic operator that is easier to analyze.
\item  In fact, the lack of precise definition together with
the non-commuting property $[\nabla_t,J] \neq  0$ in the literature
combined with the practice of
\emph{using special coordinates followed by adjusting the almost
complex structure along the Reeb orbits}
is bound to make the analysis of asymptotic operators very complicated
as seen from \cite{HWZ:smallarea}, \cite{siefring:relative},
 and prevents one from developing any perturbation
theory of asymptotic operators under the perturbation of $J$'s
such as those developed by the present authors in \cite{kim-oh:asymp-analysis}.
The latter is summarized in Section \ref{sec:asymptotic-analysis} of the present survey.
\end{enumerate}
\end{rem}

On the other hand, our explicit formula of the asymptotic operator given in Theorem
\ref{thm:A-in-LC}, which simultaneously applies to all closed Reeb orbits, enables us to prove
the following natural generic perturbation result \cite{kim-oh:asymp-analysis}.

\begin{thm}[Generic simpleness of eigenvalues; \cite{kim-oh:asymp-analysis}]
\label{thm:eigenvalue-intro}  Let $(Q,\xi)$ be a contact manifold.
Assume that $\lambda$ is
nondegenerate. For a generic choice of $\lambda$-adapted CR almost complex
structures $J$, all eigenvalues $\mu_i$ of the asymptotic
operator are simple for all
closed Reeb orbits of $\lambda$.
\end{thm}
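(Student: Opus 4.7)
The plan is to combine a Baire category reduction to a single Reeb orbit with first-order perturbation theory and a pointwise surjectivity check. Since $\lambda$ is nondegenerate, the closed Reeb orbits (counted with iteration) form a countable set $\{\gamma_k\}_{k\in\N}$ with periods $T_k$. For each $k$, write $A_k(J):= A^\pi_{(\lambda, J, \nabla)}$, viewed as a self-adjoint first-order elliptic operator on sections of $\gamma_k^* \xi$ over $S^1$. Since $S^1$ is compact, $A_k(J)$ has compact resolvent and discrete real spectrum. By standard Kato perturbation theory, the set
$$
\CJ_k := \{J \text{ adapted} : \text{every eigenvalue of } A_k(J) \text{ is simple}\}
$$
is open in a suitable $C^\ell$-topology on $\lambda$-adapted CR almost complex structures. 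Density of each $\CJ_k$ then implies $\bigcap_k \CJ_k$ is residual by Baire category, and a Taubes-type trick upgrades genericity to the $C^\infty$-topology.

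Fix $k$ and suppose $A_k(J_0)$ has eigenvalue $\mu_0$ of multiplicity $m \geq 2$ with eigenspace $V \subset \Gamma(\gamma_k^*\xi)$. First-order self-adjoint perturbation theory gives that, for a smooth family $J_s$ with variation $\dot J = \frac{d}{ds}\big|_{s=0} J_s$, the eigenvalues branching from $\mu_0$ are, to first order in $s$, the eigenvalues of the real symmetric $m \times m$ matrix
$$
M(\dot J) := \bigl(\langle \delta A_k(\dot J)\, e_i, e_j\rangle_{L^2}\bigr)_{i,j=1}^m,
$$
where $\{e_i\}$ is any $L^2$-orthonormal basis of $V$ and $\delta A_k(\dot J)$ is the linearization. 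Non-simple symmetric matrices form a real-algebraic subvariety of codimension at least $2$, so if the linear map $\dot J \mapsto M(\dot J)$ is surjective onto the space of symmetric $m\times m$ matrices, one can perturb $J_0$ to split $\mu_0$ into $m$ distinct eigenvalues. The task therefore reduces to verifying this surjectivity.

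Using Theorem \ref{thm:A-in-LC} together with $\nabla^{\text{\rm LC}}_{R_\lambda} J = 0$ from Proposition \ref{prop:blair}, the linearization reads
$$
\delta A_k(\dot J) = -\dot J\, \nabla_t^{\text{\rm LC}} + \tfrac{T_k}{2}\bigl((\CL_{R_\lambda}\dot J) J + (\CL_{R_\lambda} J) \dot J\bigr),
$$
while the tangent space of $\lambda$-adapted CR almost complex structures at $J_0$ consists of endomorphisms $\dot J$ with $\dot J J_0 + J_0 \dot J = 0$ on $\xi$, $\dot J R_\lambda = 0$, and with $d\lambda(\cdot,\dot J\cdot)|_\xi$ symmetric; so $\dot J|_\xi$ can be freely prescribed pointwise as a $J_0$-antilinear self-adjoint endomorphism of $\xi$. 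The surjectivity argument for $\dot J \mapsto M(\dot J)$ is the main obstacle, and I would handle it by a pointwise localization: by unique continuation for the first-order elliptic operator $A_k(J_0)$, the eigenfunctions $e_1, \ldots, e_m$ cannot share a common zero, and one can find $t_0 \in S^1$ where $\{e_i(t_0)\}$ are linearly independent in $\xi_{\gamma_k(t_0)}$. Choosing $\dot J$ supported in a small tubular neighborhood of $\gamma_k(t_0)$ with a narrow bump in $t$, the dominant contribution to $M(\dot J)$ comes from $-\dot J\, \nabla_t^{\text{\rm LC}}$ localized at $t_0$, which depends pointwise linearly on $\dot J(t_0)$. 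A direct algebraic check then reduces the matter to the surjectivity of the map
$$
B \longmapsto \bigl( \langle B\, \nabla_t^{\text{\rm LC}} e_i(t_0),\, e_j(t_0)\rangle \bigr)_{i,j=1}^m
$$
from $J_0$-antilinear self-adjoint $B \in \End(\xi_{\gamma_k(t_0)})$ to symmetric $m\times m$ matrices, which holds by elementary linear algebra once $\{e_i(t_0)\}$ is independent. The one subtlety is that for iterated (multiply-covered) orbits $\gamma_k = \gamma^N$, the eigenfunctions carry an additional $\Z/N$-covariance, so the pointwise perturbation must be performed along the underlying simple orbit and averaged in an equivariant way; this equivariant surjectivity check, rather than the basic linear algebra, is where I expect the real technical work to lie.
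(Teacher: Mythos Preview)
The paper does not actually prove this theorem. Both occurrences (Theorem~\ref{thm:eigenvalue-intro} in the introduction and Theorem~\ref{thm:simple-eigenvalue} in Section~\ref{sec:asymptotic-analysis}) are stated with attribution to the companion paper \cite{kim-oh:asymp-analysis}, which is listed as in preparation. What the present survey contributes is only the prerequisite: the explicit, connection-independent formula for $A^\pi_{(\lambda,J,\nabla)}$ (Proposition~\ref{prop:A}, Corollary~\ref{cor:A-in-LC}, Proposition~\ref{prop:A=T-intro}) together with Blair's identity $\nabla^{\text{\rm LC}}_{R_\lambda} J = 0$, and the remark that these formulae ``enable us to prove'' the genericity result. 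There is therefore no proof here against which to compare your proposal.

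That said, your outline is the natural one and is consistent with what the survey advertises as the mechanism: the explicit dependence of $A^\pi_{(\lambda,J,\nabla)}$ on $J$ makes first-order Kato perturbation in $J$ tractable, and a Sard--Smale/Baire reduction over the countable set of Reeb orbits is the standard packaging. Two technical points deserve care. First, your linearization formula for $\delta A_k(\dot J)$ omits the variation of the connection: both the contact triad connection $\nabla$ and the Levi-Civita connection $\nabla^{\text{\rm LC}}$ depend on $J$ through the triad metric, so differentiating $-J\nabla_t^{\text{\rm LC}}$ in $J$ produces an extra term $-J(\delta\nabla_t^{\text{\rm LC}})$ in addition to $-\dot J\,\nabla_t^{\text{\rm LC}}$; you should either control this extra term or, better, use the connection-free formula \eqref{eq:A-in-CL2-intro} to compute $\delta A_k$. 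Second, your unique-continuation step is actually easier than you suggest: for a first-order linear ODE on $S^1$, $L^2$-independent solutions of the same eigenvalue equation are pointwise linearly independent at \emph{every} $t$, not merely at some $t_0$. Your identification of the multiply-covered case as the genuine technical crux is accurate; since $\dot J$ lives on $Q$ and not on the covering circle, the induced perturbation along an $N$-fold iterate is automatically $\Z/N$-periodic, and the surjectivity of $\dot J \mapsto M(\dot J)$ must be argued within that constraint.
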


See Section \ref{sec:asymptotic-analysis} for our derivation of the formula of
the asymptotic operator and a summary of our analysis in \cite{kim-oh:asymp-analysis}
of the asymptotic operators. We believe that this kind of perturbation result
will play some role in the construction of Kuranishi structures
on the moduli space of finite energy contact instantons so that certain
natural functor can be defined in our Fukaya-type category of contact manifolds
\cite{kim-oh:contacton-kuranishi}, \cite{oh:entanglement2} (See Remark \ref{rem:compatibility} (2) for
the relevant remark.)

\subsection{Comparison of compactifications of two moduli spaces}

(The materials in this subsection is borrowed from
\cite{kim-oh:contacton-kuranishi}, \cite{oh:entanglement2}
which are in preparation.)

Now let us consider contact instantons $w$ arising
from a pseudoholomorphic curves on
symplectization $(w,f)$. In particular all such $w$ has its the
\emph{charge class} vanishes $[w^*\lambda \circ j] = 0$
in $H^1(\dot \Sigma, \Z)$. (See \cite{oh-savelyev} or Subsection
\ref{subsec:prescribed-charge} for its definition.)

\begin{rem}
For the boundary punctured case,
the charge class can be lifted to
$H^1(\overline\Sigma, \del_\infty \dot \Sigma)$ where $\overline\Sigma$
is the real blow-up of $\dot \Sigma$ along the punctures.
See  Subsection \ref{subsec:prescribed-charge} for its definition.
\end{rem}

Let $\Sigma$ be a closed Riemann surface of genus $g$ and $\dot \Sigma$
be the associated punctured Riemann surface
$\dot \Sigma = \Sigma \setminus \{z_1,\cdots, z_\ell\}$.
We denote the moduli space of such contact instantons
$w: \dot \Sigma \to Q$ of finite energy by
$$
\widetilde \CM^{\text{\rm exact}}_{g,\ell}(Q,J;\vec \gamma^-, \vec \gamma^+)
$$
and
$$
\CM^{\text{\rm exact}}_{g,k,\ell}(Q,J;\vec \gamma^-, \vec \gamma^+)
: = \widetilde \CM^{\text{\rm exact}}_{g,\ell}( Q,J;\vec \gamma^-, \vec \gamma^+)/\text{\rm Aut}(\dot \Sigma)
$$
the set of isomorphism classes thereof. We have the
natural forgetful map $(w,f) \mapsto w$ which descends to
$$
\mathfrak{forget}: \CM_{g,\ell}( M,\widetilde J;\vec \gamma^-, \vec \gamma^+)
\to \CM^{\text{\rm exact}}_{g,\ell}( Q,J;\vec \gamma^-, \vec \gamma^+).
$$
By definition of the equivalence relation on
$\widetilde \CM^{\text{\rm exact}}_{g,\ell}( M,\widetilde J;\vec \gamma^-, \vec \gamma^+)$ defined in \cite{EGH},
\cite{behwz}, it follows that this forgetful map is a
bijective correspondence, \emph{provided $\dot \Sigma$ is connected}.

However when one considers the SFT compactification, one needs to also
consider the case of pseudoholomorphic curves with disconnected domains.
So let us consider such cases. Suppose that the Riemann surface $\dot \Sigma$ is the union
$$
\dot \Sigma = \bigsqcup_{i=1}^k \dot \Sigma_i
$$
of connected components with $k \geq 2$. We denote by
$$
\overline \CM_{g,\ell}(M,\widetilde J;\vec \gamma^-, \vec \gamma^+)
$$
and
$$
\overline \CM^{\text{\rm exact}}_{g,\ell}( Q,J;\vec \gamma^-, \vec \gamma^+)
$$
the stable map compactification respectively. The following proposition shows
the precise relationship between the two. We know that each story carries
at least one non-cylindrical component.

\begin{prop}[Proposition \ref{prop:Rk-1fibration}]
Let $1 \leq \ell \leq k$ be the number of connected components which are
not cylinderical. The forgetful map $\mathfrak{forget}$ is a
principle $\R^{\ell-1}$ fibration.
\end{prop}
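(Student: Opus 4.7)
The plan is to describe the fiber of $\mathfrak{forget}$ over a fixed equivalence class $[w] \in \overline \CM^{\text{\rm exact}}_{g,\ell}(Q,J;\vec \gamma^-, \vec \gamma^+)$ whose domain $\dot \Sigma$ decomposes as a disjoint union $\dot \Sigma = \bigsqcup_{i=1}^k \dot \Sigma_i$ of connected components with restrictions $w_i := w|_{\dot \Sigma_i}$, and then to exhibit a free and transitive $\R^{\ell-1}$-action on that fiber.

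First I would recover the possible lifts $(w,f)$ of $w$. On each component $\dot \Sigma_i$, the hypothesis that the charge class $[w_i^*\lambda \circ j]$ vanishes, together with the contact instanton equation $d(w_i^*\lambda\circ j) = 0$, produces a smooth function $f_i : \dot \Sigma_i \to \R$ with $df_i = w_i^*\lambda \circ j$, unique up to an additive constant $c_i \in \R$. Hence, before quotienting by the story-level equivalence, the preimage $\mathfrak{forget}^{-1}([w])$ is naturally a torsor over $\R^k$ via $(c_1,\ldots,c_k)\cdot (w,f) = (w, f + \sum_i c_i \chi_{\dot \Sigma_i})$, where $\chi_{\dot \Sigma_i}$ denotes the characteristic function of the $i$-th component.

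Second I would quotient by the SFT equivalence relation on each story, which has two ingredients: the diagonal $\R$-translation $(w,f) \mapsto (w, f+t)$ of the target, which shifts all $c_i$ by the same real number, and the domain automorphism group $\Aut(\dot \Sigma) = \prod_i \Aut(\dot \Sigma_i)$. For a cylindrical component, i.e.\ a trivial cylinder with $w_i(\tau,t) = \gamma_i(T_i t)$ and $T_i \neq 0$, the lift satisfies $f_i(\tau,t) = T_i \tau + c_i$, so the $\tau$-translation $\tau \mapsto \tau + a$ in $\Aut(\dot \Sigma_i)$ has the effect $c_i \mapsto c_i + T_i a$ and thus absorbs the entire $\R$-freedom on that component. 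For a non-cylindrical component, stability of $w_i$ together with the equation $df_i = w_i^*\lambda \circ j$ forces any $\varphi_i \in \Aut(\dot \Sigma_i)$ that preserves $w_i$ to also preserve $f_i$, so no additional identification among the $c_i$'s is introduced.

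Combining the two steps, the fiber over $[w]$ is the quotient of $\R^\ell$ (indexed by the non-cylindrical components) by the overall diagonal $\R$-action coming from the story-level translation, which is $\R^{\ell - 1}$. The residual action is free, since an element acting trivially on some lift would have to be diagonal and thus vanish, and it is transitive by construction, giving the desired principal $\R^{\ell - 1}$-bundle structure. The main obstacle I expect is precisely the bookkeeping of Step~2: showing cleanly that on each cylindrical component the full $\R$ of $c_i$-freedom is killed by the $\tau$-translation in $\Aut(\dot \Sigma_i)$ (using $T_i \neq 0$), while on every non-cylindrical component no $c_i$ is created or destroyed by $\Aut(\dot \Sigma_i)$ because of stability. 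Once this decoupling is established, the $\R^{\ell - 1}$ count follows immediately and the principal bundle structure is automatic.
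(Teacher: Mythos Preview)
Your proposal is correct and follows essentially the same approach as the paper: both identify the $\R^k$ freedom in lifting $w$ to $(w,f)$, observe that on trivial cylindrical components this freedom is absorbed by the $\tau$-translation in $\Aut(\dot\Sigma_i)$ (the paper phrases this as ``this action is trivial for each trivial component''), and then quotient the remaining $\R^\ell$ on the non-cylindrical components by the diagonal story-level $\R$-translation. The paper realizes $\R^{\ell-1}$ as the hyperplane $\{s_1+\cdots+s_\ell=0\}\subset\R^\ell$ acting by componentwise shifts, while you realize it as the quotient $\R^\ell/\R_{\text{diag}}$; these are equivalent, and your write-up is in fact more careful than the paper's in spelling out why stability prevents $\Aut(\dot\Sigma_i)$ from creating identifications on non-cylindrical components.
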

So the above mentioned bijective correspondence still holds when there is
exactly one non-trivial component. (See \cite[p. 835]{behwz} for
a relevant remark.)

This proposition clearly shows that the compactification proposed in \cite{EGH}, \cite{behwz} have spurious strata that contract  in the compactification of (exact) contact instantons under the forgetful map.
In this regard, our compactification
of exact contact instanton moduli spaces is closely related to Pardon's
compactification  of moduli spaces of pseudoholomorphic curves on
symplectization \cite{pardon:contacthomology},
which is slightly different from that of \cite{EGH},
\cite{behwz}. We will make precise their relationship in
\cite{kim-oh:contacton-kuranishi} and \cite{oh:entanglement2}.
Pardon \cite{pardon:contacthomology} used his compactification
 for his construction of Kuranishi structure on the moduli space of
pseudoholomorphic curves on symplectization to define contact homology.

\subsection{Fredholm theory and the index formula}
\label{subsec:index-formula}

Next, we study another crucial component, the relevant Fredholm theory and the index formula
for the equation \eqref{eq:contacton-Legendrian-bdy-intro}
by adapting the one from \cite{oh:contacton}, \cite{oh-savelyev}
to the current case of contact instantons \emph{with boundary}. The relevant Fredholm theory in general
has been developed  by the first named author in \cite{oh:contacton} for the closed string case and
\cite{oh:contacton-transversality} for the case with boundary.

In the present paper, we state the index formula for general disk instanton $w$
with finite number of boundary punctures.

We recall  the following Fredholm property of the linearized operator that is proved in
\cite{oh:contacton-transversality}.

\begin{prop}[Proposition 3.18  \& 3.20 \cite{oh:contacton-transversality}] \label{prop:open-fredholm}Suppose that $w$ is a solution to \eqref{eq:contacton-Legendrian-bdy-intro}.
Consider the completion of $D\Upsilon(w)$,
which we still denote by $D\Upsilon(w)$, as a bounded linear map
from $\Omega^0_{k,p}(w^*TM,(\del w)^*T\vec R)$ to
$\Omega^{(0,1)}(w^*\xi)\oplus \Omega^2(\Sigma)$
for $k \geq 2$ and $p \geq 2$. Then
\begin{enumerate}
\item The off-diagonal terms of $D\Upsilon(w)$ are relatively compact operators
against the diagonal operator.
\item
The operator $D\Upsilon(w)$ is homotopic to the operator
\be\label{eq:diagonal}
\left(\begin{matrix}\delbar^{\nabla^\pi} + T_{dw}^{\pi,(0,1)}+ B^{(0,1)} & 0 \\
0 & -\Delta(\lambda(\cdot)) \,dA
\end{matrix}
\right)
\ee
via the homotopy
\be\label{eq:s-homotopy}
s \in [0,1] \mapsto \left(\begin{matrix}\delbar^{\nabla^\pi} + T_{dw}^{\pi,(0,1)} + B^{(0,1)}
& \frac{s}{2} \lambda(\cdot) (\CL_{R_\lambda}J)J (\pi dw)^{(1,0)} \\
s\, d\left((\cdot) \rfloor d\lambda) \circ j\right) & -\Delta(\lambda(\cdot)) \,dA
\end{matrix}
\right) =: L_s
\ee
which is a continuous family of Fredholm operators.
\item And the principal symbol
$$
\sigma(z,\eta): w^*TM|_z \to w^*\xi|_z \oplus \Lambda^2(T_z\Sigma), \quad 0 \neq \eta \in T^*_z\Sigma
$$
of \eqref{eq:diagonal} is given by the matrix
\beastar
\left(\begin{matrix} \frac{\eta + i\eta \circ j}{2} Id  & 0 \\
0 & |\eta|^2
\end{matrix}\right).
\eeastar
\end{enumerate}
\end{prop}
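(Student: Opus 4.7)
The plan is to first derive the linearization $D\Upsilon(w)$ in block form with respect to the orthogonal splitting $w^*TM = w^*\xi \oplus \R\{R_\lambda\}$, then establish the three statements in sequence via a careful order-of-derivatives analysis. For a section $Y$ along $w$, I would write $Y = Y^\pi + \lambda(Y) R_\lambda$ and compute $D\Upsilon(w)(Y) = (D_1\Upsilon(w)(Y), D_2\Upsilon(w)(Y))$, where the first component lies in $\Omega^{(0,1)}(w^*\xi)$ (the linearization of $\delbar^\pi w$) and the second in $\Omega^2(\Sigma)$ (the linearization of $d(w^*\lambda\circ j)$). The contact triad connection $\nabla$ preserves $\xi$, satisfies $\nabla R_\lambda=0$ along $\xi$-directions, and is torsion-compatible with $(\lambda,J)$; this is exactly what makes the tensorial computation clean. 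Plugging $Y = Y^\pi + \lambda(Y) R_\lambda$ into the linearized operator and using the fundamental equation \eqref{eq:Laplacian-w-intro} to identify the zeroth-order contributions yields precisely the matrix $L_s$ at $s=1$: the $(1,1)$-entry is the twisted Cauchy-Riemann operator $\delbar^{\nabla^\pi} + T^{\pi,(0,1)}_{dw} + B^{(0,1)}$ on $Y^\pi$, the $(1,2)$-entry is the pointwise zeroth-order operator $\tfrac12 \lambda(\cdot)(\CL_{R_\lambda}J)J(\pi dw)^{(1,0)}$, the $(2,1)$-entry is the first-order operator $d(\,\cdot\,\rfloor d\lambda)\circ j$, and the $(2,2)$-entry is $-\Delta(\lambda(\cdot))\, dA$.

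For statement (1), I would compare orders: the diagonal entries have orders $(1,2)$ while the $(1,2)$-entry has order $0$ and the $(2,1)$-entry has order $1$. On the completed $W^{k,p}$-spaces, each off-diagonal term thus gains at least one derivative relative to the corresponding diagonal entry. Combining Rellich--Kondrachov compactness on relatively compact subdomains with the exponential decay estimates near the punctures (afforded by the asymptotic subsequence convergence Theorems \ref{thm:subsequence-intro} and \ref{thm:subsequence-open-intro} together with Hypothesis \eqref{eq:hypo-basic-pt}), these off-diagonal operators are relatively compact against the diagonal operator. The Legendrian boundary condition contributes only through $(\del w)^*T\vec R \subset w^*\xi$ (since $TR \subset \xi$ for Legendrians), and $J$ maps $TR$ to its orthogonal complement in $\xi$, so the boundary condition is totally real for the Cauchy-Riemann block and Neumann-type for the scalar block; neither interferes with the compactness argument.

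For statement (2), the diagonal operator $L_0$ is manifestly elliptic with well-posed boundary conditions as just described, and hence Fredholm; this follows from the standard Fredholm theory for Cauchy-Riemann operators with totally real boundary coupled with the Fredholm theory of the scalar Laplacian on a Riemann surface with strip-like ends. Because the off-diagonal corrections appearing in $L_s = L_0 + s\, K$ are compact relative to $L_0$ by part (1), the path $s\mapsto L_s$ is a norm-continuous family of bounded Fredholm operators with common index. For statement (3), the principal symbol is read off directly from the diagonal operator since compact (lower-order) terms contribute nothing to the symbol: the first block is a Cauchy-Riemann operator with symbol $\frac{\eta+i\eta\circ j}{2}\,\mathrm{Id}$, and the scalar Laplacian has symbol $|\eta|^2$.

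The main obstacle will be the bookkeeping around the punctures: to promote the pointwise order comparison into a genuine relative compactness statement on the global weighted Sobolev completion, one must verify that the off-diagonal terms actually decay (in the appropriate exponential weight) toward the asymptotic Reeb orbit or Reeb chord, so that the cutoff-and-Rellich argument applies. This uses that on the asymptotic cylinder or strip, $dw$ approaches $T\,dt\otimes R_\lambda$, so $(\pi dw)^{(1,0)}\to 0$ and $w^*\lambda\to T\,dt$, driving the coefficients of the off-diagonal entries to the symbol of a constant-coefficient operator on the end which can be absorbed into the asymptotic operator $A^\pi_{(\lambda,J,\nabla)}$ analyzed in Section \ref{sec:asymptotic-analysis}. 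A secondary subtlety is verifying that the homotopy $L_s$ remains Fredholm uniformly in $s\in[0,1]$, which follows once the relative compactness bound is made uniform in $s$, i.e., once the off-diagonal operators are shown to be $L_0$-compact with an operator norm bounded independently of $s$.
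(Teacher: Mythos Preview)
Your proposal is correct and follows essentially the same strategy the paper employs in its analogous proof for the symplectization case (Proposition \ref{prop:fredholm} and Proposition \ref{prop:off-diagonal}): compute the linearization in block form with respect to the splitting $w^*TM = w^*\xi \oplus \R\{R_\lambda\}$, observe that the off-diagonal entries are of strictly lower order than the corresponding diagonal entries and decay at the punctures, and then homotope to the block-diagonal operator via the linear interpolation $L_s$. Note that the paper itself does not prove this particular proposition but cites it from \cite{oh:contacton-transversality}; your outline is consistent with the method used there and with the parallel argument carried out explicitly in Section \ref{sec:Fredholm} of the present survey.
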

Then we have the Fredholm index of $D\Upsilon(w)$ is given by
$$
\operatorname{Index}D\Upsilon(w) =
\operatorname{Index}(\delbar^{\nabla^\pi} + T_{dw}^{\pi,(0,1)}+ B^{(0,1)}) +
\operatorname{Index}(-\Delta_0).
$$

For this purpose of computing the Fredholm index of the linearized operator in terms of a topological index, especially in terms of those who will equip the relevant Floer-type
complex with the absolute grading, we need  the notion of anchored Legendrian submanifolds  \cite{oh-yso:index}
which is an adaptation of that of Lagrangian submanifolds studied in \cite{fooo:anchored} in symplectic geometry.
\begin{defn}\label{defn:anchor-intro}
Fix a base point $y$ of ambient contact manifold $(Q,\xi)$. Let $R$ be a Legendrian submanifold of $(Q,\xi)$. We define an
\emph{anchor} of $R$ to $y$ is a path $\ell:[0,1] \rightarrow M$ such that $\ell(0)=y,\;\ell(1)\in R$. We call a pair $(R,\ell)$
an \emph{anchored} Legendrian submanifold.
A chain $\mathcal{E} = ((R_0,\ell_0),\ldots,(R_k,\ell_k))$ is called an \emph{anchored Legendrian chain}.
\end{defn}

We refer readers to \cite{oh-yso:index} for the details of
derivation of the relevant index formula
which expresses the analytical index of the linearized problem of \eqref{eq:contacton-Legendrian-bdy-intro}
in terms of a topological index of the Maslov-type.
This index is made more explicit in \cite{oh-yso:spectral}
for the case of Hamiltonian isotopes of the zero section of the one-jet bundle.

\begin{rem} While we are preparing this survey, Dylan Cant informed
the first author of his thesis work \cite{cant:thesis} in which he studied the
Fredholm theory and index formula \emph{for the open string case} of
pseudoholomorphic curves for the  symplectization of \emph{general pair}
$(Q\times \R,R \times \R)$ with Legendrian submanifold $R$
in general dimension. As mentioned in \cite[Section 1.5]{oh-yso:index},
this case is included as a part of the study of pseudoholomorphic curves on the
$\mathfrak{lcs}$-fication of contact manifold as the exact case or as
the `zero-temperature limit' thereof.
\end{rem}

\subsection{Finer asymptotics}
\medskip

Finally we describe our work on the precise fine asymptotics of contact instantons (and so of
pseudoholomorphic curves on symplectization) through
the coordinate-free analysis of asymptotic operators and the study of their eigenvalues under the perturbation of adapted pairs $(\lambda,J)$
in our work  \cite{kim-oh:asymp-analysis}. We refer readers to Section \ref{subsec:tangentplane} for more detailed summary of the materials below.
\begin{rem} A study of precise asymptotic formula of pseudholomorphic curves on
symplectization in the more general context of stable Hamiltonian structures
is given by Siefring in \cite{siefring:relative,siefring:intersection}
which is applied to develop local intersection theory and embedding controls
(in 4 dimension) of two embedded pseudoholomorphic curves with the
same asymptotic limits.
\end{rem}

Our study provides a precise
generic description of the spectral behavior of asymptotic operators under the change $d\lambda$-compatible CR almost complex structures $J$ when $\lambda$ is fixed. For example, we prove the following in \cite{kim-oh:asymp-analysis}.

An upshot of our study of asymptotic behavior of finite contact instantons
(and hence that of pseudoholomorphic curves on symplectization)
is our usage of the following variable
$$
\zeta(\tau,t): = \left(\frac{\del w}{\del \tau}\right)^\pi
$$
on the cylindrical coordinates of $\dot \Sigma$ near a puncture
in our various study of contact instantons which enables us to avoid
any usage of special target coordinates that has been essential in
Hofer-Wyosocki-Zehnder's approach, when we take the covariant tensorial
approach using the contact triad connection.

We will derive in \eqref{eq:fundamental-isothermal} that $\zeta$ satisfies
\be\label{eq:fundamental-isothermal2}
\nabla^\pi_\tau \zeta + J \nabla^\pi_t + S \zeta = 0
\ee
where $S$ is the zero operator
$$
S \zeta = \frac12 w^*\lambda(\del_\tau) (\CL_{R_\lambda}J) \zeta
 + \frac12w^*\lambda(\del_t)(\CL_{R_\lambda}J)J \zeta.
$$
We write the loop $w_\tau: = w(\tau, \cdot)$ and
$$
A^\tau =  ( - J \nabla^\pi_t - S)|_{w_\tau^*\xi}
$$
which we know converges to the asymptotic operator $A^\pi_{(\lambda,J,\nabla)}$.
We define
$$
\nu_\tau: = \frac{\zeta_\tau}{\|\zeta_\tau\|_{L^2}}
$$
so that it has unit $L^2$-norm on $S^1$ for all $\tau \geq \tau_0$.
With this definition, one can following essentially verbatim the proof of \cite[Theorem 2.8]{HWZ:behavior} using the tensorial
way considering parallel transport of sections of $w^*\xi$
with respect to the canonical connection.

Then the following theorem describes the asymptotic behavior of $w$ as $|\tau| \to \infty$,
which is the analog to \cite[Theorem 1.4]{HWZ:asymptotics}.
Our proof is the covariant tensorial version of the Hofer-Wysocki-Zehnder's proof of
\cite[Theorem 2.8]{HWZ:behavior} given in Section 3 therein via coordinate calculations
in 3 dimension. (See Remark \ref{rem:alert} for some caution required when
readers compare our proof with theirs.)

\begin{thm}[Asymptotic behavior]
\label{thm:behavior-intro} Assume that $(\gamma,T)$ is nondegenerate.
Consider the unit vectors.
$$
\nu(\tau,t): = \frac{\zeta(\tau,t)}{\|\zeta(\tau)\|_{L^2(w_\tau^*\xi)}}, \quad
\alpha(\tau) : = \frac{d}{d\tau} \log \|\zeta(\tau)\|_{L^2(w_\tau^*\xi)}.
$$
Then we have the following:
\begin{enumerate}
\item Either $\zeta(\tau,t) = 0$ for all $(\tau,t) \in [\tau_0,\infty)$ or
\item the following hold:
\begin{enumerate}
\item There exists an eigenvector $e$ of $A^\pi_{(\lambda,J,\nabla)}(\gamma_T)$ of eigenvalue
$\mu$ such that $\nu_\tau \to e$ as $\tau \to \infty$ and
$$
\zeta(\tau,t) = e^{\int_{\tau_0}^\tau \alpha(s)\, ds} (e(t) + \widetilde r(\tau,t))
$$
where $\widetilde r(\tau,t) \to 0$ in $C^\infty$ topology.
\item For any $0 < r < \mu$,
there exist constants $\delta >0$, $\tau_0> 0$ and $C_{\beta}$ such that
for all multi-indices $\beta = (\beta_1,\beta_2)) \in \mathbb{N}^2$ such that
$$
\sup_{(\tau,t)}|(\nabla^\beta \zeta)(\tau,t)| \leq C_\beta e^{-r \tau}
$$
for all $\tau \geq \tau_0$ and $t \in S^1$.
\end{enumerate}
\end{enumerate}
\end{thm}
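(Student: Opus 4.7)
The plan is to reinterpret the fundamental equation \eqref{eq:fundamental-isothermal2} as a non-autonomous linear evolution on the circle and deploy the eigenvalue-decomposition strategy of \cite{HWZ:behavior}, but carried out covariantly via the contact triad connection rather than via special coordinates. First I would use parallel transport (for each fixed $t$) along the curves $\tau \mapsto w(\tau,t)$ with respect to the pull-back of the contact triad connection to trivialize the moving bundles $w_\tau^*\xi \to S^1$ onto the limiting bundle $\gamma^*\xi \to S^1$, for $\tau$ sufficiently large. In this trivialization equation \eqref{eq:fundamental-isothermal2} becomes
\[
\partial_\tau \zeta = A^\tau \zeta, \qquad A^\tau = -J\nabla^\pi_t - S,
\]
acting on $L^2(S^1,\gamma^*\xi)$. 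The crucial inputs are: (i) $A^\tau$ is symmetric with respect to the triad inner product, thanks to $\nabla$ being Riemannian and to the commutation $[\nabla^{\text{\rm LC}}_t,J]=0$ from Theorem \ref{thm:A-in-LC}; (ii) $A^\tau \to A^\pi_{(\lambda,J,\nabla)}(\gamma_T)$ in operator norm as $\tau\to\infty$ because $w_\tau \to \gamma_T$ in $C^\infty$ by Theorem \ref{thm:subsequence-intro} together with the nondegeneracy hypothesis; (iii) nondegeneracy of $(\gamma,T)$ forces the limiting operator to have discrete spectrum, bounded away from $0$ and accumulating only at $\pm\infty$.

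Next I would establish the dichotomy. The evolution $\partial_\tau \zeta = A^\tau \zeta$ is of Cauchy--Riemann type in $(\tau,t)$, so an Aronszajn--Agmon--Nirenberg unique-continuation argument yields backward uniqueness: if $\zeta(\tau_0,\cdot)\equiv 0$ for some $\tau_0$, then $\zeta\equiv 0$, which is alternative (1). Otherwise $\|\zeta_\tau\|_{L^2}>0$ for all $\tau$ large, so
\[
\nu_\tau = \frac{\zeta_\tau}{\|\zeta_\tau\|_{L^2}}, \qquad \alpha(\tau) = \langle A^\tau\nu_\tau,\nu_\tau\rangle_{L^2}
\]
are well defined smooth objects. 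Differentiating the normalization gives $\nabla^\pi_\tau \nu_\tau = (A^\tau - \alpha(\tau))\nu_\tau$.

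Third, I would prove that $\nu_\tau$ converges in $C^\infty$ to an eigenvector $e$ of $A^\pi_{(\lambda,J,\nabla)}(\gamma_T)$ with eigenvalue $\mu = \lim \alpha(\tau)$. Uniform $W^{k,2}$-bounds on $\nu_\tau$ obtained from the elliptic equation above, together with Rellich compactness on $S^1$, yield subsequential $L^2$-limits, each of which satisfies $A^\pi_{(\lambda,J,\nabla)}(\gamma_T) \nu_\infty = \alpha_\infty \nu_\infty$ with $\alpha_\infty = \lim \alpha(\tau_j)$. To upgrade this to unique convergence, I would exploit the identity
\[
\alpha'(\tau) = 2 \|(A^\tau - \alpha(\tau))\nu_\tau\|_{L^2}^2 + \langle (\partial_\tau A^\tau)\nu_\tau,\nu_\tau\rangle,
\]
whose second term is controlled uniformly by the $C^\infty$-convergence of $w_\tau$ and therefore vanishes in the limit, while the first is nonnegative. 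Since the spectrum of $A^\pi_{(\lambda,J,\nabla)}(\gamma_T)$ is discrete, $\alpha(\tau)$ must converge to a single eigenvalue $\mu$ and $\nu_\tau$ to a corresponding normalized eigenvector $e$; the representation $\zeta(\tau,t) = e^{\int_{\tau_0}^\tau \alpha(s)\,ds}(e(t)+\widetilde r(\tau,t))$ with $\widetilde r \to 0$ is then just a rewriting. This is part (a).

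Finally, for (b), I would expand $\zeta_\tau$ in the orthonormal eigenbasis of $A^\pi_{(\lambda,J,\nabla)}(\gamma_T)$ and project away from the $\mu$-eigenspace. The projected component satisfies a decoupled evolution whose effective generator is bounded away from $\mu$ by the spectral gap, so a Gr\"onwall estimate yields exponential decay at any rate $r$ strictly smaller than this gap. Elliptic regularity then upgrades the $L^2$-decay to $C^k$-decay of all derivatives $\nabla^\beta \zeta$, using Theorem \ref{thm:local-regularity-intro} applied uniformly on translated cylinders of fixed size. The hard part will be the monotonicity step for $\alpha(\tau)$: one must verify that the commutator errors produced by $\nabla^\pi$ (absent in the flat-coordinate computation of \cite{HWZ:behavior}) contribute only terms that vanish in the limit, and this is precisely where Proposition \ref{prop:blair} and the commutation property $[\nabla^{\text{\rm LC}}_t,J]=0$ are essential, as they allow the covariant computation to reproduce the Hofer--Wysocki--Zehnder monotonicity without recourse to ad hoc coordinates.
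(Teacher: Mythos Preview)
Your proposal follows essentially the same approach the paper describes: the survey explicitly states that its proof ``is the covariant tensorial version of the Hofer--Wysocki--Zehnder's proof of \cite[Theorem 2.8]{HWZ:behavior}\ldots\ using the tensorial way considering parallel transport of sections of $w^*\xi$ with respect to the canonical connection,'' and defers the full details to \cite{kim-oh:asymp-analysis}; your steps (parallel-transport trivialization, unique-continuation dichotomy, the differential inequality for $\alpha'$, spectral-gap decay, and bootstrap via Theorem~\ref{thm:local-regularity-intro}) match this outline.

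One small inaccuracy worth fixing: your claim (i) that $A^\tau$ is symmetric at every finite $\tau$, justified via $[\nabla^{\text{\rm LC}}_t,J]=0$, is not quite right, since that commutation holds only along the Reeb orbit $\gamma$, not along an arbitrary loop $w_\tau$. What is actually true is that the leading term $-J\nabla^\pi_t$ is symmetric because $\nabla^\pi$ is Hermitian on $\xi$ (Property~(4) of Theorem~\ref{thm:connection}), while the zero-order piece $S$ splits into a symmetric part $\tfrac12\, w^*\lambda(\partial_t)(\CL_{R_\lambda}J)J$ and an \emph{antisymmetric} part $\tfrac12\, w^*\lambda(\partial_\tau)(\CL_{R_\lambda}J)$; the latter decays exponentially with $w^*\lambda(\partial_\tau)\to 0$ and so only contributes additional exponentially small remainder terms in your identity for $\alpha'(\tau)$, leaving the argument intact. (The paper also uses parallel transport along short geodesics from $w(\tau,t)$ to $\gamma(Tt)$ rather than along the $\tau$-curves; the two trivializations differ by exponentially small terms and either works.)
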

The exponential convergence here appearing can be derived by a boot-strap argument
using the exponential convergence result on $\zeta$ as $\tau \to \infty$ given in
Section \ref{sec:exponential-convergence}. (See \cite{oh-wang:CR-map2},
\cite{oh-yso:index} for the details of this exponential convergence and the boot-strap
argument.)

Note that the above representation formula of $\zeta$ implies the following convergence of
the tangent plane.

\begin{cor}[Convergence of tangent plane]\label{cor:tangentplane-intro}
Assume the second alternative in
Theorem \ref{thm:behavior-intro} and denote
$$
P(\tau,t) := \Image dw(\tau,t) \in \text{\rm Gr}_2(\xi_{w(\tau,t)})
$$
where $\text{\rm Gr}_2(\xi_x)$ is the set of 2 dimensional subspaces of the
contact hyperplane $\xi_x \subset T_xQ$.
Then $P(\tau,t) \to \span_\R\{ e(t), J e(t)\}$ exponentially fast in $C^\infty$ topology
uniformly in $t \in S^1$.
\end{cor}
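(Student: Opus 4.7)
The plan is to reduce the corollary directly to the representation formula of Theorem \ref{thm:behavior-intro}(2)(a), once we identify $P(\tau,t)$ with the $\xi$-component of $\Image dw(\tau,t)$. First I would observe that the contact instanton equation $\delbar^\pi w = 0$, when written in the cylindrical coordinates $(\tau,t)$, gives
$$
\pi\,\del_t w \;=\; J\,\pi\,\del_\tau w \;=\; J\zeta(\tau,t).
$$
Combined with the Reeb decomposition $\del_\tau w = \zeta + w^*\lambda(\del_\tau)R_\lambda$ and $\del_t w = J\zeta + w^*\lambda(\del_t)R_\lambda$, this shows that the $\xi$-projection of $\Image dw(\tau,t)$ is precisely $\span_\R\{\zeta(\tau,t),J\zeta(\tau,t)\}$ whenever $\zeta(\tau,t)\ne 0$, which is the intended meaning of $P(\tau,t)\in \text{\rm Gr}_2(\xi_{w(\tau,t)})$. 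Under the second alternative of Theorem \ref{thm:behavior-intro} we have $\zeta(\tau,t)\ne 0$ for all sufficiently large $\tau$, so $P(\tau,t)$ is well defined on the relevant neighborhood of infinity.

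Next I would substitute the exponential representation
$$
\zeta(\tau,t) \;=\; e^{\int_{\tau_0}^\tau \alpha(s)\,ds}\bigl(e(t) + \widetilde r(\tau,t)\bigr)
$$
into $\span_\R\{\zeta,J\zeta\}$. Since the span of a pair of vectors is invariant under positive rescaling, the prefactor drops out and
$$
P(\tau,t) \;=\; \span_\R\bigl\{\,e(t)+\widetilde r(\tau,t),\; Je(t)+J\widetilde r(\tau,t)\,\bigr\}.
$$
Because $\widetilde r\to 0$ in $C^\infty$ exponentially fast (this being strengthened from the pointwise convergence in Theorem \ref{thm:behavior-intro} by the boot-strap/exponential-convergence machinery of Section \ref{sec:exponential-convergence}, together with the smoothness of $J$ on $\xi$), the frame $\{e(t)+\widetilde r(\tau,t),\,Je(t)+J\widetilde r(\tau,t)\}$ converges in $C^\infty$ to $\{e(t),Je(t)\}$ exponentially. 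The Grassmannian map $(v_1,v_2)\mapsto \span_\R\{v_1,v_2\}$ is smooth on the open set of linearly independent pairs, and $\{e(t),Je(t)\}$ is linearly independent because $J|_\xi$ is a complex structure; hence $P(\tau,t)$ converges exponentially to $\span_\R\{e(t),Je(t)\}$ fiberwise uniformly in $t\in S^1$.

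The one remaining point is that the subspaces $P(\tau,t)\subset \xi_{w(\tau,t)}$ live in moving fibers and must be compared to the target subspace $\span_\R\{e(t),Je(t)\}\subset \xi_{\gamma(Tt)}$. For this I would use the parallel transport along the short geodesic from $w(\tau,t)$ to $\gamma(Tt)$ with respect to the contact triad connection $\nabla$, which preserves $\xi$ and hence descends to a smooth local trivialization of the bundle $\text{\rm Gr}_2(\xi)\to Q$ near $\gamma$. The base convergence $w(\tau,t)\to\gamma(Tt)$ is itself exponential in $C^\infty$ by Theorem \ref{thm:subsequence-intro} combined with the $C^\infty$ exponential decay of $\zeta$, so the comparison error produced by this trivialization is bounded by $C e^{-r\tau}$ for the same rate $r$ that controls $\widetilde r$ (or the worse of the two rates, which is still exponential). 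Adding the two exponential contributions yields the asserted exponential convergence of $P(\tau,t)$ to $\span_\R\{e(t),Je(t)\}$ in $C^\infty$, uniformly in $t\in S^1$.

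The main obstacle I anticipate is purely notational: fixing a smooth local trivialization of $\text{\rm Gr}_2(\xi)$ along $\gamma$ that is simultaneously compatible with the tensorial estimates on $\widetilde r$ (which are stated along $w$) and with the base estimates on $w$ (which are stated with respect to the triad metric on $Q$), so that the two exponential rates can legitimately be combined without losing the $C^\infty$ character of the convergence. No new analytic input is required beyond what Theorem \ref{thm:behavior-intro} already provides; the work is entirely in carefully unwinding the fiberwise identifications and verifying that the contact triad connection, through Propositions \ref{prop:canonical-intro} and \ref{prop:blair}, guarantees compatibility of the parallel transport with the $\xi$-splitting on which $\zeta$, $e(t)$, and $J$ all live.
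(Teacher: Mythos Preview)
Your proposal is correct and follows exactly the approach the paper indicates: the paper gives no detailed argument for this corollary, simply writing ``Note that the above representation formula of $\zeta$ implies the following convergence of the tangent plane,'' and your write-up is a careful unpacking of precisely that implication (identify $P(\tau,t)$ with $\span_\R\{\zeta,J\zeta\}$ via $\delbar^\pi w=0$, drop the scalar prefactor, and invoke the $C^\infty$ exponential decay of $\widetilde r$ together with parallel transport to handle the moving fibers). Nothing further is needed.
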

This convergence statement is a trivial vacuous
statement in dimension 3 since $\dim \xi = 2$ but is a nontrivial statement
for higher dimensions. We refer readers to \cite{siefring:relative,siefring:intersection} for a precise local intersection
theory of two pseudoholomorphic curves at the punctures and topological
controls on the intersection number of two curves with the same
`fine' asymptotic limit.

Finally we would like to just mention that the same asymptotic study
can be made in a straightforward way by incorporating the boundary condition
by now as done in \cite{oh-yso:index}, \cite{oh:contacton-transversality},
\cite{oh:contacton-gluing}.

\medskip

This article is largely a survey of the first author and his collaborators'  series of works on the analysis of contact instantons,
 focusing mainly on the \emph{unperturbed ones}
\cite{oh-wang:connection}, \cite{oh-wang:CR-map1,oh-wang:CR-map2}, \cite{oh:contacton} and \cite{oh:contacton-Legendrian-bdy}.

This survey  does not touch upon the case of \emph{Hamiltonian-perturbed
contact instantons} studied in \cite{oh:perturbed-contacton-bdy}
for the elliptic regularity theory generalizing that of
\cite{oh:contacton-Legendrian-bdy,oh-yso:index} in which does lie
the real power of our approach through the interplay between
geometric analysis of perturbed contact instantons and
the calculus of Hamiltonian geometry and dynamics. We refer
readers to   \cite{oh:entanglement1}, \cite{oh:shelukhin-conjecture}
in which the interplay has been exhibited by the proof
of Sandon-Shelukhin type conjectures.

The case of unperturbed contact instantons
corresponds to the case of Gromov's original pseudoholomorphic curves
\cite{gromov:invent} while the perturbed ones correspond to solutions of Floer's Hamiltonian-perturbed  trajectory equations
\cite{floer:fixedpoints,salamon-zehnder} in symplectic geometry.  We refer interested readers
to \cite{oh:perturbed-contacton-bdy} for the case of perturbed equation.
We also refer readers to \cite{oh-wang:CR-map2} for the tensorial proof of exponential convergence result
in the Morse-Bott nondegenerate case of contact instantons again utilizing Weitezenb\"ock-type
formulae with respect to contact triad connections.

Throughout the paper, we freely use the (covariant) exterior calculus of $E$-valued differential
forms and Weitzenb\"ock formula with respect to the contact triad connection $\nabla$.
For the convenience of the prospective readers  of this survey article,
we duplicate \cite[Appendix A \& B]{oh-wang:CR-map1} here which summarizes the exterior
calculus of vector-valued forms and the derivation of Weitzenb\"ock formula
 in Appendix of the present paper.

There is one exception of the usage of contact triad connection: This is
for the study of asymptotic operators
along the Reeb orbits for the closed case (or along the Reeb chords in the open string case) where
the derivation of asymptotic operators is done using the triad connection but converted the formula in the final conclusion
to one involving the Levi-Civita connection of the triad metric \emph{along the Reeb orbits (or along the Reeb chords}) because of our desire to more
widely advertize the wonderful property of the Levi-Civita connection 
given in Proposition \ref{prop:blair}.

\medskip
\noindent{\bf Acknowledgement:}  We thank MATRIX for providing
an excellent research environment and Brett Parker for his great effort
for smoothly running the IBSCGP-MATRIX Symplectic Topology Workshop.
We also thank all participants of the workshop
for making the workshop a big success.
The first author also thanks Givental for useful communication on
SFT compactification, and Hutchings for brining our attention to
Siefring's paper \cite{siefring:relative}.

\bigskip

\noindent{\bf Conventions:} All the conventions regarding the definition of Hamiltonian vector fields, canonical symplectic forms on the cotangent bundle and definition of contact Hamiltonians and others
are the same as those adopted and listed in \cite[Conventions]{oh:contacton-Legendrian-bdy}.
These also coincide with the conventions used in \cite{dMV}.

\part{Contact triads and their $\mathfrak{lcs}$-fications}

\section{Contact triad connection and canonical connection}
\label{sec:connection}

Let $(Q,\xi)$ be a given contact manifold. When a contact form $\lambda$
is given, we have the projection
$\pi=\pi_\lambda$ from $TM$ to $\xi$ associated to the decomposition
$$
TQ = \xi \oplus \R \langle R_\lambda \rangle.
$$
We denote by $\Pi=\Pi_\lambda: TM \to TM$
 the corresponding idempotent, i.e., the endomorphism of $TM$ satisfying
${\Pi}^2 = \Pi$, $\Im \Pi = \xi$, $\ker \Pi = \R\{R_\lambda\}$.

\subsection{Contact triads and triad connections}

Let  $(M, \lambda, J)$ be a contact triad of dimension $2n+1$ for the contact manifold $(M, \xi)$, and equip with it the contact triad metric
$g=g_\xi+\lambda\otimes\lambda$.
In \cite{oh-wang:connection}, Wang and the first author
 introduced the \emph{contact triad connection} associated to every contact triad $(M, \lambda, J)$ with the contact triad metric and proved its existence and uniqueness and naturality.

\begin{thm}[Contact Triad Connection \cite{oh-wang:connection}]\label{thm:connection}
For every contact triad $(M,\lambda,J)$, there exists a unique affine connection $\nabla$, called the contact triad connection,
 satisfying the following properties:
\begin{enumerate}
\item The connection $\nabla$ is  metric with respect to the contact triad metric, i.e., $\nabla g=0$;
\item The torsion tensor $T$ of $\nabla$ satisfies $T(R_\lambda, \cdot)=0$;
\item The covariant derivatives satisfy $\nabla_{R_\lambda} R_\lambda = 0$, and $\nabla_Y R_\lambda\in \xi$ for any $Y\in \xi$;
\item The projection $\nabla^\pi := \pi \nabla|_\xi$ defines a Hermitian connection of the vector bundle
$\xi \to M$ with Hermitian structure $(d\lambda|_\xi, J)$;
\item The $\xi$-projection of the torsion $T$, denoted by $T^\pi: = \pi T$ satisfies the following property:
\be\label{eq:TJYYxi}
T^\pi(JY,Y) = 0
\ee
for all $Y$ tangent to $\xi$;
\item For $Y\in \xi$, we have the following
$$
\del^\nabla_Y R_\lambda:= \frac12(\nabla_Y R_\lambda- J\nabla_{JY} R_\lambda)=0.
$$
\end{enumerate}
%We name $\nabla$ the contact triad connection.
\end{thm}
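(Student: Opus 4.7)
The plan is to mimic the Koszul-formula strategy used for the Levi-Civita connection: first extract uniqueness by showing that the six conditions determine $g(\nabla_XY,Z)$ piece by piece on the three relevant blocks of the splitting $TM=\xi\oplus\R\{R_\lambda\}$, and then verify that the resulting explicit formula defines a genuine affine connection satisfying (1)--(6). I would organize the proof by dissecting the bilinear form $(X,Y)\mapsto \nabla_XY$ according to whether $X$ and $Y$ are $\xi$-valued or Reeb-valued.

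First I would handle the three easy blocks. Condition (3) gives outright $\nabla_{R_\lambda}R_\lambda=0$. Condition (2) rewrites $\nabla_{R_\lambda}X=\nabla_XR_\lambda+\mathcal{L}_{R_\lambda}X$, reducing the mixed block to knowledge of $\nabla_\cdot R_\lambda$. So the free data consists of (i) $\nabla_YR_\lambda$ for $Y\in\xi$ and (ii) $\nabla_XY$ for $X,Y\in\xi$. For (i), the second clause of (3) forces $\nabla_YR_\lambda\in\xi$, and condition (6) imposes the $J$-linearity $\nabla_YR_\lambda=J\nabla_{JY}R_\lambda$. Combining these with metric compatibility (1) and the contact identity $R_\lambda\rfloor d\lambda=0$ (equivalently $\mathcal{L}_{R_\lambda}\lambda=0$) through a Koszul-type expansion of $g(\nabla_YR_\lambda,Z)$ yields an explicit tensorial expression for $\nabla_YR_\lambda$, uniquely determined up to the $(1,1)$-symmetric part which is killed by $J$-linearity. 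One recovers the formula essentially equivalent to $\nabla_YR_\lambda=\tfrac{1}{2}(\mathcal{L}_{R_\lambda}J)JY$.

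Next I would treat block (ii), the restriction of $\nabla$ to $\xi\otimes\xi$. Decompose $\nabla_XY=\nabla^\pi_XY+\lambda(\nabla_XY)R_\lambda$. The Reeb component is pinned down by (1):
\[
\lambda(\nabla_XY)=g(\nabla_XY,R_\lambda)=Xg(Y,R_\lambda)-g(Y,\nabla_XR_\lambda)=-g(Y,\nabla_XR_\lambda),
\]
and the right-hand side is explicit by Paragraph~2. It remains to determine $\nabla^\pi_XY$. By (4), $\nabla^\pi$ is a Hermitian connection on the Hermitian bundle $(\xi,d\lambda|_\xi,J)$; the space of such connections is a torsor over $\Omega^1(M;\mathfrak{u}(\xi))$, so the Hermitian requirement alone does not pin it down. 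The normalization (5), $T^\pi(JY,Y)=0$ for all $Y\in\xi$, gives the missing equation: expanding
\[
T^\pi(JY,Y)=\nabla^\pi_{JY}Y-J\nabla^\pi_YY-\pi[JY,Y]
\]
and polarizing in $Y$ yields a $\mathfrak{u}(\xi)$-valued linear system on the ambiguity tensor whose solution is unique. A standard symmetrization/antisymmetrization argument together with the Hermitian property $\nabla^\pi J=0$ and $\nabla^\pi d\lambda|_\xi=0$ provides the solvability.

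The main obstacle, and the step I would spend most time on, is existence: one must check that the tensorial formulas just derived actually assemble into a single well-defined affine connection satisfying \emph{all} six conditions simultaneously, not merely the ones used in the derivation. The most delicate consistency check is that the value of $\pi\nabla_{R_\lambda}Z$ coming from (2) --- namely $\pi\nabla_{R_\lambda}Z=\nabla^\pi_ZR_\lambda+\pi\mathcal{L}_{R_\lambda}Z$ --- is compatible with $\nabla^\pi$, viewed as a full connection on $\xi$ over $M$, being Hermitian. This reduces to verifying that $\pi\mathcal{L}_{R_\lambda}\colon\Gamma(\xi)\to\Gamma(\xi)$ is skew-Hermitian with respect to $(d\lambda|_\xi,J)$, which in turn follows from $\mathcal{L}_{R_\lambda}d\lambda=0$ and a direct computation using the adaptedness $d\lambda(\cdot,J\cdot)\geq 0$. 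Once this compatibility is established, conditions (1)--(6) all hold by construction, and uniqueness is automatic from the derivation, completing the proof.
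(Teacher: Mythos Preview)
The paper does not contain a proof of this theorem: it is quoted verbatim from \cite{oh-wang:connection}, and the surrounding discussion only records consequences (Corollary~\ref{cor:connection}) and related remarks. So there is no in-paper argument to compare against.

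That said, your plan is essentially the strategy one expects and is close in spirit to what is carried out in \cite{oh-wang:connection}: determine $\nabla$ block by block on $TM=\xi\oplus\R\{R_\lambda\}$, with (2), (3), (6) fixing the Reeb-related blocks and (4), (5) singling out the canonical (Chern-type) Hermitian connection on $(\xi,d\lambda|_\xi,J)$. One point to tighten: in your existence check you reduce compatibility of $\pi\nabla_{R_\lambda}$ with the Hermitian structure to the assertion that $\pi\CL_{R_\lambda}$ is skew-Hermitian. While $\CL_{R_\lambda}d\lambda=0$ gives skewness for the symplectic form, $\pi\CL_{R_\lambda}$ need not commute with $J$ (indeed $\CL_{R_\lambda}J\neq 0$ in general); what actually happens is that the correction term $\nabla_\cdot R_\lambda=\tfrac12(\CL_{R_\lambda}J)J$ exactly absorbs the failure of $\pi\CL_{R_\lambda}$ to be $J$-linear, so that the \emph{sum} $\pi\nabla_{R_\lambda}=\nabla_\cdot R_\lambda+\pi\CL_{R_\lambda}$ is Hermitian on $\xi$. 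You should phrase the verification that way rather than as a property of $\pi\CL_{R_\lambda}$ alone.
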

From this theorem, we see that the contact triad connection $\nabla$ canonically induces
a Hermitian connection $\nabla^\pi$ for the Hermitian vector bundle $(\xi, J, g_\xi)$,
and we call it the \emph{contact Hermitian connection}. This connection will be
used to study estimates for the $\pi$-energy in later sections.

Moreover, the following fundamental properties of the contact triad connection was
proved in \cite{oh-wang:connection}
\begin{cor}[Naturality]
\begin{enumerate}
\item Let  $\nabla$ be the contact triad connection of
the triad $(Q,\lambda, J)$. Then for any diffeomorphism $\phi: Q \to Q$,
the pull-back connection $\phi^*\nabla$ is the triad connection
associated to the triad $(Q,\phi^*\lambda,\phi^*J)$ associated to
the pull-back contact structure $\phi^*\xi$.
\item
In particular if $\phi$ is contact, i.e., $d\phi(\xi) \subset \xi$, then
$(Q, \phi^*, \phi^*J)$ is a contact triad of $\xi$ and $\phi^*\nabla$
the contact triad connection $(Q,\xi)$.
\end{enumerate}
\end{cor}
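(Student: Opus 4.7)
The plan is to invoke the uniqueness clause of Theorem~\ref{thm:connection}: it suffices to verify that the pull-back connection $\phi^*\nabla$ satisfies each of the six defining axioms (1)--(6) for the triad $(Q,\phi^*\lambda,\phi^*J)$, where throughout I use the standard conventions $(\phi^*J) := d\phi^{-1}\circ J\circ d\phi$ and $(\phi^*\nabla)_XY := \phi^*(\nabla_{\phi_*X}\phi_*Y)$. Once this is done, part (2) will follow as a special case, since a contact diffeomorphism satisfies $\phi^*\xi=\xi$, so $\phi^*\lambda$ is another contact form for the \emph{same} contact structure and $\phi^*J$ is a $\phi^*\lambda$-adapted CR almost complex structure for $\xi$.

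Before verifying the axioms, I would carry out three preliminary naturality identifications. First, pulling back the defining equations $R_\lambda\rfloor\lambda=1$ and $R_\lambda\rfloor d\lambda=0$ by $\phi$ gives $R_{\phi^*\lambda}=\phi^*R_\lambda$. Second, $(\phi^*J)^2=-\Pi_{\phi^*\lambda}$ and $\phi^*J(R_{\phi^*\lambda})=0$ are direct consequences of the corresponding identities for $J$, and $\phi^*J$-adaptedness follows from
$$
d(\phi^*\lambda)(Y,\phi^*J\,Y)=d\lambda(d\phi\,Y,J\,d\phi\,Y)\geq 0
$$
for $Y\in\ker\phi^*\lambda$, with equality iff $Y=0$. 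Third, a one-line calculation shows
$$
g_{\phi^*\lambda,\phi^*J}=d(\phi^*\lambda)(\cdot,\phi^*J\cdot)+\phi^*\lambda\otimes\phi^*\lambda=\phi^*\bigl(d\lambda(\cdot,J\cdot)+\lambda\otimes\lambda\bigr)=\phi^*g_{\lambda,J},
$$
so the triad metric of the pulled-back triad is exactly the pull-back of the triad metric.

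With these identifications in hand, each axiom translates mechanically. Axiom (1) (metric compatibility) is immediate from $\phi^*g_{\lambda,J}=g_{\phi^*\lambda,\phi^*J}$ and the standard fact that pull-back of a metric connection by a diffeomorphism is metric for the pulled-back metric. Axioms (2), (3), (5), (6) all involve tensorial identities expressed in $R_\lambda$, $J$, $\Pi$, the torsion $T$, and the $\xi$-projection; each pulls back verbatim upon observing $T_{\phi^*\nabla}=\phi^*T$, $\phi^*R_\lambda=R_{\phi^*\lambda}$, and $d\phi^{-1}\circ\pi\circ d\phi=\pi_{\phi^*\lambda}$. For axiom (4), the Hermitian condition on $\nabla^\pi$ with respect to $(d\lambda|_\xi,J)$ pulls back to the Hermitian condition on $(\phi^*\nabla)^{\pi}$ with respect to $(d(\phi^*\lambda)|_{\phi^*\xi},\phi^*J)$, since both $d\lambda|_\xi$ and $J$ pull back to their counterparts on $\phi^*\xi$. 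Uniqueness in Theorem~\ref{thm:connection} then identifies $\phi^*\nabla$ with the triad connection of $(Q,\phi^*\lambda,\phi^*J)$, proving (1), and (2) is the specialization $\phi^*\xi=\xi$. There is no real obstacle here: the entire content is routine naturality bookkeeping, and the only care required is to keep the definitions of pull-backs of endomorphisms, connections, and Reeb fields consistent so that the six axioms transport intact.
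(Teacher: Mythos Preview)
Your proposal is correct and is exactly the canonical argument: verify that the six defining axioms of Theorem~\ref{thm:connection} are all expressed in terms of natural tensorial data (metric, torsion, $R_\lambda$, $J$, $\pi$) and hence transport under pull-back, then invoke uniqueness. The paper itself does not supply a proof here but simply attributes the result to \cite{oh-wang:connection}; the argument there follows the same uniqueness-based route you outline.
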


The following identities are also very useful to perform tensorial
calculations in the study of a priori elliptic estimates and
in the derivation of the linearization formula.

\begin{cor}\label{cor:connection}
Let $\nabla$ be the contact triad connection. Then
\begin{enumerate}
\item For any vector field $Y$ on $M$,
\be\label{eq:nablalambdaY}
\nabla_Y R_\lambda = \frac{1}{2}(\CL_{R_\lambda}J)JY;
\ee
\item $\lambda(T|_\xi)=d\lambda$.
\end{enumerate}
\end{cor}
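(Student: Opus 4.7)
Both parts are tensorial identities that should follow directly from the six defining properties of the contact triad connection in Theorem \ref{thm:connection}. Part (2) uses part (1), so I would address them in that order.

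For part (1), I would split into the cases $Y = R_\lambda$ and $Y \in \xi$. The case $Y = R_\lambda$ is immediate from property (3) and $JR_\lambda = 0$. For $Y \in \xi$, the plan is to expand $(\CL_{R_\lambda}J)(JY)$ using the tensorial identity $(\CL_X J)(Z) = [X, JZ] - J[X,Z]$ (with $J^2 Y = -Y$ on $\xi$), and then to convert the Lie brackets into covariant derivatives via the torsion vanishing $T(R_\lambda,\cdot)=0$ of property (2). Of the four resulting terms, the two involving $\nabla_{R_\lambda}$ cancel: $\nabla_{R_\lambda}R_\lambda = 0$ from (3) together with metricity show that $\nabla_{R_\lambda}Y \in \xi$, and then the Hermitian property $\nabla^\pi J = 0$ from (4) gives $J\nabla_{R_\lambda}(JY) = -\nabla_{R_\lambda}Y$. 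The remaining pair $\nabla_Y R_\lambda + J\nabla_{JY}R_\lambda$ collapses to $2\nabla_Y R_\lambda$ by the complex anti-linearity property (6), and dividing by $2$ yields the formula.

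For part (2), it suffices to check the identity on $X, Y \in \xi$. I would expand $\lambda(T(X,Y))$ from the definition of torsion, use metricity $\nabla g = 0$ together with $\lambda = g(R_\lambda,\cdot)$ to rewrite each $\lambda(\nabla_X Y)$ as $-g(\nabla_X R_\lambda, Y)$, and combine with Cartan's formula $d\lambda(X,Y) = -\lambda([X,Y])$ (which holds here since $\lambda(X) = \lambda(Y) = 0$). This reduces the claim to the single symmetry relation $g(\nabla_X R_\lambda, Y) = g(\nabla_Y R_\lambda, X)$. Substituting the formula from part (1) and setting $A := \CL_{R_\lambda}J$, the task becomes to prove that the endomorphism $AJ$ of $\xi$ is $g$-self-adjoint.

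This last symmetry is the one substantive step and is where I expect the main obstacle to lie. My plan is to exploit the fact that $A = \CL_{R_\lambda}J$ is a tangent vector at $J$ to the space of $d\lambda$-adapted CR almost complex structures along the Reeb flow: indeed $\CL_{R_\lambda}\lambda = 0$ forces $\CL_{R_\lambda}d\lambda = 0$, and the Reeb flow preserves both $\xi$ and the identity $J^2 = -\Pi$, so differentiating the compatibility conditions at $s=0$ yields both the anti-commutation $AJ = -JA$ on $\xi$ and the $g$-self-adjointness of $A$. Combining these with the $J$-invariance of $g$ in the chain $g(AJX, Y) = g(JX, AY) = -g(X, JAY) = g(X, AJY)$ gives the required symmetry and completes the proof.
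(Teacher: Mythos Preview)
Your proposal is correct. The paper itself does not give a proof of this corollary; it only states the result and refers to \cite{oh-wang:connection} for the derivation, so there is no in-paper argument to compare against. Your deduction of (1) from axioms (2)--(4) and (6) is exactly the intended one, and your reduction of (2) to the $g$-self-adjointness of $(\CL_{R_\lambda}J)J$ is the standard route; the paper in fact invokes precisely this symmetry later (citing \cite[Lemma 3.4]{oh-wang:connection}) when discussing the linearized Reeb operator, so your ``substantive step'' is a known lemma rather than an obstacle. One small remark: in your chain $g(AJX,Y)=g(JX,AY)=-g(X,JAY)=g(X,AJY)$ you use the self-adjointness of $A=\CL_{R_\lambda}J$ itself (not just of $AJ$), which you should derive first---it follows by differentiating the $J$-invariance $d\lambda(J\cdot,J\cdot)=d\lambda(\cdot,\cdot)$ along the Reeb flow (using $\CL_{R_\lambda}d\lambda=0$) together with the anticommutation $AJ+JA=0$, exactly as you outlined.
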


We refer readers to \cite{oh-wang:connection} for more discussion on the contact triad connection
and its relation with other related canonical type connections.

\subsection{Canonical connection on almost Hermitian manifold}
\label{sec:canonical}

In this subsection, we give the definition of canonical connection on general almost
Hermitian manifolds and apply it to the case of $\mathfrak{lcs}$-fication of
contact triad $(Q,\lambda,J)$. See \cite[Chapter 7]{oh:book1} for the exposition of
canonical connection for almost Hermitian manifolds and its relationship with the Levi-Civita connection,
and in relation to the study of pseudoholomorphic curves on general symplectic manifolds
emphasizing the Weitzenb\"ock formulae in the study of elliptic regularity in the same spirit of the present survey.

Let $(M,J)$ be any almost complex manifold.
\begin{defn} A metric $g$ on $(M,J)$ is called Hermitian, if $g$ satisfies
$$
g(Ju,Jv) = g(u,v), \quad u, \, v \in T_x M, \, x \in M.
$$
We call the triple $(M,J,g)$ an almost Hermitian manifold.
\end{defn}

For any given almost Hermitian manifold $(M,J,g)$, the bilinear form
$$
\Phi : = g(J \cdot, \cdot)
$$
is called the fundamental two-form in \cite{kobayashi-nomizu}, which
is nondegenerate.
\begin{defn} An almost Hermitian manifold $(M,J,g)$ is an \emph{almost
K\"ahler manifold} if the two-form $\Phi$ above is closed.
\end{defn}

\begin{defn}
A (almost) Hermitian connection $\nabla$ is an affine connection
satisfying
$$
\nabla g = 0 = \nabla J.
$$
\end{defn}
Existence of such a connection is easy to check.
In general the torsion $T = T_\nabla$ of the almost Hermitian connection $\nabla$ is not zero, even when
$J$ is integrable.  The following is the almost complex version of the Chern connection in complex
geometry.

\begin{thm}[\cite{gauduchon}, \cite{kobayashi}]
On any almost Hermitian manifold $(M,J,g)$,
there exists a unique Hermitian connection $\nabla$ on $TM$ satisfying
\be\label{eq:canonical-nabla}
T(X,JX) = 0
\ee
for all $X \in TM$.
\end{thm}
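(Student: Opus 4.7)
The plan is to work relative to the Levi--Civita connection $D$ of $g$. I would parametrize any candidate $\nabla$ as $\nabla = D + A$ with $A \in \Gamma(T^*M \otimes \End(TM))$ and convert the three requirements into algebraic conditions on $A$: metric compatibility becomes $A_X \in \mathfrak{o}(g_x)$, while $\nabla J = 0$ becomes $[A_X, J] = -D_X J$. Since $D_X J$ is automatically $g$-skew and $J$-anticommuting---both facts being consequences of $D(J^2) = 0$ and $g(JY,Z) = -g(Y,JZ)$---decomposing $A_X = A_X^+ + A_X^-$ into its $J$-commuting and $J$-anticommuting parts uniquely pins down
\[
A_X^- = \tfrac{1}{2}(D_X J)J,
\]
which is then automatically $g$-skew. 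All remaining freedom lives in the $J$-commuting, $g$-skew part $A_X^+$.

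With this parametrization, the torsion normalization \eqref{eq:canonical-nabla} (using $T_\nabla(X,Y) = A_X Y - A_Y X$ since $D$ is torsion-free) becomes the pointwise linear equation
\[
A_X^+(JX) - A_{JX}^+(X) \;=\; A_{JX}^-(X) - A_X^-(JX)
\]
for the unknown $A^+$, with an explicit right-hand side built from $D J$. Both existence and uniqueness therefore collapse to the single fiberwise linear-algebraic claim that the map
\[
\Phi \colon A^+ \longmapsto \bigl(X \mapsto A_X^+(JX) - A_{JX}^+(X)\bigr)
\]
is a linear isomorphism between the space of $g$-skew, $J$-linear tensors $A^+ \in T_x^*M \otimes \mathfrak{u}(T_xM, J_x, g_x)$ and the space of $T_xM$-valued quadratic forms on $T_xM$ that satisfy the automatic symmetry $Q(JX) = Q(X)$. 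A direct dimension count in a unitary frame $\{e_1, J e_1, \dots, e_n, J e_n\}$ shows both spaces have real dimension $2n^3$ (where $\dim_\R M = 2n$), so it suffices to verify injectivity.

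The main obstacle is this fiberwise injectivity: proving that $A_X^+(JX) = A_{JX}^+(X)$ for every $X$ forces $A^+ \equiv 0$. The strategy I would use is to polarize in $X$, substitute $Y \mapsto JY$, and iterate the $g$-skew and $J$-commuting identities of $A^+$ until all of its matrix coefficients in the unitary frame vanish. A conceptually cleaner packaging---and the viewpoint taken in \cite{gauduchon} and \cite{kobayashi}---is to interpret $\Phi$ as the Hermitian analogue of the Koszul isomorphism underlying the construction of the Levi--Civita connection; that interpretation produces the canonical connection $\nabla$ of the statement together with both existence and uniqueness.
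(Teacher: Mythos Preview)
The paper does not supply its own proof of this theorem: it is quoted as a result of Gauduchon and Kobayashi and immediately used to \emph{define} the canonical connection (Definition~\ref{defn:canonical-nabla}). So there is no in-paper argument to compare your proposal against.

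That said, your outline is essentially the standard proof found in the cited references. Writing $\nabla = D + A$ with $D$ the Levi--Civita connection, splitting $A_X = A_X^+ + A_X^-$ into $J$-commuting and $J$-anticommuting parts, and observing that $A_X^- = \tfrac12(D_XJ)J$ is forced (and is automatically $g$-skew because $D_XJ$ and $J$ are both $g$-skew and anticommute) is exactly right. Your dimension count is also correct: both $T_x^*M \otimes \mathfrak{u}(T_xM,J,g)$ and the space of $T_xM$-valued quadratic forms satisfying $Q(JX)=Q(X)$ have real dimension $2n^3$.

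The only place your write-up stops short of a complete argument is the injectivity of $\Phi$. Polarizing $A_X^+(JX) = A_{JX}^+(X)$ gives
\[
A_X^+(JY) + A_Y^+(JX) = A_{JX}^+(Y) + A_{JY}^+(X),
\]
and you then need to combine this with $g$-skewness and $J$-linearity of $A^+$ to conclude $A^+ = 0$. This is precisely the ``Hermitian Koszul'' step you allude to; if you are going to present this proof rather than cite it, that computation should be written out in full, since it is where the actual content lies.
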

In complex geometry \cite{chern:connection} where $J$ is integrable, a Hermitian connection
satisfying \eqref{eq:canonical-nabla} is called the Chern connection.

\begin{defn}\label{defn:canonical-nabla} A \emph{canonical connection}
 of an almost Hermitian connection is
defined to be one that has the torsion property \eqref{eq:canonical-nabla}.
\end{defn}
The triple \eqref{eq:almost-Hermitian-triple}
$$
(Q \times \R, \widetilde J, \widetilde g_\lambda)
$$
is a natural example of an almost Hermitian manifold associated to the
contact triad $(Q,\lambda,J)$.

Let $\widetilde \nabla$ be the canonical
connection thereof. Then we have the following which also provides a natural
relationship between the contact triad connection and the canonical connection.

\begin{prop}[Canonical connection versus contact triad connection]\label{prop:canonical}
Let $\widetilde g = \widetilde g_\lambda $ be the almost Hermitan metric given above.
Let $\widetilde \nabla$ be the canonical connection of the almost Hermitian manifold
\eqref{eq:almost-Hermitian-triple}, and $\nabla$ be the contact triad connection for the triad $(Q,\lambda,J)$.
Then $\widetilde \nabla$ preserves the splitting \eqref{eq:TM-splitting}
and satisfies $\widetilde \nabla|_\xi = \nabla|_\xi$.
\end{prop}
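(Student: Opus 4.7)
The plan is to prove the proposition by a uniqueness argument. The canonical connection $\widetilde\nabla$ is uniquely characterized by being $\widetilde g$-metric, $\widetilde J$-parallel, and satisfying the torsion condition $T(X,\widetilde J X) = 0$ for all $X \in T(Q \times \R)$. I would therefore construct an explicit candidate connection $\widehat\nabla$ on $T(Q \times \R)$ that visibly preserves the splitting $\xi \oplus \R\{R_\lambda, \frac{\del}{\del s}\}$ and visibly restricts to $\nabla|_\xi$ on $\xi$, and then verify that $\widehat\nabla$ satisfies the three defining properties listed above. Uniqueness will then force $\widehat\nabla = \widetilde\nabla$, which establishes both assertions of the proposition.

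I would define $\widehat\nabla$ block-diagonally using the splitting, which is both $\widetilde g$-orthogonal and $\widetilde J$-invariant. On the $\xi$-block, extend the Hermitian contact triad connection $\nabla^\pi = \pi \nabla|_\xi$ of Theorem \ref{thm:connection}(4) to a connection along all of $T(Q\times\R)$ by declaring $\widehat\nabla_{\del/\del s}$ on $\xi$-sections to be the Lie derivative $\CL_{\del/\del s}$, which is trivial on sections pulled back from $Q$. On the $\R\{R_\lambda, \frac{\del}{\del s}\}$-block, declare the global frame $\{R_\lambda, \frac{\del}{\del s}\}$ to be $\widehat\nabla$-parallel in all directions. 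The construction preserves the splitting and agrees with $\nabla|_\xi$ on the $\xi$-subbundle by design. Metric compatibility then follows from the metric property of $\nabla^\pi$ on $(\xi, g_\xi)$, the constancy of the inner products among $\{R_\lambda, \frac{\del}{\del s}\}$, and the $\widetilde g$-orthogonality of the summands. The identity $\widehat\nabla \widetilde J = 0$ decomposes into $\nabla^\pi J = 0$ on $\xi$ (Theorem \ref{thm:connection}(4)) and the fact that $\widetilde J$ permutes the parallel frame $\{R_\lambda,\frac{\del}{\del s}\}$ up to sign on the other block.

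The hard part will be verifying the torsion condition $T_{\widehat\nabla}(X, \widetilde J X) = 0$; this is where the nontrivial interplay between the $\mathfrak{lcs}$-fication data and the triad connection must be exploited. By polarization, the condition is equivalent to $T(X, \widetilde J Y) + T(Y, \widetilde J X) = 0$, and I would check it separately for inputs in each summand and across summands. The pure $\R\{R_\lambda, \frac{\del}{\del s}\}$-case is immediate from $[R_\lambda, \frac{\del}{\del s}] = 0$ and the parallelism of the frame. The pure $\xi$-case rests on Theorem \ref{thm:connection}(5), namely $T^\pi(JY,Y) = 0$. The cross terms would draw on Theorem \ref{thm:connection}(3) together with the explicit formula $\nabla_Y R_\lambda = \frac12(\CL_{R_\lambda}J) J Y \in \xi$ from Corollary \ref{cor:connection}(1), which governs precisely how the triad connection and its block-diagonal lift interact in the $R_\lambda$-direction. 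Once the cancellations are checked, the uniqueness statement in Section \ref{sec:canonical} identifies $\widehat\nabla$ with $\widetilde\nabla$ and yields both claims of the proposition.
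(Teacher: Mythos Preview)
The paper does not supply a proof of this proposition; it is stated as a structural fact in this survey and then used later (e.g.\ in Section~\ref{sec:asymptotic-analysis}). So I can only assess your argument on its own terms.

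Your uniqueness strategy is natural, but the verification of the torsion axiom $T_{\widehat\nabla}(X,\widetilde J X)=0$ breaks down already in the pure-$\xi$ case. You invoke Theorem~\ref{thm:connection}(5), i.e.\ $T^\pi(JY,Y)=0$, but that identity controls only the $\xi$-projection of the torsion of the contact triad connection. For your block-diagonal candidate $\widehat\nabla$ and $Y\in\xi$, both $\widehat\nabla_Y(JY)$ and $\widehat\nabla_{JY}Y$ lie in $\xi$ by construction, whereas the Lie bracket contributes a Reeb component $\lambda([Y,JY])=-d\lambda(Y,JY)$. Hence
\[
T_{\widehat\nabla}(Y,\widetilde J Y)=T^\pi(Y,JY)+d\lambda(Y,JY)\,R_\lambda
= |Y|_{g_\xi}^{2}\,R_\lambda,
\]
which is nonzero for $Y\neq 0$. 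Thus your $\widehat\nabla$ fails the defining torsion property of the canonical connection, and the uniqueness argument does not close.

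The same computation shows something stronger: \emph{any} connection on $T(Q\times\R)$ that sends $\Gamma(\xi)$ into $\Gamma(\xi)$ will exhibit exactly this nonvanishing Reeb component in $T(Y,\widetilde J Y)$, because that component comes solely from $[Y,JY]$ and is independent of the connection on $\xi$. So the literal block-diagonal ansatz cannot recover the canonical connection as characterized in Definition~\ref{defn:canonical-nabla}. Either the phrase ``preserves the splitting'' must be read in the weaker sense that the induced Hermitian connection $\pi\,\widetilde\nabla|_\xi$ on the subbundle $\xi$ coincides with $\nabla^\pi$ (which is the form in which the result is actually invoked later), or an off-diagonal correction must be built into the candidate before the torsion check can succeed. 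In either case your present proposal does not establish the claim as written.
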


\begin{rem}\label{rem:compatibility}
 \begin{enumerate}
\item In fact it was shown in \cite{oh-wang:connection} that for each
real constant $c$, there is the unique connection that satisfies all
properties (1)--(5) and (6) replaced by (6;c)
\be\label{eq:6c}
\nabla_{JY}R_\lambda + J\nabla_YR_\lambda = c\, Y, \text{or equivalently }\,
\del_Y^\nabla R_\lambda = \frac{c}{2}Y
\ee
for all $Y \in \xi$. Our canonical connection corresponds to $c = 0$.
In particular all of these connections, temporarily denoted by
$\nabla^c$ and called $c$-triad connection, induce the same Hermitian connection on $\xi$, i.e.,
$$
\nabla^{c;\pi} = \nabla^\pi
$$
for all $c$. With this choice of connection $\nabla^c$
\eqref{eq:nablalambdaY} is replaced by
$$
\nabla_Y^c R_\lambda = - \frac{c}{2} JY + \frac12(\CL_{R_\lambda}J) JY.
$$
It seems to us that this constant $c$ is related to the way how one lifts
triad connection to a canonical connection on the symplectization.
Recall if one starts from a Liouville manifold $M$ with cylindrical end,
only the contact structure $\xi$ is naturally induced from the
Liouville structure, but not the contact form,
on its ideal boundary $\del_\infty M$.
\item We suspect that Proposition \ref{prop:canonical} will be important
to make the Kuranishi structure constructed in
\cite{kim-oh:contacton-kuranishi} compatible with Kuranishi structure
on the moduli spaces of pseudoholomorphic curves
on noncompact symplectic manifolds $M$ of contact-type boundary
such as on Liouville manifolds. In this way, we conjecture
existence of an $A_\infty$-type functor
$$
D^\pi \CW(M)/D^\pi \CF(M) \to D^\pi \mathfrak{Leg}(\del_\infty M):
$$
Here the codomain $ D^\pi\mathfrak{Leg}(Q,\xi)$ is the
derived category of the Fukaya-type category $\mathfrak{Leg}(Q) = \mathfrak{Leg}(Q,\xi)$
generated by Legendrian submanifolds that will be
constructed in \cite{kim-oh:contacton-kuranishi}
and \cite{oh:entanglement2}. For the domain,
$D^\pi \CW(M)$ and $D^\pi \CF(M)$ are the derived wrapped
Fukaya category and the derived compact Fukaya category of
the Liouville manifold $M$ respectively. See \cite{bae-jeong-kim}
for the description of the quotient category associated to a pluming space
in terms of a cluster category associated. According thereto,
Ganatra-Gao-Venkatesh relate
this quotient to a derived Rabinowitz Fukaya category.
\end{enumerate}
\end{rem}

\section{Contact instantons and pseudoholomorphic curves on symplectization}
\label{sec:CRmap}

Denote by $(\dot\Sigma, j)$ a punctured Riemann surface (including the case of closed Riemann surfaces without punctures).

\subsection{Analysis of pseudoholomorphic curves on symplectization in the literature}

We would like to make it clear that the analytical results themselves we describe on the symplectization in the present article
are mostly known and established by Hofer-Wysocki-Zehnder's in a series of papers in \cite{HWZ:asymptotics-correction}
- \cite{HWZ:fredholm} in 3 dimension and some in higher dimensions by Bourgeois \cite{bourgeois} and Siefring \cite{siefring:relative,siefring:intersection}.
Wendl's book manuscript \cite{wendl:lecture} also describe the analysis in general dimension and
Cant's thesis \cite{cant:thesis} explains the relative case in general dimension.

To highlight the main differences between the above and Wang and the first author's analysis of
contact instantons \cite{oh-wang:CR-map1,oh-wang:CR-map2}, \cite{oh:contacton-Legendrian-bdy},
we start here with quoting a few sample statements of the main results
from \cite{HWZ:asymptotics-correction}-\cite{HWZ:fredholm}, which are propagated to other later literature.

The following is the prototype of the statements made in \cite{HWZ:asymptotics-correction}
- \cite{HWZ:fredholm}  on the description of finer asymptotics
of pseudoholomorphic curves near punctures,
and repeated in other later literature on the pseudoholomorphic curves on symplectization.

\subsubsection{Choice of special coordinates}

\begin{thm}[Theorem 2.8 \cite{HWZ:asymptotics}, Asymptotic behavior of nondegenerate finite energy planes]
Assume the functions $(a,u); [s_0,\infty) \times \R \to \R^4$ meet the above conditions. Then either
\begin{enumerate}
\item There exists $c \in \R$ such that
$$
(a(s,t),\vartheta(s,t), z(s,t)) = (Ts + c,kt,0)
$$
or
\item There are constants $c \in \R, \, d> 0$ and $M_\alpha > 0$ for all
$\alpha = (\alpha_1,\alpha_2) \in \N \times \N$ such that
\beastar
|\del^\alpha[a(s,t) - Ts - c]| &\leq& M_\alpha e^{-d\cdot s} \\
|\del^\alpha[\vartheta(s,t) - kt]| & \leq & M_\alpha e^{-d\cdot s}
\eeastar
for all $s \geq s_0,\, t \in \R$. Moreover
$$
z(s,t) = e^{\int_{s_0}^s \gamma(\tau) \, d\tau} [e(t) + r(s,t)].
$$
Here $e \neq 0$ is an eigenvector of the self-adjoint operator $A_\infty$ corresponding to
a negative eigenvalue $\lambda < 0$ and $\gamma:[s_0,\infty) \to \R$ is a smooth function
satisfying $\gamma(s) \to \lambda$ as $\lambda \to \infty$. In particular $e(t) \neq 0$ pointwise and
the remainder $r(s,t)$ satisfies $\del^\alpha r(s,t) \to 0$ for
all derivatives $\alpha = (\alpha_1,\alpha_2)$, uniformly in $t \in \R$.
\end{enumerate}
\end{thm}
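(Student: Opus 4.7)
The plan is to reduce this statement about a pseudoholomorphic curve $u=(a,u)$ on the symplectization (written here in a distinguished coordinate system) to the corresponding statement for the contact instanton $w = \pi_Q \circ u$ and then integrate back to recover the component $a = s\circ u$. Since $u$ is $\widetilde J$-holomorphic, Hofer's decomposition \eqref{eq:tildeJ-holo} tells us that $w$ satisfies the contact instanton equations \eqref{eq:contacton-intro} and that $df = w^*\lambda \circ j$ for $f = a$. The third component $z(s,t)$ of the special-coordinate description corresponds, up to the choice of a trivialization of $\gamma^*\xi$, to the contact-theoretic invariant $\zeta(\tau,t) = (\partial w/\partial \tau)^\pi \in w_\tau^*\xi$, and $\vartheta$ encodes the Reeb-angular coordinate of $w$ along the asymptotic orbit.

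First I would invoke the finite-energy asymptotic convergence result (Theorem \ref{thm:subsequence-intro}) to extract, from the finite-energy hypothesis, convergence of $w_\tau:=w(\tau,\cdot)$ (up to subsequence) to a Reeb orbit $\gamma_T$ of nondegenerate period $T$; under the standing nondegeneracy hypothesis this subsequential limit is actually unique and is approached with a first polynomial rate that can then be boot-strapped to exponential by the machinery of Section \ref{sec:exponential-convergence} of the survey. Next, I would apply Theorem \ref{thm:behavior-intro} directly to $\zeta$: the dichotomy in that theorem gives exactly alternative (1) (when $\zeta \equiv 0$, forcing $w$ to reparametrize $\gamma_T$ itself and hence $z\equiv 0$, $\vartheta = kt$, $a = Ts+c$) or alternative (2), which yields the representation
\[
\zeta(\tau,t) = e^{\int_{\tau_0}^\tau \alpha(s)\,ds}\bigl(e(t)+\widetilde r(\tau,t)\bigr)
\]
with $e$ an eigenvector of $A^\pi_{(\lambda,J,\nabla)}(\gamma_T)$ of eigenvalue $\mu$ (negative, by the sign convention on positive punctures), $\alpha(\tau)\to \mu$, and $\widetilde r\to 0$ in $C^\infty$. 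This is the tensorial origin of the eigenvector factor $e(t)$ and of the integrating factor $e^{\int \gamma(\tau)d\tau}$ in the stated theorem; the parameter $d>0$ in the statement is any number with $0<d<|\mu|$.

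Once $\zeta$ is controlled, I would obtain the exponential estimates for $a$ and $\vartheta$ by integrating the auxiliary first-order system \eqref{eq:equation-for-alpha-intro}: since $\alpha := \lambda(\partial_t w)+\sqrt{-1}\lambda(\partial_\tau w)$ satisfies $\overline\partial \alpha = \tfrac12 |\zeta|^2$, the square-exponential decay of the right-hand side together with the asymptotic conditions $\lambda(\partial_t w)\to T$, $\lambda(\partial_\tau w)\to 0$ yields, via standard linear elliptic estimates on the half-cylinder, exponential convergence of $\alpha$ to $T$ at the doubled rate $2|\mu|$; in particular $a(s,t) = Ts+c + O(e^{-d\cdot s})$ and $\vartheta(s,t) = kt + O(e^{-d\cdot s})$. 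The higher derivative estimates $|\partial^\alpha[\cdot]|\le M_\alpha e^{-d\cdot s}$ then follow by the alternating boot-strap between \eqref{eq:equation-for-zeta0-intro} and \eqref{eq:equation-for-alpha-intro} exactly as in the proof of Theorem \ref{thm:local-regularity-intro}, using at each stage the Weitzenb\"ock formula with respect to the contact triad connection.

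The main obstacle is the eigenvalue dichotomy itself, i.e.\ proving that either $\zeta\equiv 0$ on $[\tau_0,\infty)\times S^1$ or $\nu_\tau := \zeta_\tau/\|\zeta_\tau\|_{L^2}$ converges in $C^\infty$ to a single eigenvector of $A^\pi_{(\lambda,J,\nabla)}$ with the announced exponential rate. This is a unique-continuation-at-infinity problem for a non-autonomous operator $A^\tau$ converging to $A^\pi_{(\lambda,J,\nabla)}$, and it is exactly here that the precise tensorial formula
\[
A^\pi_{(\lambda,J,\nabla)} = T\Bigl(-\tfrac12 \CL_{R_\lambda}J - \CL_{R_\lambda}+ \tfrac12(\CL_{R_\lambda}J)J\Bigr)
\]
(Proposition \ref{prop:A=T-intro}) together with the commutativity $[\nabla_t^{\text{LC}},J]=0$ from Proposition \ref{prop:blair} (which eliminates spurious cross-terms that plague the special-coordinate approach) are essential: they reduce the argument to a standard Agmon/HWZ-type spectral gap argument for the self-adjoint operator $A^\pi_{(\lambda,J,\nabla)}$ on $L^2(\gamma^*\xi)$, carried out intrinsically on $\gamma^*\xi$ rather than after an \emph{ad hoc} adjustment of $J$ along $\gamma$, and yielding all constants $M_\alpha$ independent of any auxiliary trivialization.
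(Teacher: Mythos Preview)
The theorem you are addressing is not proved in this paper; it is \emph{quoted} from \cite{HWZ:asymptotics} in Section~\ref{sec:CRmap} as a sample of the coordinate-based statements in the existing literature, precisely to motivate the paper's alternative tensorial framework. The paper does not re-prove this coordinate form of the result; instead it formulates and proves its own intrinsic analog, Theorem~\ref{thm:behavior-intro} (restated as Theorem~\ref{thm:behavior}), where the role of the transverse coordinate $z(s,t)$ is played by the invariant section $\zeta = (\partial w/\partial\tau)^\pi$.

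That said, your proposal accurately captures the strategy behind the paper's proof of its own analog: subsequence convergence and exponential approach to a Reeb orbit (Theorem~\ref{thm:subsequence-intro} and Section~\ref{sec:exponential-convergence}), the dichotomy for $\zeta$ via the asymptotic operator $A^\pi_{(\lambda,J,\nabla)}$ (Theorem~\ref{thm:behavior-intro}), and then recovery of the radial component $a$ and the Reeb-angular component from the equation $\overline\partial\alpha = \tfrac12|\zeta|^2$ (Proposition~\ref{prop:alpha}) together with the alternating boot-strap of Section~\ref{sec:Ckdelta-estimates}. This is essentially the route the paper takes for its tensorial version, and the paper explicitly notes (just before Theorem~\ref{thm:behavior-intro}) that its argument is ``the covariant tensorial version of the Hofer--Wysocki--Zehnder's proof of \cite[Theorem 2.8]{HWZ:behavior}''. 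So your outline is correct as a derivation of the paper's Theorem~\ref{thm:behavior-intro}; just be aware that the paper treats the quoted HWZ statement as input to be superseded rather than as a target to be proved in its original special-coordinate form.
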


To arrive at these statements, the authors therefrom start with a finite energy cylinder $\widetilde v: \R \times S^1
\to Q \times \R$ with $\widetilde v = (v,a)$ for a given contact manifold $(Q,\lambda)$.

\medskip

They take special coordinates $(a,\vartheta,z)$
in an \emph{$S^1$-invariant} neighborhood $W \subset Q$ of the given Reeb orbit $x(T\cdot) \in C^\infty(S^1,Q)$ in such a way that
\begin{enumerate}
\item $\Image v \subset W$ for all $s$ large enough,
\item They choose the coordinates $(\theta,x,y) \in S^1 \times \R^2$ by considering
contact diffeomorphism $(S^1 \times \R^2, f\cdot \lambda_0)$ to $(Q,\lambda)$ where
\begin{itemize}
\item  the periodic solution corresponds to $S^1 \times \{0\}$,
\item $f$ a positive function,
\item $f \cdot \lambda_0$ is a contact form with $\lambda_0 = d\theta + x\,dy$ the standard
contact form on $S^1 \times \R^2$.
\end{itemize}
\end{enumerate}

\subsubsection{Local adjustment of $J$ along the Reeb orbits}

Next, they put some constraints on the almost complex structure in
$W \times \R \subset Q \times \R$
which has troubled the first author much:
Along the way, they \emph{change the given almost complex structure} on $W$ \emph{
in the way that depends on the given Reeb orbit} as follows.
Consider the symplectic inner product $d\lambda$ on the $S^1 \times \R^2$ family of
2-dimensional contact plane $\xi_m = \ker \lambda_m$ is given by
$$
\xi_m \subset \R^3 \cong T_m(S^1 \times \R^2)
$$
where $\xi_m = \span\langle e_1, e_2 \rangle$ with
$$
e_1 = (0,1,0), \quad e_2 = (-x_1, 0, 1).
$$
Then the $2 \times 2$ matrix $\Omega  = \Omega(\theta,x,y)$ associated to $d\lambda$ on $\xi_m$
is given by
$$
\Omega = f J_0, \quad J_0 = \left(\begin{matrix} 0 & -1 \\ 1 & 0 \end{matrix}\right)
$$
where $J_0$ is the standard complex structure on $\R^2$. Then the almost complex structure
$j_m: \xi_m \to \xi_m$ is the pull-back of a standard $\lambda$-adapted CR almost complex structure
on $Q$ restricted to its contact distributions. Then they put a `very special' almost complex structure
\cite[In the middle of p.351]{HWZ:asymptotics} denoted by $\widetilde J$.

\subsubsection{Weak points of the aforementioned coordinate approach}

This usage of special coordinates and adjustment on $J$
along the Reeb orbits \emph{depending on the Reeb orbits} prevents one
from enabling to study the $J$-dependence on the generic properties of asymptotic behaviors of pseudoholomorphic curves, e.g.,
about the generic properties of eigenvalues of the
asymptotic operators in terms of the choice of $J$.
In this regard, Siefring \cite{siefring:relative,siefring:intersection}
studied an operator that he calls
the asymptotic operator in the coordinate-free way but
did not develop its spectral perturbation theory under the change of
CR almost complex structures $J$.

The first author's attempt to find a different way of doing the
aforementioned asymptotic
study of Reeb orbits, especially that of doing those practiced in the higher dimensional Morse-Bott case
given in \cite{bourgeois}, motivated Wang and him to pursue the current tensorial approach at the time of writing
\cite{oh-wang:connection,oh-wang:CR-map1,oh-wang:CR-map2} in the beginning of 2010's. Furthermore our tensorial approach also helps our
understanding of the background geometry of contact triads $(M,\lambda, J)$ and the symplectization
irrespective of the analytic study of pseudoholomorphic curves as described
in Section \ref{sec:connection}.

\subsection{Contact Cauchy-Riemann maps}

The following definition is introduced in \cite{oh-wang:CR-map1}.

\begin{defn}[Contact Cauchy-Riemann map]
A smooth map $w:\dot\Sigma\to M$ is called a
\emph{contact Cauchy-Riemann map}
(with respect to the contact triad $(M, \lambda, J)$), if $w$ satisfies the following Cauchy--Riemann equation
$$
\delbar^\pi w:=\delbar^{\pi}_{j,J}w:=\frac{1}{2}(\pi dw+J\pi dw\circ j)=0.
$$
\end{defn}

Recall that for a fixed smooth map $w:\dot\Sigma\to M$,
the triple
$$
(w^*\xi, w^*J, w^*g_\xi)
$$
becomes  a Hermitian vector bundle
over the punctured Riemann surface $\dot\Sigma$.  This  introduces a Hermitian bundle structure on
$Hom(T\dot\Sigma, w^*\xi)\cong T^*\dot\Sigma\otimes w^*\xi$ over $\dot\Sigma$,
with inner product given by
$$
\langle \alpha\otimes \zeta, \beta\otimes\eta \rangle =h(\alpha,\beta)g_\xi(\zeta, \eta),
$$
where $\alpha, \beta\in\Omega^1(\dot\Sigma)$, $\zeta, \eta\in \Gamma(w^*\xi)$,
and $h$ is the K\"ahler metric on the punctured Riemann surface $(\dot\Sigma, j)$.

Let $\nabla^\pi$ be the contact Hermitian connection.
Combining the pulling-back of this connection and the Levi--Civita connection of the Riemann surface,
we get a Hermitian connection for the bundle $T^*\dot\Sigma\otimes w^*\xi \to \dot\Sigma$, which we will still denote by $\nabla^\pi$ by a slight abuse
of notation. \emph{This is the setting where we apply the harmonic theory and
Weitzenb\"ock formulae to study the global a priori $W^{1,2}$-estimate of
$d^\pi w$:} The smooth map $w$ has an associated $\pi$-harmonic energy density,  the function $e^\pi(w):\dot\Sigma\to \R$, defined by
$$
e^\pi(w)(z):=|d^\pi w|^2(z).
$$
(Here we use $|\cdot|$ to denote the norm from $\langle\cdot, \cdot \rangle$ which should be clear from the context.)

Similar to standard Cauchy--Riemann maps for almost Hermitian manifolds (i.e., pseudo-holomorphic curves),
we have the following whose proofs are straightforward and so omitted.
(See \cite[Lemma 3.2]{oh-wang:CR-map1} for the proofs.)

\begin{lem}\label{lem:energy-omegaarea}
Fix a K\"ahler metric $h$ on $(\dot\Sigma,j)$,
and consider a smooth  map $w:\dot\Sigma \to M$.  Then we have the following equations
\begin{enumerate}
\item $e^\pi(w):=|d^\pi w|^2 = |\del^\pi w| ^2 + |\delbar^\pi w|^2$;
\item $2\, w^*d\lambda = (-|\delbar^\pi w|^2 + |\del^\pi w|^2) \,dA $
where $dA$ is the area form of the metric $h$ on $\dot\Sigma$;
\item $w^*\lambda \wedge w^*\lambda \circ j = - |w^*\lambda|^2\, dA$.
\end{enumerate}
As a consequence, if $w$ is a contact Cauchy-Riemann map, i.e., $\delbar^\pi w=0$, then
\be\label{eq:onshell}
|d^\pi w|^2 = |\del^\pi w| ^2 \quad \text{and}\quad w^*d\lambda = \frac{1}2|d^\pi w|^2 \,dA.
\ee
\end{lem}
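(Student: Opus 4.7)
The plan is to reduce all three identities to pointwise tensorial calculations in a local isothermal chart. Fix conformal coordinates $z = x+iy$ on $\dot\Sigma$ with $h = \rho\,(dx^2+dy^2)$, area form $dA = \rho\,dx\wedge dy$, and $j\,\del_x = \del_y$. Set
\[
\zeta_+ := \del^\pi w(\del_x), \qquad \zeta_- := \delbar^\pi w(\del_x).
\]
Because $\del^\pi w$ is $(J,j)$-linear and $\delbar^\pi w$ is $(J,j)$-antilinear on $\xi$, we automatically get $\del^\pi w(\del_y) = J\zeta_+$ and $\delbar^\pi w(\del_y) = -J\zeta_-$. All three identities then reduce to routine algebra in the frame $\{dx,dy\}$, using only the Hermitian compatibility $d\lambda(\cdot,J\cdot)|_\xi = g_\xi|_\xi$ and the identity $R_\lambda \rfloor d\lambda = 0$.

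For (1), the Hermitian inner product on $T^*\dot\Sigma \otimes w^*\xi$ induced by $h$ and $g_\xi$ makes $dx,dy$ an orthogonal frame with $|dx|_h^2 = |dy|_h^2 = 1/\rho$. Writing $d^\pi w = dx\otimes(\zeta_+ + \zeta_-) + dy\otimes(J\zeta_+ - J\zeta_-)$ and squaring, the $dx$-slot contributes $g_\xi(\zeta_+,\zeta_-)$ cross terms while the $dy$-slot contributes $-g_\xi(J\zeta_+,J\zeta_-)$ cross terms; by $J$-invariance of $g_\xi$ these cancel exactly, giving $|d^\pi w|^2 = (2/\rho)(|\zeta_+|^2 + |\zeta_-|^2) = |\del^\pi w|^2 + |\delbar^\pi w|^2$. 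For (2), the identity $R_\lambda\rfloor d\lambda = 0$ reduces the pullback to $w^*d\lambda(\del_x,\del_y) = d\lambda(d^\pi w(\del_x), d^\pi w(\del_y))$; expanding via the above decomposition and using $d\lambda(Y,JZ) = g_\xi(Y,Z)$ together with the symmetry $g_\xi(\zeta_+,\zeta_-) = g_\xi(\zeta_-,\zeta_+)$, the two cross terms again cancel, leaving $d\lambda(\xi_1,\xi_2) = |\zeta_+|^2 - |\zeta_-|^2$, which in view of (1) translates to $2\,w^*d\lambda = (|\del^\pi w|^2 - |\delbar^\pi w|^2)\,dA$. For (3), if $w^*\lambda = a\,dx + b\,dy$, then $dx\circ j = -dy$ and $dy\circ j = dx$ give $w^*\lambda\circ j = b\,dx - a\,dy$, so $w^*\lambda \wedge (w^*\lambda \circ j) = -(a^2 + b^2)\,dx\wedge dy = -|w^*\lambda|^2\,dA$ since $|w^*\lambda|^2 = (a^2+b^2)/\rho$. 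The on-shell consequence \eqref{eq:onshell} is then immediate by setting $\delbar^\pi w = 0$ in (1) and (2).

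There is no substantive obstacle: the whole lemma is a tidy pointwise computation whose only nontrivial inputs are the Hermitian compatibility of the triad metric on $\xi$ and the vanishing $R_\lambda \rfloor d\lambda = 0$. The one place demanding care is bookkeeping of conventions (the factor $\tfrac12$ in the decomposition $d^\pi w = \del^\pi w + \delbar^\pi w$, the conformal factor $1/\rho$ in the $h$-norms of $dx,dy$, and the sign convention for $dx\circ j$); once these are applied consistently the three identities fall out, and the result is independent of the chosen isothermal chart since both sides are globally defined tensorial quantities.
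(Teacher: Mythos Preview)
Your proposal is correct. The paper itself omits the proof entirely, calling it ``straightforward'' and referring to \cite[Lemma 3.2]{oh-wang:CR-map1}; your isothermal-coordinate computation using the Hermitian compatibility $d\lambda(\cdot,J\cdot)|_\xi = g_\xi$ and $R_\lambda\rfloor d\lambda = 0$ is exactly the kind of direct pointwise verification that fills this gap, and your bookkeeping of the conformal factor and the $J$-invariance cancellations is accurate.
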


However the contact Cauchy-Riemann equation itself
$\delbar^\pi w = 0$ does not form
an elliptic system because it is degenerate along the Reeb direction: Note that the rank of
$w^*TQ$ has $2n+1$ while that of $w^*\xi\otimes \Lambda^{0,1}(\Sigma)$ is $2n$.
Therefore to develop suitable deformation theory and a priori estimates, one needs to
lift the equation to an elliptic system by incorporating the data of the Reeb
direction.  In hindsight, the pseudoholomorphic curve
system of the pair $(a,w)$ is one of many possible
such liftings by introducing an auxiliary variable $a$,
\emph{when the one-form $w^*\lambda \circ j$ is exact.}
Hofer \cite{hofer:invent} did this by lifting the equation to the
symplectization $\R \times Q$
and considering the pull-back function $f: = s\circ w$ of the $\R$-coordinate function
$s$ of $\R \times Q$. By doing so, he \emph{added one more variable} to the equation $\delbar^\pi w =0$
while \emph{adding 2 more equations $w^*\lambda \circ j = df$} which becomes Gromov's pseudoholomorphic curve system on the product
$\R \times Q$.

We now introduce two other possible elliptic liftings of Cauchy-Riemann
maps.

\subsection{Gauged sigma model lifting of contact Cauchy-Riemann map}

We have the lifting of Cauchy-Riemann map $w$
 involving a section of complex line bundle over $Q$
\be\label{eq:CLlambda}
\CL_\lambda \to Q
\ee
whose fiber at $q \in Q$ is given by
$$
\CL_{\lambda,q} = \R_{\lambda,q} \otimes \C
$$
where $\R_\lambda \to Q$ is the trivial real line bundle whose fiber at $q$ is given by
$$
\R_{\lambda,q} = \R \{R_\lambda(q)\}.
$$
Now let $w: \Sigma \to Q$ be a smooth map
where $\dot \Sigma$ is either closed or
a punctured Riemann surface, and $\chi$ be a section of the pull-back bundle
$w^*\CL_\lambda$.

\begin{defn} We call a triple $(w,j,\chi)$ consisting of a complex structure $j$ on $\Sigma$,
$w: \Sigma \to Q$ and a $\C$-valued one-form
$\chi$ a \emph{gauged contact instanton} if they satisfy
\be\label{eq:gauged-instanton}
\begin{cases}
\delbar^\pi w = 0 \\
\delbar \chi = 0, \quad
\Im \chi = w^*\lambda.
\end{cases}
\ee
\end{defn}

This system is a coupled system of the contact Cauchy-Riemann map equation and the well-known Riemann-Hilbert problem of the type
which solves the real part in terms of
the imaginary part of holomorphic functions in complex variable theory.

\subsection{Contact instanton lifting of contact Cauchy-Riemann map}

By augmenting the closedness condition $d(w^*\lambda\circ j)=0$ to
Contact Cauchy-Riemann map equation $\delbar^\pi w = 0$, we arrive at
an elliptic system \eqref{eq:contact-instanton}.The current contact instanton map system
\be\label{eq:contact-instanton}
\delbar^\pi w = 0, \quad d(w^*\lambda \circ j) = 0
\ee
is another such an elliptic lifting which is more natural
in some respect in that it does not introduce
any additional variable and keeps the original `bulk',
the contact manifold $Q$.

To illustrate the effect of the closedness condition on the behavior of
contact instantons, we look at them on \emph{closed}
Riemann surface and prove the following classification
result. The following proposition is stated by Abbas as a part of \cite[Proposition 1.4]{abbas}.  We refer readers
\cite[Proposition 3.4]{oh-wang:CR-map1} for another
proof which is somewhat different from Abbas' in \cite{abbas}.

\begin{prop}[Proposition 1.4, \cite{abbas}]\label{prop:abbas} Assume $w:\Sigma\to M$ is a smooth contact instanton from a closed Riemann surface.
Then
\begin{enumerate}
\item If $g(\Sigma)=0$, $w$ is a constant map;
\item If $g(\Sigma)\geq 1$, $w$ is either a constant or the locus of its image
is a \emph{closed} Reeb orbit.
\end{enumerate}
\end{prop}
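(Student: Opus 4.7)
The plan is to extract from the two contact instanton equations the harmonicity of $w^*\lambda$ on $(\Sigma, j)$ and then to read off the claim from Hodge theory together with a universal-cover descent argument.

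First I would apply Stokes' theorem to the on-shell identity $w^*d\lambda = \frac{1}{2}|d^\pi w|^2\, dA$ from \eqref{eq:onshell}. Since $\Sigma$ is closed,
\be
0 = \int_\Sigma d(w^*\lambda) = \int_\Sigma w^*d\lambda = \frac{1}{2}\int_\Sigma |d^\pi w|^2\, dA,
\ee
which forces $d^\pi w \equiv 0$ and hence $dw = w^*\lambda \otimes R_\lambda$. In particular $d(w^*\lambda) = 0$, so $w^*\lambda$ is closed. Since on any Riemann surface $(\Sigma, j, h)$ the operator $\alpha \mapsto \alpha \circ j$ on 1-forms differs from the Hodge star only by a sign, the second contact instanton equation $d(w^*\lambda \circ j) = 0$ is equivalent to $d * w^*\lambda = 0$. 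Thus $w^*\lambda$ is harmonic.

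For $g(\Sigma) = 0$, $H^1(S^2; \R) = 0$ and so $w^*\lambda \equiv 0$; combined with $d^\pi w = 0$ this yields $dw = 0$, proving (1). For $g(\Sigma) \geq 1$, if $w^*\lambda \equiv 0$ the same reasoning gives $w$ constant, so assume $w^*\lambda \not\equiv 0$. Denote the universal cover of $\Sigma$ by $\widetilde\Sigma$ and let $\widetilde w: \widetilde\Sigma \to Q$ be the lift of $w$. Since $\widetilde\Sigma$ is simply connected, $\widetilde w^*\lambda = df$ for some smooth $f: \widetilde\Sigma \to \R$, and integrating $d\widetilde w = df \otimes R_\lambda$ along paths yields
\be
\widetilde w(\widetilde z) = \phi_{f(\widetilde z) - f(\widetilde z_0)}(p_0), \qquad p_0 := \widetilde w(\widetilde z_0),
\ee
where $\phi_t$ denotes the Reeb flow. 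Invariance of $\widetilde w$ under the deck group forces the period homomorphism
\be
c : \pi_1(\Sigma) \to \R, \qquad c([\gamma]) := \int_\gamma w^*\lambda,
\ee
to satisfy $\phi_{c([\gamma])}(p_0) = p_0$ for every $[\gamma]$. If $c \equiv 0$, then $f$ descends to $\Sigma$, where it is a harmonic function on a closed surface, hence constant; this contradicts $w^*\lambda \not\equiv 0$. Therefore some $c([\gamma]) \neq 0$, so the Reeb orbit through $p_0$ is closed with period dividing $|c([\gamma])|$. Finally, since $f(\widetilde\Sigma) \subset \R$ is connected and invariant under translation by the nontrivial subgroup $c(\pi_1(\Sigma))$, it equals all of $\R$, and $w(\Sigma)$ is exactly this closed Reeb orbit.

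The real content of the argument is the harmonicity of $w^*\lambda$, which is precisely how the two contact instanton equations intertwine; the rest is Hodge theory on closed surfaces plus a one-dimensional ODE integration. The step requiring the most care is the higher-genus descent: one needs the period homomorphism to conclude that the orbit through the base point is automatically closed, and a short compactness-connectedness argument to conclude that the entire orbit is covered.
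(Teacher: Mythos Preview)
Your proof is correct. The paper itself does not supply a proof of this proposition; it merely cites \cite{abbas} and refers the reader to \cite[Proposition 3.4]{oh-wang:CR-map1} for an alternative argument. Your approach---Stokes' theorem on the on-shell identity \eqref{eq:onshell} to force $d^\pi w=0$, recognition of $w^*\lambda$ as a harmonic $1$-form via the two instanton equations, Hodge theory to handle genus zero, and in higher genus a lift to the universal cover to integrate the ODE $d\widetilde w = df\otimes R_\lambda(\widetilde w)$ and invoke the period homomorphism---is the standard one and matches what one finds in those references. One cosmetic remark: in the last step you do not actually need $f(\widetilde\Sigma)=\R$; once you know the orbit through $p_0$ has period $T$ dividing some nonzero $|c([\gamma])|$, the connected interval $f(\widetilde\Sigma)$ already has length at least $|c([\gamma])|\geq T$, so its image under $t\mapsto \phi_t(p_0)$ sweeps out the full closed orbit.
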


\part{A priori estimates}

\section{Weitzenb\"ock formulae}
\label{sec:weitzenbock}

In this section, we use the contact triad connection, first to derive
Weitzenb\"ock-type formula associated to the $\pi$-harmonic energy for contact Cauchy--Riemann maps, and then to derive another formula
associated to the full harmonic energy density for the contact instantons.
The contact triad connection fits well for this purpose which will be seen clearly in this section.

\subsection{Weitzenb\"ock formulae for contact Cauchy-Riemann maps}

We start with by looking at the (Hodge) Laplacian of the $\pi$-harmonic energy density
of an arbitrary smooth map $w: \dot \Sigma \to M$, i.e., in the
\emph{off-shell level} in physics terminology.
As the first step, we apply the standard Weitzenb\"ock formula to the connection $\nabla^\pi$
on $T^*\dot\Sigma \otimes w^*\xi$ that is induced by the pull-back connection on
bundle $w^*\xi$ and the Levi--Civita connection on $T\dot \Sigma$, and obtain
the following formula
\bea
-\frac{1}{2}\Delta e^\pi(w)&=&|\nabla^\pi(d^\pi w) |^2-\langle \Delta^{\nabla^\pi} d^\pi w, d^\pi w\rangle
+K\cdot |d^\pi w|^2+\langle \ric^{\nabla^\pi}(d^\pi w), d^\pi w\rangle.\nonumber\\
&&\label{eq:bochner-weitzenbock-e}
\eea
Here $e^\pi(w)$ is the $\pi$-harmonic energy density,
$K$ the Gaussian curvature of $\dot\Sigma$,
and $\ric^{\nabla^\pi}$ is the Ricci tensor of the connection $\nabla^\pi$
on the vector bundle $w^*\xi$.
(We refer to \cite[Appendix A]{oh-wang:CR-map1} for the
he proof of  \eqref{eq:bochner-weitzenbock-e}.)

The following fundamental
identity
is derived in \cite[Lemma 4.1]{oh-wang:CR-map1}
for whose derivation we refer readers to.
This  is an analog to a similar formula
\cite[Lemma 7.3.2]{oh:book1} in the symplectic context.

\begin{lem}\label{lem:FE-autono} Let $w: \dot\Sigma \to M$ be any smooth map. Denote by $T^\pi$ the torsion tensor of $\nabla^\pi$.
%Denote $d^\pi w=\pi dw \in \Omega^1(w^*\xi)$.
Then as a two form with values in $w^*\xi$,
$d^{\nabla^\pi} (d^\pi w)$ has the expression
\be\label{eq:dnabladpiw}
d^{\nabla^\pi} (d^\pi w)= T^\pi(\Pi dw, \Pi dw)+ w^*\lambda \wedge \left(\frac{1}{2} (\CL_{R_\lambda}J)\, Jd^\pi w\right).
\ee
\end{lem}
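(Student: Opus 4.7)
\medskip

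The plan is to reduce $d^{\nabla^\pi}(d^\pi w)$ to $d^{\nabla}(dw)$ via the splitting $dw = d^\pi w + w^*\lambda\otimes R_\lambda$, and then use the torsion identity together with the special torsion properties of the contact triad connection (Theorem \ref{thm:connection} and Corollary \ref{cor:connection}).

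First, I would compute $d^{\nabla^\pi}(d^\pi w)$ from the definition. For vector fields $X,Y$ on $\dot\Sigma$,
\[
d^{\nabla^\pi}(d^\pi w)(X,Y) = \nabla^\pi_X(d^\pi w(Y)) - \nabla^\pi_Y(d^\pi w(X)) - d^\pi w([X,Y]).
\]
Writing $d^\pi w(Y) = dw(Y) - w^*\lambda(Y)\,R_\lambda$ and using $\nabla^\pi = \pi\circ\nabla|_\xi$, I would apply $\pi$ to the ambient covariant derivative. The key input at this step is property (3) of Theorem \ref{thm:connection}, namely that $\nabla_X R_\lambda \in \xi$ (so that applying $\pi$ leaves $\nabla_X R_\lambda$ untouched and also kills the $X(w^*\lambda(Y))R_\lambda$ piece). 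This yields
\[
d^{\nabla^\pi}(d^\pi w)(X,Y) = \pi\bigl[d^{\nabla}(dw)(X,Y)\bigr] - \bigl(w^*\lambda(Y)\nabla_X R_\lambda - w^*\lambda(X)\nabla_Y R_\lambda\bigr),
\]
where $\nabla$ on the right is the pull-back of the contact triad connection to $w^*TM$.

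Next I would handle the two resulting terms separately. For the first, the standard identity $d^{\nabla}(dw)(X,Y) = T_\nabla(dw(X),dw(Y))$ reduces it to $\pi T(dw(X),dw(Y))$; expanding via $dw = \Pi dw + w^*\lambda\otimes R_\lambda$ and using property (2) of Theorem \ref{thm:connection}, $T(R_\lambda,\cdot) = 0$, kills all cross terms and leaves precisely $T^\pi(\Pi dw(X),\Pi dw(Y))$. For the second term, Corollary \ref{cor:connection}(1) gives $\nabla_X R_\lambda = \frac12(\CL_{R_\lambda}J)J\,dw(X)$, and since $J R_\lambda = 0$ we may replace $dw$ by $d^\pi w$ inside. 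Recognizing the skew-symmetric combination as the wedge product of the one-form $w^*\lambda$ with the $\xi$-valued one-form $\frac12(\CL_{R_\lambda}J)J\,d^\pi w$, I obtain the stated formula.

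The calculation is largely bookkeeping, and the only real obstacle is being careful with the distinction between $\nabla$ on $TM$ and the induced $\nabla^\pi$ on $\xi$: one must check that the ``error'' in passing from $\pi\nabla(\pi\,\cdot)$ to $\pi\nabla(\cdot)$ vanishes precisely because of $\pi R_\lambda = 0$, and then extract the contribution of $\nabla_X R_\lambda$ cleanly. Once those two observations are in place, the torsion-killing property $T(R_\lambda,\cdot)=0$ and the explicit formula for $\nabla R_\lambda$ match the two terms in the claim on the nose.
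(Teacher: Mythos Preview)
Your proof is correct and follows the natural approach: split $dw = d^\pi w + w^*\lambda\otimes R_\lambda$, use the standard identity $d^{\nabla}(dw) = w^*T$, and then invoke the torsion properties $T(R_\lambda,\cdot)=0$ and the formula $\nabla_Y R_\lambda = \tfrac12(\CL_{R_\lambda}J)JY$ from Theorem~\ref{thm:connection} and Corollary~\ref{cor:connection}. The paper itself does not reproduce a proof here but refers to \cite[Lemma~4.1]{oh-wang:CR-map1}; your argument is essentially the same computation carried out there.
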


We now restrict the above lemma to the case of
contact Cauchy--Riemann map, i.e., maps satisfying $\delbar^\pi w = 0$.
In this case, by the property $T(X,JX) = 0$
of the torsion $T$ of the contact triad connection,
we derive the following formula as an immediate corollary of the previous lemma.

\begin{thm}[Fundamental Equation; Theorem 4.2 \cite{oh-wang:CR-map1}]\label{thm:Laplacian-w}
Let $w$ be a contact Cauchy--Riemann map, i.e., a solution of $\delbar^\pi w=0$.
Then
\be\label{eq:Laplacian-w}
d^{\nabla^\pi} (d^\pi w) = -w^*\lambda\circ j \wedge\left(\frac{1}{2} (\CL_{R_\lambda}J)\,
d^\pi w\right).
\ee
\end{thm}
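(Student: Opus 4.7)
The plan is to specialize the off-shell identity of Lemma \ref{lem:FE-autono} to the case $\delbar^\pi w = 0$. Recall that $\delbar^\pi w = 0$ is equivalent to the $J$-complex linearity of $d^\pi w$ with respect to $(j,J)$, i.e., $d^\pi w \circ j = J \circ d^\pi w$ (and likewise $\Pi dw \circ j = J \Pi dw$). Since the domain $\dot \Sigma$ is two-dimensional, any two $w^*\xi$-valued 2-forms on it agree as soon as they agree on a single basis pair $(X, jX)$ at each point, so it suffices to verify the claimed identity pointwise on such pairs.

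For the torsion term $T^\pi(\Pi dw, \Pi dw)$ appearing in Lemma \ref{lem:FE-autono}, evaluation at $(X, jX)$ gives
$$
T^\pi(\Pi dw(X), \Pi dw(jX)) = T^\pi(Y, JY), \qquad Y := \Pi dw(X) \in \xi,
$$
which vanishes by property (5) of the contact triad connection in Theorem \ref{thm:connection}. So the first summand drops out entirely on shell.

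For the remaining term, the plan is to use the algebraic identity $(\CL_{R_\lambda}J) J = - J (\CL_{R_\lambda}J)$ on $\xi$, obtained by differentiating $(J|_\xi)^2 = -\mathrm{id}|_\xi$ along $R_\lambda$. This rewrites $\tfrac{1}{2}(\CL_{R_\lambda}J) J \, d^\pi w$ as $-\tfrac{1}{2} J (\CL_{R_\lambda}J)\, d^\pi w$, so the target equality reduces to
$$
w^*\lambda \wedge J\eta \;=\; (w^*\lambda \circ j) \wedge \eta, \qquad \eta := \tfrac{1}{2}(\CL_{R_\lambda}J)\, d^\pi w.
$$
Combining the $J$-linearity of $d^\pi w$ with the same anti-commutation relation shows that $\eta$ is $J$-\emph{antilinear}, i.e.\ $\eta(jX) = - J \eta(X)$. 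A short direct computation at a basis pair $(X, jX)$, using $j^2 = -1$, then verifies that both sides equal $w^*\lambda(X)\eta(X) - w^*\lambda(jX) J\eta(X)$, which completes the proof.

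The substantive analytic content has already been absorbed into Lemma \ref{lem:FE-autono}, whose derivation relies on the identity $\nabla_Y R_\lambda = \tfrac{1}{2}(\CL_{R_\lambda}J) JY$ from Corollary \ref{cor:connection} together with the full package of defining properties of the contact triad connection. Given that lemma, the only genuine ingredients of the specialization are (i) property (5) of Theorem \ref{thm:connection}, which is precisely the algebraic condition that kills the torsion contribution on $J$-linear data, and (ii) the elementary anti-commutation of $J$ and $\CL_{R_\lambda}J$ on $\xi$. I anticipate no obstacle beyond careful sign bookkeeping when converting between $w^*\lambda \wedge (\cdot)$ and $w^*\lambda \circ j \wedge (\cdot)$.
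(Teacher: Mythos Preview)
Your proposal is correct and follows precisely the paper's route: the paper derives Theorem \ref{thm:Laplacian-w} as an immediate corollary of Lemma \ref{lem:FE-autono} by invoking the torsion property $T^\pi(JY,Y)=0$ on the $J$-linear data $d^\pi w$, and you have simply spelled out the remaining algebra (the anti-commutation $(\CL_{R_\lambda}J)J=-J(\CL_{R_\lambda}J)$ on $\xi$ and the wedge identity converting $w^*\lambda\wedge J\eta$ to $(w^*\lambda\circ j)\wedge\eta$) that the paper leaves implicit.
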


The following elegant expression of Fundamental Equation in any
\emph{isothermal coordinates} $(x,y)$, i.e., one such that
$z = x+ iy$ provides a complex coordinate of $(\dot \Sigma,j)$
such that $h = dx^2 + dy^2$, will be extremely useful for the study of
higher a priori $C^{k,\alpha}$ H\"older estimates.

\begin{cor}[Fundamental Equation in Isothermal Coordinates]
Let $(x,y)$ be an isothermal coordinates.
Write $\zeta := \pi \frac{\del w}{\del \tau}$
as a section of $w^*\xi \to M$. Then
\be\label{eq:fundamental-isothermal}
\nabla_x^\pi \zeta + J \nabla_y^\pi \zeta
 - \frac{1}{2} \lambda\left(\frac{\del w}{\del x}\right)(\CL_{R_\lambda}J)\zeta
 + \frac{1}{2} \lambda\left(\frac{\del w}{\del y}\right)(\CL_{R_\lambda}J)J\zeta =0.
\ee
\end{cor}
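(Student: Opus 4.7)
The plan is to derive the coordinate expression directly from the Fundamental Equation \eqref{eq:Laplacian-w} by evaluating both sides on the pair $(\partial_x,\partial_y)$ in the isothermal frame, and then converting the result to the stated form by composing with $J$.

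First I would unpack the left-hand side. In isothermal coordinates one has $j\partial_x=\partial_y$, $j\partial_y=-\partial_x$, and $[\partial_x,\partial_y]=0$, so writing $\eta:=\pi\partial_y w$ the standard formula for a covariant exterior derivative gives
\[
d^{\nabla^\pi}(d^\pi w)(\partial_x,\partial_y)=\nabla^\pi_x\eta-\nabla^\pi_y\zeta.
\]
The contact Cauchy-Riemann equation $\bar\partial^\pi w=0$, evaluated at $\partial_x$, becomes $\zeta+J\eta=0$, i.e.\ an algebraic relation of the form $\eta=\pm J\zeta$ on $\xi$. Because $\nabla^\pi$ is a Hermitian connection (Theorem \ref{thm:connection}(4)), it commutes with $J$, so $\nabla^\pi_x\eta$ can be pushed past $J$, yielding the left-hand side as $\pm J\nabla^\pi_x\zeta-\nabla^\pi_y\zeta$.

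Next I would expand the right-hand side. For a $1$-form $\alpha$ and a bundle-valued $1$-form $\beta$, $(\alpha\wedge\beta)(\partial_x,\partial_y)=\alpha(\partial_x)\beta(\partial_y)-\alpha(\partial_y)\beta(\partial_x)$. Applying this with $\alpha=w^*\lambda\circ j$ and $\beta=(\mathcal{L}_{R_\lambda}J)d^\pi w$, and using
\[
(w^*\lambda\circ j)(\partial_x)=\lambda(\partial_y w),\qquad (w^*\lambda\circ j)(\partial_y)=-\lambda(\partial_x w),
\]
the wedge evaluates to a linear combination of $(\mathcal{L}_{R_\lambda}J)\zeta$ and $(\mathcal{L}_{R_\lambda}J)\eta$ with coefficients in $\lambda(\partial_x w),\lambda(\partial_y w)$. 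Substituting the algebraic relation between $\eta$ and $\zeta$, and then applying the anticommutation identity
\[
(\mathcal{L}_{R_\lambda}J)J=-J(\mathcal{L}_{R_\lambda}J)
\]
(which follows from differentiating $J^2=-\Pi$ along $R_\lambda$ on the contact distribution), I can rewrite the right-hand side entirely in terms of $(\mathcal{L}_{R_\lambda}J)\zeta$ and $(\mathcal{L}_{R_\lambda}J)J\zeta$.

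At this stage one has an identity of the form $\pm J\nabla^\pi_x\zeta-\nabla^\pi_y\zeta=(\text{stuff in }\zeta,J\zeta)$. To match the advertised normal form $\nabla^\pi_x\zeta+J\nabla^\pi_y\zeta+\cdots=0$, I would finally compose the whole equation with $-J$, using $J^2=-\mathrm{id}$ on $\xi$ together with the anticommutation identity again to simplify $J(\mathcal{L}_{R_\lambda}J)$ and $J(\mathcal{L}_{R_\lambda}J)J$. This leaves precisely the claimed equation \eqref{eq:fundamental-isothermal}. The step I expect to require the most care is bookkeeping of the signs: the conventions for $j\partial_x=\partial_y$, for the wedge of vector-valued forms, for the Cauchy-Riemann relation $\eta=\pm J\zeta$, and for the anticommutation $(\mathcal{L}_{R_\lambda}J)J=-J(\mathcal{L}_{R_\lambda}J)$ must all be composed consistently, and the final $-J$ twist is where it is easiest to drop a sign. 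The content is otherwise purely mechanical once Theorem \ref{thm:Laplacian-w} and the Hermitian property of $\nabla^\pi$ are in hand.
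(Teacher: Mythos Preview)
Your proposal is correct and follows essentially the same route as the paper's proof: evaluate both sides of Theorem~\ref{thm:Laplacian-w} on $(\partial_x,\partial_y)$, use the $(j,J)$-linearity of $d^\pi w$ from $\bar\partial^\pi w=0$ to write $\eta=J\zeta$, use $\nabla^\pi J=0$ to commute $J$ past $\nabla^\pi_x$, and finally apply $J$ to the resulting identity using the anticommutation of $\CL_{R_\lambda}J$ with $J$. The only difference is cosmetic---the paper commits to $\eta=J\zeta$ immediately rather than carrying a $\pm$, and your caution about sign bookkeeping is well placed given that the paper itself records slightly different sign patterns for this formula in \eqref{eq:equation-for-zeta0-intro} versus \eqref{eq:fundamental-isothermal}.
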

\begin{proof}
We denote $\pi \frac{\del w}{\del y}$ by $\eta$. By the isothermality of the coordinate $(x,y)$, we
have $J \frac{\del}{\del x} = \frac{\del}{\del y}$. Using the $(j,J)$-linearity of $d^\pi w$, we derive
$$
\eta = dw^\pi \left(\frac{\del}{\del y}\right) = dw^\pi \left( j\frac{\del}{\del x}\right) = J dw^\pi \left(\frac{\del}{\del x}\right) = J\zeta.
$$
Now we evaluate each side of \eqref{eq:Laplacian-w} against
$(\frac{\del}{\del x}, \frac{\del}{\del y})$.
For the  left hand side, we get
$$
\nabla^\pi_x \eta -\nabla^\pi_y \zeta = \nabla^\pi_x J\zeta -\nabla^\pi_ty
\zeta = J\nabla^\pi_x \zeta -\nabla^\pi_y \zeta.
$$
For the right hand side, we get
\beastar
&{}& \frac{1}{2} \lambda\left(\frac{\del w}{\del x}\right)(\CL_{R_\lambda}J) J\eta
-\frac{1}{2} \lambda\left(\frac{\del w}{\del y}\right)(\CL_{R_\lambda}J) J\zeta\\
& = & - \frac{1}{2} \lambda\left(\frac{\del w}{\del x}\right)(\CL_{R_\lambda}J) \zeta
-\frac{1}{2} \lambda\left(\frac{\del w}{\del y}\right)(\CL_{R_\lambda}J) J\zeta
\eeastar
where we use the equation $\eta = J \zeta$ for the equality. By setting them equal and applying $J$
to the resulting equation using the fact that $\CL_{R_\lambda}J$ anti-commutes with $J$, we
obtain the equation.
\end{proof}

\begin{rem}
The fundamental equation in cylindrical (or strip-like) coordinates is nothing but the linearization equation of the contact Cauchy-Riemann
equation in the direction
$\frac{\del}{\del\tau}$. This  plays an important role in the derivation
of the exponential decay of the derivatives at cylindrical ends.
(See \cite[Part II]{oh-wang:CR-map2}.)
\end{rem}

Now we can convert the general Weitzenb\"ock formula
\eqref{eq:bochner-weitzenbock-e} into

\begin{prop}[Equation (4.11) \cite{oh-wang:CR-map1}]
\label{prop:e-pi-weitzenbock}
Let $w$ be a contact Cauchy-Riemann map. Then
\bea\label{eq:e-pi-weitzenbock}
-\frac{1}{2}\Delta e^\pi(w)&=&|\nabla^\pi (\del^\pi w)|^2+K|\del^\pi w|^2+\langle \ric^{\nabla^\pi} (\del^\pi w), \del^\pi w\rangle\nonumber\\
&{}&+\langle \delta^{\nabla^\pi}[(w^*\lambda\circ j)\wedge (\CL_{R_\lambda}J)\del^\pi w], \del^\pi w\rangle.
\eea
\end{prop}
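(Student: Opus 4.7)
The plan is to combine the general Weitzenb\"ock formula \eqref{eq:bochner-weitzenbock-e} with the Fundamental Equation \eqref{eq:Laplacian-w} to rewrite the term $-\langle \Delta^{\nabla^\pi} d^\pi w, d^\pi w\rangle$ as the desired divergence-type expression. First, since $w$ satisfies $\delbar^\pi w = 0$, we have $d^\pi w = \del^\pi w$ and in particular $|d^\pi w|^2 = |\del^\pi w|^2$; this converts $|\nabla^\pi d^\pi w|^2$, $K|d^\pi w|^2$, and the Ricci term in \eqref{eq:bochner-weitzenbock-e} directly into the first three terms of \eqref{eq:e-pi-weitzenbock}. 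So the entire content of the proposition is to match $-\langle \Delta^{\nabla^\pi} d^\pi w, d^\pi w\rangle$ with $\langle \delta^{\nabla^\pi}[(w^*\lambda\circ j)\wedge (\CL_{R_\lambda}J)\del^\pi w], \del^\pi w\rangle$.

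To this end, I would decompose the Hodge Laplacian on $w^*\xi$-valued $1$-forms as
\[
\Delta^{\nabla^\pi} = d^{\nabla^\pi}\delta^{\nabla^\pi} + \delta^{\nabla^\pi} d^{\nabla^\pi}
\]
and apply each piece to $d^\pi w$. The Fundamental Equation \eqref{eq:Laplacian-w} immediately yields
\[
\delta^{\nabla^\pi} d^{\nabla^\pi}(d^\pi w) = -\tfrac12\, \delta^{\nabla^\pi}\bigl[(w^*\lambda \circ j)\wedge (\CL_{R_\lambda}J)\, d^\pi w\bigr],
\]
which, after pairing with $d^\pi w = \del^\pi w$, is exactly the divergence-type term on the right-hand side of \eqref{eq:e-pi-weitzenbock} up to constants that should be absorbed in normalizations of $(\CL_{R_\lambda}J)\,d^\pi w$.

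The subtlety lies in disposing of the remaining piece $\langle d^{\nabla^\pi}\delta^{\nabla^\pi}(d^\pi w), d^\pi w\rangle$. I would handle this in two possible ways. The cleanest is to localize in an isothermal coordinate $(x,y)$: with $\zeta := \pi \del_x w$ and $\eta := \pi \del_y w = J\zeta$, the codifferential reads $\delta^{\nabla^\pi}(d^\pi w) = -\nabla^\pi_x \zeta - J\nabla^\pi_y \zeta$ (using $\nabla^\pi J = 0$ from the Hermitian property of the contact triad connection in Theorem \ref{thm:connection}), and the isothermal Fundamental Equation \eqref{eq:fundamental-isothermal} identifies this quantity explicitly with a $(\CL_{R_\lambda}J)$-contribution contracted against $w^*\lambda$. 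Feeding this back into $d^{\nabla^\pi}\delta^{\nabla^\pi}(d^\pi w)$ should allow the two codifferential contributions to merge into the single term on the right of \eqref{eq:e-pi-weitzenbock}. Alternatively, one may invoke a Hermitian/K\"ahler-type identity to transform $\delta^{\nabla^\pi}$ on $(1,0)$-forms into $\delbar^{\nabla^\pi,*}$ modulo curvature (which is already packaged into $\ric^{\nabla^\pi}$), and then use $\delbar^\pi w = 0$.

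The main obstacle is precisely the bookkeeping in the previous paragraph: carefully matching coefficients and signs between the $\delta^{\nabla^\pi} d^{\nabla^\pi}$ contribution (given directly by the Fundamental Equation) and the $d^{\nabla^\pi}\delta^{\nabla^\pi}$ contribution (which is implicit and requires unwinding via isothermal coordinates), and showing that the latter either vanishes on shell or combines into the former to produce the single clean divergence term of \eqref{eq:e-pi-weitzenbock}. The key structural inputs are that $\nabla^\pi$ is Hermitian for $(J, d\lambda|_\xi)$ and the torsion identity $T^\pi(JY,Y) = 0$ from Theorem \ref{thm:connection}, both of which are exactly what was used to derive the Fundamental Equation itself in Lemma \ref{lem:FE-autono}.
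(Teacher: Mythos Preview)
Your approach is essentially the same as the paper's: both start from the general Weitzenb\"ock identity \eqref{eq:bochner-weitzenbock-e} and use the Fundamental Equation to rewrite $-\langle \Delta^{\nabla^\pi} d^\pi w,\, d^\pi w\rangle$ as the divergence term. The paper packages this step as Lemma~\ref{lem:Hodge-Laplacian}, asserting that $-\Delta^{\nabla^\pi} d^\pi w = \delta^{\nabla^\pi}\bigl[(w^*\lambda\circ j)\wedge (\CL_{R_\lambda}J)\,\del^\pi w\bigr]$ follows ``immediately from the fundamental equation,'' and then substitutes.

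Your worry about the $d^{\nabla^\pi}\delta^{\nabla^\pi}$ piece is exactly the right place to focus, and it is also where the factor $\tfrac12$ in the Fundamental Equation gets absorbed. The resolution is your own observation that for the $(1,0)$-form $d^\pi w=\del^\pi w$ one has $*\,d^\pi w = -J\,d^\pi w$, so (using $\nabla^\pi J=0$) the Fundamental Equation simultaneously determines $\delta^{\nabla^\pi}(d^\pi w) = J\,{*}\,d^{\nabla^\pi}(d^\pi w)$; this is precisely what you see in isothermal coordinates as $\delta^{\nabla^\pi}(d^\pi w) = -(\nabla^\pi_x\zeta + J\nabla^\pi_y\zeta)$. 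A one-line isothermal check then shows that the two halves of the Hodge Laplacian contribute \emph{equally} after pairing with $d^\pi w$, i.e.\ $\langle d^{\nabla^\pi}\delta^{\nabla^\pi}(d^\pi w),\, d^\pi w\rangle = \langle \delta^{\nabla^\pi}d^{\nabla^\pi}(d^\pi w),\, d^\pi w\rangle$, so they combine to cancel the $\tfrac12$. So neither of your alternatives (``vanishes'' vs.\ ``combines'') is quite right on its own: the $d\delta$ piece does not vanish, but it duplicates the $\delta d$ piece at the level of the inner product. The paper's outline suppresses this verification; your proposal correctly isolates it as the only nontrivial step.
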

The upshot of the equation is that the \emph{a priori the third derivatives involving general LHS for general smooth maps
can be written in terms of those involving at most the second derivatives for the contact
Cauchy-Riemann maps}.

\begin{proof}[Outline of the proof]
The following formula expresses $\Delta^{\nabla^\pi}d^\pi w$,
which involves the third derivatives of $w$,
in terms of the terms involving derivatives of order at most two.

\begin{lem}\label{lem:Hodge-Laplacian}
For any contact Cauchy--Riemann map $w$,
\beastar
-\Delta^{\nabla^\pi}d^\pi w
&=&
\delta^{\nabla^\pi}[(w^*\lambda\circ j)\wedge (\CL_{R_\lambda}J)\del^\pi w]\\
&=&-*\langle (\nabla^\pi(\CL_{R_\lambda}J))\del^\pi w, w^*\lambda\rangle\\
&&
-*\langle (\CL_{R_\lambda}J)\nabla^\pi\del^\pi w, w^*\lambda\rangle
-*\langle (\CL_{R_\lambda}J)\del^\pi w, \nabla w^*\lambda\rangle.
\eeastar
\end{lem}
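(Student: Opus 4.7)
The plan is to derive Lemma \ref{lem:Hodge-Laplacian} by applying the codifferential $\delta^{\nabla^\pi}$ to the Fundamental Equation of Theorem \ref{thm:Laplacian-w} and then expanding the resulting right-hand side via the Leibniz rule for the contact triad connection. Since $\delbar^\pi w = 0$ on-shell, the decomposition $d^\pi w = \del^\pi w + \delbar^\pi w$ collapses to $d^\pi w = \del^\pi w$, and $d^\pi w$ becomes $(j,J)$-complex linear as a map $T\dot\Sigma \to w^*\xi$.

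First I would decompose the Hodge Laplacian acting on $w^*\xi$-valued one-forms as
\[ \Delta^{\nabla^\pi} \;=\; d^{\nabla^\pi}\delta^{\nabla^\pi} + \delta^{\nabla^\pi}d^{\nabla^\pi} \]
and establish a contact-Hermitian K\"ahler-type identity equating $d^{\nabla^\pi}\delta^{\nabla^\pi}d^\pi w$ and $\delta^{\nabla^\pi}d^{\nabla^\pi}d^\pi w$ on the $(j,J)$-pure section $d^\pi w = \del^\pi w$. The key ingredients should be the Hermitian property $\nabla^\pi J = 0$ and the torsion constraint $T^\pi(JY,Y) = 0$ of the contact triad connection recorded in parts (4)--(5) of Theorem \ref{thm:connection} (the latter being exactly what allowed the $T^\pi(\Pi dw, \Pi dw)$ term in Lemma \ref{lem:FE-autono} to vanish on-shell). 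With this identity in hand, applying $\delta^{\nabla^\pi}$ to the Fundamental Equation
\[ d^{\nabla^\pi}(d^\pi w) \;=\; -\,w^*\lambda\circ j \,\wedge\, \tfrac12 (\CL_{R_\lambda}J)\, d^\pi w \]
and absorbing the resulting factor of $\tfrac12$ via the K\"ahler-type identity yields the first displayed equality of the lemma.

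For the second equality, I would invoke the standard identity $\delta^{\nabla^\pi} = -*\,d^{\nabla^\pi}\,*$ on the two-dimensional Riemann surface $\dot\Sigma$ together with the dimension-two Hodge-star computation $*(w^*\lambda\circ j) = w^*\lambda$ (up to sign). This rewrites $\delta^{\nabla^\pi}[(w^*\lambda\circ j)\wedge (\CL_{R_\lambda}J)\del^\pi w]$ as $-*$ applied to a covariant derivative of the pairing $\langle w^*\lambda, (\CL_{R_\lambda}J)\del^\pi w\rangle$. Expanding by the Leibniz rule for $\nabla^\pi$ then produces three terms, depending on whether the covariant derivative falls on the tensor $\CL_{R_\lambda}J$, on the vector-valued form $\del^\pi w$, or on the scalar one-form $w^*\lambda$; these contributions match the three summands listed in the lemma.

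The hard part will be the K\"ahler-type identity used in the first step: in the non-integrable contact setting, the torsion of $\nabla^\pi$ and the antisymmetric twist $\CL_{R_\lambda}J$ generally obstruct the standard K\"ahler identities, so one must exploit the very specific torsion and Hermitian properties of the contact triad connection, together with the contact Cauchy-Riemann equation $\delbar^\pi w = 0$, to obtain the correct balancing between $d^{\nabla^\pi}\delta^{\nabla^\pi}$ and $\delta^{\nabla^\pi}d^{\nabla^\pi}$. A secondary source of care is the consistent bookkeeping of signs in the interplay between the domain complex structure $j$ and the fibre complex structure $J|_\xi$ under the various Hodge-star manipulations, and the correct use of $\pi$-projections when applying $\nabla^\pi$ to expressions involving $R_\lambda$-related tensors.
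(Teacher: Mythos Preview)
Your treatment of the second equality matches the paper's proof exactly: rewrite $\delta^{\nabla^\pi}$ as $-*\,d^{\nabla^\pi}*$, use the Hodge-star identity of Lemma~\ref{lem:inner-star} to convert the wedge into the pairing $\langle (\CL_{R_\lambda}J)\del^\pi w, w^*\lambda\rangle$, then expand $d^{\nabla^\pi}$ of this $w^*\xi$-valued $0$-form by the Leibniz rule to obtain the three displayed summands.

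The gap is in your plan for the first equality. The K\"ahler-type identity you propose, $d^{\nabla^\pi}\delta^{\nabla^\pi}d^\pi w = \delta^{\nabla^\pi}d^{\nabla^\pi}d^\pi w$, does \emph{not} hold on the nose. Writing $\Phi := *\,d^{\nabla^\pi}d^\pi w \in \Gamma(w^*\xi)$ and using $*\,d^\pi w = -J\,d^\pi w$ together with $\nabla^\pi J = 0$, one computes
\[
d^{\nabla^\pi}\delta^{\nabla^\pi}d^\pi w \;=\; J\,\nabla^\pi\Phi,
\qquad
\delta^{\nabla^\pi}d^{\nabla^\pi}d^\pi w \;=\; (\nabla^\pi\Phi)\circ j,
\]
so the two agree precisely when $\nabla^\pi\Phi$ is $(j,J)$-complex linear, i.e.\ $\overline\partial^{\nabla^\pi}\Phi = 0$. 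Nothing in the hypotheses forces $\Phi$ to be a holomorphic section: the torsion constraint $T^\pi(JY,Y)=0$ is a target-side condition and does not affect the domain Dolbeault operator on $w^*\xi$. The paper's one-line justification (``immediately follows from the fundamental equation'') glosses over this same point.

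What actually works --- and is all that is used downstream in the Corollary and in Proposition~\ref{prop:e-pi-weitzenbock} --- is the \emph{paired} version: the discrepancy $J\,\nabla^\pi\Phi - (\nabla^\pi\Phi)\circ j = 2J\,(\nabla^\pi\Phi)^{(0,1)}$ is a $(0,1)$-form, hence pointwise orthogonal to the $(1,0)$-form $d^\pi w = \del^\pi w$. So $\langle d^{\nabla^\pi}\delta^{\nabla^\pi}d^\pi w,\, d^\pi w\rangle = \langle \delta^{\nabla^\pi}d^{\nabla^\pi}d^\pi w,\, d^\pi w\rangle$, and the factor of $\tfrac12$ from the Fundamental Equation is absorbed at the level of $\langle -\Delta^{\nabla^\pi}d^\pi w, d^\pi w\rangle$. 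You should aim directly for this paired identity rather than the bare operator equality.
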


\begin{proof}The first equality immediately follows from the fundamental equation,
Theorem \ref{thm:Laplacian-w},  for contact Cauchy--Riemann maps.
For the second equality, we calculate by writing
\beastar
\delta^{\nabla^\pi}[(w^*\lambda\circ j)\wedge (\CL_{R_\lambda}J)\del^\pi w]=
 -*d^{\nabla^\pi}*[(\CL_{R_\lambda}J)\del^\pi w\wedge (*w^*\lambda)],
\eeastar
and then by applying the definition of the Hodge $*$  to the expression $*[(\CL_{R_\lambda}J)\del^\pi w\wedge (*w^*\lambda)]$,
we further get
\beastar
&&\delta^{\nabla^\pi}[(w^*\lambda\circ j)\wedge (\CL_{R_\lambda}J)\del^\pi w]\\
&=&-*d^{\nabla^\pi}\langle (\CL_{R_\lambda}J)\del^\pi w, w^*\lambda\rangle\\
&=&-*\langle (\nabla^\pi(\CL_{R_\lambda}J))\del^\pi w, w^*\lambda\rangle
-*\langle (\CL_{R_\lambda}J)\nabla^\pi\del^\pi w, w^*\lambda\rangle
-*\langle (\CL_{R_\lambda}J)\del^\pi w, \nabla w^*\lambda\rangle.
\eeastar
\end{proof}

This leads us to the following formula

\begin{cor} For any contact Cauchy-Riemann map $w$, we have
\beastar
- \langle \Delta^{\nabla^\pi}d^\pi w, d^\pi w \rangle
&=&
\langle \delta^{\nabla^\pi}[(w^*\lambda\circ j)
\wedge (\CL_{R_\lambda}J)d^\pi w], d^\pi w\rangle \\
&=&-\langle * \langle (\nabla^\pi(\CL_{R_\lambda}J))\del^\pi w, w^*\lambda\rangle, d^\pi w\rangle \\
&&
-\langle * \langle (\CL_{R_\lambda}J)\nabla^\pi\del^\pi w, w^*\lambda\rangle,
d^\pi w\rangle \\
&&
-\langle *\langle (\CL_{R_\lambda}J)\del^\pi w, \nabla w^*\lambda\rangle,
d^\pi w\rangle.
\eeastar
\end{cor}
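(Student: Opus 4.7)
The Corollary is a direct consequence of the preceding Lemma, obtained by taking the pointwise fiberwise inner product of both identities in the Lemma against the $w^*\xi$-valued one-form $d^\pi w$. The plan is simply to pair and then invoke the contact Cauchy--Riemann condition to make the notational replacement $\del^\pi w = d^\pi w$, which is legal because $\delbar^\pi w = 0$ forces $d^\pi w = \del^\pi w$ pointwise.

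For the first equality I apply $\langle \,\cdot\, , d^\pi w\rangle$ to both sides of the identity $-\Delta^{\nabla^\pi}d^\pi w = \delta^{\nabla^\pi}[(w^*\lambda\circ j)\wedge (\CL_{R_\lambda}J)\del^\pi w]$ and rewrite $\del^\pi w$ as $d^\pi w$ inside the bracket. For the second equality I pair the three-term expansion from the Lemma against $d^\pi w$ and distribute by bilinearity of the inner product on $\Lambda^1(\dot\Sigma)\otimes w^*\xi$, again replacing $\del^\pi w$ by $d^\pi w$ in the first and third summands. No additional manipulation is required: the Corollary is literally a term-by-term pairing of the Lemma's formula with $d^\pi w$.

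The only point warranting care, and the one I would slow down on, is the interplay between the two distinct inner products in play --- the pointwise $w^*\xi$-valued pairing used to form the scalar $\langle \cdot, w^*\lambda\rangle$, which is subsequently Hodge-starred on the domain, versus the $\Lambda^1(\dot\Sigma)\otimes w^*\xi$ pairing used for the final pairing against $d^\pi w$. This bookkeeping is standard, and I do not anticipate any genuine obstacle; the Corollary should be viewed as repackaging the Lemma into precisely the form needed to feed into the Weitzenb\"ock identity \eqref{eq:e-pi-weitzenbock} for $-\tfrac{1}{2}\Delta e^\pi(w)$.
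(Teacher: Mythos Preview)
Your proposal is correct and matches the paper's approach exactly: the paper presents this Corollary immediately after Lemma~\ref{lem:Hodge-Laplacian} with the phrase ``This leads us to the following formula,'' i.e., it is obtained simply by pairing both identities of the Lemma against $d^\pi w$ and using $d^\pi w = \del^\pi w$ for contact Cauchy--Riemann maps. Your remark about the two distinct inner products is a helpful clarification but no more than what the paper implicitly assumes.
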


Here in the above lemma $\langle\cdot, \cdot\rangle$ denotes the inner product induced from $h$, i.e.,
$\langle\alpha_1\otimes\zeta, \alpha_2\rangle:=h(\alpha_1, \alpha_2)\zeta$,
for any $\alpha_1, \alpha_2\in \Omega^k(P)$ and $\zeta$ a section of $E$. This inner product should not be confused with the inner product of the vector bundles.

By applying $\delta^{\nabla^\pi}$ to \eqref{eq:dnabladpiw} and the resulting expression of
$\Delta^\pi(d^\pi w) = \Delta^\pi(\del^\pi w)$ thereinto, we can convert
the Weitzenb\"ock formula \eqref{eq:bochner-weitzenbock-e} to
\eqref{eq:e-pi-weitzenbock}.
\end{proof}

\subsection{The case of contact instantons}

Now we consider contact instantons which are Cauchy--Riemann maps satisfying $d( w^*\lambda \circ j)=0$ in addition.
\begin{prop}\label{prop:Delta|w*lambda|2}  Let $w$ be a contact instanton. Then
\be\label{eq:Delta|w*lamba|2}
-\frac{1}{2}\Delta|w^*\lambda|^2=|\nabla w^*\lambda|^2+K|w^*\lambda|^2
+ \langle *\langle \nabla^\pi \del^\pi w, \del^\pi w\rangle,  w^*\lambda\rangle.
\ee
\end{prop}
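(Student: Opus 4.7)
The plan is to apply the standard Bochner--Weitzenb\"ock formula to the real $1$-form $\beta := w^*\lambda$ on the $2$-dimensional Riemann surface $(\dot\Sigma, h)$, equipped with its Levi--Civita connection. Since $\dim \dot\Sigma = 2$, the Ricci tensor equals $K \cdot h$, so the Weitzenb\"ock identity on $1$-forms reads $\Delta_H \beta = \nabla^*\nabla \beta + K\beta$, where $\Delta_H = dd^* + d^*d$. Combined with the general pointwise Bochner identity $-\frac{1}{2}\Delta |\beta|^2 = |\nabla \beta|^2 - \langle \nabla^*\nabla \beta, \beta\rangle$, this yields
\begin{equation*}
-\frac{1}{2}\Delta |w^*\lambda|^2 \;=\; |\nabla w^*\lambda|^2 + K|w^*\lambda|^2 - \langle \Delta_H \beta, \beta\rangle,
\end{equation*}
reducing the claim to identifying $-\langle \Delta_H \beta, \beta\rangle$ with the last term of \eqref{eq:Delta|w*lamba|2}.

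Here both defining conditions of a contact instanton enter. First, $d(w^*\lambda \circ j) = 0$ is equivalent to co-closedness $d^*\beta = 0$, via the orientation-dependent identity $*\alpha = -\alpha \circ j$ for $1$-forms on a Riemann surface; hence $\Delta_H \beta = d^* d\beta = d^*(w^*d\lambda)$. Second, the orthogonal decomposition $dw = d^\pi w + w^*\lambda \otimes R_\lambda$ together with $R_\lambda \rfloor d\lambda = 0$ gives $w^*d\lambda(X,Y) = d\lambda(d^\pi w(X), d^\pi w(Y))$, and Lemma \ref{lem:energy-omegaarea} applied under $\delbar^\pi w = 0$ collapses this to the clean formula $w^*d\lambda = \frac{1}{2}|\del^\pi w|^2\, dA$.

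From here the rest is direct. Using $d^* = -{*}d{*}$ on $2$-forms together with $*dA = 1$ yields $d^*(w^*d\lambda) = -{*}\,d\bigl(\frac{1}{2}|\del^\pi w|^2\bigr)$, and the compatibility of the contact Hermitian connection $\nabla^\pi$ with $g_\xi$ gives the pointwise identity $\frac{1}{2}\, d|\del^\pi w|^2 = \langle \nabla^\pi \del^\pi w, \del^\pi w\rangle$ as $1$-forms on $\dot\Sigma$. Substituting and pairing with $\beta = w^*\lambda$ produces the displayed third term of \eqref{eq:Delta|w*lamba|2}. The main bookkeeping obstacle is pure sign-tracking: one must simultaneously pin down conventions for $\Delta$ versus $\operatorname{tr}\nabla^2$, the identity $*\alpha = -\alpha \circ j$ on $1$-forms, and the formula $d^* = -{*}d{*}$ on $2$-forms in dimension $2$. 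Notably, no analogue of the Fundamental Equation (Theorem \ref{thm:Laplacian-w}) is required here, since $|w^*\lambda|^2$ is a Reeb-direction quantity governed entirely by the co-closedness of $\beta$; the $\delbar^\pi w = 0$ hypothesis enters only through the collapse of $w^*d\lambda$ to a scalar multiple of the area form.
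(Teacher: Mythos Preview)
Your proof is correct and follows essentially the same route as the paper: apply the Bochner--Weitzenb\"ock formula \eqref{eq:e-lambda-weitzenbock} to the ordinary $1$-form $w^*\lambda$, use $d(w^*\lambda\circ j)=0$ to kill the $d\delta$ piece of the Hodge Laplacian, and then compute $\delta d(w^*\lambda)$ via $w^*d\lambda=\tfrac12|\del^\pi w|^2\,dA$ and $\tfrac12 d|\del^\pi w|^2=\langle\nabla^\pi\del^\pi w,\del^\pi w\rangle$. Your closing remark is also on point: the Fundamental Equation plays no role in this particular identity.
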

\begin{proof}
In this case again by using the Bochner--Weitzenb\"ock formula
(for forms on Riemann surface), we get the following inequality
\be\label{eq:e-lambda-weitzenbock}
-\frac{1}{2}\Delta|w^*\lambda|^2=|\nabla w^*\lambda|^2+K|w^*\lambda|^2-\langle \Delta (w^*\lambda), w^*\lambda\rangle.
\ee
Write
$$
\Delta (w^*\lambda)=d\delta (w^*\lambda)+\delta d(w^*\lambda),
$$
in which the first term vanishes since $w$ satisfies the contact instanton equations. Then
\beastar
\langle \Delta (w^*\lambda), w^*\lambda\rangle
&=&\langle \delta d(w^*\lambda), w^*\lambda\rangle\\
&=&-\frac{1}{2}\langle *d|\del^\pi w|^2, w^*\lambda\rangle\\
&=&-\langle *\langle \nabla^\pi \del^\pi w, \del^\pi w\rangle,  w^*\lambda\rangle.
\eeastar
Substituting this into \eqref{eq:e-lambda-weitzenbock}
we have finished the proof.
\end{proof}

\section{A priori $W^{2,2}$-estimates for contact instantons}

In this section, we derive basic estimates for the  harmonic
energy density $|dw|^w$ of contact instantons $w$.
These estimates are important for the derivation of local regularity and
$\epsilon$-regularity needed for the compactification of moduli
spaces in general.

We first remark that
the total harmonic energy density  is decomposed into
$$
e(w):=|dw|^2=e^\pi(w)+|w^*\lambda|^2:
$$
This follows from the decomposition
$dw = d^\pi w + w^*\lambda \, R_\lambda$ and the
orthogonality of the two summands with respect to the triad metric
and $|R_\lambda| \equiv 1$.

Therefore we will derive the Laplacian of each summand $|d^\pi w|^2$
and $|w^*\lambda|^2$.

\subsection{Computation of $\Delta |dw|^2$ and Weitzenb\"ock formulae}

Recall the formula \eqref{eq:Laplacian-w} of the Laplacian $\Delta e^\pi(w)$ from the last section.
The following estimate is proved in \cite[Equation (5.3)]{oh-wang:CR-map1}.

\begin{lem}\label{lem:3rd-term}
For any constant $c > 0$, we have
\bea\label{eq:laplacian-pi-upper}
&{}&|\langle \delta^{\nabla^\pi}[(w^*\lambda\circ j)\wedge (\CL_{R_\lambda}J)\del^\pi w], \del^\pi w\rangle|\nonumber\\
&\leq&
\frac{1}{2c}\left(|\nabla^\pi (\del^\pi w)|^2+ |\nabla w^*\lambda|^2\right)
+\left(c\|\CL_{R_\lambda}J\|^2_{C^0(M)}+\|\nabla^\pi(\CL_{R_\lambda}J)\|_{C^0(M)}\right)|dw|^4\nonumber\\
\eea
\end{lem}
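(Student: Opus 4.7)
The plan is to substitute the Hodge--Laplacian identity from the preceding lemma into the expression $\delta^{\nabla^\pi}[(w^*\lambda\circ j)\wedge(\CL_{R_\lambda}J)\del^\pi w]$, which rewrites it as a sum of three terms, and then to bound each one pointwise after pairing with $\del^\pi w$ via Cauchy--Schwarz and Young's inequality with parameter $c$. Explicitly, by that lemma,
$$
\delta^{\nabla^\pi}[(w^*\lambda\circ j)\wedge(\CL_{R_\lambda}J)\del^\pi w] = -*\langle (\nabla^\pi(\CL_{R_\lambda}J))\del^\pi w, w^*\lambda\rangle - *\langle (\CL_{R_\lambda}J)\nabla^\pi\del^\pi w, w^*\lambda\rangle - *\langle (\CL_{R_\lambda}J)\del^\pi w, \nabla w^*\lambda\rangle,
$$
so after pairing with $\del^\pi w$ and taking absolute values we obtain three scalar contributions $(\mathrm{I}), (\mathrm{II}), (\mathrm{III})$ to be bounded separately.

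Pointwise Cauchy--Schwarz applied term by term, using the bundle metric on $w^*\xi$ and the Riemannian inner product on forms on $\dot\Sigma$, gives
\begin{align*}
(\mathrm{I}) &\leq \|\nabla^\pi(\CL_{R_\lambda}J)\|_{C^0(M)}\,|dw|\,|\del^\pi w|^2\,|w^*\lambda|,\\
(\mathrm{II}) &\leq \|\CL_{R_\lambda}J\|_{C^0(M)}\,|\nabla^\pi\del^\pi w|\,|\del^\pi w|\,|w^*\lambda|,\\
(\mathrm{III}) &\leq \|\CL_{R_\lambda}J\|_{C^0(M)}\,|\nabla w^*\lambda|\,|\del^\pi w|^2.
\end{align*}
The extra factor $|dw|$ in $(\mathrm{I})$ appears because the $T^*M$-index of the tensor $\nabla^\pi(\CL_{R_\lambda}J)$ on $M$ gets pulled back through $dw$ when the tensor is composed with $w$, contributing one more factor of $|dw|$ pointwise on $\dot\Sigma$.

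I would then apply Young's inequality $AB \leq \tfrac{1}{2c}A^2+\tfrac{c}{2}B^2$ to $(\mathrm{II})$ (with $A=|\nabla^\pi\del^\pi w|$ and $B=\|\CL_{R_\lambda}J\|_{C^0}|\del^\pi w|\,|w^*\lambda|$) and to $(\mathrm{III})$ (with $A=|\nabla w^*\lambda|$ and $B=\|\CL_{R_\lambda}J\|_{C^0}|\del^\pi w|^2$). This yields the derivative contribution $\tfrac{1}{2c}(|\nabla^\pi\del^\pi w|^2+|\nabla w^*\lambda|^2)$, plus a remainder bounded by $\tfrac{c}{2}\|\CL_{R_\lambda}J\|^2_{C^0}(|\del^\pi w|^2|w^*\lambda|^2+|\del^\pi w|^4) \leq c\,\|\CL_{R_\lambda}J\|^2_{C^0(M)}|dw|^4$, using $|\del^\pi w|,|w^*\lambda|\leq |dw|$. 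For $(\mathrm{I})$, the same trivial inequalities directly yield $(\mathrm{I})\leq \|\nabla^\pi(\CL_{R_\lambda}J)\|_{C^0(M)}|dw|^4$. Summing the three bounds recovers the claimed inequality. The only subtle point is the index bookkeeping that produces the extra $|dw|$ in $(\mathrm{I})$; once that is noted, each of the three terms is of total degree four in the first-derivative quantities $dw,\,d^\pi w,\,w^*\lambda$, and the rest is a routine pointwise Cauchy--Schwarz and Young inequality computation within the covariant calculus afforded by the contact triad connection.
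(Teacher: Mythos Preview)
Your proof is correct and is precisely the intended argument: the paper itself does not reproduce a proof of this lemma (it cites \cite[Equation (5.3)]{oh-wang:CR-map1}), but it sets up exactly the identity you invoke via Lemma~\ref{lem:Hodge-Laplacian}, and the three-term decomposition followed by Cauchy--Schwarz and Young's inequality with parameter $c$ is the standard way to close the estimate. Your observation that the pullback of $\nabla^\pi(\CL_{R_\lambda}J)$ through $w$ contributes an extra $|dw|$ factor in term $(\mathrm{I})$ is the one point requiring care, and you handle it correctly.
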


This gives rise to the following differential inequality
\bea\label{eq:e-pi-weitzenbock<}
-\frac{1}{2}\Delta e^\pi(w)& \leq &
\left(1 +\frac1{2c}  |\nabla^\pi (d^\pi w)|^2 +\frac1{2c}\right)
+  \frac{1}{2c}\left( |\nabla w^*\lambda|^2\right) \nonumber\\
&{}& + (\|K\|_{C^0} +  \|\ric^{\nabla^\pi}\|_{C^0}) |d^\pi w)|^2 \nonumber\\
&{}&  +\left(c\|\CL_{R_\lambda}J\|^2_{C^0(M)}+\|\nabla^\pi(\CL_{R_\lambda}J)\|_{C^0(M)}\right)|dw|^4
\eea

Similarly as in the previous estimates for the Laplacian term of $\del^\pi w$, we can bound
\bea\label{eq:laplacian-lambda-upper}
|-\langle \Delta (w^*\lambda), w^*\lambda\rangle|&=&|\langle *\langle \nabla^\pi \del^\pi w, \del^\pi w\rangle,  w^*\lambda\rangle|\nonumber\\
&\leq& |\nabla^\pi \del^\pi w||dw|^2\nonumber\\
&\leq& \frac{1}{2c}|\nabla^\pi \del^\pi w|^2+\frac{c}{2}|dw|^4.
\eea

We add \eqref{eq:e-pi-weitzenbock<} and \eqref{eq:e-lambda-weitzenbock}, and then apply the estimates \eqref{eq:laplacian-pi-upper} and
\eqref{eq:laplacian-lambda-upper} respectively. This yields the following differential inequality for the total energy density
\bea\label{eq:laplace-e-derivative}
&{}& -\frac{1}{2}\Delta e(w)\nonumber\\
&\geq& \left(1-\frac{1}{c}\right)|\nabla^\pi(\del^\pi w)|^2+\left(1-\frac{1}{2c}\right)|\nabla w^*\lambda|^2\nonumber\\
&{}&- \left(c\|\CL_{R_\lambda}J\|^2_{C^0(M)}+\|\nabla^\pi(\CL_{R_\lambda}J)\|_{C^0(M)}+\frac{c}{2}+\|\ric\|_{C^0(M)} \right)e(w)^2
+Ke(w)\nonumber\\
\\
&\geq& - \left(c\|\CL_{R_\lambda}J\|^2_{C^0(M)}+\|\nabla^\pi(\CL_{R_\lambda}J)\|_{C^0(M)}
+\frac{c}{2}+\|\ric\|_{C^0(M)} \right)e(w)^2+Ke(w),\nonumber
\eea
for any $c>1$. We fix $c=2$ and get the following
\begin{thm}[Theorem 5.1 \cite{oh-wang:CR-map1}]
For a contact instanton $w$, we have the following total energy density estimate
\be\label{eq:Deltaew<}
\Delta e(w)\leq Ce(w)^2+\|K\|_{L^\infty(\dot\Sigma)}e(w),
\ee
where
$$
C=2\|\CL_{R_\lambda}J\|^2_{C^0(M)}+\|\nabla^\pi(\CL_{R_\lambda}J)\|_{C^0(M)}+\|\ric\|_{C^0(M)}+1
$$
which is a positive constant independent of $w$.
\end{thm}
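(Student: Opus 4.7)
The plan is to add the two Weitzenb\"ock identities \eqref{eq:e-pi-weitzenbock} and \eqref{eq:Delta|w*lamba|2} and extract a one-sided pointwise inequality for the full density $e(w)$. The orthogonal splitting $TM=\xi\oplus\R\{R_\lambda\}$ together with $|R_\lambda|\equiv 1$ gives $e(w)=e^\pi(w)+|w^*\lambda|^2$, so after summing, the left-hand side is $-\tfrac{1}{2}\Delta e(w)$, while the right-hand side carries three kinds of pieces: (i) the non-negative quadratic $|\nabla^\pi d^\pi w|^2+|\nabla w^*\lambda|^2$ in the second derivatives, (ii) the curvature contributions $K\,e(w)$ and $\langle\ric^{\nabla^\pi}d^\pi w,d^\pi w\rangle$, and (iii) two cross terms that a priori look third order in $w$ but are in fact second order \emph{on shell} thanks to Theorem \ref{thm:Laplacian-w} and the contact triad connection.

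The first real step is to absorb the two cross terms. The one in \eqref{eq:e-pi-weitzenbock} arising from $\delta^{\nabla^\pi}$ applied to the fundamental equation is expanded, via Lemma \ref{lem:Hodge-Laplacian}, into a sum of pointwise pairings involving $\CL_{R_\lambda}J$, $\nabla^\pi(\CL_{R_\lambda}J)$, $\nabla^\pi\del^\pi w$ and $\nabla w^*\lambda$. Applying the weighted Young inequality $|ab|\leq \tfrac{1}{2c}a^2+\tfrac{c}{2}b^2$ with the derivative factors on the $a$-side and the remaining factors of $|dw|$ on the $b$-side, and uniformly using $|d^\pi w|^2,|w^*\lambda|^2\leq e(w)$, reproduces the estimate \eqref{eq:laplacian-pi-upper}. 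The same trick applied to the cross term $\langle *\langle\nabla^\pi\del^\pi w,\del^\pi w\rangle, w^*\lambda\rangle$ in \eqref{eq:Delta|w*lamba|2} yields \eqref{eq:laplacian-lambda-upper}.

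Summing the two estimated identities and collecting coefficients, the square $|\nabla^\pi d^\pi w|^2$ emerges with coefficient $1-\tfrac{1}{c}$ and $|\nabla w^*\lambda|^2$ with coefficient $1-\tfrac{1}{2c}$, both strictly positive as soon as $c>1$. Since they sit on the lower-bound side of $-\tfrac{1}{2}\Delta e(w)\geq(\cdot)$, they can simply be discarded. Fixing $c=2$ for concreteness, flipping the sign of the resulting inequality, and estimating $|K|\leq \|K\|_{L^\infty(\dot\Sigma)}$ on the linear-in-$e(w)$ term delivers the claimed bound, with $C$ assembled from $\|\CL_{R_\lambda}J\|_{C^0}^2$, $\|\nabla^\pi(\CL_{R_\lambda}J)\|_{C^0}$ and $\|\ric\|_{C^0}$ (plus a numerical constant coming from the Young weights).

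The step that deserves most care is the coefficient book-keeping in the preceding paragraph: one must check that after substituting \eqref{eq:laplacian-pi-upper} and \eqref{eq:laplacian-lambda-upper} into the sum of \eqref{eq:e-pi-weitzenbock} and \eqref{eq:Delta|w*lamba|2}, every cross pairing is controlled by a uniform constant times $e(w)^2$ without double-counting, and that the two Young weights cooperate to leave a positive coefficient in front of each second-derivative square. There is no serious analytic obstacle; the real conceptual input is the Weitzenb\"ock machinery of Section \ref{sec:weitzenbock}, which in turn relies crucially on the properties of the contact triad connection listed in Theorem \ref{thm:connection} and on the fundamental equation, Theorem \ref{thm:Laplacian-w}, that makes the apparently third-order cross terms second order in disguise.
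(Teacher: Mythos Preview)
Your proposal is correct and follows essentially the same approach as the paper: sum the two Weitzenb\"ock identities \eqref{eq:e-pi-weitzenbock} and \eqref{eq:Delta|w*lamba|2}, absorb the cross terms via the weighted Young inequalities \eqref{eq:laplacian-pi-upper} and \eqref{eq:laplacian-lambda-upper}, discard the resulting positive second-derivative squares once $c>1$, and specialize to $c=2$. The only cosmetic difference is ordering---the paper first converts each identity into its own differential inequality and then adds, while you add first and then apply Young---but the mathematical content and the constants produced are identical.
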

An immediate corollary of \eqref{eq:Deltaew<}
is the following density estimate which is derived
by the standard  argument from \cite{schoen}. (Also see the proof of \cite[Theorem 8.1.3]{oh:book1}
given in the context of pseudoholomorphic curves.)
It is in turn a consequence of an application of the
mean value inequality of Morrey
(See \cite[Problems 4.5 in p.67]{gilbarg-trudinger}
 for the relevant extension of
 the mean-value inequality for the Poisson equation.)

\begin{cor}[$\epsilon$-regularity and interior density estimate;
Corollary 5.2 \cite{oh-wang:CR-map1}]\label{cor:density}
There exist constants $C, \, \varepsilon_0$ and $r_0 > 0$, depending only on $J$ and
the Hermitian metric $h$ on $\dot \Sigma$, such that for any
 $C^1$ contact instanton $w: \dot \Sigma \to M$ with
$$
E(r_0): = \frac{1}{2}\int_{D(r_0)} |dw|^2 \leq \varepsilon_0,
$$
and discs $D(2r) \subset \operatorname{Int}\Sigma$ with $0 < 2r \leq r_0$,
$w$ satisfies
\be\label{eq:schoen's}
\max_{\sigma \in (0,r]} \left(\sigma^2 \sup_{D(r-\sigma)}
e(w)\right) \leq CE(r)
\ee
for all $0< r \leq r_0$. In particular, letting $\sigma = r/2$, we obtain
\be\label{eq:supeu}
\sup_{D(r/2)} |dw|^2 \leq \frac{4C E(r)}{r^2}
\ee
for all $r \leq r_0$.
\end{cor}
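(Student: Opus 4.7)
The plan is to run the classical Schoen-type $\varepsilon$-regularity-by-rescaling argument (see \cite{schoen}, \cite{sacks-uhlen} and the exposition in \cite[Theorem 8.1.3]{oh:book1}), driven by the differential inequality
$$
\Delta e(w) \leq C\, e(w)^2 + \|K\|_{L^\infty}\, e(w)
$$
just established, combined with a Morrey mean value inequality for subsolutions of $\Delta u \leq C_1 u$.

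First I would introduce the continuous function $f: [0,r] \to [0,\infty)$ defined by $f(\sigma) := \sigma^2 \sup_{\overline{D(r-\sigma)}} e(w)$. Since $f(0) = 0$, the maximum of $f$ is attained at some $\sigma_0 \in (0,r]$; pick $x_0 \in \overline{D(r-\sigma_0)}$ realizing $\sup_{\overline{D(r-\sigma_0)}} e(w)$ and set $e_0 := e(w)(x_0)$, so $f(\sigma_0) = \sigma_0^2 e_0$. Testing the maximality of $\sigma_0$ against $\sigma = \sigma_0/2$ and using the inclusion $D(x_0,\sigma_0/2) \subset D(r - \sigma_0/2)$, one obtains the local doubling estimate $\sup_{D(x_0,\sigma_0/2)} e(w) \leq 4 e_0$.

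Next I would rescale by setting $\widetilde w(y) := w(x_0 + y/\sqrt{e_0})$ on $D(0,\rho)$ with $\rho := \sigma_0 \sqrt{e_0}/2$. The scaling relations $e(\widetilde w)(y) = e(w)(x_0 + y/\sqrt{e_0})/e_0$ and $\Delta_y = e_0^{-1}\,\Delta_z$ give $e(\widetilde w)(0) = 1$, $e(\widetilde w) \leq 4$ on $D(0,\rho)$, and, in the case $e_0 \geq 1$,
$$
\Delta_y\, e(\widetilde w) \leq C\, e(\widetilde w)^2 + \frac{\|K\|_{L^\infty}}{e_0} e(\widetilde w) \leq \bigl(4C + \|K\|_{L^\infty}\bigr) e(\widetilde w) =: C_1\, e(\widetilde w).
$$
Applying the Morrey mean value inequality \cite[Problems 4.5]{gilbarg-trudinger} to the nonnegative subsolution $e(\widetilde w)$ of $\Delta u \leq C_1 u$ at the origin, and changing variables back via $z = x_0 + y/\sqrt{e_0}$, I would obtain
$$
1 = e(\widetilde w)(0) \leq \frac{C'}{\rho^2} \int_{D(0,\rho)} e(\widetilde w)\, dy = \frac{C'}{\rho^2}\int_{D(x_0,\sigma_0/2)} e(w)\, dz \leq \frac{2 C'\, E(r)}{\rho^2},
$$
which rearranges to $f(\sigma_0) = 4\rho^2 \leq 8 C'\, E(r)$, yielding \eqref{eq:schoen's} with $C := 8C'$. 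The bound \eqref{eq:supeu} follows by specializing $\sigma = r/2$.

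Two auxiliary points must be orchestrated. The case $e_0 < 1$ is trivial since then $f(\sigma_0) \leq \sigma_0^2 \leq r_0^2$, which can be absorbed into $C\, E(r)$ by shrinking $\varepsilon_0$ and $r_0$. The Morrey mean value inequality requires $\rho$ to be of bounded size; if $\rho$ were large, the estimate $\rho^2 \leq 2 C'\, E(r) \leq 2 C'\, \varepsilon_0$ derived above would already force $\rho$ to be small once $\varepsilon_0$ is chosen sufficiently small, yielding a contradiction and hence confirming $\rho \lesssim 1$ a posteriori. The main obstacle I anticipate is precisely this bookkeeping of constants, arranging $(C,\varepsilon_0,r_0)$ so that the regimes $e_0 < 1$, $e_0 \geq 1$, and the implicit smallness of $\rho$ close up uniformly in $w$; once the Weitzenb\"ock-driven inequality of the preceding theorem is in hand, however, the rescaling argument itself is essentially textbook.
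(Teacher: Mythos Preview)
Your proposal is correct and follows exactly the route the paper indicates: the paper does not supply its own proof but declares the corollary an immediate consequence of the differential inequality $\Delta e(w)\leq Ce(w)^2+\|K\|_{L^\infty}e(w)$ via ``the standard argument from \cite{schoen}'' (cf.\ \cite[Theorem 8.1.3]{oh:book1}) together with the Morrey mean value inequality \cite[Problems 4.5]{gilbarg-trudinger}, which is precisely the Schoen-type rescaling you carry out. Your bookkeeping of the cases $e_0<1$ versus $e_0\geq 1$ and the a posteriori smallness of $\rho$ is the usual way this argument is closed, so nothing is missing.
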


Now we rewrite $\eqref{eq:laplace-e-derivative}$ into
\bea\label{eq:laplace-higherderivative}
&{}&\left(1-\frac{1}{c}\right)|\nabla^\pi(\del^\pi w)|^2+\left(1-\frac{1}{2c}\right)|\nabla w^*\lambda|^2\nonumber\\
&\leq&-\frac{1}{2}\Delta e(w) - Ke(w) \nonumber\\
& {}& \quad +\left(c\|\CL_{R_\lambda}J\|^2_{C^0(M)}+\|\nabla^\pi(\CL_{R_\lambda}J)\|_{C^0(M)}+\frac{c}{2}+\|\ric\|_{C^0(M)} \right)e^2
\eea

Taking $c=2$ and get the following coercive estimate for contact instantons,
we obtain the following differential inequality.

\begin{prop}[Equation (5.13) \cite{oh-wang:CR-map1}]
\be\label{eq:higher-derivative}
|\nabla(dw)|^2\leq C_1|dw|^4-4K|dw|^2-2\Delta e(w)
\ee
where
$$
C_1:=9\|\CL_{R_\lambda}J\|^2_{C^0(M)}+4\|\nabla^\pi(\CL_{R_\lambda}J)\|_{C^0(M)}+4\|\ric\|_{C^0(M)}+4
$$
denotes a constant.
\end{prop}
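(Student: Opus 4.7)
My plan for the proof is to deduce the target bound as a pointwise algebraic consequence of the already-established differential inequality \eqref{eq:laplace-higherderivative} at $c=2$, with no further PDE analysis. The key step is to pass from the ``reduced'' gradient terms $|\nabla^\pi(\del^\pi w)|^2$ and $|\nabla w^*\lambda|^2$ that appear on the left of \eqref{eq:laplace-higherderivative} to the full norm $|\nabla(dw)|^2$, absorbing all the error into a multiple of $|dw|^4$.

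First, I will use the triad-orthogonal splitting $dw = d^\pi w + w^*\lambda \otimes R_\lambda$ together with the triangle inequality $|\alpha+\beta|^2 \leq 2|\alpha|^2+2|\beta|^2$ for the pointwise norm of $w^*TM$-valued $2$-tensors to obtain
\[
|\nabla(dw)|^2 \leq 2|\nabla(d^\pi w)|^2 + 2|\nabla(w^*\lambda \otimes R_\lambda)|^2.
\]
The two summands on the right differ from the purely $\xi$-valued $|\nabla^\pi(d^\pi w)|^2$ and purely real $|\nabla w^*\lambda|^2$ only by the obstruction to $\nabla$ preserving the splitting $TM = \xi \oplus \R\{R_\lambda\}$. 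That obstruction is made completely explicit by the defining identity of the contact triad connection in Corollary \ref{cor:connection},
\[
\nabla_Y R_\lambda = \tfrac{1}{2}(\CL_{R_\lambda}J)JY,
\]
which under pull-back via $w$ reads $(\nabla R_\lambda)(X) = \tfrac12(\CL_{R_\lambda}J)J\,dw(X)$. Using this together with metric compatibility of $\nabla$ (which gives $\lambda(\nabla_X \zeta) = -\langle \zeta,\nabla R_\lambda(X)\rangle$ for any $\zeta \in \Gamma(w^*\xi)$), I get the orthogonal decompositions
\[
(\nabla d^\pi w)(X,Y) = (\nabla^\pi d^\pi w)(X,Y) - \langle d^\pi w(Y), \tfrac12(\CL_{R_\lambda}J)J\,dw(X)\rangle R_\lambda,
\]
\[
\nabla(w^*\lambda \otimes R_\lambda) = (\nabla w^*\lambda)\otimes R_\lambda + w^*\lambda \cdot \tfrac12(\CL_{R_\lambda}J)J\,dw.
\]
Pythagoras then yields $|\nabla(d^\pi w)|^2 \leq |\nabla^\pi(d^\pi w)|^2 + \tfrac14\|\CL_{R_\lambda}J\|^2_{C^0(M)}|dw|^4$ and $|\nabla(w^*\lambda \otimes R_\lambda)|^2 \leq |\nabla w^*\lambda|^2 + \tfrac14\|\CL_{R_\lambda}J\|^2_{C^0(M)}|dw|^4$, after bounding $|d^\pi w|\,|dw|$ and $|w^*\lambda|\,|dw|$ by $|dw|^2$ and using $|J\,dw|\leq |dw|$.

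Combining these two estimates with $d^\pi w = \del^\pi w$ (which holds on-shell since $\delbar^\pi w = 0$) and $e(w) = |dw|^2$ gives
\[
|\nabla(dw)|^2 \leq 2|\nabla^\pi(\del^\pi w)|^2 + 2|\nabla w^*\lambda|^2 + \|\CL_{R_\lambda}J\|^2_{C^0(M)}|dw|^4.
\]
Multiplying \eqref{eq:laplace-higherderivative} at $c=2$ by $4$ and using the trivial bound $2|\nabla w^*\lambda|^2 \leq 3|\nabla w^*\lambda|^2$ on the left absorbs the first two terms on the right above, and the remaining $|dw|^4$-coefficient is
\[
4\bigl(2\|\CL_{R_\lambda}J\|^2_{C^0(M)} + \|\nabla^\pi(\CL_{R_\lambda}J)\|_{C^0(M)} + 1 + \|\ric\|_{C^0(M)}\bigr) + \|\CL_{R_\lambda}J\|^2_{C^0(M)} = C_1,
\]
matching the stated constant. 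The only real obstacle is the careful bookkeeping of the orthogonal projections; once the explicit formula for $\nabla R_\lambda$ is invoked, the computation is entirely mechanical and demonstrates precisely why the contact triad connection is the right tool: the Reeb direction contributes only a harmless multiple of $\|\CL_{R_\lambda}J\|^2_{C^0(M)}|dw|^4$ beyond what \eqref{eq:laplace-higherderivative} already controls.
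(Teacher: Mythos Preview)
Your proof is correct and follows essentially the same approach as the paper, which simply states ``taking $c=2$'' in \eqref{eq:laplace-higherderivative}. You have supplied precisely the missing step the survey leaves implicit: the passage from $|\nabla^\pi(\del^\pi w)|^2$ and $|\nabla w^*\lambda|^2$ to the full $|\nabla(dw)|^2$ via the triad-connection identity $\nabla_Y R_\lambda = \tfrac12(\CL_{R_\lambda}J)JY$, and your bookkeeping of the constant $C_1$ is exact.
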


Then by multiplying the cut-off functions and doing integration by parts,
we obtain the following local $W^{2,2}$ estimate.

\begin{prop}[Proposition 5.3 \cite{oh-wang:CR-map1}]
\label{prop:coercive-L2}
For any pair of open domains $D_1$ and $D_2$ in $\dot\Sigma$ such that $\overline{D}_1\subset \Int(D_2)$,
$$
\|\nabla(dw)\|^2_{L^2(D_1)}\leq C_1(D_1, D_2)\|dw\|^2_{L^2(D_2)}+C_2(D_1, D_2)\|dw\|^4_{L^4(D_2)}
$$
for any contact instanton $w$,
where $C_1(D_1, D_2)$, $C_2(D_1, D_2)$ are some constants which depend on $D_1$, $D_2$ and $(M, \lambda, J)$, but are independent of $w$.
\end{prop}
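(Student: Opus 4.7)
\medskip
\noindent\textbf{Proof proposal for Proposition \ref{prop:coercive-L2}.}
The plan is to use the pointwise Bochner-type inequality \eqref{eq:higher-derivative} as the engine, cut off with a bump function supported in $D_2$ that equals $1$ on $D_1$, integrate, and move the Laplacian term off via integration by parts. Explicitly, choose a smooth cutoff $\chi \in C^\infty_c(D_2)$ with $\chi \equiv 1$ on $D_1$ and $0 \leq \chi \leq 1$, and multiply \eqref{eq:higher-derivative} by $\chi^2$. Integrating over $D_2$ with respect to the area form of $h$ gives
\begin{align*}
\int_{D_2} \chi^2 |\nabla(dw)|^2 \;\leq\; C_1 \int_{D_2} \chi^2 |dw|^4 \;-\;4\int_{D_2} \chi^2 K\,|dw|^2 \;-\; 2\int_{D_2} \chi^2 \Delta e(w).
\end{align*}
Since $\chi^2 \equiv 1$ on $D_1$, the left-hand side dominates $\|\nabla(dw)\|^2_{L^2(D_1)}$, and since $K$ is bounded on $\overline{D_2}$ the middle term already has the form $\|dw\|^2_{L^2(D_2)}$ up to a constant depending on $(D_1,D_2)$ and the metric $h$.

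The only nontrivial step is to deal with the third term, which requires moving two derivatives off $e(w) = |dw|^2$. First I would integrate by parts once: as $\chi^2$ has compact support in $D_2$ there are no boundary terms, and
\begin{align*}
-\int_{D_2} \chi^2 \,\Delta e(w) \;=\; \int_{D_2} \nabla(\chi^2)\cdot \nabla e(w) \;=\; 2\int_{D_2} \chi\,\nabla\chi \cdot \nabla e(w)
\end{align*}
(up to the sign convention for $\Delta$). Then using $\nabla e(w) = 2\langle \nabla(dw), dw\rangle$, one has the pointwise bound $|\nabla e(w)| \leq 2|\nabla(dw)|\,|dw|$, and applying the Cauchy--Schwarz inequality with parameter $\varepsilon > 0$,
\begin{align*}
\bigl|4\int_{D_2} \chi\, \nabla\chi \cdot \langle\nabla(dw),dw\rangle\bigr|
\;\leq\; \varepsilon\int_{D_2} \chi^2 |\nabla(dw)|^2 \;+\; \frac{C}{\varepsilon} \int_{D_2} |\nabla\chi|^2\, |dw|^2.
\end{align*}
Choosing $\varepsilon = \tfrac{1}{2}$ lets me absorb the $|\nabla(dw)|$ term into the left-hand side.

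Collecting everything and using that $\|\nabla\chi\|_{L^\infty}$ depends only on $D_1,D_2$ and the metric, I obtain
\begin{align*}
\tfrac{1}{2}\|\nabla(dw)\|^2_{L^2(D_1)}
\;\leq\; C_1\|dw\|^4_{L^4(D_2)} \;+\; C_2(D_1,D_2)\,\|dw\|^2_{L^2(D_2)},
\end{align*}
which is the desired inequality after relabeling constants. The main (mild) obstacle I anticipate is keeping book on the sign convention for $\Delta$ and ensuring the integration-by-parts is performed with the correct identification of the Hodge Laplacian $\Delta$ of the scalar $|dw|^2$ with $-\mathrm{div}\,\nabla$; once that is fixed, the cutoff-plus-absorption argument is standard and the constants depend only on $\|\chi\|_{C^1}$, $\|K\|_{L^\infty(D_2)}$, and the geometric constant $C_1$ of \eqref{eq:higher-derivative} determined by $(M,\lambda,J)$.
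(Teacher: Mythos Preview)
Your proposal is correct and matches the paper's approach exactly: the paper derives the pointwise differential inequality \eqref{eq:higher-derivative} and then simply says ``by multiplying the cut-off functions and doing integration by parts'' one obtains the local $W^{2,2}$ estimate, which is precisely the cutoff-plus-absorption argument you spell out. Your caution about the sign convention for $\Delta$ is warranted (here $\Delta$ is the Hodge Laplacian, so $\Delta f = -\operatorname{div}\nabla f$ on functions), but since you ultimately bound the cross term in absolute value before absorbing, the sign discrepancy is harmless.
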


\subsection{Local boundary $W^{2,2}$-estimate}

Now let us consider the contact instantons with Legendrian boundary condition. Let $\vec R = (R_0,\cdots, R_k)$ be a $(k+1)$-component
Legendrian link.

Consider the equation
\be\label{eq:contacton-Legendrian-bdy}
\begin{cases}
\delbar^\pi w = 0, \, \, d(w^*\lambda \circ j) = 0\\
w(\overline{z_iz_{i+1}}) \subset R_i
\end{cases}
\ee
for a smooth map $w: (\dot \Sigma, \del \dot \Sigma) \to (Q, \vec R)$.

The following local boundary a priori estimate is established in
\cite{oh:contacton-Legendrian-bdy}, \cite{oh-yso:index}.

\begin{thm}\label{thm:local-W12} Let $w: \R \times [0,1] \to M$ satisfy \eqref{eq:contacton-Legendrian-bdy-intro}.
Then for any relatively compact domains $D_1$ and $D_2$ in
$\dot\Sigma$ such that $\overline{D_1}\subset D_2$, we have
\be\label{eq:without-bdyterm}
\|dw\|^2_{W^{1,2}(D_1)}\leq C_1 \|dw\|^2_{L^2(D_2)} + C_2 \|dw\|^4_{L^4(D_2)}
\ee
where $C_1, \ C_2$ are some constants which
depend only on $D_1$, $D_2$ and $(M,\lambda, J)$ and $C_3$ is a
constant which also depends on $R_i$ with $w(\del D_2) \subset R_i$ as well.
\end{thm}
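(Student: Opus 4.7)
The plan is to mimic the proof of Proposition \ref{prop:coercive-L2}, starting from the pointwise differential inequality \eqref{eq:higher-derivative}
\[
|\nabla(dw)|^2 \leq C_1 |dw|^4 - 4K|dw|^2 - 2\Delta e(w),
\]
which is valid pointwise on the interior of $\dot\Sigma$ for any contact instanton $w$, and then carefully pushing the argument up to the boundary using the Legendrian boundary condition. Fix a smooth cutoff function $\chi \in C_c^\infty(D_2)$ with $\chi \equiv 1$ on $D_1$ and $0 \leq \chi \leq 1$. Multiplying the inequality by $\chi^2$ and integrating over $D_2 \cap \dot\Sigma$ gives the main terms $\int \chi^2 |\nabla(dw)|^2$, $\int \chi^2 |dw|^4$ and $\int \chi^2 |dw|^2$ together with the crucial contribution
\[
-2\int_{D_2 \cap \dot\Sigma} \chi^2 \, \Delta e(w)\, dA
= 2\int_{D_2 \cap \dot\Sigma} \nabla(\chi^2)\cdot \nabla e(w)\, dA \;-\; 2\int_{\del\dot\Sigma \cap D_2} \chi^2 \,\frac{\del e(w)}{\del\nu}\, d\sigma,
\]
obtained by Green's formula. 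The interior term is standard: a further integration by parts turns it into $-2\int \Delta(\chi^2)\, e(w)$, which is absorbed into $\|dw\|_{L^2(D_2)}^2$ up to constants depending only on $\chi$. The nontrivial issue is the boundary integral, which does not appear in the closed-string case.

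To control the boundary integral, I would first invoke the fact, proved in \cite{oh:contacton-Legendrian-bdy}, that the Legendrian boundary condition $w(\overline{z_i z_{i+1}}) \subset R_i$ together with $\delbar^\pi w = 0$ forces the free-boundary condition
\[
\frac{\del w}{\del \nu}(z) \perp T_{w(z)} R_i, \qquad z \in \overline{z_i z_{i+1}} \cap D_2,
\]
with respect to the contact triad metric, and moreover $w^*\lambda$ annihilates the boundary tangent direction (because $TR_i \subset \xi = \ker\lambda$). Using isothermal coordinates $(x,y)$ adapted to $\del\dot\Sigma$, so that $\del/\del x$ is tangent to the boundary and $\del/\del y = \nu$, the derivative $\zeta := d^\pi w(\del_x)$ lies in $TR_i \subset \xi$ along the boundary, while $\lambda(\del w/\del x)$ vanishes. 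Differentiating the energy density and using the contact triad connection (which, by property (3) of Theorem \ref{thm:connection}, preserves $\xi$ and the decomposition $TQ = \xi \oplus \R\{R_\lambda\}$), the normal derivative $\del e(w)/\del \nu$ reduces to expressions of the form $2\langle \nabla^\pi_y \zeta,\zeta\rangle$ plus terms involving $w^*\lambda$. The key algebraic identity is that, along the boundary, the contact triad connection restricts to the Levi-Civita connection of $R_i$ up to second fundamental form corrections, so $\langle \nabla^\pi_y \zeta,\zeta\rangle$ on $\del\dot\Sigma$ can be rewritten, via the system \eqref{eq:equation-for-zeta0-intro}--\eqref{eq:equation-for-alpha-intro} applied on the boundary, in terms of tangential derivatives and pointwise quadratic quantities in $dw$.

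Carrying out this reduction, the boundary term is estimated as
\[
\left| \int_{\del \dot\Sigma \cap D_2} \chi^2 \,\frac{\del e(w)}{\del\nu}\,d\sigma \right|
\leq C(R_i)\int_{\del\dot\Sigma \cap D_2}\chi^2 (|dw|^2 + |dw|^3)\,d\sigma,
\]
which, via the trace theorem $W^{1,2}(D_2) \hookrightarrow L^p(\del D_2)$ for $p < \infty$ in dimension two and Young's inequality, is controlled by $\epsilon \|\nabla(dw)\|_{L^2(D_2)}^2 + C_\epsilon(\|dw\|_{L^2(D_2)}^2 + \|dw\|_{L^4(D_2)}^4)$. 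Choosing $\epsilon$ small enough to absorb the resulting $\nabla(dw)$ term into the left-hand side, and combining all estimates, yields the desired inequality \eqref{eq:without-bdyterm} with constants depending on $D_1,D_2,(M,\lambda,J)$ and also on $R_i$ via its second fundamental form.

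The main obstacle, as anticipated, is the rigorous handling of the boundary integral $\int \chi^2\, \del e(w)/\del\nu$: one needs to show that the Legendrian condition together with the fundamental equation in isothermal coordinates \eqref{eq:fundamental-isothermal} conspires to bound this term by quantities that are either of lower order or absorbable. This is precisely the tensorial calculation done in \cite{oh:contacton-Legendrian-bdy} and revisited and corrected in \cite{oh-yso:index}, where the compatibility between the contact triad connection and the Legendrian boundary condition plays the decisive role.
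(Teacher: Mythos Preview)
Your overall strategy---multiply the pointwise inequality \eqref{eq:higher-derivative} by a cutoff, integrate, and isolate the boundary contribution via Green's formula---matches the paper's. The difference lies in how the boundary term is disposed of.

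The paper first computes the boundary integrand \emph{exactly}: Lemma \ref{prop:*de|delD} gives $*de|_{\del D} = -4\langle B(\del_x w,\del_x w),\del_y w\rangle$, where $B$ is the second fundamental form of $R_i$ relative to the connection in use. Rather than estimating this, the paper then switches to the Levi--Civita connection of an auxiliary metric for which $R_i$ is \emph{totally geodesic}; this makes $B\equiv 0$ and the boundary term simply vanishes. Since the new connection differs from the triad connection by a zeroth-order tensor $P$, converting back to $\nabla$ only shifts the constants. This is the trick alluded to in step~(2) after the theorem, borrowed from the Lagrangian boundary case \cite[Section 8.3]{oh:book1}.

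Your route---use the fundamental equation \eqref{eq:equation-for-zeta0-intro} to trade $\nabla^\pi_y\zeta$ for $J\nabla^\pi_x\zeta$ plus lower order, then invoke $J(TR_i)\perp TR_i$ and the second fundamental form to bound the result by $C(R_i)|dw|^3$, and finally absorb via the $W^{1,1}\!\to\! L^1(\del)$ trace and Young---also works, and is a legitimate alternative. It is more hands-on analytic where the paper is geometric; the paper's trick kills the term outright, while yours estimates and absorbs it. One minor point: your ``further integration by parts'' of $\int\nabla(\chi^2)\cdot\nabla e$ into $-\int\Delta(\chi^2)\,e$ produces an additional boundary term $\int_{\del\dot\Sigma\cap D_2}\del_\nu(\chi^2)\,e(w)$ which you do not mention; it is harmless (quadratic in $dw$ on the boundary, hence absorbable by the same trace argument), but should be acknowledged. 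Also, the detour through the coupled system \eqref{eq:equation-for-zeta0-intro}--\eqref{eq:equation-for-alpha-intro} is unnecessary: the direct computation of $\del_\nu e$ that yields Lemma \ref{prop:*de|delD} is shorter and lands you at exactly the same $C(R_i)|dw|^3$ bound.
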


Leaving the details of the proof to \cite{oh-yso:index},
we just outline the strategy of the proof here
(See  \cite[Section 8.2 \& 8.3]{oh:book1} for the same strategy
used for  pseudoholomorphic curves with Lagrangian boundary condition.):
\begin{enumerate}
\item As the first step, we utilize the contact triad connection $\nabla$ for the study of
boundary value problem to derive the following differential inequality
\be\label{eq:differential-inequality}
\|dw\|^2_{W^{1,2}(D_1)} \leq  C_1 \|dw\|^2_{L^2(D_2)} + C_2 \|dw\|^4_{L^4(D_2)}
+ \int_{\del D}|C(\del D)|
\ee
where we have
\be\label{eq;CdelD}
C(\del D): = - 8\left\langle B\left(\frac{\del w}{\del x},\frac{\del w}{\del x}\right),\frac{\del w}{\del y}\
\right \rangle
\ee
for the second fundamental form $B$ of $\nabla$ in
the isothermal coordinate $z = x+ iy$ adapted to $\del \dot \Sigma \cap D_2$.
\item Once we derive \eqref{eq:differential-inequality}, noting that Legendrian boundary
condition $\vec R = (R_0, \cdots, R_k)$ for the contact instanton is automatically the free boundary value problem, i.e.,
$$
\frac{\del w}{\del \nu} \perp TR_i,
$$
one can use the Levi-Civita connection of a metric for which $\vec R$ becomes
\emph{totally geodesic} (i.e., $B=0$) which will eliminate the boundary contribution
appearing above in \eqref{eq:differential-inequality}. Then recalling the standard fact
that $\nabla = \nabla^{\text{\rm LC}} + P$ for a $(2,1)$ tensor, we can convert the inequality
into \eqref{eq:without-bdyterm}. (See \cite[Section 8.3]{oh:book1} for such detail.)
\end{enumerate}

The following lemmata is an important ingredients entering in the proof.

\begin{lem}\label{prop:*de|delD} Let $e = |dw|^2$ be the total
harmonic energy density function. Then we have
\be\label{eq:*de-on-bdy}
* de = - 4 \left \langle B\left(\frac{\del w}{\del x},\frac{\del w}{\del x}\right),
\frac{\del w}{\del y}\right \rangle
\ee
on $\del D$.
\end{lem}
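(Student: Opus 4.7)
The plan is to reduce the identity to a boundary computation in adapted isothermal coordinates $z = x+iy$, chosen so that $\del D \subset \{y=0\}$ and $\frac{\del}{\del x}$ is tangent to $\del D$. In these coordinates $h = dx^2 + dy^2$, so a direct Hodge star computation gives $(*de)(\frac{\del}{\del x}) = \pm \del_y e$ on $\del D$ (the precise sign depending on orientation conventions adopted in the paper). The problem thus reduces to computing the outward normal derivative $\del_y e$ along $\del D$ and matching it with the second fundamental form term. I would use the Levi-Civita connection $\nabla^{\text{\rm LC}}$ of the triad metric, so that metric compatibility combined with $e = |\del_x w|^2 + |\del_y w|^2$ gives
\[
\del_y e \;=\; 2\langle \nabla_y^{\text{\rm LC}} \del_x w,\, \del_x w\rangle + 2\langle \nabla_y^{\text{\rm LC}} \del_y w,\, \del_y w\rangle.
\]

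For the first term I would use torsion-freeness of $\nabla^{\text{\rm LC}}$ together with $[\del_x,\del_y]=0$ to commute the derivatives, $\nabla_y^{\text{\rm LC}} \del_x w = \nabla_x^{\text{\rm LC}} \del_y w$. The Legendrian condition gives $\del_x w \in TR_i$ and $\lambda(\del_x w)\equiv 0$ on $\del D$, while the decomposition $\del_y w = J(\pi \del_x w) + \lambda(\del_y w)R_\lambda$ (from $\delbar^\pi w=0$) combined with the Legendrianness of $R_i$ forces the \emph{free boundary condition} $\del_y w \perp TR_i$ on $\del D$. Differentiating $\langle \del_y w, \del_x w\rangle \equiv 0$ tangentially along $\del D$ and invoking the definition $B(X,Y) = (\nabla_X^{\text{\rm LC}} Y)^\perp$ then yields
\[
\langle \nabla_x^{\text{\rm LC}} \del_y w,\, \del_x w\rangle \;=\; -\langle \del_y w,\, \nabla_x^{\text{\rm LC}} \del_x w\rangle \;=\; -\langle B(\del_x w, \del_x w),\, \del_y w\rangle
\]
on $\del D$.

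The main obstacle is the second term $\tfrac{1}{2}\del_y|\del_y w|^2$, which a priori has no obvious relation to $B$; handling it requires \emph{both} halves of the contact instanton equation. From $\delbar^\pi w=0$ we get $\pi\del_y w = J\pi\del_x w$, hence
\[
|\del_y w|^2 - |\del_x w|^2 \;=\; \lambda(\del_y w)^2 - \lambda(\del_x w)^2.
\]
Applying $\del_y$ and restricting to $\del D$, the contribution from $\lambda(\del_x w)$ dies pointwise because $\lambda(\del_x w)\equiv 0$ there, while the contribution from $\del_y(\lambda(\del_y w))$ dies because the equation $d(w^*\lambda\circ j) = 0$ reads $\del_x(\lambda(\del_x w)) + \del_y(\lambda(\del_y w)) = 0$ in these coordinates, and the first summand vanishes on $\del D$ by the Legendrian condition. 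Hence $\del_y|\del_y w|^2 = \del_y|\del_x w|^2$ on $\del D$, so both terms in $\del_y e$ contribute equally, and I arrive at $\del_y e = -4\langle B(\del_x w, \del_x w), \del_y w\rangle$ on $\del D$, which is equivalent to the asserted formula. The delicate bookkeeping here, distinguishing quantities that vanish pointwise on $\del D$ from those that vanish only under tangential differentiation, is where the real care is needed; once it is in place the conclusion is an immediate consequence of the standard torsion-free computation of the second fundamental form.
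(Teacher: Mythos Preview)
Your argument is correct and complete. The paper itself does not supply a proof of this lemma (it is stated as an input from \cite{oh-yso:index}), so there is nothing to compare against directly; but your computation is exactly the natural one, and it dovetails with the strategy sketched in the surrounding text: the free-boundary condition $\del_y w \perp TR_i$ is what converts $\langle \del_y w, \nabla_x^{\text{\rm LC}}\del_x w\rangle$ into the second fundamental form term, and the full contact instanton system is what makes the two halves of $\del_y e$ coincide on $\del D$.

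One remark on bookkeeping. The paper's surrounding text refers to ``the second fundamental form $B$ of $\nabla$'' (the triad connection), while you compute with $\nabla^{\text{\rm LC}}$. This is harmless: the clean identity as stated holds for the Levi--Civita connection because you need torsion-freeness to commute $\nabla_y^{\text{\rm LC}}\del_x w = \nabla_x^{\text{\rm LC}}\del_y w$, and the paper's subsequent step (replacing the triad metric by one for which $R_i$ is totally geodesic) is phrased in Levi--Civita terms anyway. If one insists on the triad connection, the torsion contributes additional zero-order terms that get absorbed into the constants $C_1, C_2$ via the difference tensor $P$ with $\nabla = \nabla^{\text{\rm LC}} + P$, exactly as the paper indicates.

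Your handling of the ``delicate bookkeeping'' is clean: the vanishing of $\lambda(\del_x w)$ \emph{identically along} $\del D$ kills both $\del_x(\lambda(\del_x w))$ and the cross term $\lambda(\del_x w)\,\del_y(\lambda(\del_x w))$, while $d(w^*\lambda\circ j)=0$ then forces $\del_y(\lambda(\del_y w))=0$ on $\del D$. This is the crux, and you have it right.
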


\section{$C^{k,\delta}$ coercive estimates for $k \geq 1$: alternating boot-strap}
\label{sec:Ckdelta-estimates}

Once we have established $W^{2,2}$ estimate, we could proceed with the $W^{k+2,2}$ estimate $k \geq 1$ inductively
as in \cite[Section 5.2]{oh-wang:CR-map1}. Because of
the effect of the Legendrian boundary condition on
the higher derivative estimate, it is not quite straightforward
to boot-strap the Sobolev norms but is better to work with
$C^{k,\delta}$ H\"older norms as in \cite{oh:contacton-Legendrian-bdy},
\cite{oh-yso:index}.

Here we provide an outline of the main steps of the \emph{alternating
boot-strap arguments}  from \cite[Section 4]{oh-yso:index}
to establish higher $C^{k,\delta}$ regularity results.

We start with the fundamental equation in isothermal coordinates $z = x+ iy$
$$
\nabla_x^\pi \zeta + J \nabla_y^\pi \zeta
+ \frac{1}{2} \lambda\left(\frac{\del w}{\del y}\right)(\CL_{R_\lambda}J)\zeta - \frac{1}{2}
\lambda\left(\frac{\del w}{\del x}\right)(\CL_{R_\lambda}J)J\zeta =0.
$$
 By writing
$$
\overline \nabla^\pi := \nabla^{\pi(0,1)} = \frac{\nabla^\pi + J \nabla^\pi_{j(\cdot)}}{2}
$$
which is the anti-complex linear part of $\nabla^\pi$, and the linear operator
$$
P_{w^*\lambda}(\zeta): = \frac{1}{4} \lambda\left(\frac{\del w}{\del y}\right)(\CL_{R_\lambda}J)\zeta - \frac{1}{4}
\lambda\left(\frac{\del w}{\del x}\right)(\CL_{R_\lambda}J)J\zeta,
$$
the equation becomes
\be\label{eq:nablabar-P}
\overline \nabla^\pi \zeta + P_{w^*\lambda}(\zeta) = 0
\ee
which is a linear first-order PDE of Cauchy-Riemann type
once $w^*\lambda$ is given. We note that
by the Sobolev embedding, $ W^{2,2} \subset C^{0,\delta}$
for $0 \leq \delta < 1/2$.
Therefore we start from $C^{0,\delta}$ bound with $0 < \delta <1/2$ and will inductively bootstrap it to get
$C^{k, \delta}$ bounds for $k \geq 1$.

WLOG, we assume that $D_2 \subset \dot \Sigma$ is a semi-disc with $\del D \subset \del \dot \Sigma$
and equipped with an isothermal coordinates $(x,y)$ such that
$$
D_2 = \{ (x,y) \mid |x|^2 + |y|^2 < \epsilon, \, y \geq 0\}
$$
for some $\epsilon > 0$
and so $\del D_2 \subset \{(x,y) \in D \mid y = 0\}$. Assume $D_1 \subset D_2$
is the semi-disc with radius $\epsilon /2$.
We denote $\zeta = \pi \frac{\del w}{\del x}$, $\eta = \pi \frac{\del w}{\del y}$
as in \cite{oh-wang:CR-map1}, and consider the complex-valued function
\be\label{eq:alpha}
\alpha(x,y) = \lambda\left(\frac{\del w}{\del y}\right)
+ \sqrt{-1}\left(\lambda\left(\frac{\del w}{\del x}\right)\right)
\ee
as in \cite[Subsection 11.5]{oh-wang:CR-map2}.
\begin{rem}\label{rem:alpha} In \cite[Subsection 11.5]{oh-wang:CR-map2}, the global isothermal coordinate $(\tau,t)$
of $[0,\infty) \times S^1$ with circle-valued flat coordinate $t$ is used and the function $\alpha$ defined by
$$
\alpha(x,y) = \lambda\left(\frac{\del w}{\del y}\right) - T
+ \sqrt{-1}\left(\lambda\left(\frac{\del w}{\del x}\right)\right)
$$
is used for the exponential convergence result.
See the displayed formula right above in Lemma 11.19 of \cite[Subsection 11.5]{oh-wang:CR-map2}.
\end{rem}

We note that since $w$ satisfies the Legendrian boundary condition
 $w(\del \dot \Sigma) \subset \vec R$, we have
\be\label{eq:lambda(delw)=0}
\lambda\left(\frac{\del w}{\del x}\right) = 0
\ee
on $\del D_2$. The following formula is crucially used in \cite[Subsection 11.5]{oh-wang:CR-map2}
for the exponential decay result, and
in \cite{oh:contacton-Legendrian-bdy,oh-yso:index} for the alternating boot-strap argument for the higher regularity
but without detailed proof. Because the proof well demonstrates how important closedness of $w^*\lambda \circ j$
and the equality \eqref{eq:onshell} are
in the study of a priori elliptic estimates of contact instanton $w$ and also because how the Legendrian boundary condition
interacts with the equation,
we give the full details of its proof here.

\begin{prop}\label{prop:alpha} The complex-valued function
 $\alpha$ satisfies the equations
\be\label{eq:atatau-equation}
\begin{cases}
\delbar \alpha =\nu, \quad \nu = \frac{1}{2}|\zeta|^2 \\
\alpha(z) \in \R \quad \text{\rm for } \, z \in \del D_2
\end{cases}
\ee
\end{prop}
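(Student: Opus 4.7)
The plan is to unwind the definition of $\alpha$ into two real components, identify them with the coefficient functions of the one-form $w^*\lambda$ in the isothermal coordinates $(x,y)$, and then read off both claims as immediate consequences of (i) the contact instanton equations $\delbar^\pi w = 0$ and $d(w^*\lambda \circ j) = 0$, and (ii) the Legendrian boundary condition. Write $\alpha = u + iv$ with $u = \lambda(\del w/\del y)$ and $v = \lambda(\del w/\del x)$, so that
\begin{equation*}
w^*\lambda = v\,dx + u\,dy, \qquad w^*\lambda \circ j = u\,dx - v\,dy
\end{equation*}
using the isothermal identities $dx\circ j = -dy$ and $dy\circ j = dx$.

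For the boundary condition, I would appeal to \eqref{eq:lambda(delw)=0}: on $\del D_2 \subset \del \dot\Sigma$ the vector $\del w/\del x$ is tangent to one of the Legendrian components $R_i$, hence $v \equiv 0$ there. Therefore $\operatorname{Im}\alpha \equiv 0$ on $\del D_2$, and $\alpha$ takes real values on the boundary.

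For the $\delbar$-equation, I would compute $d(w^*\lambda)$ and $d(w^*\lambda \circ j)$ directly. A short calculation gives
\begin{equation*}
d(w^*\lambda) = (u_x - v_y)\,dx\wedge dy, \qquad d(w^*\lambda\circ j) = -(u_y + v_x)\,dx\wedge dy.
\end{equation*}
The contact instanton closedness condition $d(w^*\lambda \circ j) = 0$ then gives $u_y + v_x = 0$, which is exactly the vanishing of the imaginary part of $(\del_x + i\del_y)(u+iv)$. For the real part, I would apply \eqref{eq:onshell} from Lemma~\ref{lem:energy-omegaarea}: since $\delbar^\pi w = 0$, one has $w^*d\lambda = \frac{1}{2}|d^\pi w|^2\, dA$. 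Using the on-shell identity $\eta = J\zeta$ in isothermal coordinates (from the corollary to Theorem~\ref{thm:Laplacian-w}) together with the $J$-isometry on $\xi$, we get $|d^\pi w|^2 = |\zeta|^2 + |\eta|^2 = 2|\zeta|^2$, so $u_x - v_y = |\zeta|^2$. Combining the two gives
\begin{equation*}
\delbar \alpha = \tfrac{1}{2}(u_x - v_y) + \tfrac{i}{2}(v_x + u_y) = \tfrac{1}{2}|\zeta|^2,
\end{equation*}
completing the proof.

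I do not expect a serious obstacle here: the whole point of choosing this particular packaging of the components of $w^*\lambda$ into a complex-valued function $\alpha$ is precisely to have $\delbar \alpha$ encode simultaneously $d(w^*\lambda)$ (real part) and $d(w^*\lambda\circ j)$ (imaginary part). The only subtle sign to watch is in the expression for $w^*\lambda\circ j$, which must be checked against the chosen sign convention of $j$; once that is fixed, both the boundary condition and the inhomogeneous $\delbar$-equation fall out algebraically. The more interesting fact is the conceptual one: \eqref{eq:atatau-equation} together with \eqref{eq:nablabar-P} for $\zeta$ forms the coupled elliptic system on which the alternating bootstrap of Section~\ref{sec:Ckdelta-estimates} is built.
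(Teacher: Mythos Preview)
Your proof is correct and follows essentially the same route as the paper's: both split $\delbar\alpha$ into real and imaginary parts, identify the imaginary part with $d(w^*\lambda\circ j)$ (which vanishes by the instanton equation) and the real part with $d(w^*\lambda)$ (computed via \eqref{eq:onshell} and $|d^\pi w|^2 = 2|\zeta|^2$), and read off the boundary condition from the Legendrian tangency. Your packaging via the coefficient functions $u,v$ is slightly more explicit than the paper's, but the argument is the same.
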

\begin{proof} For the equation, we first recall
$$
d(w^*\lambda\circ j)=0, \quad dw^*\lambda = \frac12 |d^\pi w|^2 \, dA
$$
where the second equality is from \eqref{eq:onshell}.
In isothermal coordinate $(x,y)$, we have $dA = dx \wedge dy$ and hence we have
$$
* (dw^*\lambda) = \frac12 |d^\pi w|^2.
$$
By the isothermality of $(x,y)$, $\{\frac{\del}{\del x}, \frac{\del}{\del y}\}$ is an orthonormal
frame of $T\dot \Sigma$ and hence
$$
|d^\pi w|^2 = \left|\pi \frac{\del w}{\del x}\right|^2 + \left|\pi \frac{\del w}{\del y}\right|^2
= 2 \left|\pi \frac{\del w}{\del x}\right|^2
$$
where the last equality follows since $0 = \delbar^\pi w  = (\delbar w)^\pi$ for
the Cauchy-Riemann operator $\delbar$ for the standard complex structure $J_0=\sqrt{-1}$.

Therefore if we write $\zeta = \pi \frac{\del w}{\del x}$,
$$
*d(w^*\lambda) =|\zeta|^2.
$$
On the other hand, using this identity, the isothermality of the coordinate again $(x,y)$
and the equation $d(w^*\lambda \circ j) = 0$, we derive
\beastar
\delbar \alpha & = & \frac12\left(\frac{\del}{\del x} + i \frac{\del}{\del y}\right)
\left(\lambda\left(\frac{\del w}{\del y}\right)
+ i \left(\lambda\left(\frac{\del w}{\del x}\right)\right)\right)\\
& = & \frac12 \left(\frac{\del}{\del x}\left(\lambda\left(\frac{\del w}{\del y}\right)\right)
- \frac12 \frac{\del}{\del y}\left(\lambda\left(\frac{\del w}{\del x}\right)\right)\right)\\
&{}& \quad + \frac{i}{2} \left(\frac{\del}{\del x}\left(\lambda\left(\frac{\del w}{\del x}\right)\right)
+ \frac12 \frac{\del}{\del y}\left(\lambda\left(\frac{\del w}{\del y}\right)\right)\right)\\
& = & \frac12 \left(d(w^*\lambda) - i d(w^*\lambda \circ j)\right)\left(\frac{\del}{\del x},\frac{\del}{\del y}\right)\\
& = & \frac12 d(w^*\lambda)\left(\frac{\del}{\del x},\frac{\del}{\del y}\right) = \frac 12* d(w^*\lambda).
\eeastar
Combining the two, we have derived $\delbar \alpha = \frac12 |\zeta|^2$.
This finishes the proof of the equation.

For the boundary condition $\alpha(z) \in \R$ for  $z \in \del D_2$, it
follows from the Legendrian boundary condition: We have
$$
\text{\rm Im}\, \alpha(z) = \lambda\left(\frac{\del w}{\del x}\right) = 0
$$
since the vector $\frac{\del w}{\del x}$ is tangent to the given Legendrian submanifold by the
adaptedness $\frac{\del}{\del x} \in \del D_2$ of the isothermal coordinate $(x,y)$ to the boundary $\del D_2$.
This finishes the proof.
\end{proof}

Then we arrive at the following system of equations for the pair $(\zeta,\alpha)$
\be\label{eq:equation-for-zeta0}
\begin{cases}\nabla_x^\pi \zeta + J \nabla_y^\pi \zeta
+ \frac{1}{2} \lambda(\frac{\del w}{\del y})(\CL_{R_\lambda}J)\zeta - \frac{1}{2} \lambda(\frac{\del w}{\del x})(\CL_{R_\lambda}J)J\zeta =0\\
\zeta(z) \in TR_i \quad \text{for } \, z \in \del D_2
\end{cases}
\ee
for some $i = 0, \ldots, k$, and
\be\label{eq:equation-for-alpha}
\begin{cases}
\delbar \alpha = \frac{1}{2}|\zeta|^2 \\
\alpha(z) \in \R \quad \text{for } \, z \in \del D_2.
\end{cases}
\ee
These two equations form a nonlinear elliptic system for $(\zeta,\alpha)$ which are coupled:
$\alpha$ is fed into
\eqref{eq:equation-for-zeta0} through its coefficients and then $\zeta$ provides the input
for the right hand side of the equation \eqref{eq:equation-for-alpha} and then back and forth. Using this structure of
coupling, we can derive the higher derivative estimates
by alternating boot strap arguments between $\zeta$ and $\alpha$
which is now in order.

\begin{thm}[Theorem 1.4, \cite{oh-yso:index}]\label{thm:local-regularity}
Let $w$ be a contact instanton satisfying \eqref{eq:contacton-Legendrian-bdy-intro}.
Then for any pair of domains $D_1 \subset D_2 \subset \dot \Sigma$ such that $\overline{D_1}\subset D_2$, we have
$$
\|dw\|_{C^{k,\delta}(D_1)} \leq C_\delta( \|dw\|_{W^{1,2}(D_2)})
$$
for some  positive function $C_\delta = C_\delta(r)$ that
is continuous at $r = 0$ which depends on $J$, $\lambda$ and $D_1, \, D_2$
but independent of $w$.
\end{thm}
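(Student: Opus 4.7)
The plan is to carry out an alternating boot-strap between the two coupled equations \eqref{eq:equation-for-zeta0} and \eqref{eq:equation-for-alpha}, starting from the $W^{2,2}$ bound already established in Theorem \ref{thm:local-W12}. By the Sobolev embedding $W^{2,2}\hookrightarrow C^{0,\delta}$ (valid for $0<\delta<1/2$ in dimension two and also in the half-disc model after reflection), the $W^{2,2}$-estimate on $dw$ gives an initial $C^{0,\delta}$-bound on both $\zeta$ and $\alpha$ in terms of $\|dw\|_{W^{1,2}(D_2)}$. The task is then to upgrade this $C^{0,\delta}$-control to $C^{k,\delta}$-control for every $k\ge 1$ on a slightly smaller semi-disc, shrinking the domain only finitely many times thanks to a chain of intermediate semi-discs $D_1\subset D_1'\subset D_1''\subset\cdots\subset D_2$.

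First I would treat \eqref{eq:equation-for-alpha}. This is a standard $\overline\partial$-problem on a semi-disc with the totally real boundary condition $\alpha(z)\in\R$, which by Schwarz reflection across $\del D_2$ reduces to an interior $\overline\partial$-problem. The Calder\'on--Zygmund/Schauder theory for $\overline\partial$ then yields
$$
\|\alpha\|_{C^{m+1,\delta}(D_1')}\le C\bigl(\||\zeta|^2\|_{C^{m,\delta}(D_2')}+\|\alpha\|_{C^{0,\delta}(D_2')}\bigr)
$$
for every $m\ge 0$, where $D_1'\subsubset D_2'$. Next I would treat \eqref{eq:equation-for-zeta0}. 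Rewriting it as $\overline\nabla^\pi \zeta+P_{w^*\lambda}(\zeta)=0$ as in \eqref{eq:nablabar-P}, and expressing $\nabla^\pi$ in the isothermal frame of $D_2$ via the Christoffel symbols of the contact triad connection pulled back by $w$, this is a linear first-order Cauchy--Riemann type system for the section $\zeta$ of $w^*\xi$ with zero-order coefficients controlled by $w^*\lambda$ (equivalently, by $\alpha$ and its first derivatives through the Christoffel terms) and by $w$ itself. The boundary condition $\zeta(z)\in TR_i$ places $\zeta$ into a totally real subbundle of $(w|_{\del D_2})^*\xi$, since each $R_i$ is Legendrian and thus $TR_i\subset\xi$ is a totally real subspace of $(\xi,J)$ of maximal dimension. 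The standard boundary Schauder theory for first-order Cauchy--Riemann operators with totally real boundary (see e.g.\ the corresponding argument for $J$-holomorphic discs in \cite[Ch.~8]{oh:book1}) then yields
$$
\|\zeta\|_{C^{m+1,\delta}(D_1'')}\le C\bigl(\|P_{w^*\lambda}\|_{C^{m,\delta}(D_2'')}\|\zeta\|_{C^{m,\delta}(D_2'')}+\|\zeta\|_{C^{0,\delta}(D_2'')}\bigr),
$$
with the constant $C$ depending only on $D_1'',D_2''$, the triad data, and on the $C^{m,\delta}$-geometry of the Legendrian boundary $\vec R$.

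Now I would iterate. At step $m=0$ the initial $C^{0,\delta}$-bound on $\zeta$ feeds the $\alpha$-estimate to yield $\alpha\in C^{1,\delta}$; since $P_{w^*\lambda}$ is built algebraically from the components of $w^*\lambda$, which are (up to sign) the real and imaginary parts of $\alpha$ together with $J$ and $\CL_{R_\lambda}J$ evaluated along $w$, the coefficients of the $\zeta$-equation then sit in $C^{0,\delta}$ (the $w$-dependence being controlled via the already-established $C^{0,\delta}$-bound on $dw$), and the $\zeta$-Schauder estimate promotes $\zeta$ to $C^{1,\delta}$. Plugging $|\zeta|^2\in C^{1,\delta}$ back into the $\alpha$-equation upgrades $\alpha$ to $C^{2,\delta}$, and so on. Writing the scheme as a discrete induction on $m$, and noting that $dw$ decomposes as $dw=\zeta\,dx+J\zeta\,dy+w^*\lambda\cdot R_\lambda$ so that $C^{m,\delta}$-control of $(\zeta,\alpha)$ together with $C^{m,\delta}$-control of $w$ itself (obtained by integrating $dw$) yields $C^{m,\delta}$-control of $dw$, one arrives at the stated estimate on $D_1$.

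The main obstacle I expect is bookkeeping at the first step: to run Schauder for the $\zeta$-equation one needs the coefficients of $\nabla^\pi$ pulled back by $w$ to lie in $C^{0,\delta}$, which forces one to track how $C^{0,\delta}$-control of $w$ (rather than just of $dw$) propagates, and to verify that the totally real boundary condition $\zeta\in TR_i$ is preserved under the reflection/straightening of the Legendrian used to reduce to a flat model. Once the base case is secured, the inductive step is formally the same at every order, and the dependence of the final constant $C_\delta(\|dw\|_{W^{1,2}(D_2)})$ is continuous at $0$ because at each stage of the bootstrap the Schauder constants depend only multiplicatively on norms that are in turn controlled by continuous functions of $\|dw\|_{W^{1,2}(D_2)}$ through the preceding steps and through Theorem \ref{thm:local-W12}.
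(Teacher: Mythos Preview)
Your proposal is correct and follows essentially the same route as the paper's outline: both start from the $W^{2,2}$ bound and the Sobolev embedding $W^{2,2}\hookrightarrow C^{0,\delta}$, set up the coupled system \eqref{eq:equation-for-zeta0}--\eqref{eq:equation-for-alpha} for $(\zeta,\alpha)$, and then run the alternating bootstrap in the order $\alpha\to\zeta\to\alpha\to\cdots$, exactly as in the paper's enumerated steps. The only cosmetic difference is that the paper (in step~6) phrases the inductive gain as ``taking the differential with respect to $\nabla_x^{\text{LC}}$'' tangentially along the boundary, whereas you invoke the Schauder estimates directly at each order; these are standard equivalent ways of packaging the same elliptic gain, and your acknowledged bookkeeping concern about the $w$-dependence of the Christoffel symbols is precisely the technical point one must track in either formulation.
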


The rest of the proof of this theorem given in \cite{oh-yso:index} consist of the following
steps:

\begin{enumerate}
\item  Start of alternating boot-strap: $W^{1,2}$-estimate for $dw$.
\item $C^{1,\delta}$-estimate for $w^*\lambda = f\, dx + g\, dy$.
\item $C^{1,\delta}$-estimate for $d^\pi w$.
\item $C^{2,\delta}$-estimate for $w^*\lambda$.
\item $C^{2,\delta}$-estimate for $d^\pi w$.
\item Wrap-up of the alternating boot-strap argument:
We repeat the above alternating boot strap arguments between
$\zeta$ and $\alpha$
back and forth by taking the differential with respect to $\nabla_x^{\text{\rm LC}}$
to inductively derive the $C^{k,\delta}$-estimates both for $\zeta$ and $\alpha$
in terms of $\|\zeta\|_{L^4(D_2)}$ and $\|\alpha\|_{L^4(D_2)}$ which is equivalent to
considering the full $\|dw\|_{L^4}$.
This completes the proof of Theorem \ref{thm:local-regularity}.
\end{enumerate}
We refer readers to \cite{oh-yso:index} for complete details of this
for the alternating boot strap arguments which go
back and forth between $\zeta$ and $\alpha$.

\part{Asymptotic convergence and charge vanishing}

In this part, we study the most basic asymptotic behavior of finite
energy contact instantons near the punctures.
We divide the study into two cases separately,
one the case of closed strings, i.e., on the
cylinderical region $[0,\infty) \times S^1$ near interiors
punctures, and the other the case of open strings, i.e., on the strip-like
region $[0,\infty) \times [0,1]$ with Legendrian boundary condition of
the pair $(R, R')$.

\section{Generic nondegeneracy of Reeb orbits and of Reeb chords}

 Nondegeneracy of closed Reeb orbits or of Reeb chords
is fundamental in the Fredholm property of the linearized operator of
contact instanton equations as well as of pseudoholomorphic curves on symplectization.
The main conclusion of the present subsection will be the statement on the generic nondegeneracy
 under the perturbation of contact forms or of Legendrian boundary conditions.
The study of the case of closed Reeb orbits is standard
(see \cite{albers-bramham-wendl}), and our exposition on
the results for the case of open strings is based on
\cite[Appendix B]{oh:contacton-transversality}.

 \subsection{The case of closed Reeb orbits}

Let $\gamma$ be a closed Reeb orbit of period $T > 0$. In other words,
$\gamma: \R \to M$ is a solution of $\dot x = R_\lambda(x)$ satisfying
$\gamma(T) = \gamma(0)$.
By definition, we can write $\gamma(T) = \phi^T_{R_\lambda}(\gamma(0))$
for the Reeb flow $\phi^T= \phi^T_{R_\lambda}$ of the Reeb vector field $R_\lambda$.
Therefore if $\gamma$ is a closed orbit, then we have
$$
\phi^T_{R_\lambda}(\gamma(0)) = \gamma(0)
$$
i.e.,
$p = \gamma(0)$ is a fixed point of the diffeomorphism $\phi^T$.
Since $\CL_{R_\lambda}\lambda = 0$, $\phi^T_{R_\lambda}$ is a (strict) contact diffeomorphism and so
induces an isomorphism
$$
d\phi^T(p)|_{\xi_p}: \xi_p \to \xi_p
$$
which is the linearization restricted to $\xi_p$ of the Poincar\'e return map.

\begin{defn} We say a $T$-closed Reeb orbit $(T,\lambda)$ is \emph{nondegenerate}
if $d\phi^T(p)|_{\xi_p}:\xi_p \to \xi_p$ with $p = \gamma(0)$ has not eigenvalue 1.
\end{defn}

Denote $\CL(Q)=C^\infty(S^1,Q)$ the space of loops $z: S^1 = \R /\Z \to Q$.
We consider the assignment
$$
\Phi: (T,\gamma,\lambda) \mapsto \dot \gamma - T \,R_\lambda(\gamma)
$$
which we would like to consider a section of some Banach vector bundle over
$ (0,\infty) \times \CL^{1,2}(Q) \times \CC(Q,\xi)$ where $\CL^{1,2}(Q)$
is the $W^{1,2}$-completion of $\CL(Q)$. We note the value
$$
\dot \gamma - T\, R_\lambda(\gamma) \in \Gamma(\gamma^*TQ).
$$
We denote by $L^2(\gamma^*TQ)$ the space of $L^2$-sections of
the vector bundle $\gamma^*TQ$. Then we define the vector bundle
$$
\CL^2 \to (0,\infty) \times \CL^{1,2}(Q) \times \Cont(Q,\xi)
$$
whose fiber at $(T,\gamma,\lambda)$ is $L^2(\gamma^*TQ)$. We denote by
$\pi_i$, $i=1,\, 2, \, 3$ the corresponding projections.

We denote $\mathfrak{Reeb}(Q,\xi) = \Phi^{-1}(0)$. Then the set
$Reeb(\lambda)$ of $\lambda$-Reeb orbits
$(\gamma,T)$ is nothing but $\mathfrak{Reeb}(Q,\xi) \cap \pi_3^{-1}(\lambda)$.

\begin{prop} A $T$-closed Reeb orbit $(T,\gamma)$ is nondegenerate if and only if
the linearization
$$
d_{(T,\gamma)}\Phi: \R \times W^{1,2}(\gamma^*TQ) \to L^2(\gamma^*TQ)
$$
is surjective.
\end{prop}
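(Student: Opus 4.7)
The first step is to write the linearization $L := d_{(T,\gamma)}\Phi$ in closed form. Using any connection $\nabla$ on $Q$ (for definiteness, the Levi-Civita connection of the triad metric), differentiating $(T,\gamma) \mapsto \dot\gamma - T R_\lambda(\gamma)$ in the direction $(s,Y) \in \R \times W^{1,2}(\gamma^*TQ)$ yields
\begin{equation*}
L(s,Y) \;=\; \nabla_t Y \;-\; T\, \nabla_Y R_\lambda \;-\; s\, R_\lambda(\gamma),
\end{equation*}
modulo a zero-order torsion term that is irrelevant to surjectivity. The ODE operator $A(Y) := \nabla_t Y - T\nabla_Y R_\lambda$ from $W^{1,2}(\gamma^*TQ)$ to $L^2(\gamma^*TQ)$ is Fredholm of index zero, so $L$ is Fredholm and the whole question reduces to a finite-dimensional problem once the cokernel of $A$ and the geometric meaning of $R_\lambda(\gamma)$ are sorted out.

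To sort them out, I would trivialize $\gamma^*TQ \cong S^1 \times \R^{2n+1}$ by parallel transport along $\gamma$, turning $A(Y) = f$ into $\dot Y = M(t) Y + f$ with $Y(1) = Y(0)$. Let $\Psi(t)$ be the fundamental matrix with $\Psi(0) = I$; variation of parameters sends the equation $L(s,Y) = f$ to the finite-dimensional algebraic equation $(\Psi(1) - I)\, Y(0) = b(s,f)$ on $\R^{2n+1}$, where
\begin{equation*}
b(s,f) \;=\; -\Psi(1) \int_0^1 \Psi(\sigma)^{-1}\bigl(s\, v(\sigma) + f(\sigma)\bigr)\, d\sigma
\end{equation*}
and $v(\sigma)$ denotes $R_\lambda(\gamma(\sigma))$ in the trivialization. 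A short check identifies $\Psi(1)$ with the linearized Reeb return map $d\phi^T_{R_\lambda}(p)$ at $p = \gamma(0)$.

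The heart of the proof is then to couple two elementary contact-geometric observations. Since the Reeb flow preserves $R_\lambda$, one has $A(R_\lambda(\gamma)) = 0$, which in the trivialization reads $\Psi(\sigma)^{-1} v(\sigma) \equiv v(0) = R_\lambda(p)$; hence the $s$-direction contributes exactly $-s\, R_\lambda(p)$ to $b(s,f)$. Moreover, $\Psi(1)$ preserves the splitting $T_p Q = \xi_p \oplus \R R_\lambda(p)$ and acts as the identity on the second summand, so $\Image(\Psi(1) - I) \subseteq \xi_p$. Combining these, tuning $s$ uniquely neutralizes the $R_\lambda(p)$-component of $b(0,f)$, so $L$ is surjective if and only if the remaining $\xi_p$-part of $b$ lies in $\Image(\Psi(1) - I|_{\xi_p})$ for every $f$, i.e., if and only if $\Psi(1)|_{\xi_p} - I$ is surjective on $\xi_p$. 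By finite-dimensional linear algebra this is equivalent to $d\phi^T(p)|_{\xi_p}$ having no eigenvalue $1$, which is the definition of nondegeneracy.

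The main obstacle I anticipate is not analytical but one of careful bookkeeping: the argument hinges on recognizing that the tuning parameter $s$ is precisely dual to the tautological kernel direction $R_\lambda(\gamma) \in \ker A$ produced by Reeb reparametrization, so that enlarging the domain by $\R$ fills exactly the $R_\lambda(p)$-line of the cokernel and no more. Once this duality is in place and $\Psi(1)$ is identified with the Poincar\'e return map, both implications of the stated equivalence follow from the Fredholm alternative applied to $\Psi(1) - I$ on $T_pQ$.
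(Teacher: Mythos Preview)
The paper states this proposition without proof; it is treated as a standard fact preceding the citation of Albers--Bramham--Wendl's generic nondegeneracy theorem. Your argument is correct and supplies exactly the details the survey omits: the reduction via variation of parameters to the finite-dimensional equation $(\Psi(1)-I)Y(0)=b(s,f)$, the identification of $\Psi(1)$ with $d\phi^T(p)$, and the key contact-geometric input that $\Psi(1)$ fixes $R_\lambda(p)$ and preserves $\xi_p$, so that the extra $\R$-parameter $s$ exactly absorbs the Reeb component of $b$ while the $\xi_p$-component is governed by $(\Psi(1)-I)|_{\xi_p}$. One small remark: your parenthetical ``modulo a zero-order torsion term'' is unnecessary once you have committed to the Levi--Civita connection, since it is torsion-free and the formula $L(s,Y)=\nabla_tY-T\nabla_YR_\lambda-sR_\lambda(\gamma)$ is then exact; you may simply drop that hedge.
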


The following generic nondegeneracy result is proved by
Albers-Bramham-Wendl in \cite{albers-bramham-wendl}. We denote by
$$
\CC(Q,\xi)
$$
the set of contact forms of $(Q,\xi)$ equipped with $C^\infty$-topology.

\begin{thm}[Albers-Bramham-Wendl] \label{thm:ABW}
Let $(Q,\xi)$ be a contact manifold. Then there exists
a residual subset $\CC^{\text{\rm reg}}(Q,\xi) \subset \CC(Q,\xi)$ such that
for any contact form $\lambda \in \CC^{\text{\rm reg}}(Q,\xi)$ all
Reeb orbits are nondegenerate for $T> 0$.
\end{thm}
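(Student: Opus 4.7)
The plan is to prove this via a Sard-Smale argument on a universal moduli space, combined with Taubes' trick to descend from the $C^\ell$ category to $C^\infty$. First I would work in the Banach category by fixing $\ell \geq 2$ large and replacing $\CC(Q,\xi)$ by its $C^\ell$-completion $\CC^\ell(Q,\xi)$, which is a Banach manifold (an open subset of an affine space modelled on $C^\ell$ one-forms). The assignment $\Phi(T,\gamma,\lambda) = \dot\gamma - T\, R_\lambda(\gamma)$ then defines a smooth section of the Banach bundle $\CL^2 \to (0,\infty) \times \CL^{1,2}(Q) \times \CC^\ell(Q,\xi)$. To cut down the $S^1$-reparametrization symmetry, one either restricts to a local slice transverse to the $S^1$-action or passes to the quotient; either way the fiberwise linearization at a zero has Fredholm index zero (after accounting for $T$ and the $S^1$-orbit).

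The core step is to establish universal transversality: the full linearization
\[
D\Phi(T,\gamma,\lambda): \R \times W^{1,2}(\gamma^*TQ) \times T_\lambda\CC^\ell \to L^2(\gamma^*TQ)
\]
is surjective at every zero. By the proposition preceding the theorem, the partial linearization $d_{(T,\gamma)}\Phi$ is already Fredholm, so it suffices to show that every $L^2$ element $\eta$ annihilating the image can be paired nontrivially with some $\mu \in T_\lambda\CC^\ell$ via the $\lambda$-derivative. Writing $\lambda_s = \lambda + s\mu$ and differentiating the defining equation $\iota_{R_{\lambda_s}}\lambda_s = 1$, $\iota_{R_{\lambda_s}}d\lambda_s = 0$ at $s=0$ gives an explicit formula for $\partial_s R_{\lambda_s}|_{s=0}$ in terms of $\mu(R_\lambda)$ and $\mu|_\xi$; in particular this derivative, evaluated along $\gamma$, can produce an arbitrary smooth $\xi$-valued vector field along $\gamma$ by localizing $\mu$ near any embedded point of the image. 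Any cokernel element $\eta$ satisfies a first-order linear ODE along $\gamma$ with unique-continuation, hence cannot vanish on any open subarc unless it vanishes identically; a standard bump-function argument then produces $\mu$ with $\langle D_\lambda\Phi(\mu),\eta\rangle \neq 0$, giving surjectivity.

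Once universal transversality is established, the implicit function theorem makes $\mathfrak{Reeb}^\ell = \Phi^{-1}(0)/S^1$ a Banach manifold, and the projection $\pi_3: \mathfrak{Reeb}^\ell \to \CC^\ell(Q,\xi)$ is a smooth Fredholm map of index zero on each component. By the Sard-Smale theorem, the set of regular values $\CC^{\ell,\mathrm{reg}}(Q,\xi)$ is residual in $\CC^\ell(Q,\xi)$. For $\lambda$ a regular value, surjectivity of $d_{(T,\gamma)}\Phi$ at each zero $(T,\gamma,\lambda)$ follows from universal surjectivity together with the regular-value condition; hence by Proposition before the theorem every Reeb orbit of $\lambda$ is nondegenerate.

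Finally I would pass from $C^\ell$ to $C^\infty$ using the standard Taubes trick based on period bounds. For each $T_0 > 0$ let $\CC^\infty_{T_0}(Q,\xi) \subset \CC^\infty(Q,\xi)$ denote those contact forms for which every Reeb orbit of period $\leq T_0$ is nondegenerate. Nondegeneracy is an open condition on a compact subset (by Arzel\`a-Ascoli applied to orbits of bounded period and uniform ellipticity of the linearized Reeb equation), so $\CC^\infty_{T_0}$ is $C^\infty$-open; density follows from the residuality of $\CC^{\ell,\mathrm{reg}}$ in $\CC^\ell$ via smooth approximation. Setting $\CC^{\mathrm{reg}}(Q,\xi) = \bigcap_{n \in \N} \CC^\infty_n(Q,\xi)$ yields the desired residual subset. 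The main obstacle is verifying universal transversality in the presence of multiply covered orbits: when $\gamma = \bar\gamma^k$ is a $k$-fold iterate of a simple orbit $\bar\gamma$, the perturbation $\mu$ localized near an image point affects $\eta$ at all $k$ preimages simultaneously, and one must exploit the time-dependence of $\eta$ along the $k$ sheets of the cover (guaranteed by unique continuation unless $\eta \equiv 0$) to produce a net nonzero pairing.
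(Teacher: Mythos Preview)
The paper does not prove this theorem; it is stated with attribution to Albers--Bramham--Wendl and cited as \cite{albers-bramham-wendl}. Your outline is the standard Sard--Smale plus Taubes-trick argument and is essentially the approach taken in the original reference, including the correct identification of multiply covered orbits as the delicate point in the universal transversality step.
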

(The case $T = 0$ can be included as the Morse-Bott nondegenerate case
if we allow the action $T = 0$ by extending the definition of Reeb trajectory
to \emph{isospeed Reeb chords}  of the pairs $(\gamma,T)$ with
$\gamma:[0,1] \to Q$ with $T = \int \gamma^*\lambda$ as done in
\cite{oh:entanglement1,oh:contacton-transversality}.)

\subsection{The case of Reeb chords}

We first recall the notion of iso-speed Reeb trajectories used in
\cite{oh:entanglement1} and recall the definition of
nondegeneracy of thereof.

Consider contact triads $(Q,\lambda,J)$ and the boundary
value problem
for $(\gamma, T)$ with $\gamma:[0,1] \to Q$
\be\label{eq:chord-equation}
\begin{cases}
\dot \gamma(t) = T R_\lambda(\gamma(t)),\\
\gamma(0) \in R_0, \quad \gamma(1) \in R_1.
\end{cases}
\ee
\begin{defn}[Isospeed Reeb trajectory; Definition 2.1 \cite{oh:contacton-transversality}]
We call a pair $(\gamma,T)$
of a smooth curve $\gamma:[0,1] \to Q$ and $T \in \R$
an \emph{iso-speed Reeb trajectory} if they satisfy
\be\label{eq:isospeed-chords}
\dot \gamma(t) = T R_\lambda(\gamma(t)), \quad \int \gamma^*\lambda = T
\ee
for all $t \in [0,1]$. We call $(\gamma, T)$ an iso-speed closed Reeb orbit
if $\gamma(0) = \gamma(1)$, and an iso-speed Reeb chord of $(R_0,R_1)$
it $\gamma(0) \in R_0$ and $\gamma(1) \in R_1$ from $R_0$ to $R_1$.
\end{defn}

With this definition, we state the corresponding notion of nondegeneracy

\begin{defn}\label{defn:nondegeneracy-chords}
We say a Reeb chord $(\gamma, T)$ of $(R_0,R_1)$ is nondegenerate if
the linearization map $d\phi^T(p): \xi_p \to \xi_p$ satisfies
$$
d\phi^T(p)(T_{\gamma(0)} R_0) \pitchfork T_{\gamma(1)} R_1  \quad \text{\rm in }  \,  \xi_{\gamma(1)}
$$
or equivalently
$$
d\phi^T(p)(T_{\gamma(0)} R_0) \pitchfork T_{\gamma(1)} Z_{R_1} \quad \text{\rm in} \, T_{\gamma(1)}Q.
$$
Here $\phi^t_{R_\lambda}$ is the flow generated by the Reeb vector field $R_\lambda$.
\end{defn}
More generally, we consider the following situation.
We recall the definition of \emph{Reeb trace} $Z_R$ of a Legendrian submanifold $R$, which is defined to be
$$
Z_R: = \bigcup_{t \in \R} \phi_{R_\lambda}^t(R).
$$
(See \cite[Appendix B]{oh:contacton-transversality} for detailed discussion on its genericity.)

\begin{defn}[Nondegeneracy of Legendrian links]
\label{defn:nondegeneracy-links}
Let $\vec{R}=(R_1,\cdots,R_k)$ be a chain of Legendrian submanifolds,
which we call a (ordered) Legendrian link. We assume that we have
$$
Z_{R_i} \pitchfork R_j
$$
for all $i, \, j= 1,\ldots, k$.
\end{defn}

We denote by
$$
{\mathcal Leg}(Q,\xi)
$$
the set of Legendrian submanifold and by ${\mathcal Leg}(Q,\xi;R)$ its connected component
containing $R \in {\mathcal Leg}(Q,\xi)$, i.e, the set of Legendrian submanifolds Legendrian isotopic to
$R$. We denote by
$$
\CP({\mathcal Leg}(Q,\xi))
$$
the monoid of Legendrian isotopies $[0,1] \to {\mathcal Leg}(Q,\xi)$.
We have
natural evaluation maps
$$
\ev_0, \, \ev_1:\CP({\mathcal Leg}(Q,\xi)) \to {\mathcal Leg}(Q,\xi)
$$
and denote by
$$
\CP({\mathcal Leg}(Q,\xi), R)
= \ev_0^{-1}(R) \subset \CP({\mathcal Leg}(Q,\xi))
$$
and
$$
\CP({\mathcal Leg}(Q,\xi), (R_0,R_1)) = (\ev_0\times \ev_1)^{-1}(R_0,R_1) \subset \CP({\mathcal Leg}(Q,\xi)).
$$

We now provide the off-shell framework for the proof of nondegeneracy
in general. Denote by $\LL(Q;R_0,R_1)$ the space of paths
$$
\gamma: ([0,1], \{0,1\}) \to (Q;R_0,R_1).
$$
We consider the assignment
\be\label{eq:Phi-TR}
\Phi: (T,\gamma,\lambda) \mapsto \dot \gamma - T \,R_\lambda(\gamma)
\ee
as a section of the Banach vector bundle over
$$
(0,\infty) \times \CL^{1,2}(Q;R_0,R_1) \times \CC(Q,\xi)
$$
where $\CL^{1,2}(Q;R_0,R_1)$
is the $W^{1,2}$-completion of $\CL(Q;R_0,R_1)$. We have
$$
\dot \gamma - T\, R_\lambda(\gamma) \in \Gamma(\gamma^*TQ'T_{\gamma(0)}R_0, T_{\gamma(1)}R_1).
$$
We  define the vector bundle
$$
\CL^2(R_0,R_1) \to (0,\infty) \times \CL^{1,2}(Q;R_0,R_1) \times \CC(Q,\xi)$$
whose fiber at $(T,\gamma,\lambda)$ is $L^2(\gamma^*TQ)$. We denote by
$\pi_i$, $i=1,\, 2, \, 3$ the corresponding projections as before.

We denote $\mathfrak{Reeb}(M,\lambda;R_0,R_1) = \Phi_\lambda^{-1}(0)$,
where
$$
\Phi_\lambda: = \Phi|_{ (0,\infty) \times \CL^{1,2}(Q;R_0,R_1) \times \{\lambda\}}.
$$
Then we have
$$
\mathfrak{Reeb}(\lambda;R_0,R_1) =  \Phi_\lambda^{-1}(0) = \mathfrak{Reeb}(Q,\xi) \cap \pi_3^{-1}(\lambda).
$$
The following relative version of Theorem \ref{thm:ABW} is proved in
\cite[Appendix B]{oh:contacton-transversality}.

\begin{thm} [Perturbation of contact forms;
Theorem B.3 \cite{oh:contacton-transversality}]
\label{thm:Reeb-chord-lambda}
Let $(Q,\xi)$ be a contact manifold. Let  $(R_0,R_1)$ be a pair of Legendrian submanifolds
allowing the case $R_0 = R_1$.  There
exists a residual subset $\CC^{\text{\rm reg}}_1(Q,\xi) \subset \CC(Q,\xi)$
such that for any $\lambda \in \CC^{\text{\rm reg}}_1(Q,\xi)$ all
Reeb chords from $R_0$ to $R_1$ are nondegenerate for $T > 0$ and
Bott-Morse nondegenerate when $T = 0$.
\end{thm}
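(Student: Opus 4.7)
The plan is a standard Sard--Smale transversality argument applied to the universal moduli space $\Phi^{-1}(0)$ inside the product $(0,\infty) \times \CL^{1,2}(Q;R_0,R_1) \times \CC(Q,\xi)$, where $\Phi$ is the section defined in \eqref{eq:Phi-TR} of the bundle $\CL^2(R_0,R_1)$. First I would verify that the universal section $\Phi$ is transverse to zero. Its linearization at a Reeb chord $(T_0,\gamma_0,\lambda_0)$ in the direction $(\tau,\eta,\mu)$ reads
\[
D\Phi(\tau,\eta,\mu) \;=\; \nabla_t \eta \;-\; \tau\, R_{\lambda_0}(\gamma_0) \;-\; T_0\bigl(\nabla_\eta R_{\lambda_0} + Y_\mu(\gamma_0)\bigr),
\]
with $\eta \in W^{1,2}(\gamma_0^*TQ;T_{\gamma_0(0)}R_0,T_{\gamma_0(1)}R_1)$ and $Y_\mu := \frac{d}{ds}\big|_{s=0} R_{\lambda_0 + s\mu}$. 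The fiberwise part (with $\mu=0$) is Fredholm, and its cokernel is precisely what obstructs the nondegeneracy condition of Definition \ref{defn:nondegeneracy-chords}; the contact-form variation must close this gap.

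The main technical step, and the chief obstacle, is showing that the $\mu$-variation is rich enough to eliminate that fiberwise cokernel. Differentiating the defining equations $R_{\lambda_s}\rfloor \lambda_s \equiv 1$ and $R_{\lambda_s}\rfloor d\lambda_s \equiv 0$ at $s=0$ yields
\[
\lambda(Y_\mu) = -\mu(R_\lambda), \qquad Y_\mu \rfloor d\lambda = -R_\lambda \rfloor d\mu,
\]
so since $d\lambda|_\xi$ is nondegenerate the $\xi$-component $\pi Y_\mu$ can be prescribed to equal any desired smooth value on any open subarc of $\gamma_0$ by choosing $\mu$ supported in an appropriate tubular neighborhood. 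Now suppose $\zeta \in L^2(\gamma_0^*TQ)$ pairs to zero against $D\Phi(\tau,\eta,\mu)$ for every $(\tau,\eta,\mu)$: variation in $\eta$ forces $\zeta$ to satisfy the adjoint ODE with adjoint boundary conditions, hence $\zeta$ is smooth; variation in $\tau$ forces $\langle \zeta,R_{\lambda_0}(\gamma_0)\rangle \equiv 0$; and variation in $\mu$, restricted to arcs where $\gamma_0$ is locally embedded, forces $\pi\zeta \equiv 0$ on those arcs. Unique continuation for the smooth adjoint ODE then yields $\zeta \equiv 0$, so $D\Phi$ is surjective.

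With universal transversality established, $\Phi^{-1}(0)$ becomes a Banach submanifold and I would apply Sard--Smale to the projection $\pi_3 : \Phi^{-1}(0) \to \CC(Q,\xi)$. Since $\CC(Q,\xi)$ is only a Fr\'echet manifold, I would first pass to a Floer-type $C^\varepsilon$ completion $\CC^\varepsilon(Q,\xi)$ on which $\pi_3$ is a smooth Fredholm map, apply Sard--Smale there to produce a residual set of regular values, and then transfer back to $C^\infty$ by the usual trick of intersecting over a countable sequence $\varepsilon_n \to 0$ while exhausting the universal moduli by compact subsets indexed by bounds on $T$ and on the $W^{1,2}$-norm of $\gamma$. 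A contact form $\lambda$ is a regular value of $\pi_3$ exactly when every $T>0$ Reeb chord of $\lambda$ from $R_0$ to $R_1$ is nondegenerate. The $T=0$ stratum consists of constant trajectories at points of $R_0 \cap R_1$ and Bott--Morse nondegeneracy there is the clean-intersection condition on $(R_0,R_1)$ alone, independent of $\lambda$; this is built into the Legendrian link hypothesis of Definition \ref{defn:nondegeneracy-links} and hence requires no further genericity on $\lambda$.
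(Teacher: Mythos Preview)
The paper does not actually prove this theorem; it is a survey and explicitly defers the proof to \cite[Appendix B]{oh:contacton-transversality}, stating ``We refer readers to \cite[Appendix B]{oh:contacton-transversality} for the proofs of these results.'' Your Sard--Smale outline is the standard route and is what one expects the cited proof to do, so there is nothing to compare against here beyond saying your approach matches the expected one.

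That said, one step in your cokernel argument is stated too weakly. You write that variation in $\mu$ on an embedded arc forces only $\pi\zeta \equiv 0$ there, and then invoke unique continuation. But from your own formulas $\lambda(Y_\mu) = -\mu(R_\lambda)$ and $Y_\mu \rfloor d\lambda = -R_\lambda \rfloor d\mu$ you see that on an embedded arc of $\gamma_0$ you can prescribe \emph{all} of $Y_\mu|_{\gamma_0}$, not just its $\xi$-component: choose $\mu(R_\lambda)$ to control the Reeb part and $d\mu(R_\lambda,\cdot)$ to control the $\xi$-part, with $\mu$ supported in a tube around the arc. This kills the full $\zeta$ on the arc directly, and then unique continuation for the first-order adjoint ODE finishes. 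As written, knowing only $\pi\zeta = 0$ on an arc plus the single integral constraint $\int_0^1 \langle \zeta, R_{\lambda_0}\rangle = 0$ from the $\tau$-variation does not obviously force the Reeb component of $\zeta$ to vanish; you would need an extra argument. So either strengthen the $\mu$-variation step as above, or supply that extra argument.

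On the $T=0$ clause: you are right that it is a condition on the pair $(R_0,R_1)$ alone and not on $\lambda$, but note the theorem as stated does not assume the link hypothesis of Definition \ref{defn:nondegeneracy-links}; the $T=0$ Bott--Morse claim is therefore either vacuous or an implicit standing assumption, and you should flag this rather than assert it follows from that definition.
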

The following theorem is also proved in \cite{oh:contacton-transversality}.

\begin{thm}[Perturbation of boundaries;
Theorem B.10 \cite{oh:contacton-transversality}]
\label{thm:Reeb-chords-MB}
Let $(Q,\xi)$ be a contact manifold. Let  $(R_0,R_1)$ be a pair of Legendrian submanifolds allowing the case $R_0 = R_1$.
 For a given contact form $\lambda$ and $R_1$,
there exists a residual subset
$$
R_0 \in{\mathcal Leg}^{\text{\rm reg}}(Q,\xi) \subset {\mathcal Leg}(Q,\xi)
$$
of Legendrian submanifolds such that for all $R_0 \in {\mathcal Leg}(Q,\xi)$ all
Reeb chords from $R_0$ to $R_1$ are nondegenerate for $T > 0$ and Morse-Bott nondegenerate when $T = 0$.
\end{thm}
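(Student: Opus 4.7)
The plan is to reduce nondegeneracy of Reeb chords from $R_0$ to $R_1$ to a transverse intersection condition in $Q$ and then invoke a Sard--Smale argument on the parameter space of Legendrian perturbations of $R_0$, keeping $\lambda$ and $R_1$ fixed. First I would reformulate the chord condition: a Reeb chord $(\gamma,T)$ with $T>0$ from $R_0$ to $R_1$ corresponds bijectively to an intersection point $p = \gamma(0) \in R_0 \cap \phi^{-T}_{R_\lambda}(R_1) \subset R_0 \cap Z_{R_1}$, and Definition \ref{defn:nondegeneracy-chords} is equivalent to the global transversality $R_0 \pitchfork Z_{R_1}$ in $Q$ on the positive-action locus. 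Since $\dim R_0 + \dim Z_{R_1} = n + (n+1) = 2n+1 = \dim Q$, this transversality cuts out a discrete set of Reeb chord endpoints, which is exactly what is to be proven.

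Second, I would parametrize nearby Legendrians of $R_0$ by a separable Banach space $\mathcal{F}$ of smooth functions on $R_0$, using Weinstein's Legendrian neighborhood theorem to identify a tubular neighborhood of $R_0$ in $Q$ with the $1$-jet bundle $J^1(R_0)$ and nearby Legendrian submanifolds with the $1$-jet graphs of elements of $\mathcal{F}$. Form the universal embedding
\[
\Phi: R_0 \times \mathcal{F} \to Q, \qquad (p,f) \mapsto j^1 f(p),
\]
and the universal intersection locus $\mathcal{U} = \Phi^{-1}(Z_{R_1})$. The decisive step is $\Phi \pitchfork Z_{R_1}$: at any $(p,f)$ with $\Phi(p,f) \in Z_{R_1}$, the partial differential $\partial_f \Phi$ must surject onto any complement of $TZ_{R_1}$ inside $TQ$, which amounts to the elementary fact that the $1$-jets of test functions at $p$ can realize any prescribed vector in $T_{\Phi(p,f)} Q$ modulo $T_{\Phi(p,f)} Z_{R_1}$, established by a bump-function construction. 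Then Sard--Smale applied to the projection $\mathcal{U} \to \mathcal{F}$ produces a residual subset $\mathcal{F}^{\text{reg}} \subset \mathcal{F}$ for which $\Phi(R_0 \times \{f\}) \pitchfork Z_{R_1}$ on the action window $T \in [\varepsilon, A]$; intersecting over a countable exhaustion $\varepsilon_k \to 0$, $A_k \to \infty$ preserves residuality. The $T = 0$ Morse--Bott statement is handled by the same scheme with $Z_{R_1}$ replaced by $R_1 \subset Z_{R_1}$ and transversality replaced by cleanness along $R_0 \cap R_1$.

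The main obstacle will be globalizing the Sard--Smale conclusion from a single Weinstein chart to the entire space ${\mathcal Leg}(Q,\xi;R_0)$, since Legendrians in this component can lie far outside any one chart. This requires a diagonal argument over a countable cover of ${\mathcal Leg}(Q,\xi;R_0)$ by Weinstein charts based at a dense sequence of reference Legendrians, ensuring via Baire's theorem that residuality is preserved on overlaps. The local surjectivity of $\partial_f \Phi$ in step two is the analytic heart of the matter and closely mirrors the corresponding surjectivity step in the proof of Theorem \ref{thm:Reeb-chord-lambda}, with the perturbation parameter now a function on $R_0$ rather than a contact form on $Q$.
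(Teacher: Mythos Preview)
The paper does not supply its own proof of this theorem; after stating it, the text simply refers the reader to \cite[Appendix B]{oh:contacton-transversality}. What the paper does set up, just above, is the off-shell framework \eqref{eq:Phi-TR}: the chord equation is cast as the vanishing of the section $\Phi(T,\gamma) = \dot\gamma - T R_\lambda(\gamma)$ over the path space $(0,\infty)\times\CL^{1,2}(Q;R_0,R_1)$, with nondegeneracy read as surjectivity of the linearization. That framework is tailored to perturbing $\lambda$; the analogous argument for perturbing $R_0$ would keep the path-space formulation and instead vary the boundary condition $\gamma(0)\in R_0$ through Legendrian isotopies.

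Your approach is different and more elementary: you collapse the path space to finite dimensions by using that the Reeb ODE determines $\gamma$ from $\gamma(0)$, so chords of action $T>0$ are exactly intersections of $R_0$ with the Reeb trace $Z_{R_1}$, and nondegeneracy becomes ordinary transversality of an embedded $n$-manifold with an immersed $(n{+}1)$-manifold in $Q$. This is correct and matches the paper's own Definition~\ref{defn:nondegeneracy-links}. The Sard--Smale step then only needs the universal evaluation $R_0\times\CF\to Q$ to be a submersion, which is immediate in a Weinstein chart since $1$-jets of bump functions span the fiber direction of $J^1(R_0)$. One point you should make explicit: $Z_{R_1}$ is in general only \emph{immersed} via $(T,q)\mapsto\phi^{-T}_{R_\lambda}(q)$ and can have self-intersections, so ``$R_0\pitchfork Z_{R_1}$'' must be read as transversality to this immersion on each action window $[\varepsilon,A]$, not to a subset of $Q$; your truncation already handles this. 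The globalization concern you flag is routine---openness of the transversality condition plus local residuality in each Weinstein chart, patched over a countable cover of ${\mathcal Leg}(Q,\xi)$, gives global residuality by Baire.
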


We refer readers to \cite[Appendix B]{oh:contacton-transversality} for the proofs of these results.

\section{Subsequence convergence}
\label{sec:subsequence-convergence}

In this section, we study the asymptotic behavior of contact instantons
on the Riemann surface $(\dot\Sigma, j)$ associated with a metric $h$ with \emph{cylinder-like ends} for the closed string context and with
\emph{strip-like ends} for the open string context.

\subsection{Closed string case}

We assume there exists a compact set $K_\Sigma\subset \dot\Sigma$,
such that $\dot\Sigma-\Int(K_\Sigma)$ is a disjoint union of punctured disks
 each of which is isometric to the half cylinder $[0, \infty)\times S^1$ or
 the half strip $(-\infty, 0]\times [0,1]$, where
the choice of positive or negative strips depends
on the choice of analytic coordinates at the punctures.
We denote by $\{p^+_i\}_{i=1, \cdots, l^+}$ the positive punctures, and by $\{p^-_j\}_{j=1, \cdots, l^-}$ the negative punctures.
Here $l=l^++l^-$. Denote by $\phi^{\pm}_i$ such cylinder-like coordinates.
We first state our assumptions for the study of the behavior of boundary punctures.
(The case of interior punctures is treated in \cite[Section 6]{oh-wang:CR-map1}.)

\begin{defn} Let $\dot\Sigma$ be a boundary-punctured Riemann surface of genus zero with punctures
$\{p^+_i\}_{i=1, \cdots, l^+}\cup \{p^-_j\}_{j=1, \cdots, l^-}$ equipped
with a metric $h$ with \emph{strip-like ends} outside a compact subset $K_\Sigma$.
Let
$w: \dot \Sigma \to M$ be any smooth map with Legendrian boundary condition.
We define the total $\pi$-harmonic energy $E^\pi(w)$
by
\be\label{eq:pienergy}
E^\pi(w) = E^\pi_{(\lambda,J;\dot\Sigma,h)}(w) = \frac{1}{2} \int_{\dot \Sigma} |d^\pi w|^2
\ee
where the norm is taken in terms of the given metric $h$ on $\dot \Sigma$ and the triad metric on $M$.
\end{defn}

Throughout this section, we work locally near one interior puncture
$p$, i.e., on a punctured semi-disc
$D^\delta(p) \setminus \{p\}$. By taking the associated conformal coordinates $\phi^+ = (\tau,t)
:D^\delta(p) \setminus \{p\} \to [0, \infty)\times [0,1]$ such that $h = d\tau^2 + dt^2$,
we need only look at a map $w$ defined on the half cylinder
 $[0, \infty)\times S^1 \to Q$ without loss of generality.

We put the following hypotheses in our asymptotic study of the finite
energy contact instanton maps $w$ as in \cite{oh-wang:CR-map1}:
\begin{hypo}\label{hypo:basic-intro}
Let $h$ be the metric on $\dot \Sigma$ given above.
Assume $w:\dot\Sigma\to M$ satisfies the contact instanton equations \eqref{eq:contacton-Legendrian-bdy-intro},
and
\begin{enumerate}
\item $E^\pi_{(\lambda,J;\dot\Sigma,h)}(w)<\infty$ (finite $\pi$-energy);
\item $\|d w\|_{C^0(\dot\Sigma)} <\infty$.
\item $\Image w \subset \mathsf K\subset M$ for some compact subset $\mathsf K$.
\end{enumerate}
\end{hypo}

The above finite $\pi$-energy and $C^0$ bound hypotheses imply
\be\label{eq:hypo-basic-pt}
\int_{[0, \infty)\times S^1}|d^\pi w|^2 \, d\tau \, dt <\infty, \quad \|d w\|_{C^0([0, \infty)\times S^1)}<\infty
\ee
in these coordinates.

\begin{defn}[Asymptotic action and charge]
Assume that the limit of $w(\tau, \dot)$ as $\tau \to \infty$ exists.
Then we can associate two
natural asymptotic invariants at each puncture defined as
\beastar
T & := & \lim_{r \to \infty} \int_{\{r\}\times S^1} (w|_{\{0\}\times S^1 })^*\lambda \label{eq:TQ-T}\\
Q & : = & \lim_{r \to \infty} \int_{\{r\}\times S^1}((w|_{\{0\}\times [0,1] })^*\lambda\circ j).\label{eq:TQ-Q}
\eeastar
(Here we only look at positive punctures. The case of negative punctures is similar.)
We call $T$ the \emph{asymptotic contact action}
and $Q$ the \emph{asymptotic contact charge} of the contact instanton $w$ at the given puncture.
\end{defn}

The proof of the following subsequence convergence result largely
is proved in
\cite[Theorem 6.4]{oh-wang:CR-map1}.

\begin{thm}[Subsequence Convergence,
Theorem 6.4 \cite{oh-wang:CR-map1}]
\label{thm:subsequence}
Let $w:[0, \infty)\times S^1 \to M$ satisfy the contact instanton equations \eqref{eq:contacton-Legendrian-bdy-intro}
and Hypothesis \eqref{eq:hypo-basic-pt}.
Then for any sequence $s_k\to \infty$, there exists a subsequence, still denoted by $s_k$, and a
massless instanton $w_\infty(\tau,t)$ (i.e., $E^\pi(w_\infty) = 0$)
on the cylinder $\R \times [0,1]$  that satisfies the following:
\begin{enumerate}
\item $\delbar^\pi w_\infty = 0$ and
$$
\lim_{k\to \infty}w(s_k + \tau, t) = w_\infty(\tau,t)
$$
in the $C^l(K \times [0,1], M)$ sense for any $l$, where $K\subset [0,\infty)$ is an arbitrary compact set.
\item $w_\infty^*\lambda = -Q\, d\tau + T\, dt$
\end{enumerate}
\end{thm}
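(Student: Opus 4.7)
The plan is to follow the standard translation-compactness scheme, extracting a smooth limit on the full cylinder $\R\times S^1$ and then using the contact instanton equations together with the asymptotic definitions of $T$ and $Q$ to identify the limit form.

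First I would set $w_k(\tau,t):= w(s_k+\tau,t)$, defined on the translated half-cylinders $[-s_k,\infty)\times S^1$. Because the contact instanton equations are invariant under $\tau$-translation, each $w_k$ is again a contact instanton, and by Hypothesis \ref{hypo:basic-intro} their images sit in the fixed compact set $\mathsf K\subset M$ with $\|dw_k\|_{C^0}\le \|dw\|_{C^0(\dot\Sigma)}<\infty$. Applying the interior higher regularity estimates from Theorem \ref{thm:local-regularity} (or equivalently, the closed-string version of the alternating boot-strap in \cite{oh-wang:CR-map1}) on a nested exhaustion by cylinders $[-N,N]\times S^1$, I obtain uniform $C^{k,\delta}$ bounds on $dw_k$ on each such compact subcylinder. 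These bounds are independent of $k$ because they depend only on the $L^4$ (hence $C^0$) size of $dw_k$ and on the data $(\lambda,J)$.

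Next, an Arzel\`a-Ascoli diagonal extraction over this exhaustion yields a subsequence (still written $s_k$) and a smooth map $w_\infty:\R\times S^1\to M$ with $w_k\to w_\infty$ in $C^l_{\text{loc}}$ for every $l$. Passing the contact instanton equations to the limit gives $\delbar^\pi w_\infty=0$ and $d(w_\infty^*\lambda\circ j)=0$. For the massless property, I use absolute continuity of the finite integral $\int_{\dot\Sigma}|d^\pi w|^2\,dA<\infty$: for any compact $K\subset \R\times S^1$,
\[
E^\pi_K(w_\infty)\;=\;\lim_{k\to\infty}\int_K|d^\pi w_k|^2\,dA\;=\;\lim_{k\to\infty}\int_{s_k+K}|d^\pi w|^2\,dA\;=\;0,
\]
so $d^\pi w_\infty\equiv 0$ everywhere.

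It remains to identify $w_\infty^*\lambda$. Since $d^\pi w_\infty=0$ we have $dw_\infty = (w_\infty^*\lambda)\otimes R_\lambda$, and because $R_\lambda\rfloor d\lambda=0$ this forces $d(w_\infty^*\lambda)=w_\infty^*d\lambda=0$. Combined with $d(w_\infty^*\lambda\circ j)=0$, the one-form $w_\infty^*\lambda$ is both closed and coclosed (with respect to the flat metric on $\R\times S^1$), hence harmonic. Its coefficients are uniformly bounded since $\|dw_\infty\|_{C^0}\le \|dw\|_{C^0}<\infty$, and a bounded harmonic one-form on $\R\times S^1$ must have constant coefficients; thus $w_\infty^*\lambda=a\,d\tau+b\,dt$ for some $a,b\in\R$. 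The constants are then pinned down by the asymptotic invariants: for any $\tau_0$, smooth convergence gives
\[
b\;=\;\int_{\{\tau_0\}\times S^1}w_\infty^*\lambda\;=\;\lim_{k\to\infty}\int_{\{s_k+\tau_0\}\times S^1}w^*\lambda\;=\;T,
\]
and using $d\tau\circ j=-dt$, $dt\circ j=d\tau$ one finds $-a=\int_{\{\tau_0\}\times S^1} w_\infty^*\lambda\circ j=Q$, so $w_\infty^*\lambda=-Q\,d\tau+T\,dt$ as claimed.

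The principal obstacle is ensuring the $C^{k,\delta}_{\text{loc}}$ bounds on $dw_k$ are genuinely uniform in $k$, so that Arzel\`a-Ascoli applies on an exhaustion producing a limit defined on the full line $\R\times S^1$ rather than only on some finite cylinder. This uniformity rests crucially on Hypothesis \ref{hypo:basic-intro}(2)-(3) (uniform $C^0$-bound on $dw$ and compact image), which pass verbatim to translates, together with the fact that the constants in Theorem \ref{thm:local-regularity} depend only on $(\lambda,J)$ and the geometry of the subcylinders, not on the map. A secondary subtlety is the Liouville-type step identifying $w_\infty^*\lambda$ as having constant coefficients; this is clean on $\R\times S^1$ because boundedness together with harmonicity leaves no room for $\tau$-linear or higher modes on the cylinder.
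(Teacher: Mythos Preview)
The paper does not give its own proof here; this is a survey that simply cites Theorem~6.4 of \cite{oh-wang:CR-map1}. Your proposal follows exactly the standard translation--compactness scheme used there and is correct: uniform $C^{k,\delta}_{\mathrm{loc}}$ bounds on the translates $w_k$ from the a priori estimates (Proposition~\ref{prop:coercive-L2} and the closed-string higher regularity analogous to Theorem~\ref{thm:local-regularity}), Arzel\`a--Ascoli diagonal extraction, vanishing of $E^\pi(w_\infty)$ by tail-integrability of the finite $\pi$-energy, and the Liouville-type identification of the bounded harmonic one-form $w_\infty^*\lambda$ on $\R\times S^1$ via the asymptotic invariants $T$ and $Q$ are all correctly assembled.
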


In general $Q  = 0$ does not necessarily hold for the closed string case.
When this $Q \neq 0$ combined with $T = 0$, we say $w$ has
the bad limit of \emph{appearance of spiraling instantons along the Reeb core}. It is also proven in \cite{oh:contacton} that If $Q = 0 = T$,
then the puncture is removable.

When $Q = 0$, which is always the case when contact instanton
is exact such as those arising from the symplectization case,
we have the following asymptotic convergence result.

\begin{cor} Assume that $\lambda$ is nondegenerate.
Suppose that $w_\tau$ converges as $|\tau| \to \infty$  and its massless
limit instanton has  $Q = 0$ but $T \neq 0$, then the $w_\tau$
converges to a Reeb orbit of period $|T|$ exponentially fast.
\end{cor}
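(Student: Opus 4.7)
The plan is to combine the subsequence convergence result (Theorem \ref{thm:subsequence}) with the nondegeneracy of $\lambda$ to pin down the asymptotic limit as a single Reeb orbit of period $|T|$, and then to exploit the fundamental equation \eqref{eq:fundamental-isothermal} in cylindrical coordinates together with the spectral theory of the asymptotic operator $A^\pi_{(\lambda,J,\nabla)}$ to upgrade the assumed plain convergence of $w_\tau$ to an exponential one.

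First I would identify the limit. Theorem \ref{thm:subsequence} produces, for any sequence $\tau_k\to\infty$, a subsequential massless limit $w_\infty$ with $w_\infty^*\lambda=-Q\,d\tau+T\,dt$. The hypothesis $Q=0$ together with the masslessness $d^\pi w_\infty=0$ forces $\partial_\tau w_\infty\equiv 0$ and $\partial_t w_\infty=T\,R_\lambda(w_\infty)$, so $w_\infty(\tau,t)=\gamma(Tt)$ for some closed Reeb orbit $\gamma$ of period $|T|$. Since $\lambda$ is nondegenerate, the set of such orbits is discrete in the loop space, and the hypothesis that $w_\tau$ converges as $|\tau|\to\infty$ forces all subsequential limits to coincide. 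This yields a unique asymptotic Reeb orbit $\gamma$ together with $C^0$-convergence $w(\tau,\cdot)\to\gamma(T\cdot)$.

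For the exponential rate, I would work near the puncture in cylindrical coordinates $(\tau,t)$ and measure the deviation of $w$ from $\gamma(T\cdot)$ through the two tensorial variables
$$
\zeta(\tau,t):=\pi\,\tfrac{\partial w}{\partial\tau}(\tau,t)\in w^*\xi,\qquad
\alpha(\tau,t):=\lambda\!\left(\tfrac{\partial w}{\partial t}\right)-T+\sqrt{-1}\,\lambda\!\left(\tfrac{\partial w}{\partial\tau}\right).
$$
The fundamental equation \eqref{eq:fundamental-isothermal} and Proposition \ref{prop:alpha} give the coupled elliptic system
$$
\nabla^\pi_\tau \zeta + J\nabla^\pi_t \zeta + S_{w^*\lambda}\zeta = 0,\qquad \delbar\alpha=\tfrac12|\zeta|^2,
$$
where the zero-order operator $S_{w^*\lambda}$ converges as $\tau\to\infty$ to the coefficient of the asymptotic operator $A^\pi_{(\lambda,J,\nabla)}$ along $\gamma$. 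By nondegeneracy of $(\gamma,T)$ the operator $A^\pi$ has trivial kernel and hence a positive spectral gap around $0$.

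The exponential decay of $\zeta$ then follows by pairing the $\zeta$-equation with $\zeta$ and integrating over $\{\tau\}\times S^1$: the quantity $\phi(\tau):=\|\zeta(\tau,\cdot)\|_{L^2(S^1)}^2$ satisfies a differential inequality of the form $\phi''(\tau)\ge 4c^2\phi(\tau)$ as soon as $\tau$ is large enough for the $\tau$-dependent errors to be dominated by the spectral gap $c>0$ of $A^\pi$, forcing $\phi(\tau)\le Ce^{-2c\tau}$. Feeding this into $\delbar\alpha=\tfrac12|\zeta|^2$ and using standard Cauchy--Riemann estimates propagates the exponential decay to $|\alpha|$, hence to $|w^*\lambda-T\,dt|$. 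Integrating $dw=d^\pi w+w^*\lambda\,R_\lambda$ then yields exponential $C^0$-convergence of $w(\tau,\cdot)$ to $\gamma(T\cdot)$, and the higher $C^{k,\delta}$ exponential rates follow by the alternating boot-strap argument of Section \ref{sec:Ckdelta-estimates}. The main obstacle will be to make the differential inequality for $\phi(\tau)$ rigorous while managing the coupling with $\alpha$ so that non-decaying contributions from $\alpha$ do not pollute the decay of $\zeta$; this is precisely where the coordinate-free form of $A^\pi_{(\lambda,J,\nabla)}$ afforded by the contact triad connection, together with the identity $\nabla^{\mathrm{LC}}_{R_\lambda}J=0$ of Proposition \ref{prop:blair}, streamlines the argument compared to the coordinate-dependent approach used in the existing pseudoholomorphic-curve literature.
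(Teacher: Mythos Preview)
Your overall strategy matches the paper's: first pin down a unique Reeb-orbit limit via subsequence convergence plus discreteness from nondegeneracy, then establish exponential decay of the $\xi$-component $\zeta=\pi\,\partial_\tau w$ using the fundamental equation \eqref{eq:fundamental-isothermal} and the spectral gap of the asymptotic operator, feed this into $\delbar\alpha=\tfrac12|\zeta|^2$ to get decay of the Reeb component, and finally bootstrap via Section~\ref{sec:Ckdelta-estimates}. The one genuine methodological difference is the mechanism you propose for the decay of $\zeta$: you invoke a convexity-type differential inequality $\phi''(\tau)\ge 4c^2\phi(\tau)$ for $\phi=\|\zeta(\tau,\cdot)\|_{L^2}^2$, whereas the paper (following \cite{oh-wang:CR-map2}) uses the abstract three-interval framework of Appendix~\ref{sec:three-interval} (Theorem~\ref{thm:three-interval}, based on Lemma~\ref{lem:three-interval}). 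Both are standard and valid. Your route is closer to the Robbin--Salamon style argument and is conceptually direct once one knows $A^\tau$ is asymptotically self-adjoint with a gap; its cost is that the $\tau$-dependent error terms coming from $\nabla_\tau A^\tau$ and the non-self-adjoint part of $A^\tau$ must be explicitly absorbed, which you correctly flag as the main obstacle. The three-interval method packages exactly this absorption into the hypotheses of Theorem~\ref{thm:three-interval} (precompactness, asymptotic cylindricality, coerciveness), so the paper trades the explicit ODE manipulation for a verification of those abstract conditions. In particular, note that in the paper's ordering the $W^{1,2}$-exponential decay of $|\zeta|^2$ is established \emph{first} (via three-interval applied to the $\zeta$-equation), and only then is Lemma~\ref{lem:exp-decay-lemma} invoked for $\alpha$; your scheme follows the same order, so the coupling concern you raise is resolved the same way in both approaches.
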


\subsection{Open string case}

In this section, we study the asymptotic behavior of contact instantons
on bordered Riemann surface $(\dot\Sigma, j)$ associated with a metric $h$ with \emph{strip-like ends}.
To be precise, we assume there exists a compact set $K_\Sigma\subset \dot\Sigma$,
such that $\dot\Sigma-\Int(K_\Sigma)$ is a disjoint union of punctured semi-disks
 each of which is isometric to the half strip $[0, \infty)\times [0,1]$ or $(-\infty, 0]\times [0,1]$, where
the choice of positive or negative strips depends on the choice of analytic coordinates
at the punctures.

Again under the assumption that the limit $\lim_{\tau \to \infty} w(\tau, \dot)$ exists,
we can define the asymptotic action $T$ and charge $Q$
at each puncture in the way
as in the closed string case by replacing $S^1$ by $[0,1]$.

The following subsequence convergence and charge vanishing result
is proved in \cite{oh:contacton-Legendrian-bdy}, \cite{oh-yso:index}.
 One may say that \emph{the presence of Legendrian barrier
prevents the instanton from spiraling.}

\begin{thm}[Subsequence convergence and Charge vanishing]\label{thm:subsequence-open}
Let $w:[0, \infty)\times [0,1]\to M$ satisfy the contact instanton equations \eqref{eq:contacton-Legendrian-bdy-intro}
and converges $|\tau| \to \infty$.
Then for any sequence $s_k\to \infty$, there exists a subsequence, still denoted by $s_k$, and a
massless instanton $w_\infty(\tau,t)$ (i.e., $E^\pi(w_\infty) = 0$)
on the cylinder $\R \times [0,1]$  that satisfies the following:
\begin{enumerate}
\item $\delbar^\pi w_\infty = 0$ and
$$
\lim_{k\to \infty}w(s_k + \tau, t) = w_\infty(\tau,t)
$$
in the $C^l(K \times [0,1], M)$ sense for any $l$, where $K\subset [0,\infty)$ is an arbitrary compact set.
\item $w_\infty$ has vanishing asymptotic charge $Q = 0$ and satisfies $w_\infty(\tau,t)= \gamma(T\, t)$
for some Reeb chord $\gamma$ is some Reeb chord joining $R_0$ and $R_1$ with period $T$ at each puncture.
\item $T \neq 0$ at each  puncture with the associated pair $(R,R')$ of boundary condition with $R \cap R'
= \emptyset$.
\end{enumerate}
\end{thm}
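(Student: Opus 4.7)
My plan is to reduce this to the closed-string pattern of Theorem~\ref{thm:subsequence} and then use the Legendrian boundary condition to kill the asymptotic charge. First, I would set $w_k(\tau,t):=w(s_k+\tau,t)$ and apply the boundary $W^{2,2}$-estimate of Theorem~\ref{thm:local-W12} together with the higher $C^{k,\delta}$ alternating boot-strap of Theorem~\ref{thm:local-regularity} to the translates $w_k$. Since the $\pi$-energy and the sup-norm of $dw$ are uniformly controlled on every compact $K\subset\R\times[0,1]$ by Hypothesis~\ref{hypo:basic-intro} and finiteness of $E^\pi(w)$, this gives uniform $C^\ell$-bounds on $K$ for every $\ell$. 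Arzel\`a--Ascoli together with a diagonal extraction then produces a subsequence, still denoted $w_k$, converging in $C^\ell_{\mathrm{loc}}(\R\times[0,1],M)$ to a smooth limit $w_\infty$ satisfying \eqref{eq:contacton-Legendrian-bdy-intro}.

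Next, I would observe that finite $\pi$-energy forces the limit to be massless: for every compact $K$,
\[
\int_K |d^\pi w_\infty|^2\,d\tau\,dt \;=\; \lim_{k\to\infty}\int_{K+s_k}|d^\pi w|^2\,d\tau\,dt \;=\; 0,
\]
as tails of a convergent integral. Hence $d^\pi w_\infty\equiv 0$ and $dw_\infty = w_\infty^*\lambda\otimes R_\lambda$. The contact instanton equations together with masslessness imply that $w_\infty^*\lambda$ is both closed and co-closed on the strip, while $C^\ell_{\mathrm{loc}}$-convergence of the cross-section integrals defining $T$ and $Q$ preserves these constants in the limit; the computation of the closed-string Theorem~\ref{thm:subsequence}, carried out verbatim since it never invoked periodicity of $t$, then yields $w_\infty^*\lambda = -Q\,d\tau + T\,dt$.

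The decisive new input is the Legendrian boundary condition. Since $w_\infty(\tau,0)\in R_0$, the vector $\del_\tau w_\infty(\tau,0)$ lies in $TR_0$ and is therefore $\lambda$-isotropic, so
\[
-Q \;=\; w_\infty^*\lambda(\del_\tau)\big|_{t=0} \;=\; \lambda\!\left(\tfrac{\del w_\infty}{\del\tau}(\tau,0)\right) \;=\; 0.
\]
Thus $w_\infty^*\lambda = T\,dt$, and combining this with $dw_\infty = w_\infty^*\lambda\otimes R_\lambda$ reads $\del_\tau w_\infty = 0$ and $\del_t w_\infty = T\,R_\lambda(w_\infty)$. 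Therefore $w_\infty(\tau,t) = \gamma(Tt)$ for a Reeb trajectory $\gamma$ with $\gamma(0)\in R_0$, $\gamma(1)\in R_1$, i.e.\ a Reeb chord of period $|T|$. Finally, if $R_0\cap R_1=\emptyset$, then $T=0$ would make $w_\infty$ a constant map whose value lies simultaneously in $R_0$ and $R_1$, a contradiction; hence $T\neq 0$.

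The genuine obstacle is the first step. Interior higher-order Schauder estimates are direct consequences of the tensorial Weitzenb\"ock machinery, but the uniformity of the boundary $C^{k,\delta}$-bounds under translation relies on the free-boundary property $\del w/\del\nu\perp TR_i$ combined with the alternating boot-strap between $\zeta=\pi\del_x w$ and the complex-valued auxiliary $\alpha$ that solves the Riemann--Hilbert problem \eqref{eq:equation-for-alpha}. Once these estimates are in place with constants depending only on the locally uniformly bounded $L^4$-norm of $dw$, the remaining steps reduce to pointwise algebra with the Legendrian condition and the structure of massless instantons.
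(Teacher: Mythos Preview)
Your overall strategy is correct and matches the paper's approach: extract a $C^\infty_{\mathrm{loc}}$-convergent subsequence via the boundary a priori estimates, show the limit is massless by the tail argument, and then use the Legendrian boundary condition to kill the charge. The paper itself only sketches this here, referring to \cite{oh:contacton-Legendrian-bdy,oh-yso:index} for details, but the mechanism you describe (``the Legendrian barrier prevents spiraling'') is exactly the one invoked.

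There is, however, one genuine logical slip in your ordering. You claim that the closed-string computation of Theorem~\ref{thm:subsequence} ``never invoked periodicity of $t$'' and therefore yields $w_\infty^*\lambda = -Q\,d\tau + T\,dt$ on the strip before you bring in the boundary condition. This is not right: on $\R\times S^1$ a bounded harmonic (equivalently, closed and coclosed) $1$-form has constant coefficients precisely \emph{because} of periodicity, whereas on the strip $\R\times[0,1]$ there are plenty of bounded holomorphic functions (e.g.\ $e^{i(\tau+it)}$), so boundedness plus harmonicity alone does not force constant coefficients. What actually makes the argument work on the strip is that the Legendrian boundary condition gives $\lambda(\del_\tau w_\infty)|_{t=0}=\lambda(\del_\tau w_\infty)|_{t=1}=0$; this is zero Dirichlet data for the bounded harmonic function $f:=\lambda(\del_\tau w_\infty)$, which then forces $f\equiv 0$ on $\R\times[0,1]$ by the maximum principle (or Phragm\'en--Lindel\"of). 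Only after that do the Cauchy--Riemann relations between the coefficients yield that $g:=\lambda(\del_t w_\infty)$ is constant, equal to $T$. In other words, the boundary condition is not a post-hoc device to read off $Q=0$ from an already-constant coefficient; it is the input that replaces periodicity in the strip setting and should be invoked \emph{inside} the harmonicity step. With this reordering your argument is complete, and your derivations of $w_\infty(\tau,t)=\gamma(Tt)$ and of $T\neq 0$ when $R_0\cap R_1=\emptyset$ are correct as written.
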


\begin{rem}\label{rem:big-difference} This charge vanishing $Q = 0$
of the massless instanton is a huge advantage over
the closed string case studied in \cite{oh-wang:CR-map1,oh-wang:CR-map2,oh:contacton}.
We refer to \cite[Section 8.1]{oh:contacton} for the classification of massless instantons on the cylinder $\R \times S^1$ for which the kind of
massless instantons with $Q \neq 0$ but $T = 0$ appear
 or the closed string case. The \emph{appearance of spiraling
 instantons along the Reeb core} is the only obstacle towards the Fredholm theory and the compactification of
the moduli space of finite energy contact instantons for the general closed string context.

The above theorem automatically removes this obstacle for the open string case of Legendrian boundary condition.
One could say that the presence of the Legendrian obstacle
blocks this spiraling phenomenon of the contact instantons.
\end{rem}

From the previous theorem, we  immediately get the following corollary as in \cite[Section 8]{oh-wang:CR-map1}.

\begin{cor}[Corollary 5.11 \cite{oh-yso:index}]\label{cor:tangent-convergence}
Assume that the pair $(\lambda, \vec R)$ is nondegenerate
in the sense of Definition \ref{defn:nondegeneracy-links}.
Let $w: \dot \Sigma \to M$ satisfy the contact instanton equation \eqref{eq:contacton-Legendrian-bdy-intro} and Hypothesis \eqref{eq:hypo-basic-pt}.
Then on each strip-like end with strip-like coordinates $(\tau,t) \in [0,\infty) \times [0,1]$ near a puncture
\beastar
&&\lim_{s\to \infty}\left|\pi \frac{\del w}{\del\tau}(s+\tau, t)\right|=0, \quad
\lim_{s\to \infty}\left|\pi \frac{\del w}{\del t}(s+\tau, t)\right|=0\\
&&\lim_{s\to \infty}\lambda(\frac{\del w}{\del\tau})(s+\tau, t)=0, \quad
\lim_{s\to \infty}\lambda(\frac{\del w}{\del t})(s+\tau, t)=T
\eeastar
and
$$
\lim_{s\to \infty}|\nabla^l dw(s+\tau, t)|=0 \quad \text{for any}\quad l\geq 1.
$$
All the limits are uniform for $(\tau, t)$ in $K\times [0,1]$ with compact $K\subset \R$.
\end{cor}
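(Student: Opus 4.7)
\emph{Proof plan.} The corollary asks to upgrade the subsequence convergence of Theorem \ref{thm:subsequence-open} to a full uniform limit as $s\to\infty$. My plan splits into two parts: (i) read off the stated limits from the explicit form of the subsequence limit $w_\infty(\tau,t)=\gamma(Tt)$, and (ii) use nondegeneracy to show the limit chord $\gamma$ is independent of the chosen subsequence, hence that the full limit exists.

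For (i), I would fix any sequence $s_k\to\infty$ and invoke Theorem \ref{thm:subsequence-open} to extract a $C^\ell_{\mathrm{loc}}$-convergent subsequence with limit $w_\infty(\tau,t)=\gamma(Tt)$. Since $w_\infty$ is $\tau$-independent, $\del_\tau w_\infty\equiv 0$, which gives vanishing of $\pi\,\del_\tau w$ and of $\lambda(\del_\tau w)$ in the limit. The $t$-derivative is $\del_t w_\infty(\tau,t)=T\,R_\lambda(\gamma(Tt))$, which lies in $\R\{R_\lambda\}$, a line transverse to $\xi$; this immediately gives $\pi\,\del_t w_\infty\equiv 0$ and $\lambda(\del_t w_\infty)\equiv T$. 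For the higher covariant derivatives, property (3) of Theorem \ref{thm:connection} ensures $\nabla_{R_\lambda}R_\lambda=0$, so the $R_\lambda$-trajectory $t\mapsto\gamma(Tt)$ is a geodesic of the contact triad connection; iterating covariant differentiation in the $t$-direction along this trajectory and using $\tau$-independence in the $\tau$-direction, I obtain $\nabla^\ell dw_\infty\equiv 0$ for every $\ell\geq 1$.

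For (ii), nondegeneracy of $(\lambda,\vec R)$ forces the set $\mathfrak{R}_T$ of action-$T$ Reeb chords between the boundary Legendrians $R_i,R_{i+1}$ associated with the given puncture to be discrete in $C^0([0,1],M)$. I would pick $\epsilon_0>0$ less than half the minimum pairwise $C^0$-distance between distinct elements of $\mathfrak{R}_T$ and set
\begin{equation*}
d(s):=\operatorname{dist}_{C^0}\bigl(w(s,\cdot),\,\mathfrak{R}_T\bigr).
\end{equation*}
If $d(s)$ failed to tend to $0$, there would exist $s_k\to\infty$ with $d(s_k)\geq\epsilon_0$, contradicting subsequence convergence along $s_k$; hence $d(s)\to 0$. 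Once $s$ is large enough that $d(s)<\epsilon_0$, the uniform $C^1$-bound in Hypothesis \ref{hypo:basic-intro} makes $s\mapsto w(s,\cdot)$ Lipschitz in the $C^0$-topology, and disjointness of the $\epsilon_0$-neighborhoods of distinct chords combined with connectedness of the tail $[s_0,\infty)$ forces the path to remain in the $\epsilon_0$-neighborhood of a single $\gamma\in\mathfrak{R}_T$. Combining (i) with this uniqueness, $w(s+\tau,t)\to\gamma(Tt)$ in $C^\ell_{\mathrm{loc}}$ as $s\to\infty$, and continuity of $\pi$, $\lambda$, and the covariant derivative operators yields all claimed limits uniformly on compact $K\times[0,1]$.

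The main obstacle will be step (ii): the derivative identities in step (i) are direct calculations with $w_\infty=\gamma(T\,\cdot)$ once one knows $\nabla_{R_\lambda}R_\lambda=0$, but promoting subsequence convergence to a full limit essentially requires nondegeneracy to guarantee discreteness of $\mathfrak{R}_T$, together with the connectedness argument that leans on the uniform $C^1$-bound of Hypothesis \ref{hypo:basic-intro}. Without nondegeneracy one would at best expect convergence to a Morse--Bott family of chords rather than to a single limit, and the statement would have to be modified accordingly.
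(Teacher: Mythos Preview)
Your argument is correct, but step (ii) is doing more work than the corollary actually requires, and this is where your approach diverges from the paper's.

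The paper derives the corollary ``immediately'' from Theorem \ref{thm:subsequence-open}, with no separate uniqueness-of-chord argument. The reason this works is that every quantity in the statement has a limiting value that is \emph{independent of which Reeb chord $\gamma$ appears as a subsequence limit}: for any massless limit $w_\infty(\tau,t)=\gamma(Tt)$ one has $\partial_\tau w_\infty=0$, $\partial_t w_\infty=T\,R_\lambda$, and (using $\nabla_{R_\lambda}R_\lambda=0$ as you observed) $\nabla^\ell dw_\infty=0$. So along \emph{every} subsequence the quantities $\bigl|\pi\,\partial_\tau w\bigr|$, $\bigl|\pi\,\partial_t w\bigr|$, $\lambda(\partial_\tau w)$, $\lambda(\partial_t w)$, $|\nabla^\ell dw|$ have further subsequences converging to $0,0,0,T,0$ respectively. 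The standard ``every sequence has a subsequence with the same limit $L$ $\Rightarrow$ full convergence to $L$'' lemma then gives the corollary directly, with uniform convergence on $K\times[0,1]$ coming from the $C^\ell_{\mathrm{loc}}$ nature of the subsequence convergence.

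What your step (ii) buys is the genuinely stronger statement that $w(s,\cdot)\to\gamma(T\,\cdot)$ in $C^0$ for a \emph{single} chord $\gamma$; this is exactly the content of Proposition \ref{prop:czero-convergence} later in the paper, and there nondegeneracy is indeed essential. For the present corollary, however, the nondegeneracy hypothesis is a standing assumption rather than something used in the proof, and your remark that ``without nondegeneracy \ldots\ the statement would have to be modified'' does not apply to the derivative limits themselves. In short: your route is sound and proves more, but the paper's shortcut---exploiting that the target values $0$ and $T$ do not see $\gamma$---is why no discreteness or connectedness argument is needed here.
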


\section{Off-shell energy of contact instantons}
\label{sec:lambda-energy}

Now following
\cite{oh:contacton}, \cite{oh:entanglement1}, we
explain the definition of off-shell energy that governs
the global convergence behavior of finite energy contact
instantons.

Assume that $\lambda$ for the closed string case (or $(\lambda, \vec R)$
for the open string case)  is nondegenerate.
We also assume that the asymptotic
charges vanish so that in cylindrical (or in strip-like) coordinates
$w^\lambda \to T\, dt$ and $w^*\lambda \circ J \to T\, d\tau$
exponentially fast.

We start with the $\pi$-energy
\begin{defn}[The $\pi$-energy of  contact instanton]
Let $u:\R \times [0,1] \rightarrow J^1B$ be any smooth map. We define
$$
E^\pi_{J,H} (u) := \frac{1}{2} \int  |d^\pi u |^2_J.
$$
\end{defn}

Next we borrow the presentation of $\lambda$-energy
from \cite[Section 5]{oh:contacton}, \cite[Section 11]{oh:entanglement1}.
We follow the procedure exercised in \cite{oh:contacton}
for the closed string case. We introduce the following class of test functions.
Especially the automatic charge vanishing in our current circumstance also enables us
to define the vertical part of energy, called the $\lambda$-energy whose definition is in order. We will focus on the more nontrivial open-string case
below.

\begin{defn}\label{defn:CC} We define
\be
\CC = \left\{\varphi: \R \to \R_{\geq 0} \, \Big| \, \supp \varphi \, \text{is compact}, \, \int_\R \varphi = 1\right\}
\ee
\end{defn}

\begin{defn}[Contact instanton potential] We call the above function $f$
the \emph{contact instanton potential} of the contact instanton charge form $w^*\lambda \circ j = df$ on $\dot \Sigma$.
\end{defn}

Such a function exists modulo addition by a constant.
 \emph{Using the assumption of  vanishing of
asymptotic charge}, we can explicitly write the potential as
\be\label{eq:f-defn}
f(z) = \int_{+\infty}^z w^*\lambda \circ j
\ee
where the integral is over any path from any puncture
denote by $\infty$ to $z$ along a path $\dot \Sigma$.
By the closedness of $w^*\lambda \circ j$ on $\dot \Sigma$,
the function $f$ is well-defined on $\dot \Sigma$ and satisfies
$w^*\lambda \circ j = df$ \emph{as long as
$H_1(\dot \Sigma, \Z) = 0$.}
(Compare this with  \cite[Formula above (5.5)]{oh:contacton}
where the general case with nontrivial charge is considered.)
By the vanishing theorem, Theorem \ref{thm:subsequence-open}
of the asymptotic charge
of bordered contact instantons,
the local version of potential function
 is always well-defined locally near punctures  for the bordered contact instantons

We denote by $\psi$ the function determined by
\be\label{eq:psi}
\psi' = \varphi, \quad \psi(-\infty) = 0, \, \psi(\infty) = 1.
\ee
\begin{defn}\label{defn:CC-energy} Let $w$ satisfy $d(w^*\lambda \circ j) = 0$. Then we define
\beastar
E_{\CC}(j,w;p) & = & \sup_{\varphi \in \CC} \int_{D_\delta(p) \setminus \{p\}} df\circ j \wedge d(\psi(f)) \\
& = &\sup_{\varphi \in \CC} \int_{D_\delta(p) \setminus \{p\}}  (- w^*\lambda ) \wedge d(\psi(f)).
\eeastar
\end{defn}

We note that
$$
df \circ j \wedge d(\psi(f)) = \psi'(f) df\circ j \wedge df = \varphi(f) df\circ j  \wedge df \geq 0
$$
since
$$
df\circ j  \wedge df = |df|^2\, d\tau \wedge dt.
$$
Therefore we can rewrite $E_{\CC}(j,w;p)$ into
$$
E_{\CC}(j,w;p) = \sup_{\varphi \in \CC} \int_{D_\delta(p) \setminus \{p\}} \varphi(f) df \circ j \wedge df.
$$
The following proposition shows that  the definition of $E_{\CC}(j,w;p)$ does not
depend on the constant shift in the choice of $f$.

\begin{prop}[Proposition 11.6 \cite{oh:entanglement1}]\label{prop:a-independent}
For a given smooth map $w$ satisfying $d(w^*\lambda \circ j) = 0$,
we have $E_{\CC;f}(w) = E_{\CC,g}(w)$ for any pair $(f,g)$ with
$$
df = w^*\lambda\circ j = dg
$$
on $D^2_\delta(p) \setminus \{p\}$.
\end{prop}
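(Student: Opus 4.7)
The plan is to exploit the elementary fact that since $df = w^*\lambda\circ j = dg$ on the connected open set $D^2_\delta(p)\setminus\{p\}$, the difference $f-g$ is a constant, say $c \in \R$. The goal is then to show that replacing $f$ by $g = f - c$ can be absorbed by a translation in the test-function class $\CC$ of Definition \ref{defn:CC}.

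First, I would introduce the shift operator $T_c \colon \CC \to \CC$ defined by $(T_c\varphi)(s) := \varphi(s+c)$. Since compactness of support, non-negativity, and the normalization $\int_\R \varphi = 1$ are all invariant under translation of the argument on $\R$, $T_c$ is a bijection of $\CC$ with inverse $T_{-c}$. Moreover, if $\psi$ is the primitive of $\varphi$ normalized as in \eqref{eq:psi} by $\psi(-\infty)=0$ and $\psi(+\infty)=1$, then the corresponding primitive of $T_c\varphi$ with the same normalization is $\widetilde{\psi}(s) = \psi(s+c)$, and in particular $\widetilde{\psi}(g) = \psi(g+c) = \psi(f)$ pointwise on $D^2_\delta(p)\setminus\{p\}$.

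Next, for each $\varphi \in \CC$ I would rewrite the integrand computed from $f$ in terms of $g$, using $df = dg$ and $f = g + c$:
\begin{equation*}
df\circ j \wedge d(\psi(f)) \;=\; \varphi(f)\, df\circ j \wedge df \;=\; (T_c\varphi)(g)\, dg\circ j \wedge dg \;=\; dg\circ j \wedge d(\widetilde{\psi}(g)).
\end{equation*}
Integrating over $D^2_\delta(p)\setminus\{p\}$, one sees that each test function $\varphi$ contributing to the supremum defining $E_{\CC;f}(w)$ corresponds, via $T_c$, to the test function $T_c\varphi \in \CC$ contributing exactly the same value to $E_{\CC;g}(w)$. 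Since $T_c$ is a bijection of $\CC$ (with inverse $T_{-c}$ producing the reverse correspondence), the two suprema coincide, yielding $E_{\CC;f}(w) = E_{\CC;g}(w)$.

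The argument is purely formal: no analytic estimate is needed, and the finiteness or integrability questions are already built into the finite-energy assumption and the compact support of $\varphi$. The only point requiring mild verification is that $D^2_\delta(p)\setminus\{p\}$ is connected so that $f-g$ is a single constant rather than merely locally constant; for a punctured disk this is immediate. Consequently I do not anticipate any genuine obstacle to carrying out this plan.
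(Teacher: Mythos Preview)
Your proposal is correct. The paper does not include its own proof of this proposition; it merely cites \cite[Proposition 11.6]{oh:entanglement1}. Your argument---observing that $f-g$ is a constant $c$ on the connected punctured disk and then absorbing this constant by the translation bijection $T_c$ on the test-function class $\CC$---is exactly the natural proof and requires nothing beyond what you have written.
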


This proposition enables us to introduce the following vertical energy where we write $E_\pm^\lambda: = E_{\pm \infty}^\lambda$
on $\R \times [0,1] \cong D^2 \setminus \{\pm 1\}$.

\begin{defn}[Vertical energy]
We define the \emph{vertical energy}, denoted by $E^\perp(w)$, to be the sum
$$
E^\perp(w) = E^\lambda_+(w) + E^\lambda_-(w)
$$
\end{defn}

Now we define the final form of the off-shell energy.
\begin{defn}[Total energy]\label{defn:total-enerty}
Let $w:\dot \Sigma \to Q$ be any smooth map. We define the \emph{total energy} to be
the sum
\be\label{eq:final-total-energy}
E(w) = E^\pi(w) + E^\perp(w).
\ee
\end{defn}

\begin{rem}[Uniform $C^1$ bound]
The upshot  is that the Sachs-Uhlenbeck \cite{sacks-uhlen}, Gromov \cite{gromov:invent} and Hofer
\cite{hofer:invent} style bubbling-off analysis
can be carried out with this choice of energy. (See \cite{oh:contacton,oh:entanglement1} for
the details of this bubbling-off analysis.) In particular
\emph{all moduli spaces of finite energy perturbed contact instantons we consider in the present paper will
have uniform $C^1$-bounds inside each given moduli spaces.}
\end{rem}

\section{Exponential $C^\infty$ convergence}
\label{sec:exponential-convergence}

In this section, we outline the main steps
for proving  the exponential convergence result from
\cite[Section 11]{oh-wang:CR-map2} for the closed string case
and from \cite[Section 6]{oh-yso:index}
for the open string case, respectively.

Under the nondegeneracy hypothesis from Definition
\ref{defn:nondegeneracy-links}
we can improve the subsequence convergence to the exponential $C^\infty$ convergence
under the transversality hypothesis. Suppose that the tuple $\vec R= (R_0, \ldots, R_k)$ are
transversal in the sense all pairwise Reeb chords are nondegenerate. In particular we assume
that the tuples are pairwise disjoint. The proof is divided into several steps.

\subsection{$L^2$-exponential decay of the Reeb component of $dw$}
\label{subsec:L2-expdecay}

We will prove the exponential decay of the Reeb component $w^*\lambda$.
We focus on a punctured neighborhood around a puncture $z_i \in \del \Sigma$ equipped with
strip-like coordinates $(\tau,t) \in [0,\infty) \times [0,1]$.

We again consider a complex-valued function $\alpha$ given in \eqref{eq:alpha}.
Then by the Legendrian boundary condition, we know $\alpha(\tau, i) \in \R$, i.e.
$$
\text{\rm Im}\, \alpha = 0
$$
for $i =0, \, 1$.

The following lemma was proved in the closed string case in
\cite{oh-wang:CR-map2} (in the more general context of Morse-Bott case).
For readers' convenience, we
provide some details by indicating how we adapt the argument
with the presence of boundary condition.

\begin{lem}[Lemma 6.1 \cite{oh-yso:index}; Compare with Lemma 11.20 \cite{oh-wang:CR-map2}]\label{lem:exp-decay-lemma}
Suppose the complex-valued functions $\alpha$ and $\nu$ defined on $[0, \infty)\times [0,1]$
satisfy
\beastar
\begin{cases}
\delbar \alpha = \nu, \\
\alpha(\tau, i) \in \R \,   \text{\rm for } i = 0,\, 1, \\
\|\nu\|_{L^2([0,1])}+\left\|\nabla\nu\right\|_{L^2([0,1])}\leq Ce^{-\delta \tau}
\, & \text{\rm for some constants } C, \delta>0\\
\lim_{\tau\rightarrow +\infty}\alpha(\tau,t) = T
\end{cases}
\eeastar
then $\|\alpha -T\|_{L^2(S^1)}\leq \overline{C}e^{-\delta \tau}$
for some constant $\overline{C}$.
\end{lem}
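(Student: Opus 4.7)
\smallskip

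\noindent\textbf{Plan of proof.}
The strategy is a Fourier-analytic one in the spirit of classical exponential decay estimates for $\bar\partial$-equations on half-strips. Set $\beta := \alpha - T$, which vanishes as $\tau \to \infty$ and satisfies $\bar\partial \beta = \nu$ with $\operatorname{Im}\beta = 0$ on both boundary components $\{t = 0\}$ and $\{t = 1\}$ (using $T \in \R$ together with the boundary constraint $\alpha(\tau, i) \in \R$). The first step is to reduce the two-boundary problem to a problem with one periodic boundary via Schwarz reflection: extend $\beta$ to $\tilde\beta$ on $[0,\infty) \times [-1, 1]$ by setting $\tilde\beta(\tau, -t) := \overline{\beta(\tau, t)}$ and then periodize in $t$ with period $2$. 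The reality conditions at $t = 0$ and $t = 1$ are precisely what is needed for the doubled $\tilde\beta$ to be smooth across the seams, and a direct calculation shows that $\bar\partial \tilde\beta = \tilde\nu$, where $\tilde\nu$ is reflected and periodized analogously and inherits the exponential decay bound $\|\tilde\nu(\tau,\cdot)\|_{L^2([-1,1])} + \|\nabla \tilde\nu(\tau,\cdot)\|_{L^2([-1,1])} \leq \sqrt{2}\, C e^{-\delta \tau}$.

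The second step is to expand in the Fourier basis $\{e^{i\pi k t}\}_{k \in \Z}$ of $L^2([-1,1])$, writing
\[
\tilde\beta(\tau, t) = \sum_{k \in \Z} c_k(\tau)\, e^{i\pi k t}, \qquad \tilde\nu(\tau, t) = \sum_{k \in \Z} d_k(\tau)\, e^{i\pi k t}.
\]
The reflection symmetry forces $c_k(\tau), d_k(\tau) \in \R$ for every $k$, and the $\bar\partial$ equation decouples into the family of first-order ODEs
\[
c_k'(\tau) - \pi k\, c_k(\tau) = 2 d_k(\tau), \qquad k \in \Z,
\]
supplemented by the decay constraint $c_k(\tau) \to 0$ at infinity.

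The third step is to solve each ODE by the unique Duhamel formula consistent with decay at $\tau = +\infty$: integrate from $\tau$ to $+\infty$ for $k \geq 0$, and from $0$ to $\tau$ (with the homogeneous term $c_k(0)\, e^{\pi k \tau}$, which already decays) for $k < 0$. Applying Cauchy--Schwarz to the Duhamel integrals and using Parseval converts the hypothesis $\sum_k |d_k(s)|^2 \leq C^2 e^{-2\delta s}$ into a pointwise-in-$\tau$ bound $\sum_k |c_k(\tau)|^2 \leq \bar C^2 e^{-2\delta \tau}$. Recovering $\|\beta(\tau, \cdot)\|_{L^2([0,1])}^2$ from $\sum_k |c_k(\tau)|^2$ via Parseval gives the desired estimate $\|\alpha - T\|_{L^2([0,1])} \leq \bar C\, e^{-\delta \tau}$.

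The main obstacle I expect is keeping the estimates uniform in the mode index $k$: the gain $1/\pi|k|$ produced by the Duhamel kernels $e^{\pi k(\tau - s)}$ is by itself insufficient to preserve $\ell^2$-summability of $\{c_k(\tau)\}$ at the rate $e^{-\delta \tau}$ when $\delta$ is large relative to the spectral gap. This is precisely where the hypothesis $\|\nabla \nu\|_{L^2([0,1])} \leq C e^{-\delta \tau}$ enters: it supplies the additional control $\sum_k k^2 |d_k(s)|^2 \leq C^2 e^{-2\delta s}$ which, split between the low and high modes, closes the summation. Minor additional care is required at the resonant values where $\delta$ coincides with some $\pi |k|$, but such degeneracies can be absorbed by an elementary $\varepsilon$-perturbation of the decay rate (or, equivalently, by replacing the pointwise bound with an integrated one over $[\tau, \tau+1]$ before summing).
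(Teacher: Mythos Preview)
Your reflection-and-Fourier argument is correct, but it follows a genuinely different route from the paper's. The paper does not decompose into Fourier modes at all; instead it invokes the abstract \emph{three-interval method} recorded in Appendix~\ref{sec:three-interval} (Theorem~\ref{thm:three-interval}), whose engine is the discrete convexity lemma of Mundet~i~Riera--Tian (Lemma~\ref{lem:three-interval}): if nonnegative numbers satisfy $x_k \leq \gamma(x_{k-1} + x_{k+1})$ with $\gamma < \tfrac12$, then $x_k$ decays geometrically. In the paper this is applied with $x_k$ an $L^2$-norm of $\beta = \alpha - T$ over successive unit $\tau$-intervals; the proof of the lemma then reduces to checking that the hypotheses of Theorem~\ref{thm:three-interval} hold for the operator $\nabla_\tau + i\partial_t$ with the real boundary condition. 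Your approach instead exploits that the asymptotic operator is simply $i\partial_t$ with explicitly known spectrum $\pi\Z$, so that after Schwarz reflection the mode-by-mode Duhamel integrals can be estimated and summed directly. This is more elementary and fully self-contained for the case at hand; the paper's method is a black box designed to cover a much wider class of asymptotically cylindrical operators (notably the Morse--Bott setting of \cite{oh-wang:CR-map2}), at the cost of relying on the abstract machinery of the appendix.

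One small correction to your outline: the gradient hypothesis $\|\nabla\nu\|_{L^2} \leq Ce^{-\delta\tau}$ is not actually needed to close your summation. For $|k| \geq 1$ the Duhamel kernel already produces a factor $1/(\pi|k|) \leq 1$, so that $\sum_{k}|c_k(\tau)|^2$ is controlled directly by $\int \sum_k |d_k(s)|^2\,ds$ via the weighted Cauchy--Schwarz you describe, using only the $L^2$ bound on $\nu$; the $k=0$ mode is handled by the pointwise bound $|d_0(s)| \leq \|\tilde\nu(s,\cdot)\|_{L^2}$. The extra derivative is harmless but superfluous for this particular estimate. Your remark about resonances when $\delta$ approaches an eigenvalue $\pi|k|$ is well taken; in the application (and implicitly in the paper's framework, cf.\ condition~(5) of Theorem~\ref{thm:three-interval}) the exponent $\delta$ is taken below the spectral gap $\pi$, which sidesteps the issue.
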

\begin{proof}[Outline of proof]
Notice that from previous section we have already
established the $W^{1, 2}$-exponential decay of $\nu = \frac{1}{2}|\zeta|^2$.
Once this is established, the proof of this $L^2$-exponential decay result
is proved by the standard three-interval method.
See  Theorem \ref{thm:three-interval} for a general abstract framework which is  in Appendix \ref{sec:three-interval} of the present paper.  We also refer to
the Appendix of the arXiv version of \cite{oh-wang:CR-map1} for friendly details for the  current nondegenerate case.
\end{proof}
(See Remark \ref{rem:alpha} for the difference of the definitions of $\alpha$ here and in Lemma 11.20 \cite{oh-wang:CR-map2}.)
\subsection{$C^0$ exponential convergence}

Now the $C^0$-exponential convergence of $w(\tau,\cdot)$ to some Reeb chord as $\tau \to \infty$
can be proved from the $L^2$-exponential estimates presented in previous sections by
the verbatim same argument as the proof of \cite[Proposition 11.21]{oh-wang:CR-map2}
with $S^1$ replaced by $[0,1]$ here. Therefore we omit its proof.

\begin{prop}[ Proposition 11.21 \cite{oh-wang:CR-map2}, Proposition 6.3 \cite{oh-yso:index}]\label{prop:czero-convergence}
Under Hypothesis \ref{hypo:basic-intro}, for any contact instanton $w$ satisfying the Legendrian boundary condition,
there exists a unique Reeb orbit $\gamma$ such that the curve  $z(\cdot)=\gamma(T\cdot):[0,1] \to M$  satisfies
$$
\|d(w(\tau, \cdot), z(\cdot))
\|_{C^0([0,1])}\rightarrow 0,
$$
as $\tau\rightarrow +\infty$,
where $d$ denotes the distance on $M$ defined by the triad metric.
Here $T = T_\gamma $ is action of $\gamma$ given by  $T_\gamma =\int \gamma^*\lambda$.
\end{prop}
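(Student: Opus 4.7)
\medskip

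\noindent\textbf{Proof proposal.} The plan is to upgrade the $L^2$ exponential decay results already obtained in Subsection \ref{subsec:L2-expdecay} to pointwise $C^0$ exponential decay of the quantities measuring how far $w(\tau,\cdot)$ is from being a Reeb chord, and then to integrate this pointwise bound along the $\tau$-direction to obtain uniform Cauchy convergence on $[0,1]$. Concretely, the key quantities to control are
\[
\left|\frac{\partial w}{\partial\tau}(\tau,t)\right|, \qquad \left|\lambda\left(\frac{\partial w}{\partial t}\right)(\tau,t)-T\right|, \qquad \left|\pi\frac{\partial w}{\partial t}(\tau,t)\right|,
\]
uniformly in $t\in[0,1]$, together with the Legendrian boundary condition $w(\tau,i)\in R_i$ for $i=0,1$.

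First I would invoke Lemma \ref{lem:exp-decay-lemma} to obtain $\|\alpha-T\|_{L^2([0,1])}\le \overline C e^{-\delta\tau}$, together with the $W^{1,2}$ exponential decay of $\zeta=\pi\,\partial w/\partial x$, which by the fundamental equation \eqref{eq:fundamental-isothermal} and the Riemann--Hilbert-type equation \eqref{eq:equation-for-alpha} for $\alpha$ forms a coupled elliptic system with Legendrian/real boundary condition. Second, I would run an alternating boot-strap argument of the same type as in Section \ref{sec:Ckdelta-estimates}, but now applied to the \emph{$\tau$-translated} maps $w_s(\tau,t):=w(s+\tau,t)$ on a fixed reference strip such as $[-1,1]\times[0,1]$: the $W^{1,2}$-bound on $(\zeta,\alpha-T)$ on $w_s$ decays like $e^{-\delta s}$, and the local interior/boundary $C^{k,\delta}$ estimates of Theorem \ref{thm:local-regularity} applied to $w_s$ then yield the corresponding $C^{k,\delta}$ exponential decay for $\zeta$ and $\alpha-T$, in particular the pointwise bound
\[
\left|\frac{\partial w}{\partial\tau}(\tau,t)\right|\le C\,e^{-\delta\tau},\qquad (\tau,t)\in[0,\infty)\times[0,1].
\]
Third, from this pointwise bound I would deduce that for $\tau_2>\tau_1$ large,
\[
d\bigl(w(\tau_1,t),w(\tau_2,t)\bigr)\le \int_{\tau_1}^{\tau_2}\left|\frac{\partial w}{\partial\tau}(s,t)\right|ds\le \frac{C}{\delta}e^{-\delta\tau_1}
\]
uniformly in $t\in[0,1]$, so that $\{w(\tau,\cdot)\}$ is Cauchy in $C^0([0,1],M)$ with respect to the triad distance and converges uniformly to a continuous curve $z:[0,1]\to M$. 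Passing to the limit in the decay of $\pi\,\partial w/\partial t$ and of $\lambda(\partial w/\partial t)-T$, together with the Legendrian boundary condition, shows that $z$ is $C^1$ and satisfies $\dot z=T\,R_\lambda(z)$ with $z(0)\in R_0$, $z(1)\in R_1$; hence $z(\cdot)=\gamma(T\cdot)$ for a Reeb chord $\gamma$ joining $R_0$ to $R_1$, and the action is identified by $T=\int\gamma^*\lambda$ using Theorem \ref{thm:subsequence-open}(2). Uniqueness of $\gamma$ is immediate from the uniformity of the convergence together with the nondegeneracy assumption, which forces Reeb chords to be isolated.

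The main technical obstacle is the upgrade from $L^2_t$ exponential decay (which is all Lemma \ref{lem:exp-decay-lemma} produces directly) to \emph{pointwise} exponential decay of all derivatives of $w-z$, and in particular the handling of the Legendrian boundary condition under this upgrade. The key point is that the decay estimates on $\zeta$ and $\alpha-T$ are $L^2$ estimates on each time-slice, not Sobolev estimates on two-dimensional regions; to convert them into the desired $C^0$ bound one must exploit the elliptic nature of the coupled system \eqref{eq:equation-for-zeta0}--\eqref{eq:equation-for-alpha} on a two-dimensional strip. The correct way to do this is to apply the local boundary $C^{k,\delta}$ estimates of Theorem \ref{thm:local-regularity} on the shifted domains $[s-1,s+1]\times[0,1]$, whose proof already incorporates the Legendrian boundary condition as a free boundary condition. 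Once this translation-invariant $C^k$ bootstrap is in place, the remaining steps---integration of $\partial_\tau w$ and identification of the limit with a Reeb chord---are essentially bookkeeping, following verbatim the argument of \cite[Proposition 11.21]{oh-wang:CR-map2} with $S^1$ replaced by $[0,1]$.
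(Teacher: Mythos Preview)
Your proposal is correct and follows essentially the same route the paper indicates: the paper omits the proof entirely, stating only that it follows ``by the verbatim same argument as the proof of \cite[Proposition 11.21]{oh-wang:CR-map2} with $S^1$ replaced by $[0,1]$,'' and you have fleshed out precisely that argument---upgrade the slice-wise $L^2$ decay of $(\zeta,\alpha-T)$ to pointwise decay via the translation-invariant local elliptic estimates of Theorem \ref{thm:local-regularity}, then integrate $|\partial_\tau w|$ to obtain a uniform Cauchy estimate for $w(\tau,\cdot)$ in $C^0([0,1],M)$. Note that your argument in fact already yields the exponential rate of the subsequent Proposition (the paper's Proposition 11.23 analog), so you have proved both at once; also be aware that ``Reeb orbit'' in the statement should read ``Reeb chord'' in this open-string setting, as you correctly identify in your limit step.
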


Then the following $C^0$-exponential convergence is also proved.
\begin{prop}[Proposition 11.23 \cite{oh-wang:CR-map2}, Proposition 6.5 \cite{oh-yso:index}]
There exist some constants $C>0$, $\delta>0$ and $\tau_0$ large such that for any $\tau>\tau_0$,
\beastar
\|d\left( w(\tau, \cdot), z(\cdot) \right) \|_{C^0([0,1])} &\leq& C\, e^{-\delta \tau}
\eeastar
\end{prop}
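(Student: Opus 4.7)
\smallskip

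\noindent\textbf{Proof plan for the $C^0$-exponential convergence.}
The plan is to upgrade the qualitative $C^0$-convergence $w(\tau,\cdot)\to z(\cdot)$ obtained in Proposition \ref{prop:czero-convergence} to an exponentially fast convergence by (i) first promoting the $L^2$-exponential decay of the slice-wise derivative $\frac{\del w}{\del\tau}(\tau,\cdot)$ established in Section \ref{subsec:L2-expdecay} to a pointwise $C^0$-exponential decay through interior elliptic regularity, and (ii) then integrating the resulting pointwise bound from $\tau$ to $+\infty$ in a normal tubular chart around the Reeb chord $z$.

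For step (i), I would fix a large $\tau_0$ and, for each $\tau\geq\tau_0+1$, consider the translated contact instanton $w_\tau(\sigma,t):=w(\tau+\sigma,t)$ on the fixed rectangle $R:=[-1,1]\times[0,1]$. From the $W^{1,2}$-exponential decay of $\zeta=\pi\frac{\del w}{\del x}$ recalled in Section \ref{subsec:L2-expdecay} and the $L^2$-exponential decay of $\alpha-T$ from Lemma \ref{lem:exp-decay-lemma}, one obtains
\[
\left\|\,d w_\tau - T\,R_\lambda(w_\tau)\,dt\,\right\|_{L^2(R)}\;\leq\; C\,e^{-\delta\tau}
\]
with a uniform constant $C$. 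Now the pair $(\zeta,\alpha-T)$ solves the coupled elliptic system \eqref{eq:equation-for-zeta0}--\eqref{eq:equation-for-alpha} whose coefficients depend smoothly on $w_\tau$ and are therefore uniformly $C^\infty$-bounded by Hypothesis \ref{hypo:basic-intro} and the interior $C^{k,\delta}$-bounds of Theorem \ref{thm:local-regularity}. The alternating boot-strap of Section \ref{sec:Ckdelta-estimates} applied to this linear system with exponentially small $L^2$-data produces, on the concentric rectangle $R':=[-1/2,1/2]\times[0,1]$,
\[
\left\|\,d w_\tau - T\,R_\lambda(w_\tau)\,dt\,\right\|_{C^{k,\delta}(R')}\;\leq\; C_k\,e^{-\delta\tau}
\]
for every $k$. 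Taking $k$ large enough that $W^{k,2}\hookrightarrow C^0$ in dimension $2$ (already $k=1$ with the Hölder bound suffices), this yields the pointwise slice-wise bound
\[
\sup_{t\in[0,1]}\left|\frac{\del w}{\del\tau}(\tau,t)\right|\;\leq\; C\,e^{-\delta\tau}\qquad\text{for all }\tau\geq\tau_0.
\]

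For step (ii), using the $C^0$-convergence of $w(\tau,\cdot)$ to $z(\cdot)$ and the fact that $z([0,1])$ is compact, I would choose $\tau_0$ so large that, for all $\tau\geq\tau_0$ and all $t\in[0,1]$, the point $w(\tau,t)$ lies in the injectivity radius neighborhood of $z(t)$ with respect to the contact triad metric $g_\lambda$. Then the vector field $\xi(\tau,t)\in T_{z(t)}Q$ defined by $w(\tau,t)=\exp_{z(t)}\xi(\tau,t)$ is smooth, small, and satisfies $|\xi(\tau,t)|=d\bigl(w(\tau,t),z(t)\bigr)$. Differentiating in $\tau$ and inverting the derivative of $\exp_{z(t)}$ (which is uniformly close to the identity for $\tau$ large), one gets
\[
\bigl|\nabla_\tau\xi(\tau,t)\bigr|\;\leq\; 2\left|\frac{\del w}{\del\tau}(\tau,t)\right|\;\leq\; 2C\,e^{-\delta\tau}.
\]
Since $\xi(\tau,t)\to 0$ as $\tau\to\infty$ for each fixed $t$, integrating from $\tau$ to $\infty$ gives $|\xi(\tau,t)|\leq\tfrac{2C}{\delta}\,e^{-\delta\tau}$ uniformly in $t$, which is precisely the claimed estimate.

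The main obstacle is verifying in step (i) that the bootstrap preserves the \emph{same} exponential rate $\delta$ at every order, rather than degrading it at each stage. This is handled by observing that the alternating boot-strap of Section \ref{sec:Ckdelta-estimates} is linear in the top-order norm of the input, so a source term of size $e^{-\delta\tau}$ is propagated through each bootstrap loop without loss of rate; only the constants $C_k$ grow with $k$. A secondary technical point is the regularity of $\exp_{z(t)}$ as a function of $t$ at the two endpoints, where Legendrian tangency of $R_0,R_1$ is used, but since $z([0,1])$ is a compact embedded arc and $(R_0,R_1)$ are smooth, this is a routine local coordinate computation.
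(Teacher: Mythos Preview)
Your approach is correct. The survey does not give a proof here, only citing \cite{oh-wang:CR-map2} and \cite{oh-yso:index}; but from the surrounding structure (Sections \ref{subsec:L2-expdecay}--\ref{subsec:Cinftydecaydu}) the paper's ordering is: first prove the $C^0$-exponential convergence of $w(\tau,\cdot)$ to $z$, and only afterwards run the alternating boot-strap to obtain the $C^{k,\alpha}$-exponential decay of $dw - T R_\lambda(w)\,dt$. You reverse this order---you boot-strap first to get the pointwise bound $\sup_t|\partial_\tau w(\tau,t)|\leq Ce^{-\delta\tau}$, then integrate in $\tau$ to reach $w$. Both routes use the same inputs (the $L^2$-exponential decay of $\zeta$ and of $\alpha-T$ from Section \ref{subsec:L2-expdecay}, the uniform $C^\infty$-bounds on $w$, and the linear elliptic system \eqref{eq:equation-for-zeta0}--\eqref{eq:equation-for-alpha}), so the difference is purely organizational; your ordering has the virtue that the $C^0$-distance estimate falls out as a one-line integration once the derivative bound is known.

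Two minor remarks. In step (i) what is recorded in Section \ref{subsec:L2-expdecay} is the $W^{1,2}$-exponential decay of $\nu=\tfrac12|\zeta|^2$, not of $\zeta$ itself; you recover the $L^2$-decay of $\zeta$ from $\|\zeta\|_{L^2([0,1])}^2=2\|\nu\|_{L^1([0,1])}\leq C\|\nu\|_{L^2([0,1])}$, possibly at the cost of halving the rate, which is harmless. In step (ii) the exponential chart is more than you need: since $d(w(\tau,t),z(t))\leq\int_\tau^\infty|\partial_s w(s,t)|\,ds$ by the length inequality together with the already-known $C^0$-convergence $w(\tau,t)\to z(t)$, the integration step requires no normal coordinates at all.
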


\subsection{$C^\infty$-exponential decay of $dw - R_\lambda(w) \, d\tau$}
\label{subsec:Cinftydecaydu}

 So far, we have established the following:
\begin{itemize}
\item $W^{1,2}$-exponential decay of $w$,
\item $C^0$-exponential convergence of $w(\tau,\cdot) \to z(\cdot)$ as $\tau \to \infty$
for some Reeb chord $z$ between two Legendrians $R, \, R'$.
\end{itemize}

Now we are ready to complete the proof of
$C^\infty$-exponential convergence $w(\tau,\cdot) \to z$
by establishing the $C^\infty$-exponential decay of $dw - R_\lambda(w)\, dt$.
The proof of the latter decay is now in order which will be
carried out by the bootstrapping arguments
applied to the system \eqref{eq:contacton-Legendrian-bdy-intro}.

Combining the above three, we have obtained $L^2$-exponential estimates of the full derivative $dw$.
By the bootstrapping argument using the local uniform a priori estimates
on the strip-like region as in the proof of Lemma \ref{lem:exp-decay-lemma},
we obtain higher order $C^{k,\alpha}$-exponential decays of the term
$$
\frac{\del w}{\del t} - T R_\lambda(z), \quad \frac{\del w}{\del\tau}
$$
for all $k\geq 0$, where $w(\tau,\cdot)$ converges to $z$ as $\tau \to \infty$ in $C^0$ sense.
Combining this, Lemma \ref{lem:exp-decay-lemma} and elliptic $C^{k,\alpha}$-estimates
given in Theorem \ref{thm:local-regularity}, we complete the proof of $C^\infty$-convergence of
$w(\tau,\cdot) \to z$ as $\tau \to \infty$.

\part{Compactification, Fredholm theory and asymptotic analysis}

In this section, we develop the Fredholm theory of moduli space of pseudoholomorphic curves on the
symplectization as a special case of the theory of pseudoholomorphic curves on the $\mathfrak{lcs}$-fication
of contact manifolds as done in by the first author with Savelyev \cite{oh-savelyev}. The
symplectization is just the zero-temperature limit of the $\mathfrak{lcs}$-fications. Contact triad connection and its symplectification
interact very well with the decomposition
\beastar
TQ & = & \xi \oplus \R \langle R_\lambda \rangle\\
TM & \cong & \xi \oplus \R \langle R_\lambda \rangle
\oplus \R \left \langle \frac{\del}{\del s} \right\rangle
\eeastar
which enable us to derive a tensorial formula
for the linearized operator in a precise matrix form
all whose summands carry natural geometric meaning.
This enables us to do complete coordinate free
asymptotic analysis. On the other hand,
\cite{HWZ:asymptotics,HWZ:asymptotics-correction,HWZ:smallarea}
rely on the choice of special coordinates in 3 dimension.
Such a coordinate approach, especially in higher dimensions,
leads to many complicated tedious expressions in the asymptotic
convergence results. This was the starting point of Wang
and the first author of the present survey for them to develop
the notion of contact instantons and its tensorial study and to
have discovered the notion of \emph{contact triad connection}
which well suits the purpose. This approach especially leads to
an simple transparent formulae for the linearized operators both for the
contact instantons and for the pseudoholomorphic curves
in symplectization.

Leaving the case of contact instantons to \cite{oh:contacton} for the
closed string case and to \cite{oh:contacton-transversality} for
the open string case, we now focus on the case of pseudoholomorphic
curves  in symplectization.

We start with the following completion of exponential convergence
result for the finite energy punctured pseudoholomorphic curves.

\section{Exponential convergence in symplectization}

In this section, we consider the symplectization
$$
M = Q \times \R, \quad \omega = d(e^s \pi^*\lambda) = e^s (ds \wedge \pi^*\lambda + d\pi^*\lambda)
$$
of the contact manifold $(Q,\xi)$ equipped with contact form $\lambda$.

On $Q$, the Reeb vector field $R_\lambda$ associated to the contact
form $\lambda$ is the unique vector field satisfying
\be\label{eq:Liouville} X \rfloor \lambda = 1, \quad X \rfloor
d\lambda = 0.
\ee
We call $(y,s)$ the cylindrical coordinates.
On the cylinder $[0, \infty) \times Q \subset (-\infty,\infty) \times Q$,
we have the natural splitting
$$
TM \cong TQ  \oplus \R\cdot \frac{\del}{\del s} \cong \xi \oplus \span\left\{\widetilde R_\lambda,
\frac{\del}{\del s}\right\}  \cong \xi \oplus \R^2.
$$
We denote by $\widetilde R_\lambda$ the unique vector field on $[0,\infty) \times Q$ which is
invariant under the translation, tangent to the level sets of $s$
and projected to $R_\lambda$. When there is no danger of confusion, we will
sometimes just denote it by $R_\lambda$.

Now we describe a special family of almost complex structure adapted to
the given cylindrical structure of $M$.

\begin{defn} An almost complex structure $J$ on $(0,\infty) \times Q$ is called
\emph{$\lambda$-adapted} if it is split into
$$
J = J_\xi \oplus J_0: TM \cong \xi \oplus \R^2  \to TM \cong \xi \oplus \R^2
$$
where $J|_\xi$ is compatible to $d\lambda|_{\xi}$ and $j: \R^2 \to \R^2$
maps $\frac{\del}{\del s}$ to $R_\lambda$.
\end{defn}

For our purpose, we will need to consider a family of symplectic forms
to which the given $J$ is compatible and their associated metrics.
For any $\lambda$-adapted $J$, the $J$-compatible metric associated
to $\omega$ is expressed as
\be\label{eq:gJ}
g_{(\omega,J)}  = e^s(ds^2 + g_Q)
\ee
on $\R \times Q$.

Now we regard the triple $(\omega,J,g_{(\omega,J)})$ be an almost
K\"ahler manifold near the level surface $s = 1$. We then fix the canonical
connection $\nabla$ associated to $(\omega,J,g_{(\omega,J)})$. The following
is a general property of the canonical connection.

\begin{prop}\label{prop:TJYY} Let $(W,\omega, J)$ be an almost K\"ahler manifold
and $\nabla$ be the canonical connection. Denote by $T$ be its torsion tensor. Then
\be\label{eq:TJYY}
T(JY,Y) = 0
\ee
for all vector fields $Y$ on $W$.
\end{prop}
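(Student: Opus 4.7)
The plan is to observe that this proposition is essentially an immediate reformulation of the defining property of the canonical connection. Recall from Definition \ref{defn:canonical-nabla} that the canonical connection of an almost Hermitian manifold is characterized (among Hermitian connections) by the torsion identity
\[
T(X, JX) = 0 \quad \text{for all } X \in TM.
\]
Since an almost K\"ahler manifold is in particular an almost Hermitian manifold, the existence and uniqueness theorem quoted just before Definition \ref{defn:canonical-nabla} applies, and the canonical connection $\nabla$ satisfies this identity as its defining property.

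The main step, then, is simply to specialize to $X = Y$ and use the antisymmetry of the torsion tensor. Setting $X = Y$ gives $T(Y, JY) = 0$, and then
\[
T(JY, Y) \;=\; -\,T(Y, JY) \;=\; 0,
\]
which is the desired conclusion.

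There is no real obstacle here; the statement is a direct corollary of the definition, restated in the form $T(JY, Y) = 0$ (rather than $T(Y, JY) = 0$) because this is the form in which it will be applied later to establish Weitzenb\"ock-type estimates on the symplectization. The assumption that $(W, \omega, J)$ is almost K\"ahler (rather than merely almost Hermitian) is not logically needed for this algebraic identity, but is imposed because the proposition will subsequently be applied in the almost K\"ahler setting of the symplectization with its canonical symplectic form. I would therefore present the proof as a two-line argument immediately invoking Definition \ref{defn:canonical-nabla} and the antisymmetry of $T$, without any further computation.
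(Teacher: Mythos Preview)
Your proposal is correct and matches the paper's treatment: the paper states Proposition~\ref{prop:TJYY} without proof, prefacing it with ``The following is a general property of the canonical connection,'' which indicates it is meant to be read as an immediate consequence of Definition~\ref{defn:canonical-nabla} together with the antisymmetry of $T$, exactly as you argue.
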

Consider the decomposition
$$
TM \cong \xi \oplus \R\{R_\lambda\} \oplus \R \left\{\frac{\del}{\del r}\right\}
$$
and the canonical connection $\widetilde \nabla$ on $Q \times \R$, which in particular
is $J$-linear.

Now denote $f = s\circ u$ and $w = \Theta \circ u$, i.e., $u = (w,a)$ in $Q \times \R$
be $J$-holomorphic. Then we have already shown the exponential convergence $w$ to
the Reeb orbit $w(\cdot, t)$. For the convergence, we use the equation
$$
w^*\lambda \circ j = df
$$
which implies
$$
w^*\lambda = - df\circ j
$$
By taking differential, we obtain
$$
-d(df\circ j) = w^*d\lambda = \frac12 |d^\pi w|^2 \, d\tau \wedge dt.
$$
This first shows that $f$ is a subharmonic function and
satisfies
\be\label{eq:a-asymptotics}
\frac{\del^2 f}{\del \tau^2} + \frac{\del^2 f}{\del t^2} - \frac12 |d^\pi w|^2 = 0
\ee
where we know  from Theorem \ref{thm:subsequence} that the convergence $\frac12 |d^\pi w|^2 \to T^2$ is exponentially fast.
This immediately
gives rise to the following exponential convergence of the radial component
which will complete the study of asymptotic convergence property of
finite energy pseudoholomorphic planes in symplectization.
(See Theorem \ref{thm:three-interval} for the general abstract framework of
establishing exponential convergence.)

\begin{prop}[Exponential convergence of radial component] Let
$u=(w,f)$ be a finite energy $\widetilde J$-holomorphic plane in
$Q\times \R$. Then we have convergence
$$
df \to T\, d\tau
$$
exponentially fast.
\end{prop}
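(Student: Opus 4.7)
The plan is to invoke directly the second scalar component of the $\widetilde J$-holomorphic equation, rather than grinding through the Poisson equation for $f$ that is set up in the preceding paragraph. Decomposing $\widetilde J=J\oplus J_0$ along $TM\cong\xi\oplus\R\langle R_\lambda\rangle\oplus\R\langle\del/\del s\rangle$, the equation $du\circ j=\widetilde J\circ du$ for $u=(w,f)$ splits into the already-exploited $\xi$-piece $\delbar^\pi w=0$ together with the equivalent pair
\begin{equation*}
w^*\lambda\circ j = df,\qquad df\circ j = -\,w^*\lambda.
\end{equation*}
In any conformal coordinate $(\tau,t)$ on the cylindrical end $[0,\infty)\times S^1$ one has the elementary identity $dt\circ j=d\tau$, so the first of these identifies $df$ pointwise as the image of the 1-form $w^*\lambda$ under the fixed, parallel, norm-preserving bundle map $\,\cdot\circ j$.

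Next I would invoke the $C^\infty$-exponential convergence of $w^*\lambda$ to $T\,dt$ as $\tau\to\infty$ already established in Subsection \ref{subsec:Cinftydecaydu}: the $L^2$-exponential decay of the auxiliary function $\alpha-T$ from Lemma \ref{lem:exp-decay-lemma}, upgraded to $C^0$-exponential convergence in Proposition \ref{prop:czero-convergence}, and finally bootstrapped to all $C^k$-norms by means of the local $C^{k,\delta}$-estimates of Theorem \ref{thm:local-regularity}. This yields constants $C_k,\delta>0$ and $\tau_0$ such that
\begin{equation*}
\|\nabla^k(w^*\lambda - T\,dt)\|_{C^0(\{\tau\}\times S^1)}\le C_k\,e^{-\delta\tau}\qquad(\tau\ge\tau_0,\ k\ge 0).
\end{equation*}
Composing on the right with the fixed flat endomorphism $j$, which commutes with $\nabla$ on the Euclidean cylinder and preserves every $C^k$ norm, transports this bound verbatim from $w^*\lambda - T\,dt$ to $df - T\,d\tau$, which is the assertion of the proposition.

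For the sake of fidelity to the excerpt one can also follow the alternative Poisson-equation route, starting from $\Delta f = \tfrac12|d^\pi w|^2$ obtained from $-d(df\circ j) = w^*d\lambda = \tfrac12|d^\pi w|^2\,d\tau\wedge dt$. The right-hand side decays exponentially in $\tau$ by the $C^\infty$-exponential decay of $d^\pi w$ already in hand; writing $f(\tau,t)=T\tau+\widetilde f(\tau,t)$ and applying the abstract three-interval framework of Theorem \ref{thm:three-interval} to the Fourier modes of $\widetilde f$ in $t$ forces exponential decay of $d\widetilde f$ in all $C^k$-norms. The main (essentially bookkeeping) obstacle in this second route is pinning down the linear-growth slope of $f$ as exactly $T$, and this is itself most cleanly done via the identity $df=w^*\lambda\circ j$; so the two approaches share the same analytical core, namely the $C^\infty$-exponential convergence of the Reeb component of $dw$ previously extracted from the contact instanton equation via the contact triad connection and its Weitzenb\"ock formulae.
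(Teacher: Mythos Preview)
Your primary argument is exactly the paper's proof: the paper simply observes that $w^*\lambda\to T\,dt$ exponentially fast has already been established, and composing with $j$ (using $w^*\lambda\circ j=df$ and $dt\circ j=d\tau$) yields $df\to T\,d\tau$. Your additional discussion of the Poisson-equation route is accurate but superfluous here, since the paper only sets up that equation in the paragraph before the proposition and then bypasses it in the actual proof.
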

\begin{proof} We have already established $w^*\lambda \to T\, dt$ before.
By composing by $j$, the statement follows.
\end{proof}

\section{The moduli spaces of contact instantons and of pseudoholomorphic curves}

In this section, we recall the definitions of the moduli spaces of contact instantons and of
pseudoholomorphic curves and compare their compactifications.

\subsection{Moudli space of pseudoholomorphic curves on symplectization}

Let $(\dot \Sigma, j)$ be a punctured Riemann surface and let
$$
p_1, \cdots, p_{s^+}, q_1, \cdots, q_{s^-}
$$
be the positive and negative punctures. For each $p_i$ (resp. $q_j$), we associate the isothermal
coordinates $(\tau,t) \in [0,\infty) \times S^1$ (resp. $(\tau,t) \in (-\infty,0] \times S^1$)
on the punctured disc $D_{e^{-2\pi R_0}}(p_i) \setminus \{p_i\}$
(resp. on $D_{e^{-2\pi R_0}}(q_i) \setminus \{q_i\}$) for some sufficiently large $R_0 > 0$.

Following \cite{hofer:invent}, \cite{behwz},
we define the associated energy
$$
E_\eta(u) = E^\pi(u) + E^\perp_\eta(u)
$$
for each smooth map $u = (w,f)$ in class $\eta$, i.e., $[u]_{S^1} = \eta$.
\begin{defn} We define
$$
\widetilde{\CM}_{k,\ell} (\dot \Sigma, M;J), \quad k, \, \ell \geq 0, \, k+\ell \geq 1
$$
to be the moduli space of  pseudoholomorphic curve
$u = (w,f)$ with $E_\eta(u) < \infty$.
\end{defn}
Then we have a decomposition
$$
\widetilde{\CM}_{k,\ell} (\dot \Sigma, M;J)
= \bigcup_{\vec \gamma^+,\vec \gamma^-} \widetilde{\CM}_{k,\ell}
\left(\dot \Sigma, M;J;(\vec \gamma^+,\vec \gamma^-)\right)
$$
by Theorem \ref{thm:subsequence} where
\beastar
\widetilde{\CM}_{k,\ell} (\dot \Sigma, M;J;(\vec \gamma^+,\vec \gamma^-))
& = &\{u = (w,f) \mid u \, \text{is an lcs instanton with } \\
&{}& \quad E_\eta(u) < \infty, \, w(-\infty_j) = \gamma^-_j, \, w(\infty_i) = \gamma_i \}.
\eeastar
Here we have
the collections of Reeb orbits $\gamma^+_i$ and $\gamma^-_j$
and of points $\theta^+_i$, $\theta^-_j$ for $i =1, \cdots, s^+$
and for $j = 1, \cdots, s^-$ respectively such that
\be\label{eq:limatinfty}
\lim_{\tau \to \infty}w((\tau,t)_i) = \gamma^+_i(T_i(t+t_i)), \quad
\lim_{\tau \to - \infty}w((\tau,t)_j) = \gamma^-_j(T_j(t-t_j))
\ee
for some $t_i, \, t_j \in S^1$, where
$$
T_i = \int_{S^1} (\gamma^+_i)^*\lambda, \, T_j = \int_{S^1} (\gamma^-_j)^*\lambda.
$$
Here $t_i,\, t_j$ depends on the given analytic coordinate and the parameterization of the Reeb orbits.

Due to the $\R$-equivariance of the equation \eqref{eq:tildeJ-holo}
under the $\R$ action of translations, this action induces a free action on
$\widetilde{\CM}_{k,\ell} (\dot \Sigma, M;J)$. Then we denote
\be\label{eq:CMkell}
\CM_{k,\ell} (\dot \Sigma, M;J) = \widetilde{\CM}_{k,\ell}(\dot \Sigma, M;J)/\R.
\ee
We also have the decomposition
$$
\widetilde{\CM}_{k,\ell} (\dot \Sigma, M;J) = \bigcup_{\vec \gamma^\pm}
\widetilde{\CM}_{k,\ell} (\dot \Sigma, M;J;(\vec \gamma^+,\vec \gamma^-)).
$$
Here we denote
$
\vec \gamma^+ = (\gamma^+_i), \quad \vec \gamma^- = (\gamma^-_j)
$
By quotienting the above out by the $\R$-action,
we obtain the moduli space of $J$-holomorphic curves
$$
\CM_{k,\ell} (\dot \Sigma, M;J) = \bigcup_{\vec \gamma^\pm}
\CM_{k,\ell} (\dot \Sigma, M;J;(\vec \gamma^+,\vec \gamma^-)).
$$
We now introduce a uniform energy bound for $u = (w,f) $ with given asymptotic condition at its punctures.  Recall that they satisfy
$w^*\lambda \circ j = df$.

The following proposition is the analog to \cite[Lemma 5.15]{behwz} and
\cite[Proposition 9.2]{oh:contacton}  whose proof is also similar.

\begin{prop}\label{prop:proper-energy} Let
$u = (w,f) \in \CM_{k,\ell} (\dot \Sigma, M;J;(\vec \gamma^+,\vec \gamma^-))$.
Suppose that $E^\pi(w) < \infty$ and the function $f:\dot \Sigma \to \R$ is proper.
Then $E(w) < \infty$.
\end{prop}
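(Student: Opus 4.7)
The plan is to bound $E^\perp(w) = \sum_p E^\lambda_p(w)$ (the sum being over the finitely many punctures of $\dot\Sigma$), since $E^\pi(w) < \infty$ is already assumed. My first observation is that the $\widetilde J$-holomorphicity of $u = (w,f)$ together with \eqref{eq:tildeJ-holo} forces $w$ to be a contact instanton whose instanton potential is literally the radial coordinate $f = s\circ u$: we have $df = w^*\lambda\circ j$, equivalently $df\circ j = -w^*\lambda$. In particular $f$ is globally well defined without any cohomological hypothesis on $\dot\Sigma$, and properness of $f$ forces $|f|\to\infty$ at every puncture.

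The heart of the argument will be the on-shell identity
\[
\varphi(f)\,|df|^2\,dA \;=\; d\!\left(\psi(f)\,w^*\lambda\right) \;-\; \tfrac{1}{2}\,\psi(f)\,|d^\pi w|^2\,dA,
\]
valid for any $\varphi\in\CC$ with primitive $\psi$ as in \eqref{eq:psi}. I will derive it by a two-line Leibniz computation using $df\circ j = -w^*\lambda$, the Cauchy--Riemann identity $d(w^*\lambda) = \tfrac12|d^\pi w|^2\,dA$ of Lemma \ref{lem:energy-omegaarea}, and the pointwise fact that $df\wedge(df\circ j) = -|df|^2\,dA$. The merit of this identity is that it rewrites the $\varphi$-dependent density appearing in $E^\lambda_p$ as the sum of an exact form and a term pointwise dominated by $|d^\pi w|^2$.

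Next I fix a puncture $p$, take cylindrical coordinates $(\tau,t)\in[0,\infty)\times S^1$ centered there, and integrate the identity over the truncation $\Omega_R = [0,R]\times S^1$. Stokes' theorem gives
\beastar
\int_{\Omega_R}\varphi(f)\,|df|^2\,dA
&=& \int_{\{R\}\times S^1}\psi(f)\,w^*\lambda \;-\; \int_{\{0\}\times S^1}\psi(f)\,w^*\lambda \\
&{}&\quad -\;\tfrac12\int_{\Omega_R}\psi(f)\,|d^\pi w|^2\,dA.
\eeastar
The volume term is bounded in absolute value by $\tfrac12 E^\pi(w)$ because $0\le\psi\le 1$, and the inner boundary term is a fixed constant depending only on $w|_{\{0\}\times S^1}$. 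For the outer boundary term I use properness: given $\varphi$ supported in $[-N,N]$, there is an $R_0(\varphi)$ such that $\psi\circ f$ is constantly $0$ or $1$ on $\{R\}\times S^1$ for all $R\ge R_0$. In both cases the outer integral then reduces to something bounded uniformly in $R$ --- either $0$, or $\int_{\{R\}\times S^1}w^*\lambda$, the latter controlled by $|\int_{\{0\}\times S^1}w^*\lambda|+E^\pi(w)$ upon applying Stokes to $w^*\lambda$ itself on $\Omega_R$ together with $d(w^*\lambda) = \tfrac12|d^\pi w|^2\,dA$.

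The only wrinkle --- rather than a genuine obstacle --- will be bookkeeping the sign conventions between positive and negative punctures (at a negative puncture $\psi\circ f\to 0$ and the outer contribution vanishes; at a positive puncture one gets the uniformly bounded action-like quantity just described). The resulting upper bound on $\int_{\Omega_R}\varphi(f)|df|^2\,dA$ is independent of both $R$ and of $\varphi\in\CC$; letting $R\to\infty$ and taking sup over $\varphi$ shows $E^\lambda_p(w)<\infty$. Summing over the finitely many punctures yields $E^\perp(w)<\infty$ and hence $E(w)=E^\pi(w)+E^\perp(w)<\infty$, as claimed.
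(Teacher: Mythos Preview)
Your argument is correct. The key differential identity you derive,
\[
\varphi(f)\,|df|^2\,dA \;=\; d\bigl(\psi(f)\,w^*\lambda\bigr)\;-\;\tfrac12\,\psi(f)\,|d^\pi w|^2\,dA,
\]
is exactly right (your sign check $df\wedge(df\circ j)=-|df|^2\,dA$ is fine, and combined with $w^*\lambda=-df\circ j$ it gives $df\wedge w^*\lambda=|df|^2\,dA$). The Stokes argument and the use of properness to freeze $\psi\circ f$ at $0$ or $1$ on the outer circle are clean; connectedness of $\{R\}\times S^1$ together with properness indeed forces $f$ to lie entirely on one side of $\operatorname{supp}\varphi$ for $R$ large. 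One small point worth making explicit: your bound on the inner boundary term $\int_{\{0\}\times S^1}\psi(f)\,w^*\lambda$ must be uniform in $\varphi$, which it is since $0\le\psi\le1$ gives $\bigl|\int_{\{0\}\times S^1}\psi(f)\,w^*\lambda\bigr|\le\int_{\{0\}\times S^1}|w^*\lambda|$.

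As for comparison: the paper does not supply its own proof of this proposition; it simply states that the result is the analog of \cite[Lemma~5.15]{behwz} and \cite[Proposition~9.2]{oh:contacton} ``whose proof is also similar.'' Your approach is precisely the standard one employed in those references --- Stokes' theorem applied to $\psi(f)\,w^*\lambda$, exploiting $d(w^*\lambda)=\tfrac12|d^\pi w|^2\,dA$ and the compact support of $\varphi$ --- so there is nothing to contrast.
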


The following a priori energy bounds for $u = (w,f)$  is proved in
\cite{behwz}.

\begin{prop}[Lemma 5.15, \cite{behwz}]\label{prop:uniform-energy-bound}
Let $u = (w,f) \in \CM_{k,\ell} (\dot \Sigma, M;J;(\vec \gamma^+,
\vec \gamma^-))$.
Let $\widetilde f: \dot \Sigma \to \R$ be the function satisfying $w^*\lambda \circ j = d\widetilde f$.
Suppose the function $\widetilde f$ is proper. Then we have
\beastar
E^\pi(u) & = &  E^\pi(w) = \sum_{i=1}^k \CA_\lambda(\gamma^+_i) - \sum_{j=1}^\ell \CA_\lambda(\gamma^-_j)\\
E^\perp_\eta (u)& = & \sum_{j=1}^k \CA_\lambda(\gamma^+_i)  \\
E(u) & = & 2  \sum_{i=1}^k \CA_\lambda(\gamma^+_i) - \sum_{j=1}^\ell \CA_\lambda(\gamma^-_j).
\eeastar
\end{prop}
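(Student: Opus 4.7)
The plan is to establish the three formulae in turn, with the third being the sum of the first two. For $E^\pi(u) = E^\pi(w) = \sum_i\CA_\lambda(\gamma_i^+) - \sum_j\CA_\lambda(\gamma_j^-)$, the on-shell identity of Lemma \ref{lem:energy-omegaarea}(2) rewrites $\frac12|d^\pi w|^2\,dA = w^*d\lambda$, so $E^\pi(w) = \int_{\dot\Sigma} d(w^*\lambda)$. I would then apply Stokes' theorem to a compact exhaustion $\dot\Sigma_R$ obtained by truncating each cylindrical end at $|\tau|=R$, and invoke the $C^\infty$-exponential convergence from Section \ref{sec:exponential-convergence}: at each positive puncture $w^*\lambda|_{\{R\}\times S^1}\to T_i^+\,dt$, contributing $+\CA_\lambda(\gamma_i^+)$ to the boundary integral, while at each negative puncture the reversed boundary orientation (outward normal $-\partial_\tau$) converts the limit into $-\CA_\lambda(\gamma_j^-)$, yielding the alternating signs.

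For $E^\perp_\eta(u) = \sum_i\CA_\lambda(\gamma_i^+)$, I exploit that the asymptotic charge vanishes automatically in the exact setting since $w^*\lambda\circ j = df$, so $f$ itself serves as the global contact instanton potential of Section \ref{sec:lambda-energy}. For $\varphi\in\CC$, let $\Psi$ be the antiderivative with $\Psi(-\infty)=0$, $\Psi(+\infty)=1$. The key identity is
$$d\bigl(\Psi(f)\,w^*\lambda\bigr) = \varphi(f)\,df\wedge w^*\lambda + \Psi(f)\,w^*d\lambda,$$
and integrating over $\dot\Sigma_R$ and letting $R\to\infty$ via Stokes gives
$$\int_{\dot\Sigma}\varphi(f)\,df\wedge w^*\lambda = \sum_i\CA_\lambda(\gamma_i^+) - \int_{\dot\Sigma}\Psi(f)\,w^*d\lambda,$$
where on the boundary $\Psi(f)\to 1$ at positive punctures and $\Psi(f)\to 0$ at negative ones, thanks to properness of $f$ together with the positivity of the actions $T^\pm_{i,j}$ forcing $f\sim T_i^+\tau\to +\infty$ and $f\sim T_j^-\tau\to -\infty$ at the respective ends.

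The main obstacle will be to show that the non-negative correction $\int\Psi(f)\,w^*d\lambda$ can be driven to zero as $\varphi$ varies, so that the supremum realizes $\sum_i\CA_\lambda(\gamma_i^+)$ exactly. The strategy is to translate $\varphi$ toward $+\infty$: if $\varphi$ is supported in $[N,N+1]$ then $\Psi(f)$ is supported in $\{f\geq N\}$, and by properness this set is contained in arbitrarily small neighborhoods of the positive punctures as $N\to\infty$. On these cylindrical ends the $C^\infty$-exponential decay of $d^\pi w$ from Section \ref{sec:exponential-convergence}, combined with the integrability $\int_{\dot\Sigma} w^*d\lambda = E^\pi(w) < \infty$ established in the first step, allows dominated convergence to yield $\int\Psi(f)\,w^*d\lambda\to 0$. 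This gives the second identity; the total-energy formula then follows by addition, $E(u) = E^\pi(u) + E^\perp_\eta(u) = 2\sum_i\CA_\lambda(\gamma_i^+) - \sum_j\CA_\lambda(\gamma_j^-)$.
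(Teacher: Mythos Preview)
The paper does not supply its own proof of this proposition; it simply attributes the result to \cite{behwz} (Lemma 5.15 there) and moves on. Your argument is the standard one from that source: Stokes' theorem applied to $w^*d\lambda = d(w^*\lambda)$ for the $\pi$-energy, and the identity $d(\Psi(f)\,w^*\lambda) = \varphi(f)\,df\wedge w^*\lambda + \Psi(f)\,w^*d\lambda$ together with translation of $\varphi$ toward $+\infty$ and dominated convergence for the $\lambda$-energy. The steps you outline are correct and complete, so there is nothing to compare against beyond the citation.
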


\subsection{Moduli space of contact instantons with prescribed charge}
\label{subsec:prescribed-charge}

We first consider the closed string case. In this case, the asymptotic charge
$Q(p)$ at some interior puncture $p$ may not vanish in general although they
always vanish at the boundary punctures. In \cite{oh-savelyev}, the authors
introduced the notion of \emph{charge class} $\eta \in H^1(\dot \Sigma,\Z)$ and
defined the moduli space of pseudoholomorphic curves on the $\mathfrak{lcs}$-fication
$$
(Q \times S^1, \omega_\lambda), \quad \omega_\lambda = d\lambda + d\theta \wedge \lambda
$$
for the canonical angular form $d\theta$ on $S^1$ which is the generator of $H^1(S^1,\Z)$
similarly as in the case of symplectization or more precisely in the zero-temperature limit
$\mathfrak{lcs}$-fication
$$
(Q \times \R, \omega_\lambda), \quad \omega_\lambda = d\lambda + ds \wedge \lambda.
$$
They lift a $\lambda$-adapted CR almost complex structure to an almost complex structure on $Q \times S^1$ by requiring
$$
\widetilde J|_\xi = J|_\xi, \quad \widetilde J\left(\frac{\del}{\del \theta}\right) = R_\lambda.
$$
Similarly as in the case of symplectization, they showed that a $\widetilde J$-holomorphic curve $u = (w,f)$ satisfies
\be\label{eq:lcs-instanton}
\delbar^\pi w = 0, \quad w^*\lambda \circ j = df
\ee
where $f: \dot \Sigma \to S^1$ given by $f: = \theta \circ u$. By definition, each lcs instanton
carries a cohomology class $\eta_w: = [w^*\lambda \circ j] \in H^1(\dot \Sigma, \Z)$.

Recalling the isomorphism
$$
[\dot \Sigma,S^1] \cong H^1(\dot \Sigma,\Z)
$$
we may also regard the cohomology class $[u]_{S^1}$ as an element in $[\dot \Sigma,S^1]$.
This enables us to define an element in the set of homotopy classes $[\dot \Sigma,S^1]$, which
we also denote by $\eta = \eta_u$. In fact, the isomorphism
$[\dot \Sigma, S^1] \cong H^1(\dot \Sigma;\Z)$
is directly induced by the period map
$$
[f] \mapsto [f^*d\theta].
$$

\begin{defn}[Period map and the charge class; Definition 5.5 \cite{oh-savelyev}]
Let $f: \dot \Sigma \to Q \times S^1$ be a smooth map.
\begin{enumerate}
\item We call the map
$$
C^\infty(\dot \Sigma,Q) \to H^1(\dot \Sigma,\Z); \quad f \mapsto [f^*d\theta]
$$
the \emph{period map} and call the cohomology class $[f^*d\theta]$
the \emph{charge class} of the map $f$.
\item For an $\text{\rm lcs}$ instanton $u = (w,f): \dot \Sigma \to Q \times S^1$, we call the cohomology class
$$
\left[f^*d\theta\right] \in H^1(\dot \Sigma,\Z)
$$
the \emph{charge class} of $u$ and write
$$
[u]_{S^1}: = [f^*d\theta].
$$
\end{enumerate}
\end{defn}

Now we consider the maps $u = (w,f)$ with a fixed charge class $\eta = \eta_u$ and define
the moduli space
\beastar
\widetilde{\CM}_{k,\ell}^\eta(\dot \Sigma, Q;J;(\vec \gamma^+,\vec \gamma^-))
& = &\{u = (w,f) \mid u \, \text{\rm is an $\lcs$ instanton with} \\
&{}& \quad  E_\eta(u) < \infty, \, w(-\infty_j) = \gamma^-_j, \, w(\infty_i) = \gamma_i \}.
\eeastar

For the open string case,  as proven in Theorem \ref{thm:subsequence-open}
the (local) charge near every boundary puncture vanishes. This shows that
the closed one-form $w^*\lambda\circ j$ defines a cohomology class
\be\label{eq:chargeclass-open}
[w^*\lambda \circ j] \in H^1(\overline \Sigma, \del_\infty \dot\Sigma); \Z).
\ee
\begin{defn}[The charge class; open string case] Let
$w: (\dot \Sigma, \del \dot \Sigma) \to (Q, \vec R)$ be a
bordered contact instanton. We call the cohomology class
$[w^*\lambda \circ j]$ given in \eqref{eq:chargeclass-open}
the charge class of $w$.
\end{defn}

The following is an immediate consequence of this definition and
Theorem \ref{thm:subsequence-open}.

\begin{cor} Suppose $\Sigma$ is an open Riemann surface of
genus 0. Then for any bordered contact instanton on
$\dot \Sigma = \Sigma \setminus \{z_0, \cdots z_k\}$, the
charge class vanishes.
\end{cor}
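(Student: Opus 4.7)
The plan is to deduce the corollary from two facts: the topological triviality of genus zero, and the vanishing of local asymptotic charges given by Theorem \ref{thm:subsequence-open}. First, for a genus zero bordered $\Sigma$ the real blow-up $\overline\Sigma$ is a closed disc, and the punctured surface $\dot\Sigma = D^2\setminus\{z_0,\ldots,z_k\}$ is simply connected (a closed disc with boundary points removed deformation retracts to a tree). Hence $H^1(\dot\Sigma;\R) = 0$ and the closed form $w^*\lambda\circ j$ admits a global primitive $f:\dot\Sigma\to\R$ with $df = w^*\lambda\circ j$.

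Next I would invoke Theorem \ref{thm:subsequence-open} to control $f$ at each puncture: it gives $Q=0$ and convergence to $w_\infty(\tau,t)=\gamma(T\,t)$, so in a strip-like coordinate chart $(\tau,t)\in[0,\infty)\times[0,1]$ near the $i$-th puncture one has $w^*\lambda\circ j = T_i\,d\tau + (\text{exponentially decaying})$, whence $f(\tau,t) = T_i\tau + c_i + o(1)$. Subtracting the leading linear piece $T_i\tau$ on each strip-like end yields a renormalised potential $\widetilde f$ which extends continuously to $\overline\Sigma$ and is locally constant on each component of $\del_\infty\dot\Sigma$.

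Finally, I would combine this with the long exact sequence of the pair $(\overline\Sigma,\del_\infty\dot\Sigma)$. Since $\overline\Sigma$ is a disc, $H^1(\overline\Sigma;\Z)=0$, so the connecting homomorphism identifies $H^1(\overline\Sigma,\del_\infty\dot\Sigma;\Z)$ with the quotient $H^0(\del_\infty\dot\Sigma;\Z)/H^0(\overline\Sigma;\Z)$; under this identification the class $[w^*\lambda\circ j]$ is represented by the tuple of boundary constants of $\widetilde f$. Because the components of $\del_\infty\dot\Sigma$ are threaded together by the Legendrian arcs of $\del_0\dot\Sigma$, and the potential differences across Legendrian arcs are determined by integrating the closed form $w^*\lambda\circ j$ along those arcs, this tuple of constants collapses modulo a global additive constant to the zero class. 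The main technical subtlety, which is precisely what necessitates the relative formulation and the input of Theorem \ref{thm:subsequence-open}, is the unbounded behaviour $f\sim T_i\tau+c_i$ at each puncture; the renormalisation step is what turns the vanishing local asymptotic charges into well-defined boundary data on $\del_\infty\dot\Sigma$ representing the trivial relative class.
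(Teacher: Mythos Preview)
The paper gives no detailed proof; it simply says the corollary is ``an immediate consequence of this definition and Theorem \ref{thm:subsequence-open}.'' The intended argument is the purely topological one already contained in your steps 1--2: for genus zero with only boundary punctures, $\dot\Sigma = D^2\setminus\{z_0,\ldots,z_k\}$ is contractible, so $H^1(\dot\Sigma)=0$ and the closed one-form $w^*\lambda\circ j$ is globally exact. The role of Theorem \ref{thm:subsequence-open} is to make the (refined, relative) charge class well-defined in the first place; once it is, vanishing in genus zero is immediate from contractibility.

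Your steps 3--4 attempt to verify vanishing in $H^1(\overline\Sigma,\del_\infty\dot\Sigma)$ directly, but step 4 has a gap. When $\del_\infty\dot\Sigma$ consists of $k{+}1$ disjoint intervals, the long exact sequence gives $H^1(\overline\Sigma,\del_\infty\dot\Sigma)\cong\R^k$, which is nonzero for $k\geq 1$. Your claim that the tuple $(c_0,\ldots,c_k)$ ``collapses modulo a global additive constant to the zero class'' amounts to asserting that all the $c_i$ coincide, and the justification offered---that the $\del_\infty$-components are ``threaded together by the Legendrian arcs''---does not establish this. The form $w^*\lambda\circ j$ does \emph{not} vanish along the Legendrian boundary arcs (only $w^*\lambda$ does; recall $(w^*\lambda\circ j)(\partial_x)=\lambda(\partial w/\partial y)$ in adapted isothermal coordinates), and the primitive $f$ diverges like $T_i\tau$ at each end, so the integral of $w^*\lambda\circ j$ along an arc $\overline{z_iz_{i+1}}$ is not finite and cannot be used to equate $c_i$ with $c_{i+1}$. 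In short: steps 1--2 are correct and match the paper's implicit argument; steps 3--4 are unnecessary, and step 4 as written does not go through.
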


\subsection{Comparison of compactifications of the two moduli spaces}
\label{subsec:comparison}

Now we consider contact instantons $w$ arising from a pseudoholomorphic curves on symplectization $(w,f)$. In particular all such $w$ has
its the charge class
$[w^*\lambda \circ j] = 0$ in $H^1(\dot \Sigma, \Z)$ but can be lifted to
$H^1(\overline{\Sigma}, \del_\infty \dot \Sigma)$ where $\overline\Sigma$
is the real blow-up of $\dot \Sigma$ along the punctures. We denote the
moduli space of such contact instantons of finite energy by
$$
\widetilde \CM^{\text{\rm exact}}(\dot \Sigma, Q,J;\vec \gamma^-, \vec \gamma^+)
$$
and
$$
\CM^{\text{\rm exact}}(\dot \Sigma, Q,J;\vec \gamma^-, \vec \gamma^+)
: = \widetilde \CM^{\text{\rm exact}}(\dot \Sigma, Q,J;\vec \gamma^-, \vec \gamma^+)/\text{\rm Aut}(\dot \Sigma)
$$
the set of isomorphism classes thereof. We have
natural forgetful map $(w,f) \mapsto w$ which descends to
$$
\mathfrak{forget}: \CM(\dot \Sigma, M,\widetilde J;\vec \gamma^-, \vec \gamma^+)
\to \CM^{\text{\rm exact}}(\dot \Sigma, Q,J;\vec \gamma^-, \vec \gamma^+).
$$
By definition of the equivalence relation on $\widetilde \CM(\dot \Sigma, M,\widetilde J;\vec \gamma^-, \vec \gamma^+)$,
it follows that this forgetful map is a bijective correspondence, \emph{provided $\dot \Sigma$ is
connected}.

However when one considers the SFT compactification as in \cite{EGH}, \cite{behwz}, one needs to
consider the case of pseudoholomorphic curves with disconnected domain.
So let us consider such cases. Suppose we have the union
$$
\dot \Sigma = \bigsqcup_{i=1}^k \dot \Sigma_i
$$
of connected components with $k \geq 2$. We denote by
$$
\overline \CM(\dot \Sigma, M,\widetilde J;\vec \gamma^-, \vec \gamma^+)
$$
and
$$
\overline \CM^{\text{\rm exact}}(\dot \Sigma, Q,J;\vec \gamma^-, \vec \gamma^+)
$$
the stable map compactification respectively. The following proposition shows
the precise relationship between the two. We know that each story carries at least
one non-cylindrical component.

\begin{prop}\label{prop:Rk-1fibration}
Let $1 \leq \ell \leq k$ be the number of connected components which are
not cylinderical. The forgetful map $\mathfrak{forget}$ is a
principle $\R^{\ell-1}$ fibration.
\end{prop}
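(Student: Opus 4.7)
The plan is to study the fiber of the forgetful map $\mathfrak{forget}$ over a fixed exact contact instanton $w$, and show that it is naturally an $\R^{\ell-1}$-torsor. I would first parameterize the set of all pseudoholomorphic lifts $(w,f)$: since $w$ is a contact instanton, the one-form $w^*\lambda \circ j$ is closed on $\dot\Sigma$, and by the vanishing of the charge class it is exact on each connected component, so on each $\dot\Sigma_i$ there exists $f_i$ with $df_i = (w^*\lambda \circ j)|_{\dot\Sigma_i}$, unique up to an additive constant $c_i \in \R$. Assembling these, the space of smooth lifts $f:\dot\Sigma \to \R$ is a principal homogeneous space for $\R^k$, with coordinates $(c_1,\ldots,c_k)$.

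Next I would identify the group of equivalences acting on this $\R^k$-torsor that must be divided out to land in $\overline\CM_{g,\ell}(M,\widetilde J;\vec\gamma^-,\vec\gamma^+)$. Two independent sources contribute. First, the global $\R$-translation along the $s$-direction of $Q\times\R$ acts diagonally by $(c_1,\ldots,c_k)\mapsto(c_1+t,\ldots,c_k+t)$; this is the $\R$-quotient defining $\CM$ from $\widetilde\CM$ in \eqref{eq:CMkell}. Second, each cylindrical component $\dot\Sigma_i$ (where $w|_{\dot\Sigma_i}$ factors through a Reeb orbit) lifts to a trivial cylinder in $Q\times\R$ of the form $(\gamma(Tt),T\tau+c_i)$, and this trivial cylinder carries an intrinsic $\R$-automorphism that modifies $c_i$ alone. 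Under the stable map equivalence of \cite{EGH,behwz}, this automorphism lies in the stabilizer, so each of the $k-\ell$ cylindrical components contributes an independent $\R$-factor which is collapsed in $\overline\CM$.

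Together, these give a rank $k-\ell+1$ subgroup $G = \R_{\mathrm{diag}}\oplus\R^{k-\ell}\hookrightarrow\R^k$ acting freely on the $\R^k$ of lifts, and the fiber of $\mathfrak{forget}$ is the quotient $\R^k/G$. A direct linear-algebra check gives $\R^k/G\cong\R^{\ell-1}$, naturally identified with the additive constants on the non-cylindrical components modulo the overall diagonal translation; in particular the bijection noted just before the proposition is recovered as the case $\ell=1$. To upgrade this pointwise picture to a principal $\R^{\ell-1}$-fibration, I would verify that the identification varies smoothly with $w$ using the uniform exponential convergence of $f$ at the punctures established in Section \ref{sec:exponential-convergence} and the smooth dependence of the primitive $f$ on $w$ modulo constants.

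The main technical obstacle will be the second step: formalizing rigorously the claim that trivial cylinder components contribute independent $\R$-automorphisms in the stable map equivalence defining $\overline\CM$. The precise convention differs between \cite{EGH,behwz} and Pardon \cite{pardon:contacthomology}, and handling it requires unpacking how a holomorphic building is defined when several of its components within a single level are trivial cylinders over closed Reeb orbits, and how such components are labeled, collapsed, or treated as connectors between levels. Once this bookkeeping is carried out — and once one checks that no further automorphisms act on non-cylindrical components (which can be read off from the simpleness of generic Reeb orbits together with the asymptotic analysis of Theorem \ref{thm:behavior-intro}) — the computation of the fiber and the verification that $\mathfrak{forget}$ is a principal $\R^{\ell-1}$-bundle should be essentially formal.
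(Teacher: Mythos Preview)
Your approach is essentially the same as the paper's, just framed slightly differently. You set up an $\R^k$-torsor of lifts and quotient by $G=\R_{\mathrm{diag}}\oplus\R^{k-\ell}$; the paper instead writes down the $\R^{\ell-1}$-action directly, realizing $\R^{\ell-1}\cong\{(s_1,\ldots,s_\ell)\mid \sum s_i=0\}$ acting by independent $s$-translation on the $\ell$ non-cylindrical components, and simply observes that on a trivial cylinder the $s$-translation is absorbed by a domain reparameterization $\tau\mapsto\tau-s_i/T$, so those components contribute nothing. The concerns you flag about bookkeeping conventions in \cite{EGH,behwz} versus \cite{pardon:contacthomology}, and about ruling out further automorphisms via simpleness of Reeb orbits, do not actually arise: the paper works with a single explicit equivalence relation (one global $s_0$ plus componentwise domain reparameterization) and the freeness/transitivity of the $\R^{\ell-1}$-action follows immediately from the fact that a non-cylindrical component has no $\R$-symmetry absorbing a nonzero translation.
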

\begin{proof} Recall the equivalence relation on $\widetilde \CM(\dot \Sigma, M,\widetilde J;\vec \gamma^-, \vec \gamma^+)$:
We say two elements $(u_1, \cdots, u_k) \sim (u_1', \cdots, u_k')$ if there is a $s_0 \in \R$ and a reparameterization
$\varphi = (\varphi_1,\cdots, \varphi_k)$ of $\dot \Sigma$ such that
$$
(w_i',f_i') = (w\circ \varphi_i, f_i\circ \varphi_i - s_0)
$$
for all $i = 1, \cdots, k$. We define the map
$$
T_{\vec s}: \widetilde  \CM(\dot \Sigma, M,\widetilde J;\vec \gamma^-, \vec \gamma^+)\to
\widetilde \CM(\dot \Sigma, M,\widetilde J;\vec \gamma^-, \vec \gamma^+)
$$
by
$$
T_{\vec s}(u_1, \cdots,u_k) = (T_{s_1}u_1, \cdots, T_{s_k}u_k)
$$
where $T_{s_i}u_i = T_{s_i}(w_i,f_i) : = (w_i,f_i - s_i)$. Note that this action is trivial
for each trivial component.

Then all the following maps
$$
T_{\vec s}(u) = (T_{s_1}u_1, \cdots, T_{s_k}u_k)
$$
project down to the same element
$[(w_1,\cdots, w_k)] \in \CM^{\text{\rm exact}}(\dot \Sigma, Q,J;\vec \gamma^-, \vec \gamma^+)$.
The abelian group $\R^{\ell-1}$
admits a free transitive action of $\R^{\ell -1}$ on each fiber
$$
\mathfrak{forget}^{-1}([(w_1,\cdots, w_k)])
$$
given by
$$
([(u_1,\cdots, u_k)], (s_1,\cdots s_k))\mapsto ([(T_{s_1}u_1, \cdots, T_{s_k}u_k)]
$$
where we realize
$$
\R^{\ell-1} \cong \{(s_1, \cdots, s_\ell)\mid s_1 + \cdots + s_\ell = 0\}
$$
This finishes the proof.
\end{proof}

\section{Fredholm theory and index calculations}
\label{sec:Fredholm}

In this section, we work out the Fredholm theories of
pseudoholomorphic curves on symplectization. We will adapt the
exposition given in \cite{oh:contacton} \cite{oh-savelyev} for the case of contact instantons to that of pseudoholomorphic curves thereon as the zero-temperature limit
lcs instantons considered in \cite{oh-savelyev} just
by incorporating the presence of the $\R$-factor
in the product $M = Q^{2n-1} \times \R$.

We divide our discussion into the closed case and the punctured case.

\subsection{Calculation of the linearization map}
\label{subsec:linearization}

Let $\Sigma$ be a closed Riemann surface and $\dot \Sigma$ be its
associated punctured Riemann surface.  We allow the set of whose punctures
to be empty, i.e., $\dot \Sigma = \Sigma$.
We would like to regard the assignment $u \mapsto \delbar_J u$ which can be
decomposed into
$$
u= (w,f) \mapsto \left(\delbar^\pi w, w^*\lambda \circ j - f^*ds\right)
$$
for a map $w: \dot \Sigma \to Q$ as a section of the (infinite dimensional) vector bundle
over the space of maps of $w$. In this section, we lay out the precise relevant off-shell framework
of functional analysis.

Let $(\dot \Sigma, j)$ be a punctured Riemann surface, the set of whose punctures
may be empty, i.e., $\dot \Sigma = \Sigma$ is either a closed or a punctured
Riemann surface. We will fix $j$ and its associated K\"ahler metric $h$.

We consider the map
$$
\Upsilon(w,f) = \left(\delbar^\pi w, w^*\lambda \circ j - f^*ds \right)
$$
which defines a section of the vector bundle
$$
\CH \to \CF = C^\infty(\Sigma,Q)
$$
whose fiber at $u \in C^\infty(\Sigma,Q \times \R)$ is given by
$$
\CH_u: = \Omega^{(0,1)}(u^*\xi) \oplus \Omega^{(0,1)}(u^*\CV).
$$
Recalling $\CV_{(q,s)} = \span_\R\{R_\lambda, \frac{\del}{\del s}\}$,
we have a natural isomorphism
$$
\Omega^{(0,1)}(u^*\CV) \cong \Omega^1(\Sigma) = \Gamma(T^*\R)
$$
via the map
$$
\alpha \in \Gamma(T^*\R) \mapsto \frac12\left(\alpha \cdot \frac{\del}{\del s}
+ \alpha \circ j \cdot R_\lambda\right).
$$
Utilizing this isomorphism,
we decompose $\Upsilon = (\Upsilon_1,\Upsilon_2)$ where
\be\label{eq:upsilon1}
\Upsilon_1: \Omega^0(w^*TQ) \to \Omega^{(0,1)}(w^*\xi); \quad \Upsilon_1(w) = \delbar^\pi(w)
\ee
and
\be\label{eq:upsilon2}
\Upsilon_2: \Omega^0(w^*TQ) \to \Omega^1(\Sigma); \quad \Upsilon_2(w) = w^*\lambda \circ j - f^*ds.
\ee

We first compute the linearization map which defines a linear map
$$
D\Upsilon(u) : \Omega^0(w^*TQ) \oplus \Omega^0(f^*T\R) \to \Omega^{(0,1)}(w^*\xi) \oplus \Omega^1(\dot \Sigma)
$$
where we have
$$
T_u \CF = \Omega^0(w^*TQ) \oplus \Omega^0(f^*T\R).
$$

For the optimal expression of the linearization map and its relevant
calculations, we use the $\mathfrak{lcs}$-fication connection $\nabla$ of $(Q \times \R,\lambda,J)$
which is the lcs-lifting of contact triad connection introduced in \cite{oh-wang:CR-map1}.
We refer readers to \cite{oh-wang:CR-map1}, \cite{oh:contacton},
\cite{oh-savelyev} for the unexplained notations
appearing in our tensor calculations during the proof.

We define the covariant differential
$$
\delbar^{\nabla^\pi} := \frac12\left(\nabla^\pi + J \nabla^\pi \circ j\right).
$$

\begin{thm}[Theorem 10.1 \cite{oh-savelyev}] \label{thm:linearization} We decompose $d\pi = d^\pi w + w^*\lambda\otimes R_\lambda$
and $Y = Y^\pi + \lambda(Y) R_\lambda$, and $X = (Y, v) \in \Omega^0(w^*T(Q \times \R))$.
Denote $\kappa = \lambda(Y)$ and $\upsilon = ds(v)$. Then we have
\bea
D\Upsilon_1(w)(Y,v) & = & \delbar^{\nabla^\pi}Y^\pi + B^{(0,1)}(Y^\pi) +  T^{\pi,(0,1)}_{dw}(Y^\pi) \nonumber\\
&{}& \quad + \frac{1}{2}\kappa \cdot  \left((\CL_{R_\lambda}J)J(\del^\pi w)\right)
\label{eq:Dwdelbarpi}\\
D\Upsilon_2(u)(Y,v) & = &  w^*(\CL_Y \lambda) \circ j- \CL_v ds = d\kappa \circ j - d\upsilon
+ w^*(Y \rfloor d\lambda) \circ j
\nonumber\\
\label{eq:Dwddot}
\eea
where $B^{(0,1)}$ and $T_{dw}^{\pi,(0,1)}$ are the $(0,1)$-components of $B$ and
$T_{dw}^\pi$, where $B, \, T_{dw}^\pi: \Omega^0(w^*TQ) \to \Omega^1(w^*\xi)$ are
zero-order differential operators given by
$$
B(Y) =
- \frac{1}{2}  w^*\lambda \left((\CL_{R_\lambda}J)J Y\right)
$$
and
$$
T_{dw}^\pi(Y) = \pi T(Y,dw)
$$
respectively.
\end{thm}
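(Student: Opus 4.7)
The plan is to linearize the two components $\Upsilon_1$ and $\Upsilon_2$ separately at a given map $u = (w,f)$ in a variation direction $X = (Y,v)$. For $\Upsilon_2$ the calculation reduces to Cartan's magic formula, while for $\Upsilon_1$ I will exploit the defining properties of the contact triad connection $\nabla$ recalled in Theorem \ref{thm:connection} and Corollary \ref{cor:connection}, together with the standard first variation formula for $dw$ under a smooth deformation.

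For the second component, take smooth families $w_s$, $f_s$ with $\frac{d}{ds}|_{s=0} w_s = Y$ and $\frac{d}{ds}|_{s=0} f_s = v$. By Cartan's formula,
\begin{equation*}
\frac{d}{ds}\Big|_{s=0} w_s^*\lambda \;=\; w^*\CL_Y \lambda \;=\; d(\lambda(Y)) + w^*(Y \rfloor d\lambda) \;=\; d\kappa + w^*(Y \rfloor d\lambda),
\end{equation*}
and $\frac{d}{ds}|_{s=0} f_s^*ds = d\upsilon$. Post-composing the first with $j$ and subtracting gives exactly \eqref{eq:Dwddot}. Note that $Y^\pi$ enters only through the term $w^*(Y\rfloor d\lambda)\circ j$, and the Reeb component $\kappa R_\lambda$ produces the exact piece $d\kappa \circ j$ (using $R_\lambda \rfloor d\lambda = 0$).

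For the first component, view $dw$ as a section of $T^*\dot\Sigma \otimes w^*TM$ and use the standard identity
\begin{equation*}
\nabla_s (dw_s)(e)\big|_{s=0} \;=\; \nabla_e Y + T(Y, dw(e)),
\end{equation*}
which follows from the definition of torsion and $[\partial_s, e] = 0$. I then split $Y = Y^\pi + \kappa R_\lambda$ and linearize $\delbar^\pi w = \tfrac{1}{2}(\pi \, dw + J\, \pi\, dw \circ j)$ by differentiating through $\pi$ and $J$. The crucial inputs are: (i) $\nabla^\pi$ is a Hermitian connection on $(\xi, J, d\lambda)$, so $\nabla^\pi$ commutes with $J$, which cleanly produces $\delbar^{\nabla^\pi} Y^\pi$; (ii) the identity $\nabla_Y R_\lambda = \tfrac{1}{2}(\CL_{R_\lambda}J)JY$ from Corollary \ref{cor:connection}, which controls the part of $\nabla_e(\kappa R_\lambda)$ whose $\xi$-projection is not simply $d\kappa \otimes (\text{something})$ -- and whose contraction with $(\pi dw, J \pi dw \circ j)$ yields exactly the term $\tfrac{1}{2}\kappa (\CL_{R_\lambda}J)J(\del^\pi w)$ after identifying $\del^\pi w = (d^\pi w)^{(1,0)}_J$; (iii) the torsion splitting, where the $\xi$-part contributes $T^\pi_{dw}(Y^\pi)$ and the terms involving $\lambda(T)$ combine with the $\pi dw = w^*\lambda$ piece to give $B(Y^\pi)$ via $B(Y) = -\tfrac{1}{2} w^*\lambda((\CL_{R_\lambda}J)JY)$. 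Taking the $J$-antilinear part $(\cdot)^{(0,1)}$ of these zero-order operators yields $B^{(0,1)}$ and $T^{\pi,(0,1)}_{dw}$. Finally, the variable $v$ does not contribute to $D\Upsilon_1$ because the vertical factor $\R\{\partial/\partial s\}$ lies entirely in the kernel of $\pi$.

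The main obstacle is bookkeeping rather than any deep analytic input: one must carefully isolate the zero-order corrections arising from the non-parallelism of $\pi$ along the Reeb direction, identify them with the tensor $B$, and correctly propagate the $(0,1)$-projection through operators such as $(\CL_{R_\lambda}J)J$, which anti-commutes with $J$ and therefore shifts $(1,0)$ and $(0,1)$ parts. Once these identifications are made, the formula \eqref{eq:Dwdelbarpi} follows from a direct tensorial computation; no integration by parts, nor any use of the contact instanton equation, is required -- this is purely an off-shell statement in which the covariant nature of the triad connection makes the resulting expression as simple as possible.
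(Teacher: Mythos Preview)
The paper does not supply a proof of this theorem; it is quoted as \cite[Theorem 10.1]{oh-savelyev} and the reader is referred to \cite{oh-wang:CR-map1}, \cite{oh:contacton}, \cite{oh-savelyev} for the computation. Your outline is the standard tensorial derivation those references carry out, and the two components are handled exactly as one would expect: Cartan's formula for $\Upsilon_2$, and the first-variation identity $\nabla_s(dw_s)(e) = \nabla_e Y + T(Y,dw(e))$ together with the triad-connection axioms for $\Upsilon_1$.

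One expository slip is worth flagging. In your item (iii) you attribute the zero-order term $B(Y^\pi) = -\tfrac12 w^*\lambda\,(\CL_{R_\lambda}J)JY^\pi$ to ``the terms involving $\lambda(T)$''. That is not where it comes from. The Reeb component $\lambda(T)$ of the torsion disappears upon projecting to $\xi$; what survives from the torsion is exactly $T^\pi_{dw}(Y^\pi)$ (and nothing from $\kappa R_\lambda$, since $T(R_\lambda,\cdot)=0$). The operator $B$ arises instead from the variation of the projection $\Pi = I - \lambda\otimes R_\lambda$: differentiating $d^\pi w_s = \Pi\, dw_s$ produces a term $(\nabla_Y\Pi)(dw)$ whose $\xi$-part is $-\lambda(dw)\,\nabla_Y R_\lambda = -\tfrac12 w^*\lambda\,(\CL_{R_\lambda}J)JY^\pi$, using Corollary~\ref{cor:connection}(1). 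This is precisely $B(Y^\pi)$. Once this bookkeeping is corrected, your sketch matches the intended computation in every respect, including the observation that $(\CL_{R_\lambda}J)J$ anti-commutes with $J$ and hence sends $\del^\pi w = (d^\pi w)^{(1,0)}$ to a $(0,1)$-form, which is what produces the $\tfrac12\kappa\,(\CL_{R_\lambda}J)J(\del^\pi w)$ term.
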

\emph{We often omit $w^*$ from the Lie derivative $\CL_Y \lambda$ and from the interior product
$Y \rfloor d\lambda$ when $Y$ is already a vector field along the map $w$ below.}

We can also express the operator $D\Upsilon(u)$
in the following matrix form
\be\label{eq:matrix-form}
D\Upsilon(u)= \left(
\begin{matrix}
 \delbar^{\nabla^\pi} + B^{(0,1)} +  T^{\pi,(0,1)}_{dw} &, & \frac{1}{2}(\cdot) \cdot
  \left((\CL_{R_\lambda}J)J(\del^\pi w)\right)\\
\left((\cdot)^\pi \rfloor d\lambda\right)\circ j &, & \delbar
\end{matrix}
\right)
\ee
with respect to the decomposition
$$
\left(Y, \upsilon \frac{\del}{\del s}\right) = \left(Y^\pi+\kappa R_\lambda, \upsilon\frac{\del}{\del s}\right)
\cong (Y^\pi, \upsilon + i\kappa)
$$
in terms of the splitting
$$
T(Q\times \R) = \xi \oplus (\span\{R_\lambda\} \oplus T\R) \cong \xi \oplus \C.
$$

Now we evaluate $D\Upsilon_1(w)$ more explicitly.
We have  the expression of $B^{(0,1)}(Y)$
$$
B^{(0,1)}(Y) =
-\frac{1}{4}\left(w^*\lambda\,  \pi ((\CL_{R_\lambda}J)J Y)
+ w^*\lambda \circ j\, \pi (\CL_{R_\lambda}J)Y \right).
$$
\begin{rem}\label{rem:choice-of-connection}
Abstractly the linearization of the equation is well-defined
on shell which does not depend on the choice of connections.
The upshot of making a good choice of connection is to have a
good formula that makes it easier to extract its consequence from it.
\end{rem}

\subsection{The punctured case}
\label{subsec:punctured}

We consider some choice of weighted Sobolov spaces
$$
\CW^{k,p}_{\delta;\eta}\left(\dot \Sigma,Q \times \R;\vec \gamma^+, \vec \gamma^-\right)
$$
as he off-shell function space and linearize the map
$$
(w,\widetilde f) \mapsto \left(\delbar^\pi w,  d\widetilde f\right).
$$
This linearization operator then becomes cylindrical in cylindrical
coordinates near the punctures.

The local model of the tangent space  of $\CW^{k,p}_\delta(\dot \Sigma, Q;J;\gamma^+,\gamma^-)$ at
$w \in C^\infty_\delta(\dot \Sigma,Q) \subset W^{k,p}_\delta(\dot \Sigma, Q)$ is given by
\be\label{eq:tangentspace}
\Gamma_{s^+,s^-} \oplus W^{k,p}_\delta(w^*TQ)
\ee
where $W^{k,p}_\delta(w^*TQ)$ is the Banach space
\beastar
&{} & \{Y = (Y^\pi, \lambda(Y)\, R_\lambda)
\mid e^{\frac{\delta}{p}|\tau|}Y^\pi \in W^{k,p}(\dot\Sigma, w^*\xi), \,
\lambda(Y) \in W^{k,p}(\dot \Sigma, \R)\}\\
& \cong & W^{k,p}(\dot \Sigma, \R) \cdot R_\lambda(w) \oplus W^{k,p}(\dot\Sigma, w^*\xi).
\eeastar
Here we measure the various norms in terms of the triad metric of the triad $(Q,\lambda,J)$.

To describe the choice of $\delta > 0$, we need to recall the
\emph{covariant linearization} of the map
$
D\Phi_{\lambda, T}: W^{1,2}(z^*\xi) \to L^2(z^*\xi)
$
of the map
$$
\Phi_{\lambda,T}: z \mapsto \dot z - T\, R_\lambda(z)
$$
for a given $T$-periodic Reeb orbit $(T,z)$. The operator has the
expression
\be\label{eq:DUpsilon}
D\Phi_{\lambda, T} = \nabla_t^\pi - \frac{T}{2}(\CL_{R_\lambda}J) J
\ee
where $\nabla_t^\pi$ is the covariant derivative
with respect to the pull-back connection $z^*\nabla^\pi$ along the Reeb orbit
$z$ and $(\CL_{R_\lambda}J) J$ is a (pointwise) symmetric operator with respect to
the triad metric. (See Lemma 3.4 \cite{oh-wang:connection}.)
\begin{rem}\label{rem:covariant-linearlization}
 Again this \emph{covariant} linearization map
can be defined along any smooth curve and does not depend on
the choice of connection along the Reeb chords.
\end{rem}

We choose $\delta> 0$ so that $0 < \delta/p < 1$ is smaller than the
spectral gap
\be\label{eq:gap}
\text{gap}(\gamma^+,\gamma^-): = \min_{i,j}
\{d_{\text H}(\text{spec}A_{(T_i,z_i)},0),\, d_{\text H}(\text{spec}A_{(T_j,z_j)},0)\}.
\ee

We now
provide details of the Fredholm theory and the index calculation.
Fix an elongation function $\rho: \R \to [0,1]$
so that
\beastar
\rho(\tau) & = & \begin{cases} 1 \quad & \tau \geq 1 \\
0 \quad & \tau \leq 0
\end{cases} \\
0 & \leq & \rho'(\tau) \leq 2.
\eeastar

Then we consider sections of $w^*TQ$ by
\be\label{eq:barY}
\overline Y_i = \rho(\tau - R_0) R_\lambda(\gamma^+_k(t)),\quad
\underline Y_j = \rho(\tau + R_0) R_\lambda(\gamma^+_k(t))
\ee
and denote by $\Gamma_{s^+,s^-} \subset \Gamma(w^*TQ)$ the subspace defined by
$$
\Gamma_{s^+,s^-} = \bigoplus_{i=1}^{s^+} \R\{\overline Y_i\} \oplus \bigoplus_{j=1}^{s^-} \R\{\underline Y_j\}.
$$
Let $k \geq 2$ and $p > 2$. We denote by
$$
\CW^{k,p}_\delta(\dot \Sigma, Q;J;\gamma^+,\gamma^-), \quad k \geq 2
$$
the Banach manifold such that
$$
\lim_{\tau \to \infty}w((\tau,t)_i) = \gamma^+_i(T_i(t+t_i)), \quad
\lim_{\tau \to - \infty}w((\tau,t)_j) = \gamma^-_j(T_j(t-t_j))
$$
for some $t_i, \, t_j \in S^1$, where
$$
T_i = \int_{S^1} (\gamma^+_i)^*\lambda, \, T_j = \int_{S^1} (\gamma^-_j)^*\lambda.
$$
Here $t_i,\, t_j$ depends on the given analytic coordinate and the parameterization of
the Reeb orbits.

Now for each given $w \in \CW^{k,p}_\delta:= \CW^{k,p}_\delta(\dot \Sigma, Q;J;\gamma^+,\gamma^-)$,
we consider the Banach space
$$
\Omega^{(0,1)}_{k-1,p;\delta}(w^*\xi)
$$
the $W^{k-1,p}_\delta$-completion of $\Omega^{(0,1)}(w^*\xi)$ and form the bundle
$$
\CH^{(0,1)}_{k-1,p;\delta}(\xi) = \bigcup_{w \in \CW^{k,p}_\delta} \Omega^{(0,1)}_{k-1,p;\delta}(w^*\xi)
$$
over $\CW^{k,p}_\delta$. Then we can regard the assignment
$$
\Upsilon_1: (w,f) \mapsto \delbar^\pi w
$$
as a smooth section of the bundle $\CH^{(0,1)}_{k-1,p;\delta}(\xi) \to \CW^{k,p}_\delta$.

Furthermore the assignment
$$
\Upsilon_2: (w,f) \mapsto w^*\lambda \circ j - f^*ds
$$
defines a smooth section of the bundle
$$
\Omega^1_{k-1,p}(u^*\CV) \to \CW^{k,p}_\delta.
$$
We have already computed the linearization of each of these maps in the previous section.

With these preparations, the following is a corollary of exponential estimates established
in \cite{oh-wang:CR-map1}.

\begin{prop}[Corollary 6.5 \cite{oh-wang:CR-map1}]\label{prop:on-containedin-off}
Assume $\lambda$ is nondegenerate.
Let $w:\dot \Sigma \to Q$ be a contact instanton and let $w^*\lambda = a_1\, d\tau + a_2\, dt$.
Suppose
\bea
\lim_{\tau \to \infty} a_{1,i} = -Q(p_i), &{}& \, \lim_{\tau \to \infty} a_{2,i} = T(p_i)\nonumber\\
\lim_{\tau \to -\infty} a_{1,j} = -Q(p_j) , &{}& \, \lim_{\tau \to -\infty} a_{2,j} = T(p_j)
\eea
at each puncture $p_i$ and $q_j$.
Then $w \in \CW^{k,p}_\delta(\dot \Sigma, Q;J;\gamma^+,\gamma^-)$.
\end{prop}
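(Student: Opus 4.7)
The plan is to reduce the claim to two separate assertions about the behavior of $w$ near each puncture: (a) $w(\tau,\cdot)$ converges to a Reeb-orbit parameterization as $|\tau|\to\infty$, and (b) the convergence happens exponentially fast with rate strictly greater than $\delta/p$, so that the difference from the asymptotic Reeb orbit lies in the weighted Sobolev completion used to define $\CW^{k,p}_\delta$.

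First, I would invoke the subsequence convergence theorem (Theorem \ref{thm:subsequence}). The hypotheses $a_{1,i}\to -Q(p_i)$ and $a_{2,i}\to T(p_i)$ together with finite $\pi$-energy (which is built into the notion of finite-energy contact instanton appearing throughout the paper) give a massless instanton limit $w_\infty$ with $w_\infty^*\lambda = -Q\,d\tau + T\,dt$. Under the standing nondegeneracy assumption for $\lambda$, and since the prescription of $T(p_i)$ in the setup identifies the asymptotic period, the classification of massless instantons on the cylinder (as recalled in the closed-string discussion after Theorem \ref{thm:subsequence}) forces the limit to be a reparameterized closed Reeb orbit $\gamma^+_i(T_i(t+t_i))$ (similarly $\gamma^-_j$ at the negative punctures).

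Second, I would upgrade subsequence convergence to full $C^0$-convergence by the argument of Proposition \ref{prop:czero-convergence}, which is stated in the open-string context but whose proof transplants verbatim to the closed-string case by replacing $[0,1]$ with $S^1$; the crux is only that the asymptotic Reeb orbit is nondegenerate. Once the $C^0$-convergence is in place, Lemma \ref{lem:exp-decay-lemma} combined with the $L^2$-exponential decay of $d^\pi w$ (three-interval method, Section \ref{subsec:L2-expdecay}) gives the $L^2$-exponential decay of $\alpha-T$, and hence by bootstrap against the coupled elliptic system \eqref{eq:equation-for-zeta0}--\eqref{eq:equation-for-alpha} and the $C^{k,\delta}$ estimates of Theorem \ref{thm:local-regularity}, the $C^\infty$-exponential decay of the full derivative $dw - R_\lambda(w)\cdot T\,dt$ on each cylindrical end.

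Third, I would match decay rates to the weight. The rate of exponential decay produced by the three-interval method is governed by the smallest positive eigenvalue modulus of the asymptotic operator at the relevant Reeb orbit, and equals precisely $\operatorname{gap}(\vec\gamma^+,\vec\gamma^-)$ as defined in \eqref{eq:gap}. Since $\delta$ was chosen so that $\delta/p$ is strictly smaller than this gap, the exponentially decaying $\xi$-component $Y^\pi := \exp_{\gamma}^{-1}(w^\pi)$ (using the exponential map of the triad metric in a tubular neighborhood of the asymptotic orbit) lies in $W^{k,p}_\delta(w^*\xi)$, while $\lambda(Y)$ lies in $W^{k,p}$ without any weight needed. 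This is exactly the local model \eqref{eq:tangentspace} for the tangent space of $\CW^{k,p}_\delta$ at $w$, so $w$ lies in that Banach manifold.

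The main obstacle is the third step: one must carefully verify that the rate of exponential decay extracted from the three-interval method is at least the spectral gap, and that the precise choice $\delta/p < \operatorname{gap}(\vec\gamma^+,\vec\gamma^-)$ together with the $C^\infty$ convergence of $dw - R_\lambda(w)T\,dt$ suffices to place every derivative (through order $k$) of $Y^\pi$ in the $e^{\delta|\tau|/p}$-weighted $L^p$-space. This is where the alternating boot-strap of Section \ref{sec:Ckdelta-estimates}, propagating an initial $L^2$-exponential estimate to all higher $C^{k,\delta}$-norms with the same exponential rate, is essential; it ensures the weighted norms are finite simultaneously at all orders $\ell \le k$.
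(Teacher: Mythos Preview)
Your proposal is correct and follows essentially the same route the survey itself sketches: the paper gives no self-contained proof here but simply records the proposition as ``a corollary of exponential estimates established in \cite{oh-wang:CR-map1},'' and your three-step outline (subsequence convergence $\Rightarrow$ $C^0$ convergence to a Reeb orbit $\Rightarrow$ $C^\infty$-exponential decay via the three-interval method and bootstrap, with rate governed by the spectral gap \eqref{eq:gap}) is precisely the content of those estimates as recapitulated in Sections~\ref{sec:subsequence-convergence} and~\ref{sec:exponential-convergence} of the survey.

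One small caution: the proposition as stated allows arbitrary asymptotic charges $Q(p_i)$, but the target Banach manifold $\CW^{k,p}_\delta(\dot\Sigma,Q;J;\gamma^+,\gamma^-)$ is defined in the survey only for maps asymptotic to honest Reeb orbits, i.e.\ with $Q=0$ at each puncture. Your argument in the first step implicitly assumes this (you pass directly from the massless limit to a reparameterized Reeb orbit), which is fine in the intended context but worth flagging: if $Q\neq 0$ the massless limit is a spiraling instanton along the Reeb core and the conclusion as literally stated would need the more general off-shell space with prescribed charge class from Subsection~\ref{subsec:prescribed-charge}.
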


Now we are ready to describe the moduli space of lcs instantons with prescribed
asymptotic condition as the zero set
\be\label{eq:defn-MM}
\CM(\dot \Sigma, Q;J;\gamma^+,\gamma^-) = \left(\CW^{k,p}_\delta(\dot \Sigma, Q;J;\gamma^+,\gamma^-)
\oplus \CW^{k,p}_\delta(\dot \Sigma, \R)\right) \cap \Upsilon^{-1}(0)
\ee
whose definition does not depend on the choice of $k, \, p$ or $\delta$ as long as $k\geq 2, \, p>2$ and
$\delta > 0$ is sufficiently small. One can also vary $\lambda$ and $J$ and define the universal
moduli space whose detailed discussion is postponed.

In the rest of this section, we establish the Fredholm property of
the linearization map
$$
D\Upsilon_{(\lambda,T)}(u): \Omega^0_{k,p;\delta}(u^*T(Q \times \R);J;\gamma^+,\gamma^-) \to
\Omega^{(0,1)}_{k-1,p;\delta}(w^*\xi) \oplus \Omega^{(0,1)}_{k-1,p}(u^*T\R)
$$
and compute its index. Here we also denote
$$
\Omega^0_{k-1,p;\delta}(u^*T(Q \times \R);J;\gamma^+,\gamma^-) =
W^{k-1,p}_\delta(u^*T(Q \times \R);J;\gamma^+,\gamma^-)
$$
for the semantic reason.

For this purpose, we remark that
as long as the set of punctures is non-empty, the symplectic vector bundle
$w^*\xi \to \dot \Sigma$ is trivial. We recall that $\overline \Sigma$ stands for the real blow-up
of the boundary punctured Riemann surface $\dot \Sigma$.  We denote by
$
\Phi: E \to \overline \Sigma \times \R^{2n}
$
a trivialization of $E \to \overline \Sigma$
and by
$$
\Phi_i^+: = \Phi|_{\del_i^+ \overline \Sigma}, \quad \Phi_j^- = \Phi|_{\del_j^- \overline \Sigma}
$$
its restrictions on the corresponding boundary components of $\del \overline \Sigma$.
Using the cylindrical structure near the punctures,
we can extend the bundle to the bundle $E \to \overline \Sigma$.
We then consider the following set
\beastar
\CS &: = & \{A: [0,1] \to Sp(2n,\R) \mid 1 \not \in \text{spec}(A(1)), \\
&{}& \hskip0.5in A(0) = id, \, \dot A(0) A(0)^{-1} = \dot A(1) A(1)^{-1} \}
\eeastar
of regular paths in $Sp(2n,\R)$ and denote by $\mu_{CZ}(A)$ the Conley-Zehnder index of
the paths following \cite{robbin-salamon:Maslovindex}. Recall that for each closed Reeb orbit $\gamma$ with a fixed
trivialization of $\xi$, the covariant linearization $A_{(T,z)}$ of the Reeb flow along $\gamma$
determines an element $A_\gamma \in \CS$. We denote by $\Psi_i^+$ and $\Psi_j^-$
the corresponding paths induced from the trivializations $\Phi_i^+$ and $\Phi_j^-$ respectively.

We have the decomposition
$$
\Omega^0_{k,p;\delta}(w^*T(Q \times \R);J;\gamma^+,\gamma^-) =
\Omega^0_{k,p;\delta}(w^*\xi) \oplus \Omega^0_{k,p;\delta}(u^*\CV)
$$
and again the operator
\be\label{eq:DUpsilonu}
D\Upsilon_{(\lambda,T)}(u): \Omega^0_{k,p;\delta}(w^*T(Q \times \R);J;\gamma^+,\gamma^-) \to
\Omega^{(0,1)}_{k-1,p;\delta}(w^*\xi) \oplus \Omega^{(0,1)}_{k-1,p;\delta}(u^*\CV)
\ee
which is decomposed into
$$
D\Upsilon_1(u)(Y,v)\oplus D\Upsilon_2(u)
$$
where the summands are given as in
\eqref{eq:Dwdelbarpi} and \eqref{eq:Dwddot} respectively. We see therefrom that
$D\Upsilon_{(\lambda,T)}$ is the first-order differential operator whose first-order part
is given by the direct sum operator
$$
(Y^\pi,(\kappa, \upsilon)) \mapsto \delbar^{\nabla^\pi} Y^\pi \oplus (d\kappa \circ j - d\upsilon)
$$
where we write $(Y,v) = \left(Y^\pi + \kappa R_\lambda, \upsilon \frac{\del}{\del s}\right)$
for $\kappa = \lambda(Y), \, \upsilon = ds(v)$.
Here we have
$$
\delbar^{\nabla^\pi} : \Omega^0_{k,p;\delta}(w^*\xi;J;\gamma^+,\gamma^-) \to
\Omega^{(0,1)}_{k-1,p;\delta}(w^*\xi)
$$
and the second summand can be written as the standard Cauchy-Riemann operator
$$
\delbar: W^{k,p}(\dot \Sigma;\C) \to \Omega^{(0,1)}_{k-1,p}(\dot \Sigma,\C); \quad
\upsilon + i \kappa =: \varphi \mapsto
\delbar \varphi.
$$

The following proposition can be derived from the arguments used by
Lockhart and McOwen \cite{lockhart-mcowen}. However before applying their
general theory, one needs to pay some preliminary
measure to handle the fact that the order of the operators $D\Upsilon(w)$ are
different depending on the direction of $\xi$ or on that of $R_\lambda$.

\begin{prop}\label{prop:fredholm} Suppose $\delta > 0$ satisfies the inequality
$$
0< \delta < \min\left\{\frac{\text{\rm gap}(\gamma^+,\gamma^-)}{p}, \frac{2}{p}\right\}
$$
where $\text{\rm gap}(\gamma^+,\gamma^-)$ is the spectral gap, given in \eqref{eq:gap},
of the asymptotic operators $A_{(T_j,z_j)}$ or $A_{(T_i,z_i)}$
associated to the corresponding punctures. Then the operator
\eqref{eq:DUpsilonu} is Fredholm.
\end{prop}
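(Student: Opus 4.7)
The plan is to verify that $D\Upsilon_{(\lambda,T)}(u)$ falls within the framework of Lockhart--McOwen \cite{lockhart-mcowen} for first-order elliptic operators on manifolds with cylindrical ends, after arranging the mixed-order issue raised in the remark preceding the proposition. To begin, I would read off from the matrix form \eqref{eq:matrix-form} that the principal part of $D\Upsilon(u)$ is block-diagonal, consisting of $\delbar^{\nabla^\pi}$ on $Y^\pi \in \Gamma(w^*\xi)$ and of the standard Cauchy--Riemann operator $\delbar$ on the complex-valued function $\varphi := \upsilon + i\kappa$ built from the Reeb and radial components. Both blocks are first-order elliptic of the same order, so the whole operator is first-order elliptic; the off-diagonal entries and the zero-order corrections $B^{(0,1)}+T^{\pi,(0,1)}_{dw}$ are compact perturbations of the diagonal principal part, and in particular do not affect Fredholm-ness.

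Next I would compute the asymptotic operators at the punctures. Using the exponential convergence of $w$ to Reeb orbits $\gamma^\pm$ (the $C^\infty$-convergence results of Section \ref{sec:exponential-convergence}) and the convergence $df \to T\, d\tau$ for the radial component, the operator in cylindrical coordinates $(\tau,t)$ takes the form $\partial_\tau + A(\tau)$ with $A(\tau) \to A_\infty^\pm$ exponentially, where
\[
A_\infty^\pm \;=\; A^\pi_{(\lambda,J,\nabla)}(\gamma^\pm) \,\oplus\, (-J_0 \partial_t)
\]
on $L^2(\gamma^*\xi) \oplus L^2(S^1,\C)$. The $\xi$-summand is exactly the asymptotic operator in the sense of Definition \ref{defn:asymptotic-operator}, whose spectrum is discrete and avoids $0$ by nondegeneracy of the Reeb orbits. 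The $\C$-summand is the standard asymptotic operator of $\delbar$ on a cylinder, whose spectrum is $2\pi \Z$ and in particular avoids $0$.

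The third step is to check the weight condition. Conjugation by $e^{(\delta/p)|\tau|}$ shifts the spectrum of each asymptotic operator by $\pm \delta/p$, and Lockhart--McOwen requires that $0$ remain outside the spectrum of the conjugated asymptotic operator. The assumption
\[
0 < \delta \;<\; \min\!\left\{\tfrac{\operatorname{gap}(\gamma^+,\gamma^-)}{p},\,\tfrac{2}{p}\right\}
\]
is precisely the statement that $\delta/p$ lies strictly below the spectral gaps of both $A^\pi_{(\lambda,J,\nabla)}(\gamma^\pm)$ (first bound) and of $-J_0\partial_t$ on $S^1$ (second bound, with the factor $2$ rather than $2\pi$ reflecting the normalization of the period). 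With this, the asymptotic operators are invertible on the appropriate weighted Sobolev spaces, and the Lockhart--McOwen criterion for the cylindrical ends is satisfied. Standard interior elliptic regularity on the compact core $K_\Sigma$ combined with a cutoff/patching argument then upgrades the local Fredholm estimates to the global Fredholm property.

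The main subtlety, which I believe is the "preliminary measure" alluded to in the remark, is the apparent asymmetry between the directions of $\xi$ and of $R_\lambda$ in the off-shell domain: $Y^\pi$ decays exponentially while $\lambda(Y)$ approaches the constant $T\cdot t_i$ encoded in the asymptotic Reeb orbit, so it is not an element of a pure weighted space. This is precisely why the domain is enlarged by the finite-dimensional complement $\Gamma_{s^+,s^-}$ of cutoff sections \eqref{eq:barY}, turning the decomposition $Y = Y^\pi + \lambda(Y) R_\lambda$ into a genuine direct sum on which each summand lies in a Lockhart--McOwen admissible space; the resulting Fredholm index is then the sum of the index of the $\xi$-block (computed from a Conley--Zehnder-type calculation via $\mu_{CZ}(\Psi_i^+),\mu_{CZ}(\Psi_j^-)$) and that of the $\C$-valued $\delbar$-block, with a correction accounting for the $2(s^+ + s^-)$-dimensional extension by $\Gamma_{s^+,s^-}$. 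Handling this bookkeeping cleanly, so that the principal-symbol-based Lockhart--McOwen machinery applies uniformly across both blocks, is the most delicate part of the argument.
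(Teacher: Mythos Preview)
Your approach is essentially correct and aligns with the paper's strategy. The paper's proof differs mainly in presentation: rather than directly invoking Lockhart--McOwen on the full operator, it first proves explicitly (Proposition~\ref{prop:off-diagonal}) that the off-diagonal entries of the matrix \eqref{eq:matrix-form} decay exponentially as $|\tau|\to\infty$, and then homotopes the full operator to the block-diagonal form $(\delbar^{\nabla^\pi}+T^{\pi,(0,1)}_{dw}+B^{(0,1)})\oplus\delbar$ through a continuous one-parameter family of Fredholm operators, thereby reducing the Fredholm question to that of each diagonal block separately. Your early assertion that the off-diagonal terms are ``compact perturbations'' is the one place requiring care---on a noncompact domain zero-order terms are not automatically relatively compact---but this is exactly what Proposition~\ref{prop:off-diagonal} supplies, and you implicitly use it when you write $A_\infty^\pm$ as a direct sum. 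The paper's homotopy formulation has the advantage of making the index additivity \eqref{eq:indexDXiw} immediate for the subsequent index computation, while your direct asymptotic-operator route is a legitimate shortcut once the off-diagonal decay has been verified.
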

\begin{proof} We first note that the operators $\delbar^{\nabla^\pi} + T^{\pi,(0,1)}_{dw}  + B^{(0,1)}$ and
$\delbar$ are Fredholm: The relevant a priori coercive $W^{k,2}$-estimates for any integer $k \geq 1$
for the derivative $dw$ on the punctured Riemann surface $\dot \Sigma$ with cylindrical metric
near the punctures are established in \cite{oh-wang:CR-map1} for the operator
$\delbar^{\nabla^\pi}  + T^{\pi,(0,1)}_{dw} + B^{(0,1)}$ and the one for $\delbar$ is standard.
From this, the standard interpolation inequality establishes the $W^{k,p}$-estimates
for $D\Upsilon(w)$ for all $k \geq 2$ and $p \geq 2$.

\begin{prop}\label{prop:off-diagonal}
The off-diagonal terms decay exponentially fast as $|\tau|\to \infty$.
\end{prop}
\begin{proof} For the $(1,2)$-term, we derive from $\delbar^\pi w = 0$
$$
\left(\frac{\del w}{\del \tau}\right)^\pi + J \left(\frac{\del w}{\del t}\right)^\pi = 0.
$$
Therefore we have
$$
\del^\pi w\left(\frac{\del}{\del \tau}\right) = \frac12\left(\left(\frac{\del w}{\del \tau}\right)^\pi - J \left(\frac{\del w}{\del t}\right)^\pi \right)
= - J \left(\frac{\del w}{\del t}\right)^\pi.
$$
By the exponential convergence $\frac{\del w}{\del t} \to T R_\lambda(\gamma_\infty(t))$, we derive
$$
J \del^\pi w\left(\frac{\del}{\del \tau}\right)
= \left(\frac{\del w}{\del t}\right)^\pi \to 0
$$
since $\frac{\del w}{\del t} \to T R_\lambda$. Therefore
the off-diagonal term converges to the zero operator exponentially fast.

For the $(2,1)$-term, we evaluate
\beastar
(Y^\pi \rfloor d\lambda)\circ j\left(\frac{\del}{\del \tau}\right) & = & d\lambda\left(Y, \frac{\del w}{\del t}\right)\\
(Y^\pi \rfloor d\lambda)\circ j\left(\frac{\del}{\del t}\right) & = & - d\lambda\left(Y, \frac{\del w}{\del \tau}\right).
\eeastar
Therefore we have derived
$$
(Y^\pi \rfloor d\lambda)\circ j = d\lambda\left(Y, \frac{\del w}{\del t}\right) \, d\tau
- d\lambda\left(Y, \frac{\del w}{\del \tau}\right)\, dt.
$$
Therefore we have shown
$$
\left((\cdot)^\pi \rfloor d\lambda\right)\circ j \to  d\lambda\left(\cdot, \frac{\del w}{\del t}\right) \, d\tau - d\lambda\left(\cdot , \frac{\del w}{\del \tau}\right)\, dt.
$$
Since
$\frac{\del w}{\del t} \to T R_\lambda$, the first term converges to zero, and the second term converges to
$$
- d\lambda (\cdot, J TR_\lambda) = T d\lambda \left(\cdot, \frac{\del}{\del s}\right) = 0.
$$
This finishes the proof.
\end{proof}

Therefore it can be homotoped to the block-diagonal form, i.e., into the direct sum operator
$$
\left(\delbar^{\nabla^\pi} + T^{\pi,(0,1)}_{dw}  + B^{(0,1)}\right)\oplus \delbar
$$
via a continuous path of Fredholm operators given by
$$
s \in [0,1] \mapsto \left(
\begin{matrix}
 \delbar^{\nabla^\pi} + B^{(0,1)} +  T^{\pi,(0,1)}_{dw} &, &\frac{s}{2}(\cdot) \cdot
 \left((\CL_{R_\lambda}J)J(\del^\pi w)\right)\\
s \left((\cdot)^\pi \rfloor d\lambda\right)\circ j &, &\delbar
\end{matrix}
\right)
$$
from $s =1$ to $s = 0$. The Fredholm property of this path follows from the fact that
the off-diagonal terms are $0$-th order linear operators.
\end{proof}

Then by the continuous invariance of the Fredholm index, we obtain
\be\label{eq:indexDXiw}
\operatorname{Index} D\Upsilon_{(\lambda,T)}(w) =
\operatorname{Index} \left(\delbar^{\nabla^\pi} + T^{\pi,(0,1)}_{dw}  + B^{(0,1)}\right)
 + \operatorname{Index}(\delbar).
\ee
Therefore it remains to compute the latter two indices.

We denote by $m(\gamma)$ the multiplicity of the Reeb orbit in general.
Then we have the following index formula.

\begin{thm}\label{thm:indexforDUpsilon} We fix a trivialization
$\Phi: E \to \overline \Sigma$ and denote
by $\Psi_i^+$ (resp. $\Psi_j^-$) the induced symplectic paths associated to the trivializations
$\Phi_i^+$ (resp. $\Phi_j^-$) along the Reeb orbits $\gamma^+_i$ (resp. $\gamma^-_j$) at the punctures
$p_i$ (resp. $q_j$) respectively. Then we have
\bea
&{}& \operatorname{Index} (\delbar^{\nabla^\pi} + T^{\pi,(0,1)}_{dw}  + B^{(0,1)}) \nonumber\\
& = &
n(2-2g-s^+ - s^-) + 2c_1(w^*\xi)  + (s^+ + s^-) \nonumber \\
&{}& \quad  + \sum_{i=1}^{s^+} \mu_{CZ}(\Psi^+_i)- \sum_{j=1}^{s^-} \mu_{CZ} (\Psi^-_j)
\label{eq:Indexdelbarpi}
\eea
\be
\operatorname{Index} (\delbar) = 2\sum_{i=1}^{s^+} m(\gamma^+_i)+ 2\sum_{j=1}^{s^-} m(\gamma^-_j) -2 g.
\label{eq:indexdelbar}
\ee
In particular,
\bea\label{eq:indexforDUpsilon}
&{}& \operatorname{Index} D\Upsilon_{(\lambda,T)}(u) \nonumber\\
& = & n(2-2g-s^+ - s^-) + 2c_1(w^*\xi)\nonumber\\
&{}&  + \sum_{i=1}^{s^+} \mu_{CZ}(\Psi^+_i)
- \sum_{j=1}^{s^-} \mu_{CZ}(\Psi^-_j)\nonumber \\
&{}&  +
\sum_{i=1}^{s^+} (2m(\gamma^+_i)+1) + \sum_{j=1}^{s^-}( 2m(\gamma^-_j)+1)  - 2g.
\eea
\end{thm}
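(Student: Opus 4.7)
The plan is to reduce the Fredholm index calculation to two separate applications of the generalized Riemann--Roch index theorem for Cauchy--Riemann operators on punctured Riemann surfaces with cylindrical ends and nondegenerate asymptotic operators, and then to sum the two contributions.

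First I would invoke Proposition \ref{prop:fredholm}: the continuous family of Fredholm operators displayed in its proof connects $D\Upsilon_{(\lambda,T)}(u)$ at $s=1$ to the block-diagonal operator $(\delbar^{\nabla^\pi} + T^{\pi,(0,1)}_{dw} + B^{(0,1)}) \oplus \delbar$ at $s=0$. Homotopy invariance of the Fredholm index then gives \eqref{eq:indexDXiw}, reducing the problem to the two diagonal summands. Since $T^{\pi,(0,1)}_{dw} + B^{(0,1)}$ is a zero-order perturbation, the index of the first summand equals that of the bare Cauchy--Riemann operator $\delbar^{\nabla^\pi}$ on the complex rank-$n$ Hermitian bundle $(w^*\xi, J|_\xi, g_\xi) \to \dot\Sigma$.

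Second, I would apply the standard index theorem for Cauchy--Riemann operators with cylindrical ends, as developed by Lockhart--McOwen \cite{lockhart-mcowen} and Schwarz, and translated into Conley--Zehnder form in \cite{robbin-salamon:asymptotic}. The nondegeneracy hypothesis on $\lambda$ guarantees that each asymptotic operator $A^\pi_{(\lambda,J,\nabla)}$ at $\gamma^\pm_i$, whose explicit shape is given by Theorem \ref{thm:A-in-LC}, has $0$ outside its spectrum, and the choice $0<\delta<\operatorname{gap}(\vec\gamma^+,\vec\gamma^-)/p$ places the weighted operator in a single Fredholm chamber. The resulting topological contributions are $n\chi(\dot\Sigma) + 2c_1(w^*\xi)$ together with the alternating sum of Conley--Zehnder indices $\sum_i\mu_{CZ}(\Psi^+_i) - \sum_j\mu_{CZ}(\Psi^-_j)$; the extra term $(s^++s^-)$ in \eqref{eq:Indexdelbarpi} accounts for the enlargement of the $\delta$-weighted domain by the finite-dimensional subspace $\Gamma_{s^+,s^-}$ of asymptotic Reeb-direction shifts defined in \eqref{eq:barY}. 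The scalar $\delbar$ summand on the trivialized complex line $u^*\CV \cong \R\langle\partial/\partial s\rangle \oplus \R\langle R_\lambda\rangle$ is handled by the same machinery: its asymptotic operator reduces to $-J_0\partial_t$ on $\C$-valued loops, whose Conley--Zehnder index along $\gamma$ is $2m(\gamma)$, determined by the winding of the parameterization $t\mapsto \gamma(Tt)$; combined with the Riemann--Roch term $-2g$ for the scalar $\delbar$ on a genus-$g$ surface this yields \eqref{eq:indexdelbar}. Adding \eqref{eq:Indexdelbarpi} and \eqref{eq:indexdelbar} then gives \eqref{eq:indexforDUpsilon}.

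The main obstacle I anticipate is the careful bookkeeping of weight conventions and of the finite-dimensional asymptotic shifts across the two block-diagonal summands, so that the contributions from $\Gamma_{s^+,s^-}$, from the unweighted target $\Omega^{(0,1)}_{k-1,p}(u^*T\R)$ of the scalar $\delbar$-component, and from the relative trivializations at punctures are neither double-counted nor omitted. The cleanest route is to express each summand in the Lockhart--McOwen framework with explicit weights, verify via the spectral-gap hypothesis that the chosen weight sits in the correct Fredholm chamber at each end, and translate each spectral-flow contribution into its Conley--Zehnder form using the explicit formula of Theorem \ref{thm:A-in-LC} together with the identity $[\nabla^{\text{\rm LC}}_t, J]=0$ from Proposition \ref{prop:blair}.
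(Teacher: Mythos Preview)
Your treatment of \eqref{eq:Indexdelbarpi} matches the paper's: both invoke the standard punctured Riemann--Roch formula (the paper cites Bourgeois's thesis directly) and both attribute the extra $(s^++s^-)$ to the finite-dimensional enlargement $\Gamma_{s^+,s^-}$.

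For \eqref{eq:indexdelbar}, however, your route diverges from the paper's and contains a gap. The paper does \emph{not} run the cylindrical-end Conley--Zehnder machinery on the scalar block; instead it identifies the $\delbar$-operator on the vertical line with the Dolbeault operator on the closed surface $\Sigma$ twisted by the divisor $D=\sum_i m(\gamma^+_i)\,p_i+\sum_j m(\gamma^-_j)\,q_j$ and reads off $\chi(D)=2\deg D-2g$ from the classical Riemann--Roch theorem. This is how the multiplicities $m(\gamma)$ enter.

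Your proposal to obtain the same number by declaring that the asymptotic operator $A^\perp=-J_0\partial_t$ has Conley--Zehnder index $2m(\gamma)$ is not justified as stated. The bundle $u^*\CV$ is \emph{canonically} trivialized by the global frame $\{\partial/\partial s,\,R_\lambda\}$, so in that frame the asymptotic operator is simply $i\partial_t$ on $L^2(S^1,\C)$, independent of the Reeb orbit and in particular independent of its multiplicity; moreover $0$ lies in its spectrum, so the operator is degenerate and $\mu_{CZ}$ is not defined without further convention. The ``winding of the parameterization $t\mapsto\gamma(Tt)$'' does not feed into this operator under the canonical trivialization. To salvage your approach you would have to (i) explain precisely which weight and which finite-dimensional augmentations are imposed on the $\CV$-component so that the scalar block is Fredholm, and (ii) show that the resulting index contribution at each puncture equals $2m(\gamma)$; neither step is routine, and the paper avoids both by passing to the divisor picture on the compactified surface.
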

\begin{proof} The formula \eqref{eq:Indexdelbarpi} can be immediately derived from
the general formula given in the top of p. 52 of Bourgeois's thesis \cite{bourgeois}:
The summand $(s^+ + s^-)$ comes from the factor $\Gamma_{s^+,s^-}$ in the decomposition
\eqref{eq:tangentspace} which has dimension $s^+ + s^-$.

So it remains to compute the index \eqref{eq:indexdelbar}.
To compute the (real) index of $\delbar$, we consider the Dolbeault complex
$$
0 \to \Omega^0(\Sigma; D) \to \Omega^1(\Sigma;D) \to 0
$$
where $D = D^+ + D^-$ is the divisor associated to the set of punctures
$$
D^+ =  \sum_{i=1}^{s^+}m(\gamma^+_i) p_i, \quad
D^- = \sum_{j=1}^{s^-} m(\gamma^-_j) q_j
$$
where $m(\gamma^+_i)$ (resp. $m(\gamma^-_j)$) is the multiplicity of the Reeb orbit
$\gamma^+_i$ (resp. $\gamma^-_j$). The standard Riemann-Roch formula then gives rise to
the formula for the Euler characteristic
\beastar
\chi(D) & = & 2\dim_\C H^0(D) - 2\dim_\C H^1(D) = 2\operatorname{deg} (D) - 2g\\
& = &\sum_{i=1}^{s^+} 2m(\gamma^+_i)+ \sum_{j=1}^{s^-} 2m(\gamma^-_j) - 2g.
\eeastar

This finishes the proof.
\end{proof}

\begin{rem} We can also symplectify the Fredholm theory from \cite{oh:contacton-transversality} and
the index calculation given in \cite{oh-yso:index} in the similar way to give rise to
the relevant theory for the pseudoholomorphic curves on the symplectization which we leave
to the readers as an exercise.
\end{rem}

\section{Exponential asymptotic analysis}
\label{sec:asymptotic-analysis}

Recalling that for any $\widetilde J$-holomorphic curve $(w,f)$,
$w$ is a contact instanton for $J$ on $Q$. Furthermore we have
$$
(w,f)^*T(Q \times \R) = w^*TQ \oplus f^*T\R
= w^*\xi \oplus \span_\R \left\{\frac{\del}{\del s}, R_\lambda \right\}.
$$
The last splitting is respected by
by the \emph{canonical connection} of the almost Hermitian manifold
$$
(Q\times \R, d\lambda + ds \wedge \lambda, \widetilde J).
$$
Indeed, we have
$$
\nabla^{\text{\rm can}} = \nabla^\pi \oplus \nabla_0
$$
where $\nabla^\pi = \nabla|_{\xi}$ and $\nabla_0$ is the trivial
connection on $\span_\R\{\frac{\del}{\del s}, R_\lambda\}$.
(Recall Proposition \ref{prop:canonical-intro}.)

\begin{rem} Here again we would like to emphasize the usage of
the canonical connection of the almost Hermitian manifold, not
the Levi-Civita connection, admits this splitting.
\end{rem}

\subsection{Definition of asymptotic operators and their formulae}
\label{subsec:asymptotic-operators}

Now we study a finer analysis of the asymptotic behavior along the Reeb orbit. Our discussion thereof is close to the one given in
\cite[Section 11.2 \&11.5]{oh-wang:CR-map2} where the more general
Morse-Bott case is studied.

For this purpose, we  evaluate the linearization operator $D\Upsilon$ against $\frac{\del}{\del \tau}$. We have already checked the
off-diagonal terms of the matrix representation of $D\Upsilon(w)$
decays exponentially fast in the direction $\tau$
in the previous section and so we have only to examine the diagonal terms $D\Upsilon_1(w)$ and $D\Upsilon_2(w)$.

First we consider $D\Upsilon_2$ and rewrite
$$
D\Upsilon_2 =\delbar = \frac12(\del_\tau + i \del_t).
$$
Therefore we have the asymptotic operator
\be\label{eq:Aperp}
A^\perp_{(\lambda,J,\nabla)} : = i \del_t
\ee
which does not depend on the choice of $J \in \CJ_\lambda(Q,\xi)$.
The eigenfunction expansions for this operator is nothing but
the standard Fourier series for $f \in L^2(S^1,Q)$.

This being said, we now focus on the $Q$-component $D\Upsilon_1$ of the
asymptotic operator and compute
\be\label{eq:Dupsilon1-ddtau}
D\Upsilon_1(w)\left(\frac{\del}{\del \tau}\right) =
\frac12(\nabla_\tau^\pi +  J \nabla_t^\pi)
+ T_{dw}^{\pi,(0,1)}\left(\frac{\del}{\del \tau}\right)
+ B^{(0,1)}\left(\frac{\del}{\del \tau}\right).
\ee
In fact, this is nothing but the left hand side of \eqref{eq:fundamental-isothermal}.
We write
$$
2 D\Upsilon(w)\left(\frac{\del}{\del \tau}\right) = \nabla_\tau^\pi + A^\pi_{(\lambda,J,\nabla)}.
$$
and define the family of operators
$$
A^\tau_{(\lambda,J,\nabla)}: \Gamma(w_\tau^*\xi) \to \Gamma(w_\tau^*\xi)
$$
Thanks to the exponential convergence of $w_\tau \to \gamma_\pm$ as $\tau \to \pm \infty$,
we can take the limit of the conjugate operators
\be\label{eq:Atau}
\Pi_\tau^\infty A^\tau_{(\lambda,J,\nabla)}(\Pi_\tau^\infty)^{-1}: \Gamma(\gamma_\pm^*\xi) \to \Gamma(\gamma_\pm^*\xi)
\ee
as $\tau \to \pm \infty$ respectively, where $\Pi_\tau^\infty$ is the parallel transport
along the short geodesics from $w(\tau,t)$ to $w(\infty,t)$. This conjugate is defined
for all sufficiently large $|\tau|$.

Since the discussion at $\tau = -\infty$ will be the same, we will focus our discussion
on the case at $\tau = + \infty$ from now on.

\begin{defn}[Asymptotic operator]\label{defn:asymptotic-operator}
 Let $(\tau,t)$ be
the cylindrical (or strip-like) coordinate, and let $\nabla^\pi$ be
the almost Hermitian connection on $w^*\xi$ induced by
the contact triad connection $\nabla$ of $(Q,\lambda,J)$.
We define the \emph{asymptotic operator} of a contact
instanton $w$ to be the limit operator
\be\label{eq:asymptotic-operator}
A^\pi_{(\lambda,J,\nabla)}: = \lim_{\tau \to +\infty}\Pi_\tau^\infty A^\tau_{(\lambda,J,\nabla)}(\Pi_\tau^\infty)^{-1}.
\ee
\end{defn}
Obviously we can define the asymptotic operator at negative punctures in the similar way.

\begin{rem}\label{rem:asymptotic-operator}
The upshot of our definition lies in its naturality depending
only on the given adapted pair $(\lambda, J)$ and its associated
triad connection. Other literature definition of the asymptotic
operators is given differently in the way how the dependence
on the given $J$ of  the final formula is hard to analyze.
This definition can be given any connection $\nabla$ on $Q$ that has the property that $\nabla (\xi) \subset \xi$
and $\nabla^\pi (J|_\xi) = 0$. Using the exponential convergence, one can
check that the definition does not depend on the choice of such connections.
Therefore it is conceivable that a good choice of
connection will facilitate the study asymptotic operators,
 which is precisely what is happening by our choice of contact triad connection.
\end{rem}

Now we find the formula for this limit operator \emph{with respect to the contact triad connection}.
Since $T(R_\lambda, \cdot) = 0$, $\left(\frac{\del w}{\del \tau}\right)^\pi
= - J \left(\frac{\del w}{\del t}\right)^\pi$ and
$\frac{\del w}{\del t}(\tau, \cdot) \mapsto T R_\lambda$ exponentially fact, we obtain
$$
2 T_{dw}^{\pi,(0,1)}(\frac{\del}{\del \tau}) = T^\pi\left(\frac{\del w}{\del \tau},\cdot\right) \to  0.
$$
On the other hand, we have
$$
2 B^{(0,1)}\left(\frac{\del}{\del \tau}\right) =  -\frac12 \lambda\left(\frac{\del w}{\del \tau}\right) \CL_{R_\lambda}J
-\frac12 \lambda\left(\frac{\del w}{\del t}\right) J \CL_{R_\lambda}J.
$$
We note that the right hand side
converges to $\frac{T}{2} J \nabla_{R_\lambda}$ since
$w^*\lambda \to T \, dt$ as $\tau \to \infty$. This convergence proves

\begin{prop}\label{prop:A} Let $\nabla$ be the contact triad connection
associated to any adapted pair $(\lambda, J)$. Then the asymptotic operator $A^\pi_{(\lambda,J,\nabla)}$ is given by
$$
A^\pi_{(\lambda,J,\nabla)} = - J \nabla_t  + \frac{T}{2} \CL_{R_\lambda}J J.
$$
\end{prop}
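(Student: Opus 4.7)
The plan is to read the operator $A^\tau$ directly off the fundamental equation in isothermal coordinates \eqref{eq:fundamental-isothermal} applied to $\zeta := \pi(\tfrac{\del w}{\del\tau})$, rearranging it in the normal form $\nabla_\tau^\pi \zeta + A^\tau \zeta = 0$, and then passing to the limit $\tau \to +\infty$ via the $C^\infty$ exponential convergence results of Section \ref{sec:exponential-convergence}. Two defining features of the contact triad connection enter crucially: the torsion condition $T(R_\lambda,\cdot) = 0$ (Theorem \ref{thm:connection}(2)) forces the torsion contribution of Theorem \ref{thm:linearization} to vanish on shell, as already recorded in the paragraph preceding the statement, while the Hermitian property $\nabla^\pi J = 0$ (Theorem \ref{thm:connection}(4)) makes parallel transport compatible with the complex structure on $\xi$.

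Concretely, the fundamental equation isolates a $J\nabla_t^\pi$ piece plus a $\tau$-family of zeroth-order terms of the form
$$-\tfrac{1}{2}\lambda\!\left(\tfrac{\del w}{\del\tau}\right)(\CL_{R_\lambda}J) + \tfrac{1}{2}\lambda\!\left(\tfrac{\del w}{\del t}\right)(\CL_{R_\lambda}J)J,$$
while the torsion contribution $T^{\pi,(0,1)}_{dw}(\tfrac{\del}{\del\tau})$ from Theorem \ref{thm:linearization} vanishes in the limit thanks to the on-shell identity $(\tfrac{\del w}{\del\tau})^\pi = -J(\tfrac{\del w}{\del t})^\pi$ combined with $\tfrac{\del w}{\del t} \to TR_\lambda$ and $T(R_\lambda,\cdot) = 0$. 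The first zeroth-order summand dies out since $\lambda(\tfrac{\del w}{\del\tau}) \to 0$, and the second tends to $\tfrac{T}{2}(\CL_{R_\lambda}J)J$ using $\lambda(\tfrac{\del w}{\del t}) \to T$ and the anticommutation $J(\CL_{R_\lambda}J) = -(\CL_{R_\lambda}J)J$ on $\xi$.

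To interpret this pointwise limit as a well-defined operator on the fixed bundle $\gamma^*\xi$, I would then conjugate by parallel transport $\Pi_\tau^\infty$ along the short geodesics joining $w(\tau,t)$ to $\gamma(t)$. Because $\nabla^\pi$ is Hermitian, $\Pi_\tau^\infty$ commutes with $J|_\xi$ and transports the smooth ambient tensor $\CL_{R_\lambda}J$ consistently, so the zeroth-order limits descend cleanly. The genuinely delicate step --- and the only real obstacle in the argument --- is showing that $\Pi_\tau^\infty (J\nabla_t^\pi)(\Pi_\tau^\infty)^{-1}$ converges to $J\nabla_t^\pi$ on $\Gamma(\gamma^*\xi)$; this requires exponential $C^1$-control of $w(\tau,\cdot) - \gamma(T\cdot)$, which is precisely what the alternating bootstrap of Section \ref{sec:Ckdelta-estimates} together with the $L^2$ and $C^0$ exponential decays of Section \ref{sec:exponential-convergence} deliver. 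Assembling the three limits yields the formula of the statement.
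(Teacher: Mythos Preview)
Your proposal is correct and follows essentially the same route as the paper: the paper's proof likewise reads off the limit from the linearization formula \eqref{eq:Dupsilon1-ddtau} (which it explicitly identifies with the fundamental equation \eqref{eq:fundamental-isothermal}), uses $T(R_\lambda,\cdot)=0$ together with $\frac{\del w}{\del t}\to T R_\lambda$ to kill the torsion term, and lets $w^*\lambda\to T\,dt$ handle the $B^{(0,1)}$-contribution, all under the parallel-transport conjugation built into Definition \ref{defn:asymptotic-operator}. Your write-up is in fact more explicit than the paper's three-line proof, particularly in flagging the convergence of $\Pi_\tau^\infty (J\nabla_t^\pi)(\Pi_\tau^\infty)^{-1}$ as the step requiring the $C^\infty$ exponential estimates.
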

\begin{proof} We recall  $T(R_\lambda, \cdot) = 0$ and the identity
\eqref{eq:nablalambdaY}. On the other hand we have
$\frac{\del w}{\del t}(\infty,t) = T R_\lambda$.
Combining the two and above convergence of $B^{(0,1)}(\del_\tau)$,
we have finished the proof.
\end{proof}

Now one may define the full asymptotic operator $A^\pi_{(\lambda,J,\nabla)}$ of the
pseudoholomorphic curves to be the operator
$$
A^\pi_{(\lambda,J,\nabla)}: \gamma^*\xi \oplus \C \to  \gamma^*\xi \oplus \C
$$
defined by
$$
A_{(\lambda,J,\nabla)} = A^\pi_{(\lambda,J,\nabla)} \oplus
A^\perp_{(\lambda,J,\nabla)}.
$$
Here $\C$ stands for the pull-back bundle
$$
(\pi \circ u_\tau)^*\left(\R \left\{\frac{\del}{\del s}, R_\lambda \right\}\right)
 = w_\tau^*\left(\R \left\{\frac{\del}{\del s}, R_\lambda \right\}\right)
\cong \R \left \{\frac{\del}{\del s}, R_\lambda \right\}
$$
which is canonically trivialized, and hence may be regarded as a vector bundle over
the curves $w_\tau$ on $Q$. (Compare this with \cite[Definition 2.28]{pardon:contacthomology}.)

\subsection{Asymptotic operator and the Levi-Civita connection}
\label{subsec:asymptotic-operator}

Up until now, we have emphasized the usage of triad connection which
give rise to an optimal form of tensorial expression. For a finer study of
asymptotic operators, it turns out that the Levi-Civita connection is much
better than the triad connection and hence
we temporarily switch back to the Levi-Civita connection. The main reason
behind this switch is the following surprising property of
Levi-Civita connection of the triad metric.

We recall that while $\nabla_Y J = 0$ for all $Y \in \xi$,
$\nabla_{R_\lambda}^{\text{\rm LC}}J \neq 0$ for the contact triad connection in general.
 (In fact, the latter holds if and only if $R_\lambda$ is a Killing vector field, i.e.,
$\nabla R_\lambda = 0$ with respect to $\nabla$. See \cite[Remark 2.4]{oh-wang:CR-map1}.)
However while $\nabla^{\text{\rm LC}}_Y J \neq 0$ for the Levi-Civita
connection in general, the Levi-Civita connection carries the following
extremely useful property for the study of asymptotic operators.

\begin{prop}[Lemma. 6.1 \cite{blair:book},
Proposition 4 \cite{oh-wang:connection}]\label{prop:nablaLCJ}
$\nabla^{\text{\rm LC}}_{R_\lambda} J = 0$.
\end{prop}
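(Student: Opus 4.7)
The strategy is to verify $(\nabla^{\mathrm{LC}}_{R_\lambda}J)Y=0$ on $Y=R_\lambda$ and on $Y\in\xi$ separately, using that $R_\lambda$ preserves all the defining data of the triad: $\mathcal{L}_{R_\lambda}\lambda=0$, and hence $\mathcal{L}_{R_\lambda}d\lambda=0$ by Cartan's formula. The first step is to establish $\nabla^{\mathrm{LC}}_{R_\lambda}R_\lambda=0$. The Reeb-direction component $g(\nabla^{\mathrm{LC}}_{R_\lambda}R_\lambda,R_\lambda)$ vanishes because $|R_\lambda|_g\equiv1$; for $Z\in\xi$, the Koszul formula together with $\lambda(Z)=0$ reduces to
$$2g(\nabla^{\mathrm{LC}}_{R_\lambda}R_\lambda,Z)=2\bigl(R_\lambda(\lambda(Z))-\lambda([R_\lambda,Z])\bigr)=2(\mathcal{L}_{R_\lambda}\lambda)(Z)=0.$$
Since $JR_\lambda=0$, this immediately yields $(\nabla^{\mathrm{LC}}_{R_\lambda}J)R_\lambda=-J\nabla^{\mathrm{LC}}_{R_\lambda}R_\lambda=0$.

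For the main case $Y\in\xi$, I will use the $g$-antisymmetry of $J$ on $\xi$ (which follows from $g(A,B)=d\lambda(A,JB)$ and the compatibility $d\lambda(JA,JB)=d\lambda(A,B)$) to rewrite
$$2g((\nabla^{\mathrm{LC}}_{R_\lambda}J)Y,Z)=2g(\nabla^{\mathrm{LC}}_{R_\lambda}(JY),Z)+2g(\nabla^{\mathrm{LC}}_{R_\lambda}Y,JZ)$$
for $Z\in\xi$. Then I expand both Koszul expressions, using $\lambda(Y)=\lambda(Z)=\lambda(JY)=\lambda(JZ)=0$ and processing the differentiated metric terms $R_\lambda(g(JY,Z))=R_\lambda(d\lambda(Y,Z))$ and $R_\lambda(g(Y,JZ))=-R_\lambda(d\lambda(Y,Z))$ via $\mathcal{L}_{R_\lambda}d\lambda=0$. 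After the expected cancellations (including $d\lambda(JY,Z)+d\lambda(Y,JZ)=0$) and reorganizing the Lie-bracket terms through the definition $(\mathcal{L}_{R_\lambda}J)X=[R_\lambda,JX]-J[R_\lambda,X]$, the expression collapses to
$$2g((\nabla^{\mathrm{LC}}_{R_\lambda}J)Y,Z)=g((\mathcal{L}_{R_\lambda}J)Y,Z)-g((\mathcal{L}_{R_\lambda}J)Z,Y).$$

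The final step is to observe that $\mathcal{L}_{R_\lambda}J$ is $g$-symmetric on $\xi$. A direct computation gives
$$\mathcal{L}_{R_\lambda}g=(\mathcal{L}_{R_\lambda}d\lambda)(\cdot,J\cdot)+d\lambda(\cdot,(\mathcal{L}_{R_\lambda}J)\cdot)+2(\mathcal{L}_{R_\lambda}\lambda)\otimes\lambda=d\lambda(\cdot,(\mathcal{L}_{R_\lambda}J)\cdot),$$
so the symmetry of $\mathcal{L}_{R_\lambda}g$ forces $\mathcal{L}_{R_\lambda}J$ to be $d\lambda$-antisymmetric on $\xi$; combined with the standard fact $J(\mathcal{L}_{R_\lambda}J)+(\mathcal{L}_{R_\lambda}J)J=0$ on $\xi$ (an immediate consequence of differentiating $J^2=-\mathrm{Id}_\xi$), this upgrades to the $g$-symmetry $g((\mathcal{L}_{R_\lambda}J)Y,Z)=g(Y,(\mathcal{L}_{R_\lambda}J)Z)$. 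Hence the displayed right-hand side vanishes for every $Z\in\xi$. The $R_\lambda$-component of $(\nabla^{\mathrm{LC}}_{R_\lambda}J)Y$ equals $\lambda(\nabla^{\mathrm{LC}}_{R_\lambda}(JY))=g(\nabla^{\mathrm{LC}}_{R_\lambda}(JY),R_\lambda)$, which vanishes by metric compatibility and $\nabla^{\mathrm{LC}}_{R_\lambda}R_\lambda=0$. Therefore $(\nabla^{\mathrm{LC}}_{R_\lambda}J)Y=0$ on the full tangent bundle.

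The main obstacle is the bookkeeping of the several Koszul terms and verifying that everything not proportional to $\mathcal{L}_{R_\lambda}J$-terms cancels; however, the conceptual heart of the argument is that $\mathcal{L}_{R_\lambda}$-invariance of $\lambda$ and $d\lambda$ ties the symmetries of $\mathcal{L}_{R_\lambda}g$ and $\mathcal{L}_{R_\lambda}J$ together so precisely that the obstruction measured by $\nabla^{\mathrm{LC}}_{R_\lambda}J$ is forced to vanish.
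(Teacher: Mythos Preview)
Your proof is correct. The paper does not give its own proof of this proposition but cites Blair's book and the earlier Oh--Wang paper; your argument via the Koszul formula, reducing $2g((\nabla^{\mathrm{LC}}_{R_\lambda}J)Y,Z)$ to $g((\mathcal{L}_{R_\lambda}J)Y,Z)-g((\mathcal{L}_{R_\lambda}J)Z,Y)$ and then invoking the $g$-symmetry of $\mathcal{L}_{R_\lambda}J$ on $\xi$, is essentially the classical proof found in Blair (and the symmetry lemma you use is also recorded elsewhere in the paper).
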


To utilize this property in our study of eigenfunction analysis for the
linearized operator $A_{(\lambda,K,\nabla)} = -\frac12 J {\nabla}_t$, we convert
it in terms of the Levi-Civita connection. For this purpose, the
following lemma is crucial.

\begin{lem}[Lemma 6.2 \cite{blair:book}, Lemma 9 \cite{oh-wang:connection}]
For any $Y \in \xi$, we have
$$
\nabla_Y^{\text{\rm LC}} R_\lambda = \frac12 JY
+ \frac12 \CL_{R_\lambda}JJ Y.
$$
\end{lem}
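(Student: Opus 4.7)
The plan is to reduce the statement to the Koszul-type identity
\[
2\, g(\nabla_Y^{\text{\rm LC}} R_\lambda, Z) \;=\; (\CL_{R_\lambda} g)(Y, Z) + d\lambda(Y, Z),
\]
valid for every pair of vector fields $Y, Z$ on $Q$. This is the general expression for $\nabla_\cdot V$ in the Levi--Civita case applied to $V = R_\lambda$, together with the observation that $R_\lambda$ corresponds to the one-form $\lambda$ under the triad metric, which is a direct consequence of \eqref{eq:triad-metric}. The task then reduces to computing the right-hand side and rewriting it entirely in terms of $g$.

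First, I would compute $\CL_{R_\lambda} g$. Decomposing $g = d\lambda(\cdot, J\cdot) + \lambda \otimes \lambda$ and using $\CL_{R_\lambda} \lambda = 0$ and $\CL_{R_\lambda} d\lambda = 0$ (both immediate from $R_\lambda \rfloor \lambda = 1$, $R_\lambda \rfloor d\lambda = 0$ and the Cartan magic formula), a direct calculation yields
\[
(\CL_{R_\lambda} g)(Y, Z) \;=\; d\lambda(Y, (\CL_{R_\lambda} J) Z) \qquad \text{for } Y, Z \in \xi.
\]
The vertical case $Z = R_\lambda$ I would dispose of separately: since $g(R_\lambda, R_\lambda) \equiv 1$, one has $g(\nabla_Y^{\text{\rm LC}} R_\lambda, R_\lambda) = \tfrac{1}{2} Y\, g(R_\lambda, R_\lambda) = 0$, so $\nabla_Y^{\text{\rm LC}} R_\lambda \in \xi$ for any $Y$, consistent with the right-hand side of the claimed formula already lying in $\xi$.

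Second, I would translate each occurrence of $d\lambda$ back into $g$. For $Y, Z \in \xi$, the adaptedness of $J$ together with the $J$-invariance of $d\lambda|_\xi$ gives $d\lambda(Y, Z) = g(JY, Z)$, which absorbs the $d\lambda(Y, Z)$ term in the Koszul identity and produces the summand $\tfrac{1}{2} JY$. To rewrite $d\lambda(Y, (\CL_{R_\lambda} J) Z)$, I would combine two observations: the symmetry of $\CL_{R_\lambda} g$ (any Lie derivative of a metric is symmetric), which forces the bilinear form $(Y, Z) \mapsto d\lambda(Y, (\CL_{R_\lambda} J) Z)$ to be symmetric on $\xi$, and the anti-commutation $(\CL_{R_\lambda}J) J + J (\CL_{R_\lambda}J) = 0$ (coming from $\CL_{R_\lambda}(J^2) = \CL_{R_\lambda}(-\text{id}) = 0$). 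Combined, they yield
\[
d\lambda(Y, (\CL_{R_\lambda}J) Z) \;=\; g((\CL_{R_\lambda}J) JY, Z) \qquad \text{for } Y, Z \in \xi,
\]
and substituting into the Koszul identity gives exactly the claimed formula.

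The arguments are essentially formal and no step is genuinely difficult; the place that needs the most care is the sign juggling that converts $d\lambda(Y, (\CL_{R_\lambda}J) Z)$ into $g((\CL_{R_\lambda}J) JY, Z)$. Both inputs feeding into that step, the symmetry of $\CL_{R_\lambda} g$ and the anti-commutation with $J$, are individually trivial, but one has to make sure the signs and the placement of $J$ arrange themselves so that one ends up with $(\CL_{R_\lambda}J) JY$ rather than, say, $-J(\CL_{R_\lambda}J) Y$---equal by anti-commutation, but only if the bookkeeping is correct.
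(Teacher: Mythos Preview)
Your proof is correct. The Koszul identity $2g(\nabla_Y^{\text{LC}} R_\lambda, Z) = (\CL_{R_\lambda} g)(Y,Z) + d\lambda(Y,Z)$ is indeed valid since $\lambda = g(R_\lambda,\cdot)$ under the triad metric, and your computation of $\CL_{R_\lambda} g$ and the subsequent sign manipulation converting $d\lambda(Y,(\CL_{R_\lambda}J)Z)$ into $g((\CL_{R_\lambda}J)JY,Z)$ all check out; the chain $g((\CL_{R_\lambda}J)JY,Z) = -g(J(\CL_{R_\lambda}J)Y,Z) = g((\CL_{R_\lambda}J)Y,JZ) = g(JZ,(\CL_{R_\lambda}J)Y) = g(JY,(\CL_{R_\lambda}J)Z)$ uses exactly the anti-commutation, the $J$-invariance of $g|_\xi$, and the symmetry of $\CL_{R_\lambda}g$ as you indicate.

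The paper itself does not supply a proof of this lemma; it simply quotes the result from Blair's book and from the Oh--Wang contact triad connection paper. So there is no in-paper argument to compare against. Your approach via the Koszul formula specialized to the metric-dual pair $(R_\lambda,\lambda)$ is a clean and self-contained route, and is in fact close in spirit to how such identities are typically derived in the cited references.
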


Therefore combining Proposition \ref{prop:A} and
this lemma, we can rewrite the linearized operator in terms of
the Levi-Civita connection as follows. It reveals a rather remarkable
property of the asymptotic operator \emph{computed in terms of
the triad connection} that it becomes
a Hermitian operator for any choice of adapted
pair $(\lambda, J)$ for the contact manifold $(Q,\xi)$.

\begin{cor}\label{cor:A-in-LC} Let $(\lambda, J)$ be any adapted pair
and let $\nabla^{\text{\rm LC}}$ be the Levi-Civita connection of the
triad metric of $(Q,\lambda,J)$. For given contact instanton $w$ with its action $\int \gamma^*\lambda =T$
at a puncture, let $A_{(\lambda, J, \nabla)}$ be the asymptotic operator of $w$
at the  with cylindrical coordinate $(\tau,t)$. Then
\begin{enumerate}
\item $[\nabla_t^{\text{\rm LC}},J] (= \nabla_t^{\text{\rm LC}} J)= 0$,
\item $A^\pi_{(\lambda,J,\nabla)} =
- J\nabla_t +  \frac{T}{2} \CL_{R_\lambda}JJ
= - J\nabla_t^{\text{\rm LC}} - \frac{T}{2} Id
+  \frac{T}{2} \CL_{R_\lambda}JJ.$
\end{enumerate}
In particular, it induces a $J$-Hermitian operator on $(\xi, g|_\xi)$
 with respect to the triad metric $g$.
\end{cor}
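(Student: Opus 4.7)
The strategy is to chain together Proposition \ref{prop:A} (which expresses $A^\pi_{(\lambda,J,\nabla)}$ via the contact triad connection) with Blair's identity from Proposition \ref{prop:blair} and the explicit formula for $\nabla_Y^{\text{\rm LC}}R_\lambda$ recorded just before the corollary, and then to read off self-adjointness directly from the resulting representation.

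I would first dispatch part (1). Along the Reeb orbit $\gamma$ one has $\dot\gamma=TR_\lambda$, so on sections of $\gamma^*\xi$ the pull-back covariant derivative $\nabla_t^{\text{\rm LC}}$ acts as $T\nabla_{R_\lambda}^{\text{\rm LC}}$. Part (1) is then an immediate consequence of Proposition \ref{prop:blair}.

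Next I would establish the second equality of part (2); the first equality is just Proposition \ref{prop:A}. The task reduces to comparing $\nabla_t^{\text{\rm LC}}Y$ with $\nabla_t Y$ for a section $Y\in\Gamma(\gamma^*\xi)$. Since $\nabla^{\text{\rm LC}}$ is torsion-free and the triad torsion satisfies $T(R_\lambda,\cdot)=0$ by Theorem \ref{thm:connection}(2), both $\nabla_{R_\lambda}^{\text{\rm LC}}Y$ and $\nabla_{R_\lambda}Y$ can be rewritten as $\nabla_Y^{\text{\rm LC}}R_\lambda+[R_\lambda,Y]$ and $\nabla_YR_\lambda+[R_\lambda,Y]$, respectively. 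Subtracting and plugging in the identity $\nabla_YR_\lambda=\tfrac12(\CL_{R_\lambda}J)JY$ from Corollary \ref{cor:connection} together with Blair's Lemma 6.2 in the form $\nabla_Y^{\text{\rm LC}}R_\lambda=\tfrac12 JY+\tfrac12(\CL_{R_\lambda}J)JY$ yields
\[
\nabla_t^{\text{\rm LC}}Y-\nabla_tY=\frac{T}{2}JY.
\]
Applying $-J$ and using $J^2=-Id$ on $\xi$ gives $-J\nabla_t=-J\nabla_t^{\text{\rm LC}}-\tfrac{T}{2}Id$, which is the second equality.

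For the $J$-Hermitian claim I would verify $L^2$-symmetry of each of the three summands in the second form separately. The scalar $-\tfrac{T}{2}Id$ is trivial. The term $-J\nabla_t^{\text{\rm LC}}$ is symmetric by integration by parts on $S^1$, using metric-compatibility of $\nabla^{\text{\rm LC}}$, the $J$-invariance identity $g(JA,B)=-g(A,JB)$ on $\xi$, and the commutation $[\nabla_t^{\text{\rm LC}},J]=0$ from part~(1). For the zeroth-order summand $\tfrac{T}{2}(\CL_{R_\lambda}J)J$, I would combine two pointwise facts whose anti-commutations cancel out to yield symmetry: $\CL_{R_\lambda}J$ is $g|_\xi$-symmetric (Lemma 3.4 of \cite{oh-wang:connection}) and it anti-commutes with $J$, together giving $g((\CL_{R_\lambda}J)JY,Z)=g(Y,(\CL_{R_\lambda}J)JZ)$. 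The one piece of bookkeeping to handle with care — and the place I would expect the mildest friction — is checking that both $\nabla_{R_\lambda}Y$ and $\nabla_{R_\lambda}^{\text{\rm LC}}Y$ remain in $\xi$ when $Y\in\xi$, so that the identity $\nabla_{R_\lambda}^{\text{\rm LC}}Y-\nabla_{R_\lambda}Y=\tfrac12 JY$ is genuinely an identity in $\xi$. For the triad side this follows from $\nabla_{R_\lambda}R_\lambda=0$ and property (3) of Theorem \ref{thm:connection}; for the Levi-Civita side one needs the auxiliary identity $\nabla_{R_\lambda}^{\text{\rm LC}}R_\lambda=0$, which follows from the Koszul formula using $\CL_{R_\lambda}\lambda=0$ and $|R_\lambda|=1$.
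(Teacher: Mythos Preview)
Your proposal is correct and follows essentially the same route as the paper: the text immediately preceding the corollary says only that one should ``combine Proposition \ref{prop:A} and this lemma,'' and you carry out precisely that combination, using the torsion identities $T(R_\lambda,\cdot)=0$ and $T^{\text{LC}}=0$ to convert the comparison of $\nabla_{R_\lambda}$ and $\nabla_{R_\lambda}^{\text{LC}}$ on $\xi$ into the comparison of $\nabla_Y R_\lambda$ and $\nabla_Y^{\text{LC}}R_\lambda$, which is exactly what the two cited results compute. Your verification of self-adjointness term by term, and the bookkeeping remark about $\nabla_{R_\lambda}^{\text{LC}}R_\lambda=0$, are appropriate details that the paper omits.
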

The same kind of property also holds for the open string case
for the Legendrian pair $(R_0,R_1)$. This explicit formula for
the asymptotic operator enables us to prove the following series of
perturbation results in \cite{kim-oh:asymp-analysis}
on the eigenfunctions and eigenvalues of
the asymptotic operators \emph{under the perturbation of $J$'s}
inside the set $\CJ_{\lambda}$ of $\lambda$-adapted CR almost
complex structures $J$.
The above explicit form of asymptotic operator on
$J$ and $\lambda$ enable us to prove the following genericity
in terms of the choice of contact triads $(Q,\lambda,J)$ of
contact manifold $(Q,\xi)$.

\begin{thm}[Simpleness of eigenvalues; \cite{kim-oh:asymp-analysis}]
\label{thm:simple-eigenvalue}  Let $(Q,\xi)$ be a contact manifold.
Assume that $\lambda$ is
nondegenerate. For a generic choice of adapted pair $(\lambda,J)$,
 all eigenvalues $\mu_i$ of the asymptotic
operator are simple for all
closed Reeb orbits.
\end{thm}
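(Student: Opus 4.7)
The overall strategy is the standard Sard--Smale/Baire category approach, combined with the explicit formula
$$
A^\pi_{(\lambda,J,\nabla)} = - J\nabla_t^{\text{\rm LC}} - \frac{T}{2}\,\text{Id} +  \frac{T}{2}(\CL_{R_\lambda}J)\, J
$$
from Corollary \ref{cor:A-in-LC}, which exhibits $A^\pi$ as a (twisted) Dirac--type self-adjoint operator on $\gamma^*\xi$ depending explicitly on the adapted almost complex structure $J$. I would first fix $\lambda$, enlarge $\CJ_\lambda$ to its $C^k$ completion $\CJ_\lambda^{(k)}$ (a separable Banach manifold) for each large $k$, and work there; the usual intersection-of-Baire argument then delivers a residual set in $\CJ_\lambda$.

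The first step is to reduce to a single closed Reeb orbit. Since $\lambda$ is nondegenerate, the set $\text{Per}(\lambda)$ of closed Reeb orbits is countable, with each orbit isolated. It therefore suffices to prove that, for each fixed $(\gamma,T) \in \text{Per}(\lambda)$, the subset
$$
\CJ^{\text{reg}}_\lambda(\gamma,T) := \{J \in \CJ_\lambda^{(k)} \mid \text{all eigenvalues of } A^\pi_{(\lambda,J,\nabla)}(\gamma) \text{ are simple}\}
$$
is open and dense; the residual set of the theorem is then
$\bigcap_k \bigcap_{(\gamma,T)} \CJ^{\text{reg}}_\lambda(\gamma,T)$. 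Openness is routine from the compactness of the resolvent of a self-adjoint elliptic operator on $S^1$ and Kato's analytic perturbation theory: on any bounded interval of $\R$ containing only simple eigenvalues, the eigenvalues depend continuously on $J$ and remain simple under a small $C^k$-perturbation.

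The crux, and the main obstacle, is density. Suppose $J_0 \in \CJ_\lambda^{(k)}$ and $\mu \in \Spec(A^\pi_{(\lambda,J_0,\nabla)}(\gamma))$ has (real) geometric multiplicity $\geq 2$, with two $L^2$-orthonormal eigenfunctions $e_1, e_2 \in \Gamma(\gamma^*\xi)$. By Rellich's first-order perturbation formula for self-adjoint families, the eigenvalue $\mu$ splits under an infinitesimal deformation $\delta J$ (in the tangent space $T_{J_0}\CJ_\lambda^{(k)}$, i.e.\ endomorphisms $K$ of $TQ$ with $K R_\lambda = 0$ and $KJ_0 + J_0 K = 0$ on $\xi$) unless the $2\times 2$ Hermitian matrix
$$
M(\delta J) := \Bigl(\langle \delta A^\pi (\delta J) e_i, e_j\rangle_{L^2}\Bigr)_{i,j=1,2}
$$
is a scalar multiple of the identity. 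Differentiating the formula above (including the induced variation of $\nabla^{\text{\rm LC}}$ through $g_\lambda = d\lambda(\cdot, J\cdot) + \lambda \otimes \lambda$), the leading $\delta J$-dependence is a pointwise zero-order differential operator on $\xi$ composed with $\nabla_t^{\text{\rm LC}}$ plus a symmetric pointwise term coming from $\CL_{R_\lambda}(\delta J)$; this keeps the dependence $J \mapsto A^\pi_J$ an analytic map in the appropriate sense. I would then localize: for any $t_0 \in S^1$, the vectors $e_1(t_0), e_2(t_0)$ are linearly independent in $\xi_{\gamma(t_0)}$ (by standard unique continuation for the first-order system $A^\pi e = \mu e$), so we may choose $\delta J$ supported in an arbitrarily small neighborhood of $\gamma(t_0)$ taking prescribed $J_0$-anti-commuting values at $\gamma(t_0)$. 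A finite-dimensional linear-algebra computation at $\gamma(t_0)$ then shows that the map $\delta J \mapsto M(\delta J)$ surjects onto trace-free Hermitian $2\times 2$ matrices, producing a splitting of $\mu$. Hence the set of $J$ having a nonsimple eigenvalue is nowhere dense, and $\CJ^{\text{reg}}_\lambda(\gamma,T)$ is dense.

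The hardest part is the final surjectivity-at-a-point claim, i.e.\ verifying that the anti-commutation constraint on $\delta J$ (the only real restriction at a point once one localizes) still leaves enough freedom to realize every trace-free Hermitian $2\times 2$ matrix as $M(\delta J)$. This requires an honest pointwise linear-algebra check using both the $-J\nabla_t^{\text{\rm LC}}$ and the $(\CL_{R_\lambda}J)J$ contributions to $\delta A^\pi$, exploiting that $e_1(t_0), Je_1(t_0), e_2(t_0), Je_2(t_0)$ span a 4-dimensional real subspace of $\xi_{\gamma(t_0)}$ (using $\dim_\R \xi \geq 4$ for $n \geq 2$; the case $\dim Q = 3$ is simpler since every eigenspace is automatically complex-one-dimensional, hence $\R$-two-dimensional, and the splitting is handled separately by varying the $\CL_{R_\lambda}J$ term alone).
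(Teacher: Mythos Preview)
The paper does not contain a proof of this theorem: it is a survey article, and Theorem~\ref{thm:simple-eigenvalue} (as well as its restatement Theorem~\ref{thm:eigenvalue-intro} in the introduction) is simply quoted from the companion paper \cite{kim-oh:asymp-analysis}, with no argument given here beyond the remark that the explicit formula of Corollary~\ref{cor:A-in-LC} ``enables us to prove'' it. There is therefore nothing in this paper to compare your proposal against.

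That said, your outline is the expected Sard--Smale/Rellich perturbation argument and is broadly sound. Two points deserve care. First, your openness claim is stated for eigenvalues in a fixed bounded interval, but the theorem asks for simpleness of \emph{all} eigenvalues; the standard fix is to show that, for each $N$, the set of $J$ for which all eigenvalues in $[-N,N]$ are simple is open and dense, and then intersect over $N\in\N$ (and over the countably many orbits). Second, when you differentiate $A^\pi$ in $J$, remember that both $\nabla^{\text{\rm LC}}$ and the zero-order term $\frac{T}{2}(\CL_{R_\lambda}J)J$ depend on $J$ (the Levi--Civita connection through the triad metric $g_\lambda = d\lambda(\cdot,J\cdot)+\lambda\otimes\lambda$), so the variation $\delta A^\pi(\delta J)$ has contributions from both; it may be cleaner to differentiate the triad-connection formula $A^\pi = -J\nabla_t + \frac{T}{2}(\CL_{R_\lambda}J)J$ from Proposition~\ref{prop:A}, or better still the connection-free expression of Proposition~\ref{prop:A=T-intro}, before attempting the pointwise surjectivity check. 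Your unique-continuation step (pointwise independence of $e_1(t_0),e_2(t_0)$) is correct, since $A^\pi e = \mu e$ is a first-order linear ODE on $S^1$.
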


\subsection{Finer asymptotic behavior}
\label{subsec:tangentplane}

Once these theorems at our disposal, we can further proceed
with a finer asymptotic convergence result for the pseudoholomorphic
curves on symplectization in a canonical covariant tensorial way.
Similar study is given in \cite{HWZ:asymptotics,HWZ:asymptotics-correction,HWZ:smallarea}
in 3 dimension using special coordinates followed by
adjusting the almost complex structure along the given Reeb orbit.

Let $u = (w,f)$ be any finite energy $\widetilde J$-pseudoholomorphic curve from
a punctured Riemann surface $\dot \Sigma$ to the symplectization of $M = Q \times \R$.
Then $w$ is a contact instanton of the triad
$(Q,\lambda,J)$. Consider a puncture and let
$$
w : [\tau_0, \infty) \times S^1 \rightarrow Q
$$
be the representation of $w$ in the cylindrical coordinate
for some large $\tau_0 >0$ and denote $w_{\tau} := w(\tau, \cdot).$

We have shown that there exists a isospeed Reeb trajectory $(\gamma,T)$ such that
$$
w(\tau, t) = w_{\tau}(t) \rightarrow w_{\infty} = \gamma(T(\cdot) ) \text{ as } \tau \rightarrow \infty.
$$
Denote by $\gamma_T: S^1 \to Q$ the curve given by $\gamma_T(t): = \gamma(Tt)$.

At each $\tau$ we have the operator
$$
 A^{\tau,\pi}_{(\lambda,J,\nabla)} : \Gamma(w_{\tau}^* \xi) \rightarrow \Gamma(w_{\tau}^* \xi)
$$
 defined by
\be\label{eq:Ataupi}
 A^{\tau,\pi}_{(\lambda,J,\nabla)}: =  J \nabla_t^\pi
+ T_{dw}^{\pi,(0,1)}\left(\frac{\del}{\del \tau}\right)
+ B^{(0,1)}\left(\frac{\del}{\del \tau}\right).
\ee
We have shown that as $\tau \to \infty$ the operator
$$
\Pi^{\infty}_{\tau} \circ A^{\tau,\pi}_{(\lambda,J,\nabla)} \circ (\Pi_{\tau}^{\infty})^{-1} : \Gamma(\gamma_{T}^*\xi) \rightarrow  \Gamma(\gamma_{T}^*\xi),
$$
converges to the asymptotic operator
\be\label{eq:Ainftypi}
A^\pi_{(\lambda,J,\nabla)} : \Gamma(\gamma_T^* \xi) \rightarrow \Gamma(\gamma_T^*\xi),
\ee

We have shown that there exists an iso-speed Reeb trajectory
$(\gamma,T)$ such that
$$
w(\tau, t) = w_{\tau}(t) \rightarrow w_{\infty} = \gamma(T(\cdot) ) \text{ as } \tau \rightarrow \infty.
$$
Denote by $\gamma_T: S^1 \to Q$ the curve given by $\gamma_T(t): = \gamma(Tt)$.
At each $\tau$ we have the operator
$$
 A^{\tau,\pi}_{(\lambda,J,\nabla)} : \Gamma(w_{\tau}^* \xi) \rightarrow \Gamma(w_{\tau}^* \xi)
$$
 defined by
\be\label{eq:Ataupi}
 A^{\tau,\pi}_{(\lambda,J,\nabla)}: =  J \nabla_t^\pi
+ T_{dw}^{\pi,(0,1)}\left(\frac{\del}{\del \tau}\right)
+ B^{(0,1)}\left(\frac{\del}{\del \tau}\right).
\ee
We have shown that as $\tau \to \infty$ the operator
$$
\Pi^{\infty}_{\tau} \circ A^{\tau,\pi}_{(\lambda,J,\nabla)} \circ (\Pi_{\tau}^{\infty})^{-1} : \Gamma(\gamma_{T}^*\xi) \rightarrow  \Gamma(\gamma_{T}^*\xi),
$$
converges to the asymptotic operator
\be\label{eq:Ainftypi}
A^\pi_{(\lambda,J,\nabla)} :=
A^\pi_{(\lambda,J,\nabla)}(\gamma): \Gamma(\gamma_T^* \xi) \rightarrow \Gamma(\gamma_T^*\xi),
\ee
given by $A^\pi_{(\lambda,J,\nabla)}(\gamma)$.

Then the following theorem describes the asymptotic behavior of
finite energy contact instanon $w$ as $|\tau| \to \infty$, which is the analog to \cite[Theorem 1.4]{HWZ:asymptotics}.

\begin{thm}[Asymptotic behavior; Theorem 1.10 \cite{kim-oh:asymp-analysis}]
\label{thm:behavior} Assume that $\gamma_T$ is nondegenerate.
Consider t
$$
\nu(\tau,t): = \frac{\zeta(\tau,t)}{\|\zeta(\tau)\|_{L^2(w_\tau^*\xi}}, \quad
\alpha(\tau) = \frac{d}{d\tau}\log \|\zeta\|_{L^2(w_\tau^*\xi)}
$$
Then
\begin{enumerate}
\item  Either $\zeta(\tau,t) = 0$ for all $(\tau,t) \in [\tau_0,\infty)$
\item or otherwise we have the following:
\begin{enumerate}
\item There exists an eigenvector $e$ of $A^\pi_{(\lambda,J,\nabla)}$ of eigenvalue
$\mu$ such that $\nu_\tau \to e$ as $\tau \to \infty$ and
$$
\zeta(\tau,t) = e^{\int_{\tau_0}^\tau \alpha(s)\, ds} (e(t) + \widetilde r(\tau,t))
$$
\item
There exist constants $\delta >0,$ and $C_{\beta}$ for all multi-indices $\beta = (\beta_1,\beta_2) \in \mathbb{N} \times \mathbb{N}$ such that
$$
\sup_{(\tau,t)} |(\nabla^\beta \nu)(\tau,t)| \leq C_\beta
$$
for all $\tau \geq \tau_0$ and $t \in S^1$.
\end{enumerate}
\end{enumerate}
\end{thm}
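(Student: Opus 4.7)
The plan is to view the fundamental equation \eqref{eq:fundamental-isothermal2} as a non-autonomous first-order ODE
\be
\nabla_\tau^\pi \zeta(\tau) + A^{\tau,\pi}_{(\lambda,J,\nabla)} \zeta(\tau) = 0
\ee
on the Hilbert space $L^2(w_\tau^*\xi)$, where $A^{\tau,\pi}_{(\lambda,J,\nabla)}$ is the family defined in \eqref{eq:Ataupi}, and to mimic the classical spectral asymptotics of such ODEs entirely in the tensorial covariant framework. First I would invoke the $C^\infty$-exponential convergence $w_\tau \to \gamma_T$ and $w^*\lambda \to T\,dt$ from Section \ref{sec:exponential-convergence} to conclude that, after parallel transport $\Pi_\tau^\infty$ along short geodesics $w(\tau,\cdot)\to \gamma_T(\cdot)$, the conjugated family converges exponentially fast in operator norm to the asymptotic operator $A^\pi := A^\pi_{(\lambda,J,\nabla)}(\gamma_T)$. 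The identity $[\nabla_t^{\text{\rm LC}},J]=0$ from Proposition \ref{prop:blair}, via Corollary \ref{cor:A-in-LC}, gives self-adjointness of $A^\pi$ on $L^2(\gamma_T^*\xi)$ with discrete real spectrum, and the nondegeneracy of $(T,\gamma)$ guarantees $0\notin\operatorname{Spec} A^\pi$.

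Next, assuming $\zeta\not\equiv 0$, unique continuation for the Cauchy--Riemann-type equation \eqref{eq:fundamental-isothermal2} gives $\zeta(\tau,\cdot)\neq 0$ for all $\tau\geq\tau_0$. A direct computation using self-adjointness of $A^{\tau,\pi}$ yields
\be\label{eq:alpha-sketch}
\alpha(\tau) = -\langle A^{\tau,\pi} \nu(\tau),\nu(\tau)\rangle_{L^2}, \qquad \nabla_\tau^\pi \nu = -(A^{\tau,\pi}-\alpha)\nu,
\ee
and differentiating the first identity gives $\frac{d\alpha}{d\tau} = -\langle \frac{dA^{\tau,\pi}}{d\tau}\nu,\nu\rangle + 2\|(A^{\tau,\pi}-\alpha)\nu\|_{L^2}^2$. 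Since the first summand is exponentially small and the second is nonnegative, an integrated energy argument combined with a three-interval lemma in the spirit of Theorem \ref{thm:three-interval} shows that $\alpha(\tau)$ has a finite limit $\alpha_\infty$ and that $(A^{\tau,\pi}-\alpha)\nu\to 0$ in $L^2$. Setting $\mu:=-\alpha_\infty$, any $L^2$-limit $e$ of a subsequence $\nu(\tau_k)$ is a unit eigenvector of $A^\pi$ with eigenvalue $\mu$. Compactness of the resolvent of $A^\pi$ confines the $\omega$-limit set of $\nu(\tau)$ to a single finite-dimensional eigenspace, and the gradient-like structure \eqref{eq:alpha-sketch} on the unit sphere upgrades subsequential to full convergence $\nu(\tau)\to e$ in $L^2$.

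For the representation formula, set $\widetilde r(\tau,t) := \nu(\tau,t)-e(t)$ so that $\zeta(\tau,t) = e^{\int_{\tau_0}^\tau \alpha(s)\,ds}(e(t)+\widetilde r(\tau,t))$ holds tautologically with $\widetilde r\to 0$ in $L^2$. Substituting this ansatz into \eqref{eq:fundamental-isothermal2} and using $A^\pi e = \mu e$ produces a linear inhomogeneous Cauchy--Riemann-type equation for $\widetilde r$ whose leading part is $\nabla_\tau^\pi \widetilde r + (A^\pi - \mu)\widetilde r$ and whose inhomogeneity is $O(e^{-\delta\tau})$ for some $\delta>0$. The uniform interior $C^{k,\delta}$-estimates of Theorem \ref{thm:local-regularity} applied on unit cylinders $[\tau,\tau+1]\times S^1$ upgrade the $L^2$ decay to $C^\infty$ decay by the standard interior-to-interior bootstrap. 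The pointwise bounds $\sup_{(\tau,t)}|(\nabla^\beta\zeta)(\tau,t)|\le C_\beta e^{-r\tau}$ for each $0<r<\mu$ then follow by choosing $r$ strictly smaller than the distance from $\mu$ to the next eigenvalue of $A^\pi$ and applying Theorem \ref{thm:three-interval} to $\zeta$ with weight $e^{-r\tau}$.

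The hard part will be the spectral convergence step: establishing $\alpha(\tau)\to-\mu$ and $\nu(\tau)\to e$ entirely covariantly, without the special target coordinates or local $J$-adjustments along the Reeb orbit used in \cite{HWZ:asymptotics,HWZ:asymptotics-correction,HWZ:smallarea}. The crucial enabler is the self-adjointness of $A^\pi$ recorded in Corollary \ref{cor:A-in-LC}, which makes the autonomous limit operator genuinely symmetric in a $\tau$-independent inner product; combined with the perturbation bound $\|A^{\tau,\pi}-A^\pi\|_{L^2\to L^2}\le Ce^{-\delta\tau}$ obtained as in Subsection \ref{subsec:L2-expdecay}, this reduces the full convergence question to a three-interval spectral gap estimate for the normalized section $\nu$. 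A secondary point is pinning down a single eigenvector rather than merely an element of an eigenspace, for which nondegeneracy of $(T,\gamma)$ combined with the gradient-like identity \eqref{eq:alpha-sketch} of $\nu$ on the unit sphere is exactly what is needed.
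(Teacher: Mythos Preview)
Your proposal is correct and follows essentially the same approach as the paper: the paper explicitly states that its proof is ``the covariant tensorial version of the Hofer--Wysocki--Zehnder's proof of \cite[Theorem 2.8]{HWZ:behavior}'' carried out via parallel transport of sections of $w^*\xi$ with respect to the canonical connection, and your sketch (non-autonomous ODE for $\zeta$, exponential convergence of $A^{\tau,\pi}$ to the self-adjoint $A^\pi$, the $\alpha$--$\nu$ identities, spectral gap/three-interval arguments, then bootstrap via Theorem~\ref{thm:local-regularity}) is precisely that. The survey itself defers the details to \cite{kim-oh:asymp-analysis} and the exponential-convergence machinery of Section~\ref{sec:exponential-convergence}, so your outline is in fact more explicit than what appears here.
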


Note that the above representation formula of $\zeta$ implies the following convergence of
the tangent plane.

\begin{cor}[Convergence of tangent plane; Corollary 1.11 \cite{kim-oh:asymp-analysis}]\label{cor:tangentplane}
Assume the second alternative in
Theorem \ref{thm:behavior-intro} and denote
$$
P(\tau,t) := \Image dw(\tau,t) \in \text{\rm Gr}_2(\xi_{w(\tau,t)})
$$
where $\text{\rm Gr}_2(\xi_x)$ is the set of 2 dimensional subspaces of the
contact hyperplane $\xi_x \subset T_xQ$.
Then $P(\tau,t) \to \span_\R\{ e(t), J e(t)\}$ exponentially fast in $C^\infty$ topology
uniformly in $t \i S^1$.
\end{cor}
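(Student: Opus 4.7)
The plan is to exploit the representation formula from Theorem \ref{thm:behavior-intro} (the second alternative), namely
\[
\zeta(\tau,t) = e^{\int_{\tau_0}^\tau \alpha(s)\, ds}\bigl(e(t) + \widetilde r(\tau,t)\bigr),
\]
where $\widetilde r(\tau,t) \to 0$ exponentially in $C^\infty$, and translate it into a statement about the two-plane spanned by $\zeta$ and $J\zeta$ in $\xi$. First, I would use the contact instanton equation $\delbar^\pi w = 0$ to get the key algebraic identity $(\del w/\del t)^\pi = J(\del w/\del\tau)^\pi = J\zeta$; in isothermal coordinates this means $d^\pi w = \zeta\, d\tau + J\zeta \, dt$, so the (rank--two) image of $d^\pi w(\tau,t)$, which is how $P(\tau,t)$ should be read as a point of $\mathrm{Gr}_2(\xi_{w(\tau,t)})$, equals $\span_\R\{\zeta(\tau,t),\, J\zeta(\tau,t)\}$.

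Next, I would divide out the scalar factor $e^{\int_{\tau_0}^\tau \alpha(s)\,ds}$, which does not affect the span, to obtain the normalized basis
\[
\widetilde\zeta(\tau,t) := e(t) + \widetilde r(\tau,t),\qquad J\widetilde\zeta(\tau,t) = Je(t) + J\widetilde r(\tau,t).
\]
To make sense of convergence in $\mathrm{Gr}_2(\xi)$ over varying basepoints, I would pull everything back to the limit fiber $\gamma_T^*\xi$ via the parallel transport $\Pi_\tau^\infty$ along the short geodesics from $w(\tau,t)$ to $\gamma(Tt)$ (the same transport used in Subsection \ref{subsec:asymptotic-operators} to define $A^\pi_{(\lambda,J,\nabla)}$); since $w(\tau,\cdot)\to\gamma_T$ in $C^\infty$ exponentially, the $C^\infty$-exponential decay of $\widetilde r$ in $w^*\xi$ translates, after parallel transport, to the $C^\infty$-exponential decay of $\Pi_\tau^\infty\widetilde\zeta - e$ (and of its $J$-image) as sections of $\gamma_T^*\xi$.

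The final ingredient is that the map $(v_1,v_2)\mapsto \span_\R\{v_1,v_2\}$ is smooth on the open set of linearly independent pairs, from which $C^\infty$-exponential convergence of bases descends to $C^\infty$-exponential convergence of the spans in $\mathrm{Gr}_2(\xi_{\gamma(Tt)})$, uniformly in $t\in S^1$. For this step the non-vanishing of $e(t)$ is essential; I would verify it by observing that $e$ satisfies the first-order elliptic ODE $A^\pi_{(\lambda,J,\nabla)}(\gamma_T)e = \mu e$ on $S^1$, so by ODE unique continuation a zero of $e$ at one point would force $e\equiv 0$, contradicting its being an eigenvector. Since $J$ is fiberwise a complex structure on $\xi$ and $e(t)\neq 0$, the pair $\{e(t), Je(t)\}$ is always linearly independent, hence the perturbed pair is linearly independent for $\tau$ large and one stays in the smooth regime for the Plücker/orthogonal-projector description of convergence.

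The main obstacle I foresee is purely bookkeeping: carefully handling the identification of the moving bundle $w_\tau^*\xi$ with the limit bundle $\gamma_T^*\xi$ via $\Pi_\tau^\infty$ while keeping track of $C^k$-norms uniformly in $t$, and ensuring that the exponential $C^\infty$-decay of $\widetilde r$ survives parallel transport. This is however standard once one combines the exponential $C^\infty$-decay of $w_\tau\to\gamma_T$ established in Subsection \ref{subsec:Cinftydecaydu} with the boot-strap of Section \ref{sec:exponential-convergence}; no additional analytic input beyond Theorem \ref{thm:behavior-intro} is needed.
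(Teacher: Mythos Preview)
Your proposal is correct and matches the paper's approach: the paper gives no explicit proof, simply stating that ``the above representation formula of $\zeta$ implies the following convergence of the tangent plane,'' and your argument is precisely the natural unpacking of that remark---identifying $P(\tau,t)$ with $\span_\R\{\zeta,J\zeta\}$ via $\delbar^\pi w=0$, dividing out the scalar, and using the nonvanishing of the eigenvector $e(t)$ (which the paper also records, following \cite{HWZ:asymptotics}) to pass to the Grassmannian. The only point worth flagging is notational: the paper writes $P(\tau,t)=\Image dw(\tau,t)\in\mathrm{Gr}_2(\xi_{w(\tau,t)})$, which strictly speaking should be read as $\Image d^\pi w$, exactly as you interpret it.
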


Finally we would like to just mention that the same asymptotic study
can be made in a straightforward way by incorporating the boundary condition
by now as done in \cite{oh-yso:index}, \cite{oh:contacton-transversality},
\cite{oh:contacton-gluing}.
The exponential convergence statement can be derived by a boot-strap argument
using the exponential convergence result on $\zeta$ as $\tau \to \infty$ given in
Section \ref{sec:exponential-convergence}. (See \cite{oh-wang:CR-map2},
\cite{oh-yso:index} for the details of this exponential convergence and the boot-strap
argument.)

Combining all the above, we have finished the proof of Theorem \ref{thm:behavior}.

Now we prove the following two theorems which are analogs to
Theorem 5.1 and Theorem 5.2 respectively whose proofs follow those in
\cite[Section 5]{HWZ:behavior} after translating their coordinate
exposition into that of covariant tensorial exposition, and so omitted.

\begin{thm}[Compare with Theorem 5.1 \cite{HWZ:behavior}]
Consider a nondegenerate finite energy contact instanton $w$ on cylindrical
coordinates $(\tau,t) \in [0, \infty) \times S^1$.  Then there
exists some $\tau_0 > 0$ such that the sets
$$
\{(\tau,t) \mid w(\tau,z) \in \Image \gamma, \, \tau \geq \tau_0 \}
$$
$$
\{(\tau,z) \mid \zeta(z) = 0, \, \tau \geq \tau_0 \}
$$
consist of finitely many points.
\end{thm}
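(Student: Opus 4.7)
The plan is to leverage the asymptotic representation formula from Theorem \ref{thm:behavior} to reduce the problem to a standard application of the Carleman similarity principle to the Cauchy-Riemann type equation \eqref{eq:fundamental-isothermal2} governing $\zeta$. First I would dispose of the trivial case: by Theorem \ref{thm:behavior}, either $\zeta \equiv 0$ on $[\tau_0,\infty) \times S^1$ (in which case by the contact instanton equation $w$ parameterizes an arc of $\gamma$ and there is nothing to prove modulo the obvious statement about the zero set), or else we have the representation
$$
\zeta(\tau,t) = e^{\int_{\tau_0}^{\tau}\alpha(s)\,ds}\bigl(e(t) + \widetilde r(\tau,t)\bigr),
$$
with $e$ a nontrivial eigenvector of the nondegenerate asymptotic operator $A^\pi_{(\lambda,J,\nabla)}(\gamma_T)$ and $\widetilde r(\tau,t) \to 0$ in $C^\infty$ as $\tau \to \infty$.

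Next I would show that $e(t) \neq 0$ for every $t \in S^1$. By Corollary \ref{cor:A-in-LC}, any eigenfunction $e$ of $A^\pi_{(\lambda,J,\nabla)}$ with eigenvalue $\mu$ satisfies a first order linear ODE along the Reeb orbit of the form $-J\nabla_t^{\text{\rm LC}} e + (\text{bounded operator})\, e = \mu\, e$; thus $e$ is determined by its initial value $e(0)$ and cannot vanish at any $t$ unless it vanishes identically. Combined with the uniform convergence $\widetilde r \to 0$, this produces a threshold $\tau_1 \geq \tau_0$ such that $\zeta(\tau,t) \neq 0$ for all $(\tau,t) \in [\tau_1,\infty) \times S^1$. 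In particular, the zero set of $\zeta$ on $[\tau_0,\infty) \times S^1$ is contained in the \emph{compact} cylinder $[\tau_0,\tau_1] \times S^1$.

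It then suffices to show zeros of $\zeta$ are isolated. For this, I rewrite \eqref{eq:fundamental-isothermal2} as $\nabla^\pi_\tau \zeta + J \nabla^\pi_t \zeta + S\zeta = 0$ where $S$ is a smooth zeroth-order $\R$-linear endomorphism of $w^*\xi$ (with $\|S\|$ a priori bounded by the $C^0$-norm of $w^*\lambda$ and $\CL_{R_\lambda}J$). After trivializing $w^*\xi$ by a unitary frame (with respect to $\nabla^\pi$ and $J|_\xi$), this is a complex-linear-principal-part perturbation of the standard $\delbar$ operator with a bounded $\R$-linear zeroth order term acting on a $\C^n$-valued function. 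The Carleman similarity principle (see e.g. \cite[Theorem 2.2]{HWZ:asymptotics} or the standard statement in \cite{wendl:lecture}) then yields that each zero of $\zeta$ is either isolated or $\zeta$ vanishes identically in a neighborhood; unique continuation together with the nonvanishing on $[\tau_1,\infty) \times S^1$ rules out the second alternative. The finiteness of zeros on $[\tau_0,\tau_1] \times S^1$ follows from compactness.

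For the first set, the points where $w(\tau,t) \in \Image\gamma$, I would pass to a tubular neighborhood of the simple Reeb orbit $\Image\gamma \subset Q$. Using the normal exponential map with respect to the triad metric, there is $\epsilon > 0$ and a diffeomorphism from a neighborhood of the zero section of $\gamma^*\xi|_{S^1} \oplus \R R_\lambda$ onto a tubular neighborhood of $\Image\gamma$; for $\tau$ large the $C^0$-exponential convergence $w(\tau,\cdot) \to \gamma_T$ places the image of $w$ inside this tubular chart. In this chart we can write $w(\tau,t) = \exp_{\gamma(s(\tau,t))}\!\bigl(\eta(\tau,t)\bigr)$ with $\eta(\tau,t) \in \xi_{\gamma(s(\tau,t))}$, and the condition $w(\tau,t) \in \Image\gamma$ is precisely $\eta(\tau,t) = 0$. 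A direct differentiation using the contact instanton equation shows $\eta$ satisfies a Cauchy-Riemann type equation of the same shape as $\zeta$ whose leading term is elliptic and whose asymptotics are again governed by $A^\pi_{(\lambda,J,\nabla)}(\gamma_T)$; in fact $\nabla_\tau \eta|_{\eta=0}$ agrees with $\zeta$, so the asymptotic representation formula from Theorem \ref{thm:behavior} transports to $\eta$. The identical argument (nowhere-vanishing eigenvector for large $\tau$, followed by Carleman similarity and compactness on the finite annulus) then shows the zero set of $\eta$ consists of finitely many points. The main obstacle I anticipate is verifying cleanly that the similarity principle applies in the presence of a non-$J$-linear zeroth order term $S$ in \eqref{eq:fundamental-isothermal2}; but this is known to be harmless because the principle only requires measurable bounded (real-linear) zeroth-order coefficients.
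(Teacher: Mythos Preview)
Your proposal is correct and follows essentially the same route the paper indicates: the paper gives no proof of its own but simply defers to \cite[Section 5]{HWZ:behavior} translated into the covariant tensorial language, and your argument---asymptotic representation formula plus nonvanishing of the eigenvector $e(t)$ to confine zeros to a compact annulus, then the Carleman similarity principle applied to \eqref{eq:fundamental-isothermal2} for isolated zeros---is exactly that translation. Your handling of the first set via a tubular-neighborhood normal component $\eta$ is the natural covariant analog of the $z$-coordinate in \cite{HWZ:behavior}; the only point you might sharpen is the verification that $\eta$ inherits a CR-type equation with bounded zeroth-order term, but this is routine once the exponential chart is set up and is implicit in the paper's remark that the argument carries over verbatim.
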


The following naturally occurs in the study of asymptotic behavior of bubbles
which provides a strong embdding control of such planes in 3 dimension.

\begin{thm}[Compare with Theorem 5.2 \cite{HWZ:behavior}]
Consider a nondegenerate finite energy contact instanton $w$ on
$\C \cong S^2 \setminus \{pt\}$. Then each of the sets
$$
\{z \in \C \mid w(z) \in \Image \gamma \}, \quad 
\{z \in \C \mid d^\pi w(z) = 0\}
$$
consists of finitely many points.
\end{thm}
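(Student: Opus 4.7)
\medskip

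By the preceding theorem (the analog of Theorem 5.1 of \cite{HWZ:behavior}), both sets are finite outside a sufficiently large disk $D_R \subset \C$, so it suffices to prove that each set is discrete in all of $\C$: discreteness combined with compactness of $\overline{D_R}$ will then yield finiteness. My plan is to reduce local discreteness to the Carleman similarity principle applied to a linear Cauchy--Riemann-type equation, and to rule out the ``identically zero'' alternative by a topological obstruction coming from the nonzero asymptotic action of $\gamma$.

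\medskip

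For $Z_\pi := \{z \mid d^\pi w(z) = 0\}$, the fundamental equation in isothermal coordinates \eqref{eq:fundamental-isothermal} asserts that the local section $\zeta := d^\pi w(\partial_x) \in \Gamma(w^*\xi)$ satisfies $\overline{\partial}^{\nabla^\pi}\zeta + P(z)\zeta = 0$ with $P$ a bounded zeroth-order operator (bounds furnished by Theorem~\ref{thm:local-regularity}). Passing to a local unitary trivialization of the Hermitian bundle $(w^*\xi, J, g_\xi)$ converts the equation into the classical form $\partial_x \zeta + J_0 \partial_y \zeta + M(z)\zeta = 0$ in $\C^n$, to which the Carleman similarity principle applies. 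Hence either $\zeta$ vanishes on the connected plane (the degenerate alternative, handled below) or its zeros are isolated.

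\medskip

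For $Z_\gamma := \{z \mid w(z) \in \Image \gamma\}$, fix any $z_0 \in Z_\gamma$ and set $p := w(z_0) = \gamma(t_0)$. Pick a codimension-one slice $\Sigma_{t_0}$ transverse to $R_\lambda$ at $p$ together with the associated Reeb projection $\pi_{t_0} : U \to \Sigma_{t_0}$ on a tubular neighborhood $U$ of $p$; observe that $d\pi_{t_0}|_{\xi_p}$ is an isomorphism onto $T_p \Sigma_{t_0}$, via which I identify the latter with $\C^n$. Define $\sigma(z) := \exp_p^{-1} \pi_{t_0}(w(z))$ for $z$ near $z_0$. The equation $\overline{\partial}^\pi w = 0$ translates into a nonlinear equation $\partial_x \sigma + J_0 \partial_y \sigma + N(z,\sigma)\sigma = 0$ with $N$ smooth and bounded; its linearization at $\sigma = 0$ is a Cauchy--Riemann-type equation to which the Carleman similarity principle applies, showing that either $\sigma$ vanishes identically on an open neighborhood of $z_0$, or $z_0$ is isolated in $\sigma^{-1}(0) = Z_\gamma$ near $z_0$.

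\medskip

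The two degenerate alternatives collapse to one and lead to the same contradiction. If $w$ maps an open set into $\Image \gamma$, then $d^\pi w$ vanishes there; by the Carleman alternative applied to $\zeta$ above, this forces $d^\pi w \equiv 0$ globally on $\C$, so that $w = \gamma \circ f$ for some smooth $f : \C \to \Image \gamma \cong S^1$ and $w^*\lambda = T_\gamma \, df$. Since $\C$ is simply connected, every loop in $\Image \gamma$ bounds in $\C$, forcing $\oint_{|z|=R} df = 0$ for all $R$; yet asymptotic convergence $w(\tau,\cdot) \to \gamma$ at the puncture combined with nondegeneracy of $\gamma$ forces $\lim_{R\to\infty} \oint_{|z|=R} w^*\lambda = T_\gamma > 0$, a contradiction. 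The main technical obstacle lies in the proper formulation of the Carleman similarity principle in the Hermitian bundle $w^*\xi$ when the zeroth-order term involves $(\CL_{R_\lambda}J)$, the torsion of $\nabla$, and the components of $w^*\lambda$, but this follows cleanly from the uniform $C^{k,\delta}$ bounds on $\overline{D_R}$ supplied by Theorem~\ref{thm:local-regularity}.
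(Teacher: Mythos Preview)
Your proposal is correct and takes essentially the same approach the paper indicates: the paper omits the proof entirely, stating only that it ``follows those in \cite[Section 5]{HWZ:behavior} after translating their coordinate exposition into that of covariant tensorial exposition,'' and your argument is precisely that translation --- Carleman similarity applied to the fundamental equation \eqref{eq:fundamental-isothermal} for $Z_\pi$, a Reeb-transverse slice reduction for $Z_\gamma$, and the topological obstruction from the nonzero asymptotic action $T$ to rule out the degenerate alternative. One minor remark: in the $Z_\gamma$ step you do not need to speak of ``linearization''; once the equation is written as $\partial_x\sigma + J_0\partial_y\sigma + N(z,\sigma(z))\sigma = 0$ with $N$ bounded, you already have a linear Cauchy--Riemann-type equation with zeroth-order coefficient $M(z):=N(z,\sigma(z))$, to which Carleman applies directly.
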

Significance of this theorem is not as large as in 3 dimension in higher dimension
because they are expected to be empty for a generic
choice of $J$ by the transversality argument by the dimensional reason.

\appendix

\section{Wedge products of vector-valued forms}
\label{appen:forms}

In this section, we continue with the setting from Appendix \ref{appen:weitzenbock}.
To be specific, we assume $(P, h)$ is a Riemannian manifold of dimension $n$ with metric $h$, and denote by $D$
the Levi--Civita connection. $E\to P$ is a vector bundle with inner product $\langle \cdot, \cdot\rangle$
and $\nabla$ is a connection of $E$ which is compatible with $\langle \cdot, \cdot\rangle$.

We remark that we include this section for the sake of completeness of our treatment of vector valued forms, and
the content of this appendix is not used in any section of this article.
Actually one can derive exponential decay using the differential inequality method from the formulas we provide here.
We leave the proof to interested reader or to the earlier arXiv version of \cite{oh-wang:CR-map1}.

The wedge product of forms can be extended to $E$-valued forms by defining
\beastar
\wedge&:&\Omega^{k_1}(E)\times \Omega^{k_2}(E)\to \Omega^{k_1+k_2}(E)\\
\beta_1\wedge\beta_2&=&\langle \zeta_1, \zeta_2\rangle\,\alpha_1\wedge\alpha_2,
\eeastar
where $\beta_1=\alpha_1\otimes\zeta_1\in \Omega^{k_1}(E)$ and $\beta_2=\alpha_2\otimes\zeta_2\in \Omega^{k_2}(E)$
are $E$-valued forms.

\begin{lem}\label{lem:inner-star}
For $\beta_1, \beta_2\in \Omega^k(E)$,
$$
\langle \beta_1, \beta_2\rangle=*(\beta_1\wedge *\beta_2).
$$
\end{lem}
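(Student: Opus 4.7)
The plan is to reduce the identity to the analogous classical identity for ordinary real-valued differential forms on $(P,h)$, which says $\alpha_1 \wedge *\alpha_2 = h(\alpha_1,\alpha_2)\, dV$ for $\alpha_1,\alpha_2 \in \Omega^k(P)$, equivalently $h(\alpha_1,\alpha_2) = *(\alpha_1 \wedge *\alpha_2)$.

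First I would work on pure tensors, writing $\beta_i = \alpha_i \otimes \zeta_i$ for $i=1,2$, where $\alpha_i \in \Omega^k(P)$ and $\zeta_i \in \Gamma(E)$. By the convention adopted earlier in the paper, the $E$-valued Hodge star is defined by acting on the form factor alone, so $*\beta_2 = (*\alpha_2)\otimes \zeta_2$. Next, by the definition of the wedge product of $E$-valued forms recalled just above the lemma, the wedge $\beta_1 \wedge *\beta_2$ is an \emph{ordinary} (real-valued) $n$-form on $P$ given by
$$
\beta_1 \wedge *\beta_2 \;=\; \langle \zeta_1,\zeta_2\rangle\, \alpha_1 \wedge *\alpha_2.
$$
Applying the Hodge star to this scalar $n$-form and using the classical identity yields
$$
*(\beta_1 \wedge *\beta_2) \;=\; \langle \zeta_1,\zeta_2\rangle\, h(\alpha_1,\alpha_2),
$$
which is exactly $\langle \beta_1,\beta_2\rangle$ by the definition of the inner product on $E$-valued $k$-forms used throughout the paper.

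Finally I would extend the identity from pure tensors to general $E$-valued $k$-forms by bilinearity: both sides of the claim are $C^\infty(P)$-bilinear in $(\beta_1,\beta_2)$ (the left side obviously, and the right side because both $\wedge$ and $*$ are $C^\infty(P)$-linear on forms, and the wedge of $E$-valued forms is bilinear as a consequence of the bilinearity of $\langle\cdot,\cdot\rangle$ on $E$). Since every $E$-valued $k$-form can locally be written as a finite sum $\sum_\mu \alpha_\mu \otimes \zeta_\mu$ (for instance using a local frame of $E$), the identity extends by linearity to all of $\Omega^k(E)$.

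There is essentially no obstacle here; the only small care needed is to make sure the sign and normalization conventions for $*$ on $k$-forms (i.e. $*\,*\alpha = (-1)^{k(n-k)}\alpha$ on Riemannian manifolds) match the ones implicit in the paper, but since the identity $h(\alpha_1,\alpha_2) = *(\alpha_1 \wedge *\alpha_2)$ is convention-independent in the sense it holds for the standard Riemannian Hodge star, no adjustment is required.
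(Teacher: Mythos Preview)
Your proof is correct and follows essentially the same approach as the paper: reduce to pure tensors $\beta_i = \alpha_i \otimes \zeta_i$, unwind the definitions of $*$ and $\wedge$ on $E$-valued forms to get $\langle\zeta_1,\zeta_2\rangle\, *(\alpha_1 \wedge *\alpha_2)$, and invoke the scalar identity $h(\alpha_1,\alpha_2) = *(\alpha_1 \wedge *\alpha_2)$. Your explicit remark about extending by bilinearity to general $E$-valued forms is a small addition the paper leaves implicit.
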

\begin{proof}
Write $\beta_1=\alpha_1\otimes\zeta_1$ and $\beta_2=\alpha_2\otimes\zeta_2$. Then
\beastar
*(\beta_1\wedge *\beta_2)&=&*\big((\alpha_1\otimes\zeta_1)\wedge ((*\alpha_2)\otimes\zeta_2)\big)\\
&=&*(\langle\zeta_1, \zeta_2\rangle\,\alpha_1\wedge *\alpha_2)\\
&=&\langle\zeta_1, \zeta_2\rangle\,*(\alpha_1\wedge *\alpha_2)\\
&=&\langle\zeta_1, \zeta_2\rangle\, h(\alpha_1, \alpha_2)\\
&=&\langle \beta_1, \beta_2\rangle.
\eeastar
\end{proof}

The following lemmas exploit the compatibility of $\nabla$ with the inner product $\langle \cdot, \cdot\rangle$.

\begin{lem}
$$
d(\beta_1\wedge\beta_2)=d^\nabla\beta_1\wedge \beta_2+(-1)^{k_1}\beta_1\wedge d^\nabla\beta_2,
$$
where $\beta_1\in \Omega^{k_1}(E)$ and $\beta_2\in \Omega^{k_2}(E)$
are $E$-valued forms.
\end{lem}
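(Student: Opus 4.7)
The plan is to reduce to decomposable forms by bilinearity of the wedge product and then expand both sides using the ordinary Leibniz rule together with the compatibility of $\nabla$ with $\langle\cdot,\cdot\rangle$. Since both sides are bilinear over $C^\infty(P)$ in $(\beta_1,\beta_2)$ and local in nature, it suffices to verify the identity when
$$
\beta_1=\alpha_1\otimes\zeta_1,\qquad \beta_2=\alpha_2\otimes\zeta_2
$$
with $\alpha_i\in\Omega^{k_i}(P)$ and $\zeta_i\in\Gamma(E)$.

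First I would compute the left-hand side directly from the definition of the wedge product on $E$-valued forms:
$$
d(\beta_1\wedge\beta_2)=d\bigl(\langle\zeta_1,\zeta_2\rangle\,\alpha_1\wedge\alpha_2\bigr).
$$
Applying the ordinary Leibniz rule for $d$ together with the compatibility identity
$$
d\langle\zeta_1,\zeta_2\rangle=\langle\nabla\zeta_1,\zeta_2\rangle+\langle\zeta_1,\nabla\zeta_2\rangle,
$$
this expands to
$$
\langle\nabla\zeta_1,\zeta_2\rangle\wedge\alpha_1\wedge\alpha_2+\langle\zeta_1,\nabla\zeta_2\rangle\wedge\alpha_1\wedge\alpha_2
+\langle\zeta_1,\zeta_2\rangle\bigl(d\alpha_1\wedge\alpha_2+(-1)^{k_1}\alpha_1\wedge d\alpha_2\bigr).
$$

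Next I would expand the right-hand side using the defining formula $d^\nabla(\alpha\otimes\zeta)=d\alpha\otimes\zeta+(-1)^{|\alpha|}\alpha\wedge\nabla\zeta$ from Appendix B of \cite{oh-wang:CR-map1}. Writing out $d^\nabla\beta_1\wedge\beta_2$ by inserting $\nabla\zeta_1=\sum_i\omega_i\otimes e_i$ (with $\omega_i$ a $1$-form and $e_i\in\Gamma(E)$), I find
$$
d^\nabla\beta_1\wedge\beta_2=\langle\zeta_1,\zeta_2\rangle\,d\alpha_1\wedge\alpha_2+(-1)^{k_1}\alpha_1\wedge\langle\nabla\zeta_1,\zeta_2\rangle\wedge\alpha_2,
$$
and analogously
$$
\beta_1\wedge d^\nabla\beta_2=\langle\zeta_1,\zeta_2\rangle\,\alpha_1\wedge d\alpha_2+(-1)^{k_2}\alpha_1\wedge\alpha_2\wedge\langle\zeta_1,\nabla\zeta_2\rangle.
$$

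The main (and essentially only) obstacle is careful sign tracking. To finish, I would move the $1$-forms $\langle\nabla\zeta_1,\zeta_2\rangle$ and $\langle\zeta_1,\nabla\zeta_2\rangle$ to the far left, picking up signs $(-1)^{k_1}$ and $(-1)^{k_1+k_2}$ respectively. Adding up $d^\nabla\beta_1\wedge\beta_2+(-1)^{k_1}\beta_1\wedge d^\nabla\beta_2$, the prefactors $(-1)^{k_1}\cdot(-1)^{k_1}=1$ and $(-1)^{k_1}\cdot(-1)^{k_2}\cdot(-1)^{k_1+k_2}=1$ conspire so that the two derivative-of-inner-product terms appear with coefficient $+1$, while the terms involving $d\alpha_i$ reproduce the Leibniz expansion of $\langle\zeta_1,\zeta_2\rangle(d\alpha_1\wedge\alpha_2+(-1)^{k_1}\alpha_1\wedge d\alpha_2)$. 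This exactly matches the expression obtained for $d(\beta_1\wedge\beta_2)$, completing the proof.
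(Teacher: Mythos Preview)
Your proof is correct and follows essentially the same approach as the paper: reduce to decomposable forms $\beta_i=\alpha_i\otimes\zeta_i$, expand both sides using the ordinary Leibniz rule, the definition of $d^\nabla$, and the compatibility $d\langle\zeta_1,\zeta_2\rangle=\langle\nabla\zeta_1,\zeta_2\rangle+\langle\zeta_1,\nabla\zeta_2\rangle$, then match terms. Your version is slightly more explicit about the sign bookkeeping when commuting the $1$-forms $\langle\nabla\zeta_i,\zeta_j\rangle$ past the $\alpha_i$, but the argument is the same.
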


\begin{proof}
We write $\beta_1=\alpha_1\otimes\zeta_1$ and $\beta_2=\alpha_2\otimes\zeta_2$ and calculate
\beastar
d(\beta_1\wedge\beta_2)&=&d(\langle \zeta_1, \zeta_2\rangle\,\alpha_1\wedge\alpha_2)\\
&=&d\langle \zeta_1, \zeta_2\rangle\wedge\alpha_1\wedge\alpha_2+\langle \zeta_1, \zeta_2\rangle\,d(\alpha_1\wedge\alpha_2)\\
&=&\langle \nabla\zeta_1, \zeta_2\rangle\wedge\alpha_1\wedge\alpha_2+\langle \zeta_1, \nabla\zeta_2\rangle\wedge\alpha_1\wedge\alpha_2\\
&{}&+\langle \zeta_1, \zeta_2\rangle\,d\alpha_1\wedge\alpha_2+(-1)^{k_1}\langle \zeta_1, \zeta_2\rangle\,\alpha_1\wedge d\alpha_2,
\eeastar
while
\beastar
d^\nabla\beta_1\wedge \beta_2&=&d^\nabla(\alpha_1\otimes\zeta_1)\wedge(\alpha_2\otimes\zeta_2)\\
&=&(d\alpha_1\otimes\zeta_1+(-1)^{k_1}\alpha_1\wedge\nabla\zeta_1)\wedge(\alpha_2\otimes\zeta_2)\\
&=&\langle\zeta_1, \zeta_2\rangle\,d\alpha_1\wedge\alpha_2+\langle\nabla\zeta_1, \zeta_2\rangle\wedge\alpha_1\wedge\alpha_2.
\eeastar
A similar calculation shows that
$$
(-1)^{k_1}\beta_1\wedge d^\nabla\beta_2=
(-1)^{k_1}\langle \zeta_1, \zeta_2\rangle\,\alpha_1\wedge d\alpha_2+\langle \zeta_1, \nabla\zeta_2\rangle\wedge\alpha_1\wedge\alpha_2.
$$
Summing these up,  we get the equality we want.
\end{proof}

\begin{lem}\label{lem:metric-property}
Assume $\beta_0\in \Omega^k(E)$ and $\beta_1\in \Omega^{k+1}(E)$,
then we have
$$
\langle d^\nabla \beta_0, \beta_1\rangle-(-1)^{n(k+1)}\langle \beta_0, \delta^\nabla\beta_1\rangle
=*d(\beta_0\wedge *\beta_1).
$$
\end{lem}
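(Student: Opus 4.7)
The plan is to derive the identity by differentiating the wedge product $\beta_0\wedge *\beta_1$ using the graded Leibniz rule from Lemma A.2 and then applying the Hodge star, recognizing the resulting terms as the two inner products via Lemma A.1.

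\smallskip

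\noindent\textbf{Step 1 (Leibniz expansion).} Note that $*\beta_1\in \Omega^{n-k-1}(E)$, so $\beta_0\wedge *\beta_1\in \Omega^{n-1}(E\otimes E)$ after pairing (or equivalently, a scalar $(n-1)$-form once the inner product is applied, consistent with the definition of $\wedge$ in the appendix). Apply Lemma A.2 with $\beta_1\rightsquigarrow \beta_0$ (of degree $k$) and $\beta_2\rightsquigarrow *\beta_1$ (of degree $n-k-1$) to get
\[
d(\beta_0\wedge *\beta_1)=d^\nabla\beta_0\wedge *\beta_1+(-1)^{k}\,\beta_0\wedge d^\nabla(*\beta_1).
\]

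\noindent\textbf{Step 2 (apply $*$ and recognize the first term).} Applying $*$ to both sides and using Lemma A.1 (with $\beta_1,\beta_2\in\Omega^{k+1}(E)$, namely $\beta_1\rightsquigarrow d^\nabla\beta_0$ and $\beta_2\rightsquigarrow \beta_1$) yields
\[
*d(\beta_0\wedge *\beta_1)=\langle d^\nabla\beta_0,\beta_1\rangle+(-1)^{k}\,*\!\bigl(\beta_0\wedge d^\nabla(*\beta_1)\bigr).
\]
This already accounts for the first term on the left-hand side of the target identity.

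\smallskip

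\noindent\textbf{Step 3 (recognize the second term via $\delta^\nabla$).} Recall that $\delta^\nabla$ is defined on $\Omega^{k+1}(E)$ by
\[
\delta^\nabla=(-1)^{n(k+1)+1}\,*d^\nabla *,
\]
and that $**=(-1)^{j(n-j)}\,\mathrm{id}$ on $\Omega^{j}(E)$. Inverting the definition,
\[
d^\nabla(*\beta_1)=\varepsilon\,*\delta^\nabla\beta_1
\]
for a sign $\varepsilon=\varepsilon(n,k)$ obtained by combining $(-1)^{n(k+1)+1}$ with the relevant $**$-sign on a form of degree $n-k$. Substituting this into the rightmost term and applying Lemma A.1 once more (this time to $\beta_0$ and $\delta^\nabla\beta_1$, both in $\Omega^{k}(E)$) converts
\[
(-1)^{k}\,*\!\bigl(\beta_0\wedge d^\nabla(*\beta_1)\bigr)=(-1)^{k}\varepsilon\,*\!\bigl(\beta_0\wedge *\delta^\nabla\beta_1\bigr)=(-1)^{k}\varepsilon\,\langle\beta_0,\delta^\nabla\beta_1\rangle.
\]
Careful bookkeeping of the three signs $(-1)^{k}$, $(-1)^{n(k+1)+1}$, and the $**$-sign $(-1)^{(n-k)k}$ collapses the product to exactly $-(-1)^{n(k+1)}$, giving the claimed identity
\[
\langle d^\nabla\beta_0,\beta_1\rangle-(-1)^{n(k+1)}\langle\beta_0,\delta^\nabla\beta_1\rangle=*d(\beta_0\wedge *\beta_1).
\]

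\smallskip

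\noindent\textbf{Main obstacle.} The only nontrivial part of the argument is the sign bookkeeping in Step 3, where three independent sign conventions (the grading sign from Leibniz, the convention for the codifferential, and the $**$ sign on the appropriate degree) must be combined consistently. Once the conventions used in the paper for $\delta^\nabla$ and $*$ are fixed as above, the signs collapse cleanly; any residual discrepancy would only indicate a convention mismatch rather than a substantive difficulty, and can be absorbed by verifying the identity on decomposable forms $\beta_0=\alpha_0\otimes\zeta_0$, $\beta_1=\alpha_1\otimes\zeta_1$, which reduces the vector-bundle statement to the classical scalar Stokes-type identity together with the metric compatibility $d\langle\zeta_0,\zeta_1\rangle=\langle\nabla\zeta_0,\zeta_1\rangle+\langle\zeta_0,\nabla\zeta_1\rangle$.
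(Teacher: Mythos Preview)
Your approach is essentially identical to the paper's: apply the graded Leibniz rule (Lemma~A.2) to $\beta_0\wedge *\beta_1$, take $*$, identify the first term via Lemma~A.1, then rewrite $d^\nabla(*\beta_1)$ as a sign times $*\,\delta^\nabla\beta_1$ and apply Lemma~A.1 again. The paper carries this out in three displayed lines with the same ingredients. One minor remark: your quoted sign for $\delta^\nabla$ on $\Omega^{k+1}(E)$ differs from the paper's convention $(-1)^{n(k+1)+n+1}*d^\nabla *$ by a factor $(-1)^n$, so your asserted collapse of the three signs does not literally go through as written; but as you yourself note, this is purely a convention check and is easily resolved by working with decomposable forms.
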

\begin{proof}
We calculate
\beastar
*d(\beta_0\wedge *\beta_1)&=&*\big(d^\nabla\beta_0\wedge*\beta_1+(-1)^k\beta_0\wedge (d^\nabla*\beta_1)\big)\\
&=&\langle d^\nabla\beta_0, \beta_1\rangle
+(-1)^n*\big(\beta_0\wedge *(*d^\nabla *\beta_1\big)\\
&=&\langle d^\nabla\beta_0, \beta_1\rangle
-(-1)^{n(k+1)}\langle\beta_0, \delta^\nabla\beta_1\rangle.
\eeastar

\end{proof}

\section{The Weitzenb\"ock formula for vector-valued forms}\label{appen:weitzenbock}

In this appendix, we recall the standard Weitzenb\"ock formulas applied to our
current circumstance. A good exposition on the general Weitzenb\"ock formula is
provided in the appendix of \cite{freed-uhlen}.

Assume $(P, h)$ is a Riemannian manifold of dimension $n$ with metric $h$, and $D$ is the Levi--Civita connection.
Let $E\to P$ be any vector bundle with inner product $\langle\cdot, \cdot\rangle$,
and assume $\nabla$ is a connection on $E$ which is compatible with $\langle\cdot, \cdot\rangle$.

For any $E$-valued form $s$, calculating the (Hodge) Laplacian of the energy density
of $s$,  we get
\beastar
-\frac{1}{2}\Delta|s|^2=|\nabla s|^2+\langle Tr\nabla^2 s, s\rangle,
\eeastar
where for $|\nabla s|$ we mean the induced norm in the vector bundle $T^*P\otimes E$, i.e.,
$|\nabla s|^2=\sum_i|\nabla_{E_i}s|^2$ with $\{E_i\}$ an orthonormal frame of $TP$.
$Tr\nabla^2$ denotes the connection Laplacian, which is defined as
$Tr\nabla^2=\sum_i\nabla^2_{E_i, E_i}s$,
where $\nabla^2_{X, Y}:=\nabla_X\nabla_Y-\nabla_{\nabla_XY}$.

Denote by $\Omega^k(E)$ the space of $E$-valued $k$-forms on $P$. The connection $\nabla$
induces an exterior derivative by
\beastar
d^\nabla&:& \Omega^k(E)\to \Omega^{k+1}(E)\\
d^\nabla(\alpha\otimes \zeta)&=&d\alpha\otimes \zeta+(-1)^k\alpha\wedge \nabla\zeta.
\eeastar
It is not hard to check that for any $1$-forms, equivalently one can write
$$
d^\nabla\beta (v_1, v_2)=(\nabla_{v_1}\beta)(v_2)-(\nabla_{v_2}\beta)(v_1),
$$
where $v_1, v_2\in TP$.

We extend the Hodge star operator to $E$-valued forms by
\beastar
*&:&\Omega^k(E)\to \Omega^{n-k}(E)\\
*\beta&=&*(\alpha\otimes\zeta)=(*\alpha)\otimes\zeta
\eeastar
for $\beta=\alpha\otimes\zeta\in \Omega^k(E)$.

Define the Hodge Laplacian of the connection $\nabla$ by
$$
\Delta^{\nabla}:=d^{\nabla}\delta^{\nabla}+\delta^{\nabla}d^{\nabla},
$$
where $\delta^{\nabla}$ is defined by
$$
\delta^{\nabla}:=(-1)^{nk+n+1}*d^{\nabla}*.
$$
The following lemma is important for the derivation of the Weitzenb\"ock formula.
\begin{lem}\label{lem:d-delta}Assume $\{e_i\}$ is an orthonormal frame of $P$, and $\{\alpha^i\}$ is the dual frame.
Then we have
\beastar
d^{\nabla}&=&\sum_i\alpha^i\wedge \nabla_{e_i}\\
\delta^{\nabla}&=&-\sum_ie_i\rfloor \nabla_{e_i}.
\eeastar
\end{lem}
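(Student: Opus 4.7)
The strategy is to reduce both identities to standard pointwise identities on $(P,h)$ combined with the Leibniz rule for the tensor-product connection $\nabla$ on $\Omega^k(E) \cong \Omega^k(P) \otimes_{C^\infty(P)} \Gamma(E)$. Recall that on decomposable elements one has
$$\nabla_{e_i}(\alpha \otimes \zeta) = (D_{e_i}\alpha) \otimes \zeta + \alpha \otimes \nabla_{e_i}\zeta.$$
Both claims are $C^\infty(P)$-linear in the argument, so it suffices to verify them on $\beta = \alpha \otimes \zeta$ and extend by linearity.

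For the first formula I would apply $\sum_i \alpha^i \wedge \nabla_{e_i}$ to $\beta$; the Leibniz rule above splits the result into two sums. The piece $\sum_i \alpha^i \wedge (D_{e_i}\alpha) \otimes \zeta$ collapses to $d\alpha \otimes \zeta$ by the familiar scalar identity $d\alpha = \sum_i \alpha^i \wedge D_{e_i}\alpha$, a consequence of $D$ being torsion-free. The piece $\sum_i (\alpha^i \wedge \alpha) \otimes \nabla_{e_i}\zeta$ I would rewrite using antisymmetry $\alpha^i \wedge \alpha = (-1)^k \alpha \wedge \alpha^i$ together with $\nabla \zeta = \sum_i \alpha^i \otimes \nabla_{e_i}\zeta$, producing $(-1)^k \alpha \wedge \nabla\zeta$. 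Adding the two pieces reproduces the defining Leibniz rule $d^\nabla(\alpha \otimes \zeta) = d\alpha \otimes \zeta + (-1)^k \alpha \wedge \nabla\zeta$ from Appendix \ref{appen:forms}.

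For the second formula I would start from $\delta^\nabla = (-1)^{nk+n+1}\, {*}\,d^\nabla {*}$ on $\Omega^k(E)$ and combine two ingredients. First, the extended Hodge star commutes with $\nabla$: one has $\nabla_{e_i}({*}\eta) = {*}(\nabla_{e_i}\eta)$, because the Levi-Civita connection preserves the Riemannian volume form (hence $*$ on $\Omega^*(P)$), while $*$ acts trivially on the $E$-factor. Second, the pointwise identity $*(\alpha^i \wedge \eta) = (-1)^{\deg \eta}\, e_i \rfloor *\eta$, easily checked on an orthonormal basis $\alpha^{i_1}\wedge\cdots\wedge \alpha^{i_p}$, transfers verbatim to $E$-valued forms. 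Applying the first formula to $*\beta \in \Omega^{n-k}(E)$ and pushing $*$ through $\nabla_{e_i}$ gives
$$d^\nabla * \beta = \sum_i \alpha^i \wedge *(\nabla_{e_i}\beta),$$
so that after applying $*$ termwise and using $*^2 = (-1)^{k(n-k)}$ on $\Omega^k(E)$, one arrives at
$$* d^\nabla * \beta = (-1)^{(k+1)(n-k)} \sum_i e_i \rfloor \nabla_{e_i}\beta.$$

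The main obstacle is purely bookkeeping: one needs the total sign
$$(-1)^{nk+n+1}\cdot(-1)^{(k+1)(n-k)} = (-1)^{2nk+2n+1-k(k+1)}$$
to collapse to $-1$, which uses the parity fact that $k(k+1)$ is always even. I would sanity-check this with $n=2,\, k=1$ (where every sign is explicit) before committing, after which the identity $\delta^\nabla = -\sum_i e_i \rfloor \nabla_{e_i}$ drops out. The computation is intrinsically local, and the bundle $E$ enters only through the Leibniz rule and the compatibility of $\nabla$ with the inner product; neither plays any role in the final sign count.
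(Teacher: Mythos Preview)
Your argument is correct. For the first identity your computation is essentially identical to the paper's: both expand $\sum_i \alpha^i\wedge\nabla_{e_i}(\alpha\otimes\zeta)$ via Leibniz and match it against the defining formula $d^\nabla(\alpha\otimes\zeta)=d\alpha\otimes\zeta+(-1)^k\alpha\wedge\nabla\zeta$, using the scalar identity $d\alpha=\sum_i\alpha^i\wedge D_{e_i}\alpha$.

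For the second identity the two arguments diverge slightly in organisation. The paper keeps the decomposition $\beta=\alpha\otimes\zeta$ throughout: it writes $\delta^\nabla(\alpha\otimes\zeta)=(\delta\alpha)\otimes\zeta+\text{(bundle term)}$, invokes the scalar formula $\delta\alpha=-\sum_i e_i\rfloor D_{e_i}\alpha$ as a known input, and then reduces the bundle term via the pointwise identity $*((*\alpha)\wedge\alpha^i)=\pm\, e_i\rfloor\alpha$. You instead treat $\beta$ as a whole, use the commutation $[*,\nabla_{e_i}]=0$ (from $D$-parallelism of the volume form) to feed $*\beta$ back into the already-proved first formula, and then carry out a single global sign computation. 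Your route is a bit more structural---it does not quote the scalar codifferential formula separately, and the parity observation $k(k+1)\equiv 0\pmod 2$ cleanly closes the sign---while the paper's route makes the split between the base and fibre contributions more explicit. Both rest on the same Hodge-star identity, just packaged differently.
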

\begin{proof}Assume $\beta=\alpha\otimes \zeta\in \Omega^k(E)$. Then
\beastar
d^{\nabla}(\alpha\otimes \zeta)&=&(d\alpha)\otimes \zeta+(-1)^k\alpha\wedge\nabla\zeta\\
&=&\sum_i\alpha^i\wedge \nabla_{e_i}\alpha\otimes\zeta+(-1)^k\alpha\wedge\nabla\zeta.
\eeastar
On the other hand,
\beastar
\sum_i\alpha^i\wedge \nabla_{e_i}(\alpha\otimes\zeta)&=&
\sum_i\alpha^i\wedge\nabla_{e_i}\alpha\otimes\zeta+\alpha^i\wedge\alpha\otimes\nabla_{e_i}\zeta\\
&=&\sum_i\alpha^i\wedge \nabla_{e_i}\alpha\otimes\zeta+(-1)^k\alpha\wedge\nabla\zeta,
\eeastar
so we have proved the first statement.

For the second equality, we compute
\beastar
\delta^{\nabla}(\alpha\otimes\zeta)&=&(-1)^{nk+n+1}*d^{\nabla}*(\alpha\otimes\zeta)\\
&=&(\delta\alpha)\otimes\zeta+(-1)^{nk+n+1}*(-1)^{n-k}(*\alpha)\wedge\nabla\zeta\\
&=&-\sum_ie_i\rfloor \nabla_{e_i}\alpha\otimes\zeta+\sum_i(-1)^{nk-k+1}*((*\alpha)\wedge\alpha^i)\otimes\nabla_{e_i}\zeta\\
&=&-\sum_ie_i\rfloor \nabla_{e_i}\alpha\otimes\zeta-\sum_ie_i\rfloor \alpha\otimes\nabla_{e_i}\zeta\\
&=&-\sum_ie_i\rfloor \nabla_{e_i}(\alpha\otimes\zeta).
\eeastar
\end{proof}

\begin{thm}[Weitzenb\"ock Formula]\label{thm:weitzenbock}
Assume $\{e_i\}$ is an orthonormal frame of $P$, and $\{\alpha^i\}$ is the dual frame. Then
when applied to $E$-valued forms
\beastar
\Delta^{\nabla}=-Tr\nabla^2+\sum_{i,j}\alpha^j\wedge (e_i\rfloor R(e_i,e_j)(\cdot))
%&=&\nabla^*\nabla+\sum_{i,j}\alpha^j\wedge (e_i\rfloor R(e_i,e_j)\cdot).
\eeastar
where $R$ is the curvature tensor of the bundle $E$ with respect to the connection $\nabla$.
\end{thm}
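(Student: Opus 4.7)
The plan is to expand $\Delta^\nabla = d^\nabla \delta^\nabla + \delta^\nabla d^\nabla$ using the pointwise formulas for $d^\nabla$ and $\delta^\nabla$ established in Lemma \ref{lem:d-delta}, and to reassemble the result using the standard interior-product/wedge identity so that the uncontracted second derivatives combine to $-Tr\,\nabla^2$ while the antisymmetrized ones produce the curvature term. Since both sides are tensorial, I would fix an arbitrary point $x\in P$ and work with a local orthonormal frame $\{e_i\}$ that is \emph{normal at $x$}, i.e.\ $D_{e_i}e_j(x)=0$ (equivalently $(D\alpha^j)(x)=0$ and $[e_i,e_j](x)=0$). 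This kills all of the Christoffel contributions in the calculation at $x$ and leaves only the algebraic wedge-and-contraction combinatorics.

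At such a point, Lemma \ref{lem:d-delta} gives
\[
d^\nabla \delta^\nabla = -\sum_{i,j}\alpha^j\wedge \nabla_{e_j}\bigl(e_i\rfloor \nabla_{e_i}(\cdot)\bigr)
= -\sum_{i,j}\alpha^j\wedge \bigl(e_i\rfloor \nabla_{e_j}\nabla_{e_i}(\cdot)\bigr),
\]
\[
\delta^\nabla d^\nabla = -\sum_{i,j} e_i\rfloor \nabla_{e_i}\bigl(\alpha^j\wedge \nabla_{e_j}(\cdot)\bigr)
= -\sum_{i,j} e_i\rfloor \bigl(\alpha^j\wedge \nabla_{e_i}\nabla_{e_j}(\cdot)\bigr),
\]
where the commutation of $\nabla_{e_j}$ with $e_i\rfloor$ (resp.\ of $\nabla_{e_i}$ with $\alpha^j\wedge$) is legitimate at $x$ because of normality. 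The next step is to apply the derivation identity
\[
e_i\rfloor(\alpha^j\wedge \beta) = (e_i\rfloor \alpha^j)\,\beta - \alpha^j\wedge(e_i\rfloor \beta) = \delta_{ij}\,\beta - \alpha^j\wedge(e_i\rfloor\beta)
\]
inside the second line. Summing the $\delta_{ij}$-term over $i$ produces $-\sum_j \nabla_{e_j}\nabla_{e_j}$, which at $x$ equals $-Tr\,\nabla^2$ since the normality of the frame kills the $\nabla_{\nabla_{e_j}e_j}$-correction.

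Adding the two pieces then gives
\[
\Delta^\nabla = -Tr\,\nabla^2 + \sum_{i,j}\alpha^j\wedge\bigl(e_i\rfloor\bigl(\nabla_{e_i}\nabla_{e_j}-\nabla_{e_j}\nabla_{e_i}\bigr)(\cdot)\bigr).
\]
Because $[e_i,e_j](x)=0$, the bracket term in the definition of $R$ drops out, so $\nabla_{e_i}\nabla_{e_j}-\nabla_{e_j}\nabla_{e_i} = R(e_i,e_j)$ at $x$, yielding exactly
\[
\Delta^\nabla = -Tr\,\nabla^2 + \sum_{i,j}\alpha^j\wedge\bigl(e_i\rfloor R(e_i,e_j)(\cdot)\bigr).
\]
Since $x$ was arbitrary and the final expression is a pointwise tensorial identity in the frame $\{e_i\}$ (and independent of its choice by standard orthonormal invariance), the formula holds globally.

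The only mild subtlety---and the step I expect to require the most care---is the bookkeeping of signs and index conventions in the derivation identity for $e_i\rfloor(\alpha^j\wedge\beta)$ when $\beta$ is itself an $E$-valued form of mixed degree, together with making sure that the simplifications provided by the normal frame (no $\nabla e_i$ terms, no Lie brackets, no derivatives of $\alpha^j$) are used consistently in both $d^\nabla\delta^\nabla$ and $\delta^\nabla d^\nabla$. Once these conventions are fixed, the computation is a routine three-line expansion.
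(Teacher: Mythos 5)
Your proof is correct and follows essentially the same route as the paper: pass to a frame normal at the point, invoke Lemma \ref{lem:d-delta} to write $d^\nabla$ and $\delta^\nabla$ as $\sum\alpha^j\wedge\nabla_{e_j}$ and $-\sum e_i\rfloor\nabla_{e_i}$, use normality to drop the $D_{e_i}\alpha^j$ and $D_{e_i}e_j$ terms, apply the derivation identity $e_i\rfloor(\alpha^j\wedge\beta)=\delta_{ij}\beta-\alpha^j\wedge(e_i\rfloor\beta)$, and recombine the antisymmetrized double covariant derivatives into the curvature via $[e_i,e_j]=0$ at the point. You make the interior-product derivation identity and the use of $[e_i,e_j](x)=0$ slightly more explicit than the paper does, but the decomposition, the key lemma, and the bookkeeping are the same.
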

\begin{proof}Since the right hand side of the equality is independent of the choice of orthonormal basis,
and it is a pointwise formula,
we can take the normal coordinates $\{e_i\}$ at a point $p\in P$ (and $\{\alpha^i\}$ the dual basis), i.e., $h_{ij}:=h(e_i, e_j)(p)=\delta_{ij}$ and $dh_{i,j}(p)=0$,
 and prove that the given formula holds at $p$ for such coordinates. For the Levi--Civita connection, the condition $dh_{i,j}(p)=0$
 of the normal coordinate is equivalent to letting
$\Gamma^k_{i,j}(p):=\alpha^k(D_{e_i}e_j)(p)$ be $0$.

For $\beta\in \Omega^k(E)$, using Lemma \ref{lem:d-delta} we calculate
\beastar
\delta^{\nabla}d^{\nabla}\beta&=&-\sum_{i,j}e_i\rfloor \nabla_{e_i}(\alpha^j\wedge\nabla_{e_j}\beta)\\
&=&-\sum_{i,j}e_i\rfloor (D_{e_i}\alpha^j \wedge\nabla_{e_j}\beta+\alpha^j\wedge \nabla_{e_i}\nabla_{e_j}\beta).
\eeastar
At the point $p$, the first term vanishes, and we get
\beastar
\delta^{\nabla}d^{\nabla}\beta(p)&=&-\sum_{i,j}e_i\rfloor (\alpha^j\wedge \nabla_{e_i}\nabla_{e_j}\beta)(p)\\
&=&-\sum_i\nabla_{e_i}\nabla_{e_i}\beta(p)+\sum_{i,j}\alpha^j\wedge (e_i\rfloor\nabla_{e_i}\nabla_{e_j}\beta)(p)\\
&=&-\sum_i\nabla^2_{e_i, e_i}\beta(p)+\sum_{i,j}\alpha^j\wedge (e_i\rfloor\nabla_{e_i}\nabla_{e_j}\beta)(p).
\eeastar
Also,
\beastar
d^{\nabla}\delta^{\nabla}\beta&=&-\sum_{i,j}\alpha^i\wedge \nabla_{e_i}(e_j\rfloor \nabla_{e_j}\beta)\\
&=&-\sum_{i,j}\alpha^i\wedge (e_j\rfloor \nabla_{e_i}\nabla_{e_j}\beta)
-\sum_{i,j}\alpha^i\wedge ((D_{e_i}e_j) \rfloor \nabla_{e_j}\beta),
\eeastar
As before, at the point $p$, the second term vanishes.

Now we sum the two parts $d^\nabla\delta^\nabla$ and $\delta^\nabla d^\nabla$ and get
$$
\Delta^{\nabla}\beta(p)=-\sum_i\nabla^2_{e_i, e_i}\beta(p)
+\sum_{i,j}\alpha^j\wedge (e_i\rfloor R(e_i,e_j)\beta)(p).
$$
\end{proof}

In particular, when acting on zero forms, i.e., sections of $E$, the second term on the right hand side vanishes, and there is
$$
\Delta^{\nabla}=-Tr\nabla^2.
$$
When acting on full rank forms, the above also holds by easy checking.

When $\beta\in \Omega^1(E)$, which is the case we use in this article, there is the following
\begin{cor}
For $\beta=\alpha\otimes \zeta\in \Omega^1(E)$, the Weizenb\"ock formula can be written as
$$
\Delta^{\nabla}\beta=-\sum_i\nabla^2_{e_i, e_i}\beta
+\ric^{D*}(\alpha)\otimes\zeta+\ric^{\nabla}\beta,
$$
where $\ric^{D*}$ denotes the adjoint of $\ric^D$, which acts on $1$-forms.

In particular, when $P$ is a surface, we have
\bea\label{eq:bochner-weitzenbock}
\Delta^{\nabla}\beta&=&-\sum_i\nabla^2_{e_i, e_i}\beta
+K\cdot\beta+\ric^{\nabla}(\beta)\nonumber\\
-\frac{1}{2}\Delta|\beta|^2&=&|\nabla \beta|^2-\langle \Delta^\nabla \beta, \beta\rangle +K\cdot|\beta|^2+\langle\ric^{\nabla}(\beta), \beta\rangle.
\eea
where $K$ is the Gaussian curvature of the surface $P$.
\end{cor}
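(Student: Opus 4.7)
The plan is to derive the corollary as a direct specialization of Theorem \ref{thm:weitzenbock} to one-forms, decomposing the curvature endomorphism of $T^*P \otimes E$ into its two natural summands, and then specializing further to surfaces where the Ricci curvature of the base is encoded in the Gaussian curvature.

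First I would start from the general Weitzenb\"ock formula applied to $\beta = \alpha \otimes \zeta \in \Omega^1(E)$:
\[
\Delta^{\nabla}\beta = -\mathrm{Tr}\,\nabla^2 \beta + \sum_{i,j} \alpha^j \wedge \bigl(e_i \rfloor R(e_i,e_j)\beta\bigr),
\]
and focus entirely on rewriting the curvature sum, since the connection-Laplacian term already matches. Here the curvature $R$ is that of the tensor-product connection on $T^*P \otimes E$ induced by the Levi-Civita connection $D$ on $T^*P$ and the connection $\nabla$ on $E$. By the standard Leibniz rule for curvatures of tensor products, this curvature splits additively:
\[
R(e_i,e_j)(\alpha \otimes \zeta) = \bigl(R^{D^*}(e_i,e_j)\alpha\bigr) \otimes \zeta + \alpha \otimes \bigl(R^{\nabla}(e_i,e_j)\zeta\bigr),
\]
where $R^{D^*}$ denotes the curvature acting on $T^*P$ (the adjoint of the Riemann tensor acting on $TP$).

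Next I would trace out the two pieces separately. For the second piece, evaluating $\sum_{i,j} \alpha^j \wedge (e_i \rfloor (\alpha \otimes R^{\nabla}(e_i,e_j)\zeta))$ one commutes the $e_i\rfloor$ past the scalar-valued factor $\alpha$ and obtains the expression that by definition is $\mathrm{ric}^{\nabla}\beta$ (the trace over $i$ of $R^{\nabla}(e_i,\cdot)$ composed with evaluation at $e_i$ of $\beta$). For the first piece, $\sum_{i,j} \alpha^j \wedge (e_i \rfloor (R^{D^*}(e_i,e_j)\alpha \otimes \zeta))$ collapses, via contraction of $e_i$ with the one-form $R^{D^*}(e_i,e_j)\alpha \otimes \zeta$, to $(\mathrm{ric}^{D^*}(\alpha))\otimes \zeta$ by the definition of the Ricci tensor as the trace of the Riemann tensor. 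Adding the two yields the first displayed formula of the corollary.

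For the surface case, the hypothesis $\dim P = 2$ reduces $\mathrm{ric}^{D^*}$ to scalar multiplication by the Gauss curvature $K$, so $\mathrm{ric}^{D^*}(\alpha)\otimes \zeta = K \cdot \beta$, and the first equation of the surface statement follows. For the second equation, I would invoke the identity
\[
-\tfrac{1}{2}\Delta |\beta|^2 = |\nabla \beta|^2 + \langle \mathrm{Tr}\,\nabla^2 \beta, \beta\rangle,
\]
which is the elementary Bochner-type identity recalled at the start of Appendix \ref{appen:weitzenbock} (it follows from $\nabla g_E = 0$ and $\nabla h = 0$ by differentiating $|\beta|^2$ twice). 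Substituting $\mathrm{Tr}\,\nabla^2 \beta = -\Delta^{\nabla}\beta + K \beta + \mathrm{ric}^{\nabla}\beta$ from the first surface identity and taking inner products with $\beta$ produces the final displayed equation.

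The only non-mechanical step is the curvature splitting on the tensor product $T^*P \otimes E$; once that is written down, the rest is tracing indices and specializing $\mathrm{ric}^{D^*}$ to $K$ on a surface. I do not anticipate any genuine obstacle beyond verifying that sign conventions in $R^{D^*}$ versus $R^D$ match those used implicitly in the definition of $\mathrm{ric}^{D^*}$ given in the statement.
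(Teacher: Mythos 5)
Your proof is correct and follows what the paper clearly intends — the paper states this corollary without proof (treating it as routine specialization of the preceding Weitzenb\"ock theorem), and your argument is the natural unpacking: split the curvature of $T^*P\otimes E$ into the $D^*$ and $\nabla$ summands, trace out to identify $\ric^{D^*}$ and $\ric^\nabla$, reduce $\ric^{D^*}$ to scalar multiplication by $K$ when $\dim P = 2$, and substitute into the elementary Bochner identity $-\tfrac12\Delta|\beta|^2 = |\nabla\beta|^2 + \langle \mathrm{Tr}\,\nabla^2\beta,\beta\rangle$. The one point worth spelling out if you write this up is the sign bookkeeping you flag at the end: with the paper's index convention the curvature term in Theorem \ref{thm:weitzenbock} is $\sum_{i,j}\alpha^j\wedge(e_i\rfloor R(e_i,e_j)\beta)$, and using $R^{D^*}(e_i,e_j)\alpha = -\alpha\circ R^D(e_i,e_j)$ one has $\sum_i (R^{D^*}(e_i,e_j)\alpha)(e_i) = -\sum_i\alpha(R^D(e_i,e_j)e_i) = \alpha(\ric^D(e_j))$, so the contraction really does land on $\ric^{D^*}(\alpha)$ with the sign stated; on a surface this then specializes to $K\alpha$.
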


%\section{Appendix: Exponential decay of $\xi$-component by using differential inequality}
%\label{appen:exp-decay}
%In this appendix, we provide an alternative proof of the exponential decay of the vector field $\xi$
%by using some tensorial calculations and the Weitzen\"ock formula.

\section{Abstract framework of the three-interval method}
\label{sec:three-interval}

In this section, we provide the method in proving exponential decay using the abstract framework of the three-interval
method from \cite{oh-wang:CR-map2} referring interested readers \cite[Section 11]{oh-wang:CR-map2}.
We remark that the method can deal with the case with any exponentially decaying perturbation too
(see Theorem \ref{thm:three-interval}).

The three-interval method is based on the following analytic lemma.
\begin{lem}[\cite{mundet-tian} Lemma 9.4]\label{lem:three-interval}
For a sequence of nonnegative numbers $\{x_k\}_{k=0, 1, \cdots, N}$, if there exists some constant $0<\gamma<\frac{1}{2}$ such that
$$
x_k\leq \gamma(x_{k-1}+x_{k+1})
$$
 for every $1\leq k\leq N-1$, then it follows
$$
x_k\leq x_0\xi^{-k}+x_N\xi^{-(N-k)},\quad k=0, 1, \cdots, N,
$$
where $\xi:=\frac{1+\sqrt{1-4\gamma^2}}{2\gamma}$.
\end{lem}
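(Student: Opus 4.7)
The plan is to prove the statement by a discrete maximum-principle argument, comparing the sequence $\{x_k\}$ with an explicit comparison sequence built from solutions of the associated homogeneous recurrence with equality. First I will search for a geometric ansatz $y_k = \xi^{-k}$ satisfying $y_k = \gamma(y_{k-1}+y_{k+1})$; dividing by $\xi^{-k}$ converts this to $1 = \gamma(\xi + \xi^{-1})$, i.e., the quadratic $\xi^2 - \gamma^{-1}\xi + 1 = 0$, whose two positive roots are $\xi_\pm = \frac{1\pm\sqrt{1-4\gamma^2}}{2\gamma}$. The hypothesis $0<\gamma<1/2$ guarantees $\xi_+ > 1 > \xi_- = \xi_+^{-1}$, which justifies the definition of $\xi$ in the statement and produces the two linearly independent geometric solutions $\xi^{-k}$ and $\xi^{-(N-k)}$ of the recurrence with equality.

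Next I will introduce the comparison sequence
\[
y_k := x_0\,\xi^{-k} + x_N\,\xi^{-(N-k)}, \qquad 0 \leq k \leq N,
\]
which by construction satisfies $y_k = \gamma(y_{k-1}+y_{k+1})$ for $1\leq k\leq N-1$, together with the boundary inequalities $y_0 = x_0 + x_N\xi^{-N} \geq x_0$ and $y_N = x_0\xi^{-N} + x_N \geq x_N$. Setting $z_k := y_k - x_k$ and subtracting the hypothesis $x_k \leq \gamma(x_{k-1}+x_{k+1})$ from the equality satisfied by $y_k$, I obtain
\[
z_k \geq \gamma(z_{k-1}+z_{k+1}), \qquad 1 \leq k \leq N-1,
\]
while $z_0 \geq 0$ and $z_N \geq 0$.

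The conclusion $x_k \leq y_k$ then follows from the discrete minimum principle: if $z_{k^*} := \min_{0\le k\le N} z_k < 0$ were attained at some interior index $1\leq k^* \leq N-1$ (it must be interior, since the boundary values are nonnegative), then $z_{k^*-1}+z_{k^*+1} \geq 2z_{k^*}$, whence
\[
z_{k^*} \geq \gamma(z_{k^*-1}+z_{k^*+1}) \geq 2\gamma\, z_{k^*},
\]
giving $(1-2\gamma)\,z_{k^*} \geq 0$. Because $1-2\gamma > 0$, this forces $z_{k^*} \geq 0$, contradicting the assumption. Hence $z_k \geq 0$ for all $k$, which is the desired estimate.

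I expect no serious obstacle: the only real decision is to pick the correct geometric ansatz and to note that the larger root $\xi_+$ is exactly the $\xi$ appearing in the statement, so that $\xi^{-k}$ and $\xi^{-(N-k)}$ are both admissible comparison modes. The rest is a standard two-line application of a discrete maximum principle, using only the strict inequality $\gamma < 1/2$ at the contradiction step.
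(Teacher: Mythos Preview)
Your proof is correct. The paper does not supply its own proof of this lemma; it simply quotes it from \cite{mundet-tian} (Lemma 9.4) and uses it as a black box in the three-interval framework, so there is nothing to compare against. Your discrete maximum-principle argument---building the comparison sequence $y_k$ from the two geometric solutions of the equality recurrence and then ruling out a negative interior minimum of $z_k=y_k-x_k$ via $(1-2\gamma)z_{k^*}\geq 0$---is exactly the standard clean route and goes through without issue.
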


\begin{rem}\label{rem:three-interval}
\begin{enumerate}
\item If we write $\gamma=\gamma(c):=\frac{1}{e^c+e^{-c}}$ where $c>0$ is uniquely determined by $\gamma$, then the conclusion
can be written into the exponential form
$$
x_k\leq x_0e^{-ck}+x_Ne^{-c(N-k)}.
$$
\item
For an infinite nonnegative sequence $\{x_k\}_{k=0, 1, \cdots}$, if we have a uniform bound of
in addition,
then the exponential decay follows as
$$
x_k\leq x_0e^{-ck}.
$$
\end{enumerate}
\end{rem}

The analysis of proving the exponential decay will be carried on a Banach bundle $\CE\to [0, \infty)$ modelled by the Banach space $\E$, for which we mean every fiber $\E_\tau$ is identified with the Banach space $\E$ smoothly depending on $\tau$. We omit this identification if there is no way of confusion.

First we emphasize the base $[0,\infty)$ is non-compact and
carries a natural translation map for any positive number $r$,
which is $\sigma_r: \tau \mapsto \tau + r$.
We introduce the following definition
which ensures us to study the sections in local trivialization after taking a subsequence.

\begin{defn}\label{def:unif-local-tame} Let $\CE$ be a Banach bundle modelled with a Banach space $\E$ over $[0, \infty)$.
Let $[a,b] \subset [0,\infty)$ be any given bounded interval and let
$s_k \to \infty$ be any given sequence. A \emph{tame family of trivialization}
over $[a,b]$ relative to the sequence $s_k$ is defined to be
a sequence of trivializations $\{\Phi_k\}:\CE|_{[a,b]} \to [a,b] \times \E$
$$
\Phi_k: \sigma^*_{s_\cdot}\CE|_{[a+s_k, b+s_k]} \to [a,b] \times \E
$$
for $k\geq 0$ satisfying the following: There exists a sufficiently large $k_0 > 0$
such that for any $k \geq k_0$ the bundle map
$$
\Phi_{k_0+k} \circ \Phi_{k_0}^{-1}: [a,b] \times \E \to [a,b] \times \E
$$
satisfies
\be\label{eq:locallytame}
\|\nabla_\tau^l(\Phi_{k_0+k} \circ \Phi_{k_0}^{-1})\|_{\CL(\E,\E)} \leq C_l<\infty
\ee
for constants $C_l = C_l(|b-a|)$ depending only on $|b-a|$, $l=0, 1, \cdots$.

We call $\CE$ \emph{uniformly locally tame}, if it carries a tame family of
trivializations over $[a,b]$ relative to the sequence $s_k$ for any given bounded
interval $[a,b] \subset [0,\infty)$ and a sequence $s_k \to \infty$.
\end{defn}

\begin{defn} Suppose $\CE$ is uniformly locally tame. We say a connection $\nabla$ on
$\CE$ is \emph{uniformly locally tame} if the push-forward $(\Phi_k)_*\nabla_\tau$ can be written as
$$
(\Phi_k)_*\nabla_\tau = \frac{d}{d\tau} + \Gamma_k(\tau)
$$
for any tame family $\{\Phi_k\}$ so that
$\sup_{\tau \in [a,b]}\|\Gamma_k(\tau)\|_{\CL(\E,\E)} < C$ for some $C> 0$ independent of $k$'s.
\end{defn}

\begin{defn}
Consider a pair $\CE_2 \subset \CE_1$ of uniformly locally tame bundles, and a bundle map
$B: \CE_2 \to \CE_1$.
We say $B$ is \emph{uniformly locally bounded}, if for any compact set $[a,b] \subset [0,\infty)$ and
any sequence $s_k \to \infty$, there exists a subsequence, still denoted by $s_k$, a sufficiently large $k_0 > 0$ and tame families
$\Phi_{1,k}$, $\Phi_{2,k}$ such that for any $k\geq 0$
\be\label{eq:loc-uni-bdd}
\sup_{\tau \in [a,b]} \|\Phi_{i,k_0+k} \circ B \circ \Phi_{i,k_0}^{-1}\|_{\CL(\E_2, \E_1)} \leq C
\ee
where $C$ is independent of $k$.
\end{defn}

For a given locally tame pair $\CE_2 \subset \CE_1$, we denote by $\CL(\CE_2, \CE_1)$ the set
of bundle homomorphisms which are uniformly locally bounded.

\begin{lem}
If $\CE_1, \, \CE_2$ are uniformly locally tame, then so is $\CL(\CE_2, \CE_1)$.
\end{lem}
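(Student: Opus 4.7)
The plan is to build a tame family for the operator bundle $\CL(\CE_2,\CE_1)$ by conjugating with tame families from its two arguments, and then verify the tame estimates \eqref{eq:locallytame} by a Leibniz computation. Fix any bounded interval $[a,b]\subset[0,\infty)$ and any sequence $s_k\to\infty$. By uniform local tameness of $\CE_1$ and $\CE_2$, after passing to a common subsequence (still denoted $\{s_k\}$), we obtain tame families of trivializations
$$
\Phi_{i,k}:\sigma^{*}_{s_k}\CE_i\big|_{[a+s_k,b+s_k]}\longrightarrow [a,b]\times\E_i,\qquad i=1,2,
$$
and a $k_0$ large enough that the transition maps $\Phi_{i,k_0+k}\circ\Phi_{i,k_0}^{-1}$ satisfy \eqref{eq:locallytame} for all $k\ge 0$ and all $l$.

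First I would define the candidate family of trivializations of the operator bundle by fiberwise conjugation:
$$
\Psi_k(\tau)(T):=\Phi_{1,k}(\tau)\circ T\circ \Phi_{2,k}(\tau)^{-1}\in\CL(\E_2,\E_1),\qquad T\in\CL(\CE_{2,\tau+s_k},\CE_{1,\tau+s_k}).
$$
This is a fiberwise linear isomorphism onto the model Banach space $\CL(\E_2,\E_1)$, so it defines a trivialization
$
\Psi_k:\sigma^{*}_{s_k}\CL(\CE_2,\CE_1)\big|_{[a+s_k,b+s_k]}\to[a,b]\times\CL(\E_2,\E_1).
$
It remains to check the tame estimate for the transition maps $\Psi_{k_0+k}\circ\Psi_{k_0}^{-1}$.

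Next I would carry out the direct computation. Setting
$$
A_k(\tau):=\Phi_{1,k_0+k}(\tau)\circ\Phi_{1,k_0}(\tau)^{-1}\in\CL(\E_1,\E_1),\qquad B_k(\tau):=\Phi_{2,k_0}(\tau)\circ\Phi_{2,k_0+k}(\tau)^{-1}\in\CL(\E_2,\E_2),
$$
one immediately gets
$$
\bigl(\Psi_{k_0+k}\circ\Psi_{k_0}^{-1}\bigr)(\tau)(S)=A_k(\tau)\circ S\circ B_k(\tau),\qquad S\in\CL(\E_2,\E_1).
$$
By the tame condition applied to $\CE_1$ and $\CE_2$ we have $\|\nabla_\tau^{\,l}A_k\|_{\CL(\E_1,\E_1)}\le C_l$ and $\|\nabla_\tau^{\,l}B_k\|_{\CL(\E_2,\E_2)}\le C_l'$, uniformly in $k$. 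Leibniz then yields, for every $l\ge 0$,
$$
\nabla_\tau^{\,l}\bigl(\Psi_{k_0+k}\circ\Psi_{k_0}^{-1}\bigr)(\tau)(S)
=\sum_{j=0}^{l}\binom{l}{j}\bigl(\nabla_\tau^{\,j}A_k(\tau)\bigr)\circ S\circ\bigl(\nabla_\tau^{\,l-j}B_k(\tau)\bigr),
$$
whose operator norm as a map $\CL(\E_2,\E_1)\to\CL(\E_2,\E_1)$ is bounded by $\sum_{j}\binom{l}{j}C_jC_{l-j}'$, a constant depending only on $l$ and $|b-a|$. This is exactly the required bound \eqref{eq:locallytame} for the family $\{\Psi_k\}$, establishing uniform local tameness of $\CL(\CE_2,\CE_1)$.

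The only subtle point, which I expect to be the main obstacle, is the need to control not just $\Phi_{i,k_0+k}\circ\Phi_{i,k_0}^{-1}$ but also its inverse $\Phi_{i,k_0}\circ\Phi_{i,k_0+k}^{-1}$, since the conjugation uses $\Phi_{2,k_0+k}^{-1}$ on the right. I would resolve this by noting that the notion of tame family is meant to be reflexive in the sense that the role of $k_0$ is arbitrary; concretely, after passing to a further subsequence one can also treat $k_0+k$ as the reference index, or equivalently one checks that the transition cocycle is uniformly invertible in $\CL(\E_i,\E_i)$ with derivatives of the inverse bounded via the Neumann series and the identity $\nabla_\tau(M^{-1})=-M^{-1}(\nabla_\tau M)M^{-1}$. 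With this minor technicality addressed, the Leibniz computation above completes the proof.
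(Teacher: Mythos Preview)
The paper states this lemma without proof, treating it as a routine fact that follows directly from the definitions. Your argument via fiberwise conjugation and the Leibniz rule is exactly the natural proof, and it is correct.

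Your identification of the inverse-trivialization issue is apt: the definition of a tame family in the paper literally only bounds $\Phi_{k_0+k}\circ\Phi_{k_0}^{-1}$, not its inverse, whereas your $B_k$ requires control of $\Phi_{2,k_0}\circ\Phi_{2,k_0+k}^{-1}$. Your proposed fix (either reading the tameness condition as symmetric in the reference index, or deducing inverse bounds from the formula $\nabla_\tau(M^{-1})=-M^{-1}(\nabla_\tau M)M^{-1}$ together with a uniform lower bound on $\|M\|$) is the standard way to handle this, and is almost certainly what the authors had in mind. Since the paper gives no details, there is nothing further to compare.
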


\begin{defn}\label{defn:precompact} Let $\CE_2 \subset \CE_1$ be as above and let $B \in \CL(\CE_2, \CE_1)$.
We say $B$ is \emph{pre-compact} on $[0,\infty)$ if for any locally tame families $\Phi_1, \Phi_2$,
there exists a further subsequence such that
$
\Phi_{1, k_0+k} \circ B \circ \Phi_{1, k_0}^{-1}
$
converges to some $B_{\Phi_1\Phi_2;\infty}\in \CL(\Gamma([a, b]\times \E_2), \Gamma([a, b]\times \E_1))$.
\end{defn}

Assume $B$ is a bundle map from $\CE_2$ to $\CE_1$ which is uniformly locally bounded,
where $\CE_1 \supset \CE_2$ are uniformly locally tame with tame families
$\Phi_{1,k}$, $\Phi_{2,k}$. We can write
$$
\Phi_{2,k_0+k} \circ (\nabla_\tau + B) \circ \Phi_{1,k_0}^{-1} = \frac{\del}{\del \tau} + B_{\Phi_1\Phi_2, k}
$$
as a linear map from $\Gamma([a,b]\times \E_2)$ to $\Gamma([a,b]\times \E_1)$, since $\nabla$ is uniformly
locally tame.

Next we introduce the following notion of coerciveness.

\begin{defn}\label{defn:localcoercive}
Let $\CE_1, \, \CE_2$ be as above and $B: \CE_2 \to \CE_1$ be a uniformly locally bounded bundle map.
We say the operator
$$
\nabla_\tau + B: \Gamma(\CE_2) \to \Gamma(\CE_1)
$$
is \emph{uniformly locally coercive}, if the following holds:
\begin{enumerate}
\item For any pair of bounded closed intervals $I, \, I'$ with $I \subset \operatorname{Int}I'$,
\be\label{eq:coercive}
\|\zeta\|_{L^2(I,\CE_2)} \leq C(I,I') \|\nabla_\tau \zeta + B \zeta\|_{L^2(I,\CE_1)}.
\ee
\item if for given bounded sequence $\zeta_k \in \Gamma(\CE_2)$ satisfying
$$
\nabla_\tau \zeta_k + B \zeta_k = L_k
$$
with $\|L_k\|_{\CE_1}$ bounded on a given compact subset $K \subset [0,\infty)$,
there exists a subsequence, still denoted by $\zeta_k$, that uniformly converges in $\CE_2$.
\end{enumerate}
\end{defn}

\begin{rem}
Let $E \to [0,\infty)\times S$ be a (finite dimensional) vector bundle and denote by
$W^{k,2}(E)$ the set of $W^{k,2}$-section of $E$ and $L^2(E)$ the set of $L^2$-sections. Let
$D: L^2(E)\to L^2(E)$ be a first order elliptic operator with cylindrical end.
Denote by $i_\tau: S \to [0,\infty)\times S$ the natural inclusion map. Then
there is a natural pair of Banach bundles $\CE_2 \subset \CE_1$ over $[0, \infty)$ associated to $E$, whose fiber
is given by $\CE_{1,\tau}=L^2(i_\tau^*E)$, $\CE_{2,\tau} = W^{1,2}(i_\tau^*E)$.
Furthermore assume $\CE_i$ for $i=1, \, 2$ is uniformly local tame if $S$ is a compact manifold (without boundary).
Then $D$ is uniformly locally coercive, which follows from the elliptic bootstrapping and the Sobolev's embedding.
\end{rem}

Finally we introduce the notion of asymptotically cylindrical operator $B$.
\begin{defn}\label{defn:asympcylinderical} We call $B$ \emph{locally asymptotically cylindrical} if the following holds:
Any subsequence limit $B_{\Phi_1\Phi_2;\infty}$ appearing in Definition \ref{defn:precompact}
is a \emph{constant} section,
and $\|B_{\Phi_1\Phi_2, k}-\Phi_{2,k_0+k} \circ  B \circ \Phi_{1,k_0}^{-1}\|_{\CL(\E_i, \E_i)}$ converges to zero
as $k\to \infty$ for both $i =1, 2$.
\end{defn}

Now we specialize to the case of Hilbert bundles $\CE_2 \subset \CE_1$ over $[0,\infty)$ and assume that
$\CE_1$ carries a connection {which is compatible with the Hilbert inner product of $\CE_1$. We denote by $\nabla_\tau$ the associated covariant
derivative. We assume that $\nabla_\tau$ is uniformly locally tame.

Denote by $L^2([a,b];\CE_i)$ the space of $L^2$-sections $\zeta$ of $\CE_i$ over
$[a,b]$, i.e., those satisfying
$$
\int_a^b |\zeta(\tau)|_{\CE_i}^2\, dt < \infty.
$$
where $|\zeta(\tau)|_{\CE_i}$ is the norm with respect to the given Hilbert bundle structure of $\CE_i$.

\begin{thm}[Three-Interval Method]\label{thm:three-interval}
Assume $\CE_2\subset\CE_1$ is a pair of Hilbert bundles over $[0, \infty)$ with fibers $\E_2$ and $\E_1$,
and $\E_2\subset \E_1$ is dense.
Let $B$ be a section of the associated bundle $\CL(\CE_2, \CE_1)$ and
$L \in \Gamma(\CE_1)$.
We assume the following:
\begin{enumerate}
\item There exists a covariant derivative $\nabla_\tau$ %an Ehresmann connection
that preserves the Hilbert structure;
\item  $\CE_i$ for $i=1, \, 2$ are uniformly locally tame;
\item  $B$ is precompact, uniformly locally coercive and asymptotically cylindrical;
\item  Every subsequence limit $B_{\infty}$ is a self-adjoint unbounded operator on
$\E_1$ with its domain $\E_2$, and satisfies $\ker B_{\infty} = \{0\}$;
\item There exists some positive number $\delta$
such that any subsequence limiting operator $B_{\infty}$
of the above mentioned pre-compact family has their first non-zero eigenvalues $|\lambda_1| >\delta$;
\item
There exists some $R_0 > 0$, $C_0 > 0$ and $\delta_0 > \delta$ such that
$$
\|L(\tau)\|_{\CE_{1,\tau}} \leq C_0 e^{-\delta_0 \tau}
$$
for all $\tau \geq R_0$.
\end{enumerate}
Then for any (smooth) section $\zeta \in \Gamma(\CE_2)$ with
\be\label{eq:sup-bound}
\sup_{\tau \in [R_0,\infty)} \|\zeta(\tau,\cdot)\|_{\CE_{2,\tau}} < \infty
\ee
and satisfying the equation
\be\label{eq:nabla=L}
\nabla_\tau \zeta + B(\tau) \zeta(\tau) = L(\tau),
\ee
there exist some constants $R$, $C>0$ such that for any $\tau>R$,
$$
\|\zeta(\tau)\|_{\CE_{1,\tau}}\leq C e^{-\delta \tau }.
$$
\end{thm}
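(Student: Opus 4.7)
\textbf{Proof plan for Theorem \ref{thm:three-interval} (Three-Interval Method).}

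The plan is to reduce the pointwise exponential decay in $\CE_1$ to the purely numerical statement of Lemma \ref{lem:three-interval}. I will set
\[
x_k := \int_k^{k+1} \|\zeta(\tau)\|^2_{\CE_{1,\tau}}\, d\tau, \qquad k \geq R_0,
\]
and prove that for every prescribed $\delta' < \delta$ there exist $\gamma \in (0, 1/2)$ with $\log\!\left(\tfrac{1+\sqrt{1-4\gamma^2}}{2\gamma}\right) > 2\delta'$ and an integer $K_0$ such that the \emph{three-interval inequality}
\begin{equation}\label{eq:3int-goal}
x_k \leq \gamma \bigl( x_{k-1} + x_{k+1}\bigr) + C_1 e^{-2\delta_0 k}
\end{equation}
holds for all $k \geq K_0$. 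Granting \eqref{eq:3int-goal}, one absorbs the tail term (which already decays strictly faster than the target rate because $\delta_0 > \delta$) into the recursion via the standard trick of replacing $x_k$ by $\widetilde x_k := x_k + A e^{-2\delta_0 k}$ with $A$ chosen so that $\widetilde x_k$ satisfies a pure three-interval inequality. Lemma \ref{lem:three-interval}, together with the uniform sup bound \eqref{eq:sup-bound} (which bounds $x_N$ uniformly in $N$), then yields $x_k \leq C\,e^{-2\delta' k}$. Finally, the uniform local coerciveness hypothesis upgrades this $L^2$-in-$\tau$ decay to the pointwise $\CE_1$-decay on each unit interval by the standard compact-interval $L^2$-to-$C^0$ elliptic bootstrap, losing only a constant on the unit intervals. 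Since $\delta' < \delta$ was arbitrary, a final absorption of constants yields the stated rate $e^{-\delta \tau}$.

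The entire content therefore lies in establishing \eqref{eq:3int-goal}, and I plan to prove it by a translation-compactness argument. Assume \eqref{eq:3int-goal} fails for some $\delta' < \delta$; then there is a sequence $k_j \to \infty$ along which the inequality is violated for every admissible $\gamma$ and $C_1$. Translate by setting $\zeta_j(\tau) := \zeta(\tau + k_j - 1)$, $B_j(\tau) := B(\tau + k_j - 1)$, $L_j(\tau) := L(\tau + k_j - 1)$ on the fixed compact interval $I := [0, 3]$, and rescale $\zeta_j$ by dividing by its $L^2([0,3];\CE_1)$-norm. Using the tame trivializations over $I$ relative to the sequence $\{k_j - 1\}$, pull the rescaled $\zeta_j$ back to sections of the fixed trivial bundle $I \times \E_2$. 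The equation becomes $\partial_\tau \zeta_j + B_j \zeta_j = \tilde L_j$, where $\|\tilde L_j\|_{\CE_1} \to 0$ because of the prefactor $e^{-\delta_0(k_j-1)} / \|\zeta\|_{L^2(k_j-1, k_j+2; \CE_1)}$ (the ratio is bounded by the same argument as below because the assumed failure of \eqref{eq:3int-goal} rules out faster decay). Pre-compactness of $B$ and asymptotic cylindricality (Definitions \ref{defn:precompact}, \ref{defn:asympcylinderical}) yield, after passing to a subsequence, convergence $B_j \to B_\infty$ in $\CL(\E_2, \E_1)$, where $B_\infty$ is \emph{constant in} $\tau$, self-adjoint with $\ker B_\infty = \{0\}$ and spectral gap $|\lambda_1| > \delta > \delta'$. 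Uniform local coerciveness gives a subsequential limit $\zeta_\infty \in L^2(I; \E_2)$ with $\|\zeta_\infty\|_{L^2(I;\E_1)} = 1$ solving
\[
\partial_\tau \zeta_\infty + B_\infty \zeta_\infty = 0.
\]

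The main remaining step, and the heart of the argument, is the explicit three-interval bound for solutions of the constant-coefficient limit equation. Diagonalize $B_\infty = \sum_i \lambda_i\, P_i$ spectrally and expand $\zeta_\infty(\tau) = \sum_i a_i(\tau) e_i$ with $e_i$ a unit eigenvector of $\lambda_i$; then $a_i(\tau) = a_i(0)\, e^{-\lambda_i \tau}$. A direct computation gives
\[
\frac{\int_1^2 \|\zeta_\infty\|^2\, d\tau}{\int_0^1 \|\zeta_\infty\|^2\, d\tau + \int_2^3 \|\zeta_\infty\|^2\, d\tau}
\;\leq\; \sup_{i} \frac{\int_1^2 e^{-2\lambda_i \tau} d\tau}{\int_0^1 e^{-2\lambda_i \tau} d\tau + \int_2^3 e^{-2\lambda_i \tau} d\tau}
\;=\; \sup_i \frac{1}{e^{2\lambda_i} + e^{-2\lambda_i}},
\]
and since $|\lambda_i| \geq \delta > \delta'$, the right-hand side is bounded by the constant $\gamma_{\delta'} := 1/(e^{2\delta'} + e^{-2\delta'}) < 1/2$. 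This contradicts the failure of \eqref{eq:3int-goal} for any fixed $\gamma \in (\gamma_{\delta'}, 1/2)$ once $j$ is large enough, since the error $\tilde L_j$ tends to zero. The hardest step, as anticipated, is rigorously justifying this limit extraction; the delicate point is that $\zeta_\infty$ a priori lives only on the finite interval $I$, so the spectral decomposition argument must be carried out on $[0,3]$ using integration against the orthonormal basis of $\E_1$ coming from $B_\infty$ (the decomposition of each slice $\zeta_\infty(\tau)$ is valid because $B_\infty$ is self-adjoint with compact resolvent coming from elliptic regularity of the ambient problem). Once the contradiction is obtained, the proof is complete.
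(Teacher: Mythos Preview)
The paper does not actually prove Theorem \ref{thm:three-interval}; it only states the theorem in Appendix \ref{sec:three-interval} and refers the reader to \cite[Section 11]{oh-wang:CR-map2} for the proof. So there is no in-paper proof to compare against. That said, your proposal is precisely the standard translation-and-compactness argument underlying the three-interval method as set up in the paper (Lemma \ref{lem:three-interval} together with Definitions \ref{def:unif-local-tame}--\ref{defn:asympcylinderical}), and it is essentially correct.

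Two minor remarks. First, the final sentence ``Since $\delta' < \delta$ was arbitrary, a final absorption of constants yields the stated rate $e^{-\delta \tau}$'' is not literally valid: one cannot pass from decay at every rate $\delta' < \delta$ to decay at rate $\delta$ by adjusting constants. However, this detour through $\delta'$ is unnecessary. Since hypothesis (5) gives $|\lambda_1| > \delta$ strictly for every limit $B_\infty$, your spectral computation shows the limiting three-interval ratio is strictly less than $\gamma_\delta := 1/(e^{2\delta}+e^{-2\delta})$, so you may take $\gamma = \gamma_\delta$ directly in the contradiction argument; Lemma \ref{lem:three-interval} then yields $x_k \leq C e^{-2\delta k}$, i.e., the stated rate $e^{-\delta\tau}$ without any limiting passage. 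Second, the absorption of the inhomogeneous term via $\widetilde x_k = x_k + A e^{-2\delta_0 k}$ works exactly because $\delta_0 > \delta$ forces $\gamma_\delta\,(e^{2\delta_0}+e^{-2\delta_0}) > 1$, so the required $A$ exists; you might make this explicit.
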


\bibliographystyle{amsalpha}

\bibliography{biblio2}

\end{document}